\def\R{\mathbb R}
\def\Z{\mathbb Z}
\def\N{\mathbb N}
\newtheorem{thm}{Theorem}[section]
\newtheorem{prop}[thm]{Proposition}
\newtheorem{cor}[thm]{Corollary}
\newtheorem{lemma}[thm]{Lemma}
\newtheorem*{sta}{A}
\newtheorem*{stb}{B}
\newtheorem*{stc}{C}
\newtheorem*{std}{D}
\newtheorem*{ste}{E}
\theoremstyle{remark}
\newtheorem*{remark}{Remark}
\numberwithin{equation}{section}
\begin{document}
%
%%%%%%%
\title[The structure of the conjugate locus]{The structure of the conjugate locus 
of a general point on ellipsoids and certain Liouville manifolds}
\author{Jin-ichi Itoh}
\address{Department of Mathematics, Faculty of Education,
Kumamoto University, Kumamoto 860-8555, Japan.}
\email{j-itoh@gpo.kumamoto-u.ac.jp}
\thanks{The first author was supported in part by Grant-in-Aid for Scientific 
Research (C)-23540098, (C)-26400072. }
\author{Kazuyoshi Kiyohara}
\address{Graduate School of Natural Science and Technology, Okayama University, Okayama 700-8530, Japan.}
\email{kiyohara@math.okayama-u.ac.jp}
\thanks{The second author was supported in part by Grant-in-Aid for Scientific 
Research (C)-23540089, (C)-26400071.}
\subjclass[2010]{primary 53C22, secondary 53A07, 58E10}
\begin{abstract}It is well known since Jacobi that the geodesic flow of the ellipsoid is ``completely integrable'',
which means that the geodesic orbits are described in a certain explicit way. However, it does not directly indicate that any global behavior of the geodesics becomes easy to see. In fact, 
it happened quite recently that a proof for
the statement ``The conjugate locus of a general point in two-dimensional ellipsoid has just four cusps'' 
in Jacobi's Vorlesungen \"uber dynamik appeared in the literature. 

In this paper, we consider Liouville manifolds, a certain class of Riemannian manifolds which contains ellipsoids. We solve the geodesic equations; investigate the behavior of the Jacobi fields, especially the
positions of the zeros; and clarify the structure of the conjugate locus of a general point. In particular,
we show that the singularities arising in the conjugate loci are only cuspidal edges and $D_4^+$ Lagrangian
singularities, which would be the higher dimensional counterpart of Jacobi's statement.
\end{abstract}
\maketitle
\section{Introduction}
Let $M$ be a Riemannian manifold ($\dim M=n$) and let $\gamma(t)$ 
be a geodesic with $\gamma(0)=p\in M$. Then
$\gamma(t_1)$ $(t_1>0)$ is called the
{\it first conjugate point} of $p$ along $\gamma(t)$ if
$t=t_1$ is the largest value such that $\gamma|_{[0,t]}$ is the shortest one among the curves
which join $\gamma(0)$ and $\gamma(t)$
and which are ``infinitesimally close to $\gamma|_{[0,t]}$''.  
More precisely and more generally, the point $\gamma(T)$ $(T>0)$ is called 
a {\it conjugate point} of $p$
along the geodesic $\gamma(t)$ if there is a non-zero Jacobi field $Y(t)$ 
along $\gamma(t)$ such that $Y(0)=0$, $Y(T)=0$.
Conjugate points of $p$ along $\gamma(t)$ are
discrete in $t$; $\gamma(t_1)$, $\gamma(t_2)$, \dots $(0<t_1\le t_2\le\dots)$, called the first conjugate point,
the second conjugate point, etc..  The multiplicity is less 
than or equal to $n-1$. 

The $i$-th {\it conjugate locus} of $p\in M$ is the 
set of all $i$-th conjugate point of $p$ along the
geodesics emanating from $p$.
The term ``conjugate locus'' is usually used with the
meaning of the first conjugate locus.
For the generality of conjugate points and conjugate loci, one can 
refer to \cite{Kl},\cite{Sa2}. The simplest example of the conjugate locus is that of the sphere
of constant curvature $S^n$. In this case the first conjugate point of each $x\in S^n$ along
any geodesic is the 
antipodal point $-x$ and its multiplicity is $n-1$; thus the $i$-th conjugate locus ($1\le i\le n-1$) of $x$ is equal to $\{-x\}$, whereas the $j$-th conjugate locus
($n\le j\le 2n-2$) is equal to $\{x\}$.

To understand the global behavior of the geodesics, it is crucial to know the structure
of conjugate loci and cut loci of points. In general, however, it is quite difficult to
determine conjugate loci and cut loci explicitly, except for symmetric spaces and 
other few examples (see \cite[Introduction]{IK2}). In the previous papers \cite{IK1} and \cite{IK3} we determined
the structure of conjugate loci and cut loci of points for the tri-axial ellipsoid and certain
Liouville surfaces. Also we determined in \cite{IK2} the structure of cut loci of points for
the ellipsoid and certain Liouville manifolds of dimension greater than two. In this paper 
we clarify the structure of the conjugate locus of a general point on the ellipsoid and certain Liouville manifolds of dimension greater than two. In particular, we give a detailed description
for the singular points on the conjugate locus. This would be a higher-dimensional counterpart 
of ``the last geometric statement of Jacobi'', which says that the conjugate locus of a
non-umbilic point of the two-dimensional ellipsoid has exactly four cusps (\cite{J}, \cite{JW}; see also \cite{IK1}, \cite{IK3}, \cite{Si}, \cite{ST1}).

Now, let us illustrate our results in detail by taking the ellipsoid 
$M: \sum_{i=0}^n  u_i^2/a_i=1$ 
($0<a_n<\dots<a_0$) as an example. {\it The elliptic coordinate system} $(\lambda_1,\dots,\lambda_n)$ 
on $M$ $(\lambda_n\le \dots\le\lambda_1)$ is defined by the following 
identity in $\lambda$:
\begin{equation*}
\sum_{i=0}^n\frac{u_i^2}{a_i-\lambda}-1=\frac{\lambda\prod_{k=1}^n(\lambda_k-\lambda)}{\prod_i (a_i-\lambda)}\ .
\end{equation*}
For a fixed $u\in M$, $\lambda_k$ are determined by $n$ 
``confocal quadrics'' passing through $u$. 
From $\lambda_k$'s, $u_i$ are explicitly described as
\begin{equation*}
u_i^2=\frac{a_i\prod_{k=1}^n(\lambda_k-a_i)}{\prod_{j\ne i} (a_j-a_i)}\ ,
\end{equation*}
and the range of $\lambda_k$ is $a_k\le \lambda_k\le  a_{k-1}$. Also the metric $g$ is described
as
\begin{equation}
g=\sum_{i=1}^n\frac{(-1)^n\lambda_i\,\prod_{l\ne i}(\lambda_l-\lambda_i)}
{4\prod_{j=0}^n(\lambda_i-a_j)}\,d\lambda_i^2
\end{equation}

Let $N_k$ be the ellipsoid of codimension one in $M$ defined by
\begin{gather*}
N_k=\{u=(u_0,\dots,u_n)\in M\ |\ u_k=0\ \}\qquad(0\le k\le n)\,,
\end{gather*}
which is a totally geodesic submanifold of $M$. By the elliptic coordinates the 
submanifold $N_k$
is expressed as 
\begin{equation*}
N_k=\{\lambda_k=a_k\quad\text{or}\quad\lambda_{k+1}=a_k\ \} .
\end{equation*}
We shall say that $p\in M$ is a {\it general point} if $p\not\in N_k$ for any $k$.
%($1\le k\le n-1$). Note that we do not require $p\not\in N_0$ nor $p\not\in N_n$.

Now, let $p\in M$ be a general point.
Each element $v$ of the tangent space $T_pM$ at $p$
%If the point $p$ moreover satisfies $p\not\in N_0\cup N_n$, then 
is expressed as:
\begin{equation*}
v=\sum_{i=1}^n \,v_i\,\frac{\partial}{\partial \lambda_i}\,.
\end{equation*}
Then, putting 
\begin{equation*}
\tilde v_i=\sqrt{\frac{(-1)^{n-i}\,\lambda_i\prod_{l\ne i}(\lambda_l-\lambda_i)}
{(-1)^i\,4\prod_{j=0}^n(\lambda_i-a_j)}}\,v_i\,,
\end{equation*}
we have an Euclidean coordinate system $(\tilde v_1,\dots,\tilde v_n)$ on $T_pM$, i.e.,
\begin{equation*}
g(v,v)=1\quad\text{if and only if}\quad \sum_{i=1}^n \tilde v_i^2=1\,.
\end{equation*}
%If $p\in N_0$ or $p\in N_n$, then, by taking an appropriate sign of the above square root,
%we also obtain the Euclidean coordinates $(\tilde v_1,\dots,\tilde v_n)$ on $T_pM$. 
We now define an elliptic coordinate system $(\mu_1,\dots,\mu_{n-1})$ on the unit tangent space 
$U_pM\subset T_pM$ by the following identity in $\mu$:
\begin{equation*}
\sum_{i=1}^n\frac{\tilde v_i^2}{\mu-\lambda_i(p)}=
\frac{\prod_{k=1}^{n-1}(\mu-\mu_k)}{\prod_{j=1}^n(\mu-\lambda_j(p))},\quad 
\lambda_{i+1}(p)\le \mu_i\le \lambda_i(p)\,.
\end{equation*}
Then
\begin{equation*}
\tilde v_i^2=\frac{\prod_{k=1}^{n-1}(\lambda_i(p)-\mu_k)}{\prod_{j\ne i}(\lambda_i(p)-\lambda_j(p))},\qquad \sum_{i=1}^{n}\tilde v_i^2=1\,.
\end{equation*}
Define the submanifolds (with boundary) $C_i^\pm$ $(1\le i\le n-1)$ of $U_pM$ by
\begin{equation*}
 C_i^-=\{v\in U_pM\,|\, \mu_i(v)=\lambda_{i+1}(p)\},\quad 
 C_i^+=\{v\in U_pM\,|\, \mu_i(v)=\lambda_{i}(p)\}\,.
\end{equation*}
It is seen that $C_{i-1}^-\cup C_i^+$ is equal to the great sphere $\tilde v_i=0$ and
they are diffeomorphic to
\begin{gather*}
C_i^-\simeq  S^{i-1}\times \bar D^{n-1-i},\quad C_i^+\simeq \bar D^{i-1}\times S^{n-1-i}\,,
\end{gather*}
where $S^k$ and $\bar D^k$ stand for the $k$-sphere and the closed $k$-disk ($S^0$
and $\bar D^0$ stand for the set of two points and that of one point) respectively.
Also for the boundary $\partial C_i^\pm$ of $C_i^\pm$, 
\begin{gather*}
\partial C_i^+=\partial C_{i-1}^-= C_i^+\cap C_{i-1}^-\simeq S^{i-2}\times S^{n-1-i}\quad (2\le i\le n-1)\,,\\
\partial C_{n-1}^-=\emptyset=\partial C_1^+\,.
\end{gather*}

Put $V_i=\pm (\partial/\partial \mu_i)/\|\partial/\partial \mu_i\|$ $(1\le i\le n-1)$.
One can see that at each point $v\in U_pM-\partial C_i^\pm$, the vector field $V_i$ is smoothly
defined on a neighborhood of the point by taking the appropriate sign. 
Let $\gamma_v(t)$ be the geodesic on $M$ with the initial vector $\dot\gamma_v(0)=v\in U_pM$ and
let $Y_i(t,v)$ $(1\le i\le n-1)$ be the Jacobi field along the geodesic $\gamma_v(t)$ defined by 
the initial data $Y_i(0,v)=0$, $Y'_i(0,v)=V_i(v)$
(``prime'' represents the covariant derivative in $t$). Assume first that $v\not\in \partial C_j^\pm$ for any $j$.
Then, as was already shown in \cite[Proposition 5.1]{IK2}, the Jacobi field $Y_i(t,v)$ is of the form
\begin{equation*}
Y_i(t,v)=y_i(t,v)\tilde V_i(t, v),
\end{equation*}
where $y_i(t,v)$ is a function and $\tilde V_i(t,v)$ is the parallel vector field along the 
geodesic $\gamma_v(t)$ such that $\tilde V_i(0,v)=V_i(v)$. 
(Actually, we may say $\tilde V_i(t,v)=V_i(\dot\gamma_v(t))$.) Let $t=r_i(v)$ be the first zero of the 
function $t\mapsto y_i(t,v)$ for $t>0$. It turns out that the function $r_i(v)$ can be
continuously extended to all over $U_pM$ and is of $C^\infty$ outside $\partial C_i^\pm$. Then our first result is the following
\begin{sta}[Proposition~\ref{prop:conj}]
\begin{enumerate}
\item $r_{n-1}(v)\le r_{n-2}(v)\le \dots\le r_1(v)$ for any $v\in U_pM$.
\item $r_{i-1}(v)=r_{i}(v)$ if and only if $v\in \partial C_{i-1}^-=\partial C_i^+$\quad$(2\le i\le n-1)$
\end{enumerate}
\end{sta}
Put
\begin{equation*}
\tilde K_i(p)=\{r_i(v)v\,|\,v\in U_pM\},\quad K_i(p)=\{\gamma_v(r_i(v))\,|\,v\in U_pM\}\,.
\end{equation*}
As a consequence of the above proposition, we have 
\begin{stb}[Theorem~\ref{thm:conj}]\label{thm2}
\begin{enumerate}
\item $K_{n-1}(p)$ is the (first)
conjugate locus of $p$.
\item If $M$ is close to the round sphere in an appropriate sense, then $K_{n-i}(p)$
is the $i$-th conjugate locus of $p$ for $2\le i\le n-1$.
\end{enumerate}
\end{stb}
In this case, $\tilde K_{n-1}(p)$ is called the {\it tangential conjugate locus} of $p$, and 
under the situation of (2) $\tilde K_{n-i}(p)$ is called the $i$-th tangential 
conjugate locus.
The assumption in (2) of the above theorem is actually given as follows: ``if the second zero,
say $r_{n-1}^2(v)$, of $y_{n-1}(t,v)$ is greater than $r_1(v)$ for any $v\in U_pM$''.

Next, let us explain our results on the ``singular points'' of the conjugate locus.
Define the map $\Phi: U_pM\to M$ by 
\begin{equation*}
\Phi(v)=\text{Exp}_p(r_{n-1}(v)v)=\gamma_v(r_{n-1}(v))\,,
\end{equation*}
whose image is the conjugate locus $K_{n-1}(p)$ of $p$. Then we have
\begin{stc}[Theorem~\ref{thm:cusp}]
\begin{enumerate}
\item $\Phi$ is an immersion outside $C_{n-1}^-\cup C_{n-1}^+$.
\item The germ of $\Phi$ is a cuspidal edge at each point of $C_{n-1}^-$ and each interior
point of $C_{n-1}^+$; the restriction of $\Phi$ to (the interior of) $C_{n-1}^\pm$ are 
immersions to the edges of the vertices.
\end{enumerate}
\end{stc}
We note that, as was shown in \cite[Theorem~7.1]{IK2}, the restriction of $\Phi$ to 
$C_{n-1}^-$ is actually an embedding and the image bounds the cut locus of $p$.

As for the singularities arising on the boundary $\partial C_{n-1}^+$, we need to treat them
as the singularities of the map Exp$_p: T_pM\to M$, since the map $\Phi$ is not
differentiable at points on $\partial C_{n-1}^+$ (and the tangential conjugate locus 
$\tilde K_{n-1}(p)$ is not smooth at $r_{n-1}(v)v$, $v\in\partial C_{n-1}^+$).
However, it should be noted that the function $r_{n-1}$ restricted to $\partial C_{n-1}^+$
is smooth. Thus $\tilde S=\{r_{n-1}(v)v\,|\,v\in \partial C_{n-1}^+\}$ is a submanifold of $T_pM$
diffeomorphic to $S^{n-3}\times S^0$.
\begin{std}[Corollary~\ref{cor1}]
The germ of the map {\rm Exp}$_p: T_pM\to M$ at each point $w\in \tilde S$ is 
a $D_4^+$ Lagrangian singularity. 
\end{std}
The notion of $D_4^+$ Lagrangian singularity first appeared in the work of Arnold \cite{A}, where he
classified the ``simple Lagrangian singularities'' (see also \cite{AGV}). We shall give its precise description in \S{7}. Here we only note one consequence of the above result: The singularity
at each point of $\tilde S\subset \tilde K_{n-1}(p)$ is
a cone-edge, i.e., there
is a coordinate system $(w_1,\dots,w_n)$ on $T_{p}M$ around the point (represented by the origin)
such that $\tilde K_{n-1}(p)$ and $\tilde S$ are described as
\begin{gather*}
\tilde K_{n-1}(p):\quad w_1^2+w_2^2=w_3^2,\quad w_3\le 0\,,\\
\tilde S:\quad w_1=w_2=w_3=0\,.
\end{gather*}
and this cone-edge is connected to $\tilde K_{n-2}(p)$ as
\begin{equation*}
\tilde K_{n-2}(p):\quad w_1^2+w_2^2=w_3^2,\quad w_3\ge 0\,.
\end{equation*}
This illustrates the zeros of the smooth function 
$\det\,d(\text{Exp}_p)$ near $r_{n-1}(v)v$, where $v\in \partial C_{n-1}^+$.

Under the situation of the statement {\bf B} (2), we have the similar result for $K_{n-i}(p)$.
\begin{ste}[Theorem~\ref{thm:cusp}, Corollary~\ref{cor2}]
Suppose $r_{n-1}^2(v)$ is greater than $r_1(v)$ for any $v\in U_pM$. Then, defining
the map $\Phi_{n-i}:U_pM\to M$ by
\begin{equation*}
\Phi_{n-i}(v)=\text{\rm Exp}_p(r_{n-i}(v)v)\,,
\end{equation*} 
we have:
\begin{enumerate}
\item $\Phi_{n-i}$ is an immersion outside $C_{n-i}^-\cup C_{n-i}^+$.
\item The germ of $\Phi_{n-i}$ is a cuspidal edge at each interior point of $C_{n-i}^-$ and 
$C_{n-i}^+$; the restriction of $\Phi_{n-i}$ to the interior of $C_{n-i}^\pm$ is 
immersions to the edges of the vertices.
\item The germ of the map {\rm Exp}$_p: T_pM\to M$ at each point $r_{n-i}(v)v$, $v\in \partial C_{n-i}^\pm$, 
is a $D_4^+$ Lagrangian singularity and the restriction {\rm Exp}$_p|_{\partial C_{n-i}^\pm}$
is an immersion to the edge of vertices.
\end{enumerate}
\end{ste}

The present paper is partly a continuation of our
previous paper \cite{IK2}, where we studied the cut loci of
points on certain Liouville manifolds diffeomorphic to the sphere.
Each Liouville manifold which is considered here is,  as in \cite{IK2}, defined with 
$n+1$ constants $a_0>\dots>a_n>0$ 
and a positive function $A(\lambda)$ on $[a_n, a_0]$ ($A(\lambda)=\sqrt{\lambda}$ in the case of the ellipsoid).
This function $A(\lambda)$ is assumed to satisfy a certain monotonicity condition, which
is a bit stronger than the condition (4.1) posed in \cite{IK2}. We shall 
explain this condition in \S4.  In \S2 and \S3 
we give a brief summary of Liouville manifolds and the behavior of geodesics on them. Since they are almost the same as those in \cite{IK2}, we omit the proofs there. 

Under the condition given in \S4, we shall 
investigate the positions of zeros of Jacobi fields in detail in \S5 and 
describe the structure of the conjugate locus of a 
general point in \S6. In particular, we shall show in this section that
the major part of the singularities of the conjugate locus of a general point are cuspidal edges. In \S7 we shall investigate the remaining singularities,
which appear as the end points of the cuspidal edges and which are also
points of double conjugacy. We shall show that those are $D_4^+$ Lagrangian singularities.
Also, as an application of the 
results obtained in \S5, we shall illustrate there 
an interesting asymptotic nature of the distribution 
of zeros of Jacobi fields.

The authors are grateful to Shuichi Izumiya and Kentaro Saji for their helpful comments
on the $D_4^+$ Lagrangian singularity.

\subsection*{Preliminary remarks and notations}
In this paper the geodesics will be described in the Hamiltonian formalism. Therefore the geodesic flow is described in the cotangent bundle.
Let $M$ be a Riemannian manifold and $g$ its Riemannian 
metric.
By $\flat:TM\to T^*M$ we denote the bundle isomorphism
determined by $g$ (Legendre transformation). We also use the
symbol $\sharp=\flat^{-1}$.
The canonical 1-form on the cotangent bundle $T^*M$ is denoted by $\alpha$.
For a canonical coordinate system $(x,\xi)$ on $T^*M$ ($
x$ being a coordinate system on $M$), $\alpha$ is expressed as
$\sum_i\xi_idx_i$. Then the 2-form $d\alpha$ 
represents the standard
symplectic structure on $T^*M$.

Let $E$ be the function on $T^*M$ defined by 
\begin{equation*}
E(\lambda)=\frac12g(\sharp(\lambda),\sharp(\lambda))=
\frac12\sum_{i,j}g^{ij}(x)\xi_i\xi_j,\quad \lambda=(x,\xi)\in T^*M\,.
\end{equation*}
We call it the (kinetic) energy function of $M$.
For a function $F, H$ on $T^*M$, the Hamiltonian vector field 
$X_F$ and the Poisson bracket $\{F,H\}$ are defined by
\begin{equation*}
X_F=\sum_i\left(\frac{\partial F}{\partial \xi_i}\frac{\partial}
{\partial x_i}-\frac{\partial F}{\partial x_i}\frac{\partial}
{\partial \xi_i}\right)\, ,\qquad
\{F,H\}=X_FH\,.
\end{equation*}
Then $X_E$ generates the geodesic flow $\{\zeta_t\}_{t\in\R}$, i.e., each curve $\gamma(t)=
\pi(\zeta_t\lambda)$ $(\lambda\in T^*M)$ is a geodesic of the Riemannian manifold
$M$, where $\pi:T^*M\to M$ is the bundle projection. 
In this case we have $\flat(\dot\gamma(t))=\zeta_t\lambda$.
\section{Liouville manifolds}
Liouville manifold is, roughly speaking, a class of Riemannian manifold whose geodesic equations are ``integrated
in the same way as those of ellipsoids''.
The precise definition is as follows.
Let $M$ be a Riemannian manifold of dimension $n$
and let $\mathcal F$ be an $n$-dimensional vector space
of functions on the cotangent bundle $T^*M$.
Then the pair
$(M,\mathcal F)$ is called a Liouville manifold
if: i) each $F\in \mathcal F$ is
fiberwise a homogeneous quadratic polynomial; ii) those quadratic forms are
simultaneously normalizable on each fiber; iii) $\mathcal F$ is commutative with respect to the Poisson bracket; iv)
$\mathcal F$ contains the energy function $E$ (the Hamiltonian of the geodesic flow); and
v) $\{F|_{T^*_pM}\,|\,F\in\mathcal F\}$ is $n$-dimensional at some point $p\in M$.
For the generality of Liouville manifolds, we refer to \cite{Ki2}.

As in \cite{IK2}, we treat in this paper a subclass of ``compact Liouville
manifolds of rank one and type (A) (cf. \cite{Ki2})''.
An explanation of this subclass and the
geodesic equations on it were already given in
\cite{IK2}. We shall briefly illustrate it
in this and the next sections (without proof) for the sake of convenience.

Each Liouville manifold treated here is constructed
from $n+1$ constants $a_0>\cdots > a_{n}>0$ and a positive $C^\infty$
function $A(\lambda)$ on the closed interval $a_n\le \lambda\le a_0$. 
Let $\alpha_1,\dots,\alpha_n$ be positive numbers defined by
\begin{equation*}
\alpha_i=2\int_{a_i}^{a_{i-1}}\frac{A(\lambda)\ d\lambda}
{\sqrt{(-1)^i\prod_{j=0}^n (\lambda-a_j)}}\qquad
(i=1,\dots,n)\,.
\end{equation*}
Define the $C^\infty$ function $f_i$ on the circle $\R/\alpha_i\Z=
\{x_i\}$ $(1\le i\le n)$
by the conditions:
\begin{gather}\label{eq:fdiff}
\left(\frac{df_i}{dx_i}\right)^2=\frac{(-1)^i4\prod_{j=0}
^n (f_i-a_j)}{A(f_i)^2}\\
f_i(0)=a_i,\ f_i(\frac{\alpha_i}4)=a_{i-1},\quad
f_i(-x_i)=f_i(x_i)=f_i(\frac{\alpha_i}2-x_i)\ .
\end{gather}
Then the range of $f_i$ is $[a_i,a_{i-1}]$,
and $f_i$ actually has the period $\alpha_i/2$.  

Put
\begin{equation*}
R=\prod_{i=1}^n(\R/\alpha_i\Z)\ .
\end{equation*}
Let $\tau_i$ $(1\le i\le n-1)$ be the involutions
on the torus $R$ defined by
\begin{equation*}
\tau_i(x_1,\dots,x_n)=(x_1,\dots,x_{i-1},-x_i,
\frac{\alpha_{i+1} }2-x_{i+1},x_{i+2},\dots,x_n) \ ,
\end{equation*}
and let $G$ $(\simeq (\Z/2\Z)^{n-1})$ be the group of transformations generated by $\tau_1$, $\dots$,
$\tau_{n-1}$. Then the quotient space
$M=R/G$ is homeomorphic to the $n$-sphere, and 
moreover, $M$ has a unique differentiable structure so that the quotient map $R\to M$ is
of $C^\infty$ and the symmetric 2-form $g$ given by
\begin{equation}\label{eq:metric}
g=\sum_i(-1)^{n-i}\left(\prod_{l\ne i}(f_l(x_l)-f_i(x_i))\right)\,dx_i^2
\end{equation}
represents a $C^\infty$ Riemannian
metric on $M$. We regard $M$ as a
Riemannian manifold with this metric $g$. 
As a result, $M$ is diffeomorphic
to the $n$-sphere $S^n$.

Now, put
\begin{equation*}
b_{ij}(x_i)=
\begin{cases}
(-1)^i\prod_{\substack{1\le k\le n-1\\ k\ne j}}(f_i(x_i)-a_k)\quad (1\le j\le n-1)\\
(-1)^{i+1}\prod_{k=1}^{n-1}(f_i(x_i)-a_k)\qquad (j=n)
\end{cases}\ ,
\end{equation*}
and define functions $F_1$, $\dots$, $F_{n-1}$, $F_n=2E$ on the cotangent
bundle by
\begin{equation}\label{integrals}
\sum_{j=1}^n b_{ij}(x_i)F_j(x,\xi)=\xi_i^2\quad (1\le i\le n) ,
\end{equation}
where $\xi_i$ are the fiber coordinates with respect to
the base coordinates $(x_1,\dots,x_n)$. Then $F_i$ represent well-defined $C^\infty$
functions on $T^*M$. 

Computing the inverse matrix of $(b_{ij})$
explicitly, we have
\begin{align*}
2E=&\sum_{i=1}^n\frac{(-1)^{n-i} \xi^2_i}{\prod_{\substack{1\le l\le n\\l\ne i}}
(f_l(x_l)-f_i(x_i))}\\
F_j=&\frac1{\prod_{\substack{1\le k\le n-1\\k\ne j}}(a_k-a_j)}
\sum_{i=1}^n\frac{(-1)^{n-i}\prod_{\substack{1\le l\le n\\l\ne i}}(f_l(x_l)-a_j)}
{\prod_{\substack{1\le l\le n\\l\ne i}}(f_l(x_l)-f_i(x_i))}\ \xi^2_i\\
& (1\le j\le n-1)\ .
\end{align*}
Therefore $E$
is the energy function, i.e., the Hamiltonian of the associated geodesic flow of $M$.
From the formula (\ref{integrals}) one can easily see 
that
\begin{equation*}
\{F_i,F_j\}=0\qquad (1\le i,j\le n)\ ,
\end{equation*}
where $\{,\}$ denotes the Poisson bracket (see 
\cite[Prop.\,1.1.3]{Ki2}). Thus, denoted by $\mathcal F$ the vector space spanned by $F_1,\dots,
F_n$, the pair $(M, \mathcal F)$ becomes a
Liouville manifold.

The following proposition is obvious.
\begin{prop}\label{prop:isometry}
For each $i$, the map 
\begin{equation*}
x\mapsto (\dots,x_{i-1},-x_i,x_{i+1},\dots)\quad\text{or}\quad x
\mapsto (\dots,x_{i},\frac{\alpha_{i+1}}2-x_{i+1},x_{i+2},\dots)
\end{equation*}
defines an isometry of $M$ which preserves $F_j$ for any $j$. This map is the symmetry with respect to
$N_i$.
\end{prop}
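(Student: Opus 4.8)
The plan is to handle the two displayed maps together. Write $\sigma_i\colon x\mapsto(\dots,x_{i-1},-x_i,x_{i+1},\dots)$ and $\rho_i\colon x\mapsto(\dots,x_i,\frac{\alpha_{i+1}}2-x_{i+1},x_{i+2},\dots)$ for the first and second maps on the torus $R$. Since $\tau_i=\sigma_i\circ\rho_i$ and $\tau_i\in G$, the two maps induce the \emph{same} transformation on $M=R/G$, so it suffices to analyze $\sigma_i$ (the argument for $\rho_i$ is identical, using $f_{i+1}(\frac{\alpha_{i+1}}2-x_{i+1})=f_{i+1}(x_{i+1})$ in place of $f_i(-x_i)=f_i(x_i)$). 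First I would check that $\sigma_i$ descends to $M$, i.e. that it normalizes $G$: conjugating each generator $\tau_j$ by $\sigma_i$, using $-\alpha_i/2\equiv\alpha_i/2\pmod{\alpha_i}$ in the case $j=i-1$, one finds $\sigma_i\tau_j\sigma_i^{-1}=\tau_j\in G$ for all $j$. Thus $\sigma_i$ permutes the $G$-orbits and defines a map $M\to M$.

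The isometry property and the invariance of the $F_j$ are then direct consequences of the evenness $f_i(-x_i)=f_i(x_i)$. In the metric \eqref{eq:metric} every coefficient is a polynomial in the values $f_l(x_l)$, all of which are unchanged by $\sigma_i$ (for $l\ne i$ trivially, for $l=i$ by evenness), while $d(-x_i)^2=dx_i^2$; hence $\sigma_i^{\ast}g=g$. For the first integrals, the cotangent lift $\hat\sigma_i$ sends $(x_i,\xi_i)\mapsto(-x_i,-\xi_i)$ and fixes the remaining coordinates, so each $\xi_l^2$ is unchanged; likewise every coefficient $b_{lj}(x_l)$ depends only on $f_l(x_l)$ and is therefore unchanged. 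Evaluating the defining system \eqref{integrals} at $\hat\sigma_i(x,\xi)$ thus yields the same linear equations $\sum_j b_{lj}(x_l)\,F_j=\xi_l^2$ as at $(x,\xi)$; since the matrix $(b_{lj})$ is invertible, $F_j\circ\hat\sigma_i=F_j$ for every $j$.

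Finally I would identify the fixed-point set with $N_i$. The fixed points of $\sigma_i$ on $R$ are exactly those with $x_i\in\{0,\alpha_i/2\}$, where $f_i(x_i)=a_i$; in the notation of the introduction this is $\{\lambda_i=a_i\}$. Symmetrically, the fixed points of $\rho_i$ are those with $f_{i+1}(x_{i+1})=a_i$, i.e. $\{\lambda_{i+1}=a_i\}$. Since $\sigma_i$ and $\rho_i$ induce the same map on $M$, which is an isometric involution, its fixed-point set is the union $\{\lambda_i=a_i\ \text{or}\ \lambda_{i+1}=a_i\}=N_i$; being an isometric involution whose fixed locus is this hypersurface, the map is precisely the symmetry with respect to $N_i$.

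The only mildly non-routine point is the descent to $M$: one must verify that $\sigma_i$ (equivalently $\rho_i$) normalizes the group $G$, which amounts to checking the conjugation $\sigma_i\tau_j\sigma_i^{-1}$ in the three relevant cases $j=i-1$, $j=i$, and $j\notin\{i-1,i\}$. Everything else reduces to the symmetry relations $f_i(-x_i)=f_i(x_i)=f_i(\frac{\alpha_i}2-x_i)$ built into the definition \eqref{eq:fdiff} of the functions $f_i$, so the proposition is indeed the routine verification its statement claims.
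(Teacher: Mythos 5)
The paper does not actually prove this proposition at all --- it is introduced with the sentence ``The following proposition is obvious'' --- so there is no argument of the authors' to compare yours against; what you wrote is the routine verification they are alluding to, and it is correct. The two genuinely useful points in your write-up are the identity $\sigma_i\circ\rho_i=\tau_i\in G$, which shows that the two displayed maps induce one and the same transformation of $M=R/G$ (this is what the ``or'' in the statement means), and the conjugation check that $\sigma_i$ centralizes $G$ (using $-\alpha_i/2\equiv\alpha_i/2 \pmod{\alpha_i}$), which is what legitimizes the descent to $M$. Two small glosses deserve a word, though neither is a real gap. First, you verify $\sigma_i^{*}g=g$ and $F_j\circ\hat\sigma_i=F_j$ in the coordinates $(x,\xi)$, i.e.\ on $R$ and away from the branch locus $\bigcup_k J_k$ of the covering $R\to M$; to conclude that the induced map is a smooth isometry of $M$ one should either note that the invariance identities extend by continuity and density, or invoke the fact that a distance-preserving homeomorphism of a Riemannian manifold is automatically a smooth isometry. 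Second, your assertion that the fixed locus of the induced involution is \emph{exactly} $N_i$ claims more than you proved: a point $[x]\in M$ is fixed precisely when $\sigma_i(x)\in Gx$, so every composite $g\circ\sigma_i$ with $g\in G$ can contribute fixed points, and you only inspected $g=\mathrm{id}$ and $g=\tau_i$. Fortunately, equality is not needed for the conclusion: you do show that the induced map is an isometric involution fixing the totally geodesic hypersurface $N_i$ pointwise, and at a generic (non-branch) point of $N_i$ its differential fixes $TN_i$ and sends the normal vector $\partial/\partial x_i$ to its negative; since an isometry of a connected Riemannian manifold is determined by its $1$-jet at a single point, this already forces the map to be the symmetry with respect to $N_i$, whatever its full fixed-point set turns out to be.
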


As examples, if $A(\lambda)$ is a constant function, then
$M$ is the sphere of constant curvature. This case is
explained in detail in \cite[pp.71--74]{Ki2}.  If $A(\lambda)
=\sqrt{\lambda}$, then $M$ is isometric to the ellipsoid
$\sum_{i=0}^n{u_i^2}/{a_i}=1$. In this case, the system 
of functions
$(f_1(x_1),\dots,f_n(x_n))$ 
is nothing but the elliptic coordinate system (see \cite[p.261]{IK2}).

The manifold $M$ has some special submanifolds:  Put
\begin{gather*}
N_k=\{x\in M\ |\ f_k(x_k)=a_k \quad\text{or}\quad
f_{k+1}(x_{k+1})=a_k\}\quad(0\le k\le n) ,\\
J_k=\{x\in M\ |\ f_k(x_k)=
f_{k+1}(x_{k+1})=a_k\}\quad (1\le k\le n-1) .
\end{gather*}
Then we have, putting $(F_k)_p=F_k\vert_{T_p^*M}$,
\begin{prop}\label{prop:subm}
\begin{itemize}
\item[(1)] $J_k=\{p\in M\ |\ (F_k)_p=0\}$.
\item[(2)] $N_k=\{p\in M\ |\ \text{\rm rank }(F_k)_p\le 1\}$
\quad $(1\le k\le n-1)$.
\item[(3)] $\bigcup_k J_k$ is identical with the branch
locus of the covering $R\to M=R/G$.
\item[(4)] $N_k$ is a totally geodesic submanifold of codimension one \ $(0\le k\le n)$.
\item[(5)] $J_k\subset N_k$, and $J_k$ is diffeomorphic to
$S^{k-1}\times S^{n-k-1}$.
\end{itemize}
\end{prop}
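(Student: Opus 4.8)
The plan is to derive all five assertions from the algebra of the diagonal quadratic forms $(F_k)_p$, handling (1), (2) together, then (3), and finally the geometric consequences (4), (5). Fixing $p$ and writing $\mu_l=f_l(x_l)$, the explicit formula for $F_k$ exhibits $(F_k)_p$ as a diagonal form in the fiber coordinates, the coefficient of $\xi_i^2$ being a nonzero constant times
\[
\frac{(-1)^{n-i}\prod_{l\ne i}(\mu_l-a_k)}{\prod_{l\ne i}(\mu_l-\mu_i)}\,.
\]
On the open dense set where the $\mu_l$ are pairwise distinct the denominators are nonzero, so this coefficient vanishes exactly when some $l\ne i$ satisfies $\mu_l=a_k$. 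The key elementary observation is that, since $\mu_l$ ranges over $[a_l,a_{l-1}]$ and the $a_j$ are strictly decreasing, the equation $\mu_l=a_k$ can hold only for $l\in\{k,k+1\}$, the shared endpoint of two consecutive ranges. Hence $\operatorname{rank}(F_k)_p$ equals the number of indices $i$ for which no $l\ne i$ has $\mu_l=a_k$, and a short case analysis yields rank $n$ when neither $\mu_k$ nor $\mu_{k+1}$ equals $a_k$, rank $1$ when exactly one does, and rank $0$ when both do. This is precisely statements (2) and (1).

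The delicate point, and the one I expect to be the main obstacle, is the passage to the branch locus $\bigcup_kJ_k$, where $\mu_k=\mu_{k+1}=a_k$ and the displayed coefficient becomes $0/0$; there $(x,\xi)$ is not a chart on $M$, since $R\to M$ folds along $x_k$ and $x_{k+1}$, so one cannot read off the fiber-Hessian of $F_k$ naively. I would handle this by computing in genuine smooth coordinates on $M$ near a point of $J_k$, equivalently by taking the limit of the coefficients along the quadratic vanishing $\mu_k-a_k\sim x_k^2$ and $a_k-\mu_{k+1}\sim(x_{k+1}-\alpha_{k+1}/4)^2$ forced by $df_i/dx_i=0$ at the endpoints, and checking that the smooth extension of $(F_k)_p$ vanishes identically on $J_k$; this structural fact can also be quoted from \cite{Ki2}.

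For (3) I would identify the branch locus with the set of points of $R$ carrying a nontrivial $G$-stabilizer. As $G$ is generated by the commuting involutions $\tau_i$, it suffices to locate their fixed loci: $\tau_i$ fixes $x$ iff $x_i\in\{0,\alpha_i/2\}$ and $x_{i+1}\in\{\alpha_{i+1}/4,3\alpha_{i+1}/4\}$, i.e.\ iff $f_i(x_i)=a_i$ and $f_{i+1}(x_{i+1})=a_i$, which projects exactly onto $J_i$; fixed loci of products $\tau_i\tau_j\cdots$ lie in intersections and hence inside $\bigcup_kJ_k$, giving the claimed equality.

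Statement (4) I would deduce from Proposition~\ref{prop:isometry}: the two reflections attached to the index $k$ differ by $\tau_k\in G$, so they descend to a single isometric involution $\sigma_k$ of $M$ whose fixed-point set is $N_k$; a fixed-point set of an isometry is totally geodesic, and it has codimension one because each reflection has codimension-one fixed locus in $R$ (the endpoint cases $k=0,n$ being covered by the single boundary reflections of $x_1$ and $x_n$). The inclusion $J_k\subset N_k$ in (5) is immediate from the definitions. For the diffeomorphism type I would freeze $x_k,x_{k+1}$ at their branch values and note that the remaining coordinates split into an upper block $x_1,\dots,x_{k-1}$ and a lower block $x_{k+2},\dots,x_n$ whose ranges no longer overlap; the restricted $G$-action and the metric decouple accordingly, so $J_k$ becomes a Riemannian product of two lower-dimensional manifolds built by the same $R/G$ construction, each therefore diffeomorphic to a sphere, of dimensions $k-1$ and $n-k-1$. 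Verifying that this product quotient really yields $S^{k-1}\times S^{n-k-1}$, namely tracking how the four frozen branch points are glued by $\tau_{k-1},\tau_k,\tau_{k+1}$, is the second place where care is needed, and can again be reduced to the sphere-quotient result already used to build $M$.
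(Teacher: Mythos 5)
The paper contains no proof of this proposition for you to be compared against: it is stated as background material quoted ``without proof'' from \cite{IK2} and \cite{Ki2}, so your argument has to stand on its own. Its skeleton is sound: the diagonal-coefficient analysis for (1)--(2) away from the branch locus, the reduction of (3) to fixed loci of the $\tau_i$, the deduction of (4) from Proposition~\ref{prop:isometry} together with the fact that fixed-point sets of isometries are totally geodesic, and the splitting of $J_k$ into a product of two lower-dimensional copies of the $R/G$ construction for (5) are all correct as far as they are carried out.

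Two steps, however, have genuine gaps. First, the branch-locus computation that you flag is needed in more places than you deploy it: you invoke it only to get $(F_k)_p=0$ on $J_k$, but statement (2) also requires the rank of $(F_k)_p$ at points of $J_j$ with $j\ne k$, in both directions (rank $\ge 2$ on $J_j\setminus N_k$, rank $\le 1$ on $J_j\cap N_k$). Continuity cannot substitute here, since rank may drop in a limit, and the diagonal coefficients that survive at such a point (those with $i\ne j,j+1$, whose coordinates remain smooth) give only $n-2$ nonzero entries --- for $n=3$ that is a single entry, short of rank $2$. The repair is the smoothed-coordinate computation you sketch, pushed further: writing $w_1+iw_2=(y+iz)^2$ with $y,z$ the displacements of $x_j,x_{j+1}$ from their branch values, and $(\eta_1,\eta_2)$ the fiber coordinates dual to $(w_1,w_2)$, one has $\xi_j=2(y\eta_1+z\eta_2)$, $\xi_{j+1}=2(-z\eta_1+y\eta_2)$; since $c_j,c_{j+1}\sim\kappa/(y^2+z^2)$ with a common factor $\kappa$ that is a nonzero multiple of $\prod_{l\ne j,j+1}\bigl(f_l(x_l)-a_k\bigr)$, the singular pair $c_j\xi_j^2+c_{j+1}\xi_{j+1}^2$ tends to $4\kappa(\eta_1^2+\eta_2^2)$. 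This block has rank $2$ exactly off $N_k$ and vanishes exactly on $N_k$, which closes (2); the same computation for $j=k$, where the numerators carry additional vanishing factors, forces the block to zero and yields (1). (Deferring this to \cite{Ki2} would be consistent with the paper's own practice, but then the rank statement at \emph{all} branch points must be quoted, not just the vanishing on $J_k$.) Second, your justification of (3) rests on the principle that fixed loci of products of commuting involutions lie in intersections of the individual fixed loci; this is false in general (take $(x,y)\mapsto(-x,-y)$ and $(x,y)\mapsto(-x,y)$ on $\R^2$, whose product fixes the whole $x$-axis). Here it must be verified directly: any product containing two adjacent generators $\tau_i,\tau_{i+1}$ translates $x_{i+1}$ by $\alpha_{i+1}/2$ and is therefore fixed-point free, while a product of pairwise non-adjacent generators reflects each affected coordinate exactly once, so its fixed locus is precisely the intersection of the individual ones. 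With these two repairs your proposal becomes a complete proof.
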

In the case of the ellipsoid, $N_k$ is equal to the intersection with the cartesian hyperplane $u_k=0$,
and the set $\bigcup_k J_k$ is identical with the locus where some principal curvature has
multiplicity two.
%%%
%%%
\section{Geodesic equations}
Suppose that $c=(c_1,\dots,c_{n-1},1)$ is a
regular value of the map
\begin{equation*}
\boldsymbol F=(F_1,\dots, F_{n-1},2E): T^*M\to \R^n\ ,
\end{equation*}
then its inverse image is a disjoint union of tori, and
the vector fields $X_{F_j}$, $X_E$ on it are
mutually commutative and linearly independent
everywhere. Here $X_f$ denotes the Hamiltonian vector field
determined by a function $f$; 
\begin{equation*}
X_f=\sum_i\left(\frac{\partial f}{\partial \xi_i}
\frac{\partial }{\partial x_i}-
\frac{\partial f}{\partial x_i}
\frac{\partial }{\partial \xi_i}\right)\ .
\end{equation*}

Let $\omega_j$ $(1\le j\le n)$ be
the dual $1$-forms of $\{\pi_*X_{F_j}\}$, where
$\pi:T^*M\to M$ is the bundle projection. Then, by 
(\ref{integrals}) we have
\begin{equation*}
\omega_l=\sum_i\frac{b_{il}}{2\xi_i}\ dx_i\qquad
(1\le l\le n).
\end{equation*}
They are closed $1$-forms, and the geodesic
orbits are determined by
\begin{equation}\label{eq:geod0}
\omega_l=0 \qquad (1\le l\le n-1),
\end{equation}
and the length parameter $t$ on an orbit
is given by
\begin{equation}\label{eq:length}
dt=2\omega_n.
\end{equation}
Thus the geodesics are described with the integration of closed 1-forms
which contains $c_i$'s as parameters.

To observe the behavior of geodesics it is more convenient to use the
constants $b_1,\dots,b_{n-1}$ defined below than using $c_i$'s as parameters:
Put
\begin{equation*}
\Theta(\lambda)=\sum_{j=1}^{n-1}\left(
\prod_{\substack{1\le k\le n-1\\ k\ne j}}
(\lambda-a_k)\right)\,c_j-\prod_{k=1}^{n-1}(\lambda-a_k)\ .
\end{equation*}
If a unit covector $(x,\xi)\in U^*M$ with every $\xi_i\ne 0$ lies on $\boldsymbol F^{-1}(c)$, then we have, by (\ref{integrals}),
\begin{equation*}
(-1)^i\Theta(f_i(x_i))=\xi_i^2> 0\ .
\end{equation*}
Therefore the algebraic equation $\Theta(\lambda)=0$ has $n-1$ real roots in this case.
It thus follows by continuity that for each $c\in \boldsymbol F(U^*M)$ there are
constants $b_1\ge \dots\ge b_{n-1}$ such that
\begin{gather*}
\Theta(\lambda)=-\prod_{i=1}^{n-1}(\lambda-b_i)\ ,\\
f_i(x_i)\ge b_i\ge f_{i+1}(x_{i+1})\qquad(1\le i\le n-1)\ .
\end{gather*}

Note that the range of $b_i$'s are given by
\begin{equation}\label{brange}
a_{i+1}\le b_i\le a_{i-1}\ ,\qquad b_{i+1}\le b_i\ .
\end{equation}
In fact, it can be verified that for each $b_i$'s satisfying \eqref{brange}
there is a unit covector $\mu\in U^*M$ such that
$F_i(\mu)=c_i$ ($1\le i\le n-1$), where
\begin{equation}\label{b-c}
c_i=\frac{-\prod_{l=1}^{n-1}(a_i-b_l)}
{\prod_{\substack{1\le k\le n-1\\ k\ne i}}
(a_i-a_k)}
\qquad (1\le i\le n-1)\ .
\end{equation}
Note also that if
$b_1$, $\dots$, $b_{n-1}$ satisfy 
\begin{equation}\label{b-cond}
a_{i+1}< b_i< a_{i-1}\ ,\quad b_i\ne a_i,\quad 
b_{i+1}< b_i\qquad \text{for any }i
\end{equation}
then the corresponding $c=(c_1,\dots,c_{n-1},1)$ is a regular
value of $\boldsymbol F$.

Now, put
\begin{align*}
&a_i^+=\max\{a_i,b_i\}\quad (1\le i\le n-1),
\quad a_n^+=a_n\\
&a_i^-=\min\{a_i,b_i\}\quad (1\le i\le n-1),
\quad a_0^-=a_0\ .
\end{align*}
If $b_1,\dots,b_{n-1}$ satisfy the condition (\ref{b-cond}), then
the $\pi$-image of a connected component of
$\boldsymbol F^{-1}(c)$ (a Lagrange torus) is of 
the form
\begin{equation*}
L_1\times\dots \times L_n\subset M\ ,
\end{equation*}
where each $L_i$ is a connected component of the inverse
image of $[a_i^+,a_{i-1}^-]$ by the map
\begin{equation*}
f_i:\R/\alpha_i\Z\to [a_i,a_{i-1}]\ .
\end{equation*}
(Precisely speaking, $L_1\times\dots\times 
L_n\subset R$; but it is injectively mapped into $M$
by the branched covering $R\to M$.)
Along a corresponding geodesic, the coordinate function $x_i(t)$ moves on $L_i$ and $f_i(x_i(t))\in [a_i^+,a_{i-1}^-]$.

After all, the equations of geodesic orbits
\begin{equation*}
\omega_l=0\qquad (1\le l\le n-1)
\end{equation*}
are described as
\begin{equation*}
\sum_{i=1}^n\frac{\epsilon_i(-1)^i\prod_{\substack{1\le k\le n-1\\ k\ne l}}(f_i(x_i)-a_k)\ dx_i}
{\sqrt{(-1)^{i-1}\prod_{k=1}^{n-1}(f_i(x_i)-b_k)}}
=0\qquad (1\le l\le n-1)\ ,
\end{equation*}
where $\epsilon_i=\text{sign}\,(\xi_i)=\text{sign}\,(dx_i/dt)$.
This system of equations is equivalent to
\begin{equation}\label{geodeq}
\sum_{i=1}^n\frac{\epsilon_i(-1)^iG(f_i)\ dx_i}
{\sqrt{(-1)^{i-1}\prod_{k=1}^{n-1}(f_i-b_k)}}
=0
\end{equation}
for any polynomial $G(\lambda)$ of degree $\le n-2$.
Thus by \eqref{geodeq} we have
\begin{equation}\label{geodeqint}
\sum_{i=1}^n\int_s^{t}\frac{(-1)^iG(f_i)}
{\sqrt{(-1)^{i-1}\prod_{k=1}^{n-1}(f_i-b_k)
}}\ 
\left|\frac{dx_i(t)}{dt}\right|
\ dt=0
\end{equation}
for any period $[s, t]$, where $f_i=f_i(x_i(t))$.
By (\ref{eq:fdiff}) those equations are also described as
\begin{equation}\label{geodeqintf}
\sum_{i=1}^n\int_s^{t}\frac{(-1)^iG(f_i)A(f_i)}
{\sqrt{-\prod_{k=1}^{n-1}(f_i-b_k)
\cdot\prod_{k=0}^n(f_i-a_k)}}\ 
\left|\frac{df_i(x_i(t))}{dt}\right|
\ dt=0\ .
\end{equation}
%By using the variables $\sigma_i$ defined by}
%\begin{equation*}
%\sigma_{i}(t)=\int_0^t\left|\frac{df_i(x_i(t))}{dt}\right|
%\ dt\ ,
%\end{equation*}
%
%this formula
%is rewritten as
%\begin{equation}\label{eq:geodsigma1}
%\sum_{i=1}^n\int_{\sigma_i(s)}^{\sigma_i(t)}\frac{(-1)^iG(f_i)A(f_i)}
%{\sqrt{-\prod_{k=1}^{n-1}(f_i-b_k)
%\cdot\prod_{k=0}^n(f_i-a_k)}}\ 
%\ d\sigma_i=0\ .
%$\end{equation}
%
%Here, $f_i$ is regarded as a function of $\sigma_i$, i.e.,
%putting $\phi_i(t)
%=a_i+|t|$ for $|t|\le a_{i-1}-a_i$ and extending it to $\R$
%as a periodic function with the period $2(a_{i-1}-a_i)$, 
%we have
%\begin{equation*}
%f_i=
%\phi_i(\sigma_i+\epsilon_i(f_i(x_i(0))-a_i))\ ,
%\end{equation*}
%where $\epsilon_i=\pm 1$ is the sign of $df_i(x_i(t))/dt$ at 
%$t=0$.
Also, integrating $dt=2\omega_n=\sum_i(b_{in}/\xi_i)dx_i$, we have 
\begin{equation}\label{eq:geodlength}
\sum_{i=1}^n\int_{s}^{t}
\frac{(-1)^{i+1}\,\tilde G(f_i)}
{\sqrt{(-1)^{i-1}\prod_{k=1}^{n-1}(f_i-b_k)
}}\ 
\left|\frac{dx_i(t)}{dt}\right|
\ dt=t-s\ ,
\end{equation}
where $\tilde G(\lambda)$ is any monic polynomial in 
$\lambda$ of degree $n-1$.

Finally, let us illustrate the behavior of each coordinate function $x_i(t)$ along a geodesic $\gamma(t)=(x_1(t),\dots,x_n(t))$. Since
\begin{equation}\label{xprime}
x_i'(t)=\frac{\partial E}{\partial \xi_i}=\frac{\pm\sqrt{(-1)^{i-1}\prod_{k=1}^{n-1}(f_i(x_i)-b_k)}}{(-1)^{n-i}\prod_{l\ne i}(f_l(x_l)-f_i(x_i))}\,,
\end{equation}
and since
\begin{equation*}
f_i-b_{i+1}\ge f_i-f_{i+1}\ge 0,\quad
b_{i-2}-f_i\ge f_{i-1}-f_i\ge 0,
\end{equation*}
we have
\begin{equation*}
\left|x_i'(t)\right|\ge c\sqrt{(f_i(x_i))-b_i)(b_{i-1}-f_i(x_i))}
\end{equation*}
for some constant $c>0$ (at least, outside the branch locus of the covering $R\to M$). 
Therefore, if $a_i^+<f_i(x_i(t))<a_{i-1}^-$, then it reaches the boundary at a finite time $t=t_0$. And if $f_i(x_i(t_0))$
is equal to $b_i(\ne a_i)$ or $b_{i-1}(\ne a_{i-1})$, then $x'_i(t_0)=0$, $x''_i(t_0)\ne 0$, 
and if $f_i(x_i(t_0))$ is equal to $a_i(\ne b_i)$
or $a_{i-1} (\ne b_{i-1})$, then $x'_i(t_0)\ne 0$;
and $(d/dt)f_i(x_i(t))$ changes the sign when
$t$ passes $t_0$ in each case. Thus, if $a_i^-<a_i^+<a_{i-1}^-<a_{i-1}^+$, then $x_i(t)$ oscillates on $L_i$ 
if $L_i$ is an interval, or $x_i(t)$ moves monotonously if $L_i$ is the whole circle, and
the function $f_i(x_i(t))$ oscillates on the 
interval $[a_i^+,a_{i-1}^-]$.

If $a_{i+1}^+<a_i^-=a_i^+<a_{i-1}^-$ and if $x_i(t)$ and $x_{i+1}(t)$ satisfy 
\begin{equation*}
a_{i+1}^+<f_{i+1}(x_{i+1}(t))<a_i^-=a_i^+<f_i(x_i(t))<a_{i-1}^-,\quad \frac{d}{dt}f_i(x_i(t))<0
\end{equation*}
at some $t$, then $x_i(t)$ reaches the boundary point $\in f_i^{-1}(a_i^+)$ at a finite time $t_0$ by the same reason as above. Moreover, at that time, 
\begin{equation*}
f_{i+1}(x_{i+1}(t_0))=a_i^-=a_i^+=f_i(x_i(t_0))\,,
\end{equation*}
and this point $\gamma(t_0)$ is a branch point of the covering $R\to M$. In fact, observe the formula~\eqref{geodeqintf} for $G(\lambda)=\prod_{k\ne i}(\lambda-b_k)$ and take the limit $t\to t_0-0$ there. Since the $i$-th summand tends to $\infty$, if other $b_j$ and $a_k$ are all distinct, then the $(i+1)$-st summand must tend to
$\infty$ and thus $f_{i+1}(x_{i+1}(t_0))=a_i^\pm$.
In this case the geodesic passes through a point on $J_i$ and intersects $N_i$ transversally at
the point. For the general case we take a sequence of geodesics satisfying the above condition and obtain the same result.
Since $\gamma(t_0)$ is a branch point, there are two possible
ways of description for $x_i(t)$ after passing the time
$t_0$; but anyway, the function $f_i(x_i(t))$ turns the direction at 
$t=t_0$ and $(d/dt)(f_i(x_i(t))>0$ for $t>t_0$ near $t_0$. 
%The case where $a_i^+<a_{i-1}^-=a_{i-1}^+<a_{i-2}^-$ is similar.

If $a_i^+=a_{i-1}^-$, then there are several possible cases. If $a_i<b_i=b_{i-1}<a_{i-1}$, then $x_i(t)$ is constant along the geodesic. This case will be investigated in detail
in \S7. If $a_i=b_{i-1}$ (resp. if $b_i=a_{i-1}$), then the function $x_i(t)$ is again
constant, but in this case the geodesic is totally
contained in the totally geodesic submanifold $N_i$ (resp. $N_{i-1}$), and such type of geodesic is
not considered in this paper.

\section{A monotonicity condition for Liouville manifolds}
In this section we first introduce a monotonicity 
condition on
the positive function $A(\lambda)$, under which the structures 
of the conjugate loci on the corresponding Liouville manifolds 
become simple:
\begin{equation}\label{newcond}
\begin{gathered}
\text{The function }\tilde A(\lambda)=(\lambda-a_n)A(\lambda)
\text{ satisfies }\\
(-1)^{k}\tilde A^{(k)}(\lambda)>0\quad \text{on }
[a_n,a_0]\qquad (2\le k\le n),
\end{gathered}
\end{equation}
where $\tilde A^{(k)}(\lambda)$ denotes the $k$-th derivative of $\tilde A(\lambda)$ in $\lambda$. 
The following proposition indicates that this condition 
is stronger than the condition (4.1) in \cite{IK2},
which is:
\begin{equation}\label{eq:cut41}
(-1)^kA^{(k)}(\lambda)<0\quad \text{on }
[a_n,a_0]\quad (1\le k\le n-1)\ .
\end{equation}

\begin{prop}\label{prop:newcond}
If a positive function $A(\lambda)$ on $[a_n,a_0]$ satisfies
the condition (\ref{newcond}), then it also satisfies the
condition (\ref{eq:cut41}).
\end{prop}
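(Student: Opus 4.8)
The plan is to reconstruct $A$ from $\tilde A=(\lambda-a_n)A$ by a Hadamard-type integral formula and then differentiate under the integral sign, so that the sign hypotheses on the derivatives of $\tilde A$ in \eqref{newcond} translate directly into the sign information for the derivatives of $A$ demanded by \eqref{eq:cut41}. The crucial observation is that $\tilde A(a_n)=0$, which lets me write, for $\lambda\in[a_n,a_0]$,
\begin{equation*}
A(\lambda)=\frac{\tilde A(\lambda)-\tilde A(a_n)}{\lambda-a_n}=\int_0^1 \tilde A'\bigl(a_n+t(\lambda-a_n)\bigr)\,dt\,,
\end{equation*}
the second equality being the fundamental theorem of calculus after the substitution $u=a_n+t(\lambda-a_n)$. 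Since $A$ is $C^\infty$ on $[a_n,a_0]$, this representation is valid including at the endpoint $\lambda=a_n$, where the factor $1/(\lambda-a_n)$ would otherwise be troublesome.

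Next I would differentiate this identity $k$ times in $\lambda$ under the integral sign, which is legitimate because the integrand is smooth in both variables on a compact domain. Each differentiation brings down a factor $t$ and raises the order of $\tilde A$ by one, giving
\begin{equation*}
A^{(k)}(\lambda)=\int_0^1 t^{k}\,\tilde A^{(k+1)}\bigl(a_n+t(\lambda-a_n)\bigr)\,dt\qquad(1\le k\le n-1)\,.
\end{equation*}
Here the point $a_n+t(\lambda-a_n)$ lies in $[a_n,\lambda]\subseteq[a_n,a_0]$ for all $t\in[0,1]$, so every value of $\tilde A^{(k+1)}$ that appears is controlled by the hypothesis.

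Finally I would read off the signs. For $1\le k\le n-1$ the index $k+1$ satisfies $2\le k+1\le n$, so \eqref{newcond} applies and gives $(-1)^{k+1}\tilde A^{(k+1)}>0$ throughout $[a_n,a_0]$; equivalently $(-1)^{k}\tilde A^{(k+1)}<0$ there. Multiplying the displayed formula by $(-1)^k$ and using $t^{k}\ge 0$ then makes the integrand strictly negative for $t\in(0,1]$, whence $(-1)^kA^{(k)}(\lambda)<0$ on $[a_n,a_0]$. This is precisely \eqref{eq:cut41}, and the proposition follows.

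I do not expect a serious obstacle: once the integral representation is in place, the remainder is a routine sign count. The only point needing care is the behavior at the left endpoint $\lambda=a_n$, where $\tilde A$ vanishes; the Hadamard-type formula is designed exactly to absorb this apparent singularity, and the smoothness of $A$ on the closed interval guarantees that differentiation under the integral sign is justified. One could alternatively try a downward induction on $k$ based on the Leibniz identity $\tilde A^{(k)}=(\lambda-a_n)A^{(k)}+kA^{(k-1)}$, from which $(-1)^{k}A^{(k)}\le 0$ would propagate to $(-1)^{k-1}A^{(k-1)}<0$; but that scheme still requires a separate argument for the top-order term $A^{(n-1)}$ (whose control again reduces to the integral formula), so the integral approach is cleaner and I would prefer it.
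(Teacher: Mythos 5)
Your proof is correct and is essentially identical to the paper's own argument: the same Hadamard-type representation $A(\lambda)=\int_0^1\tilde A'\bigl(t\lambda+(1-t)a_n\bigr)\,dt$, the same differentiation under the integral sign yielding $A^{(k)}(\lambda)=\int_0^1 t^k\,\tilde A^{(k+1)}\bigl(t\lambda+(1-t)a_n\bigr)\,dt$, followed by the same sign count. The only difference is that you spell out the final sign bookkeeping that the paper leaves as "immediate."
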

\begin{proof}
Since $A(\lambda)$ is described in the form
\begin{equation*}
A(\lambda)=\frac{\tilde A(\lambda)-\tilde A(a_n)}{\lambda-a_n}
=\int_0^1\tilde A'(t\lambda+(1-t)a_n)\,dt\ ,
\end{equation*}
we have
\begin{equation*}
A^{(k)}(\lambda)
=\int_0^1t^k\,\tilde A^{(k+1)}(t\lambda+(1-t)a_n)\,dt
\qquad (1\le k\le n-1)\ .
\end{equation*}
The lemma immediately follows from this formula.
\end{proof}
\begin{remark}
If a positive function $A(\lambda)$ satisfies the condition
(\ref{newcond}), then $\tilde A'(\lambda)$ is positive on $[a_n,a_0]$.
In fact, $\tilde A'(\lambda)=A(\lambda)+(\lambda-a_n)A'(\lambda)$
and $A'(\lambda)>0$ by Proposition \ref{prop:newcond}.
\end{remark}
It is easily seen that $A(\lambda)=\sqrt{\lambda}$, i.e.,
the case of  the ellipsoid $\sum_iu_i^2/a_i=1$, satisfies the 
condition (\ref{newcond}).
{\it From now on, we shall always assume that the condition 
(\ref{newcond}) is satisfied.} 
Added to Proposition 4.1 in \cite{IK2},
we shall prove a similar proposition below. To do so, we need two lemmas; the first one
being the same as \cite[Lemmas 4.2]{IK2},
%and the second is similar to \cite[Lemma 4.3]{IK2}
we shall omit the proof. For the two lemmas we assume $b_1,\dots,b_{n-1}$ and 
$a_0,\dots,a_n$ are all distinct.
\begin{lemma}\label{lemma:flat}
\begin{equation*}
\sum_{i=1}^n\int_{a_i^+}^{a_{i-1}^-}
\frac{(-1)^{i}G(\lambda)\ d\lambda}
{\sqrt{-\prod_{k=1}^{n-1}(\lambda-b_k)
\cdot\prod_{k=0}^n(\lambda-a_k)}}
=0
\end{equation*}
for any polynomial $G(\lambda)$ of degree $\le n-2$.
\end{lemma}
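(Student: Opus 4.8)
The plan is to read the sum as a relation among the periods of a holomorphic differential on the hyperelliptic curve attached to the integrand, and to evaluate it by a residue‑at‑infinity (contour deformation) argument. Set $P(\lambda)=-\prod_{k=1}^{n-1}(\lambda-b_k)\prod_{k=0}^{n}(\lambda-a_k)$, a polynomial of degree $2n$ whose $2n$ roots are real and, by hypothesis, distinct. Using the sign information already recorded in Section~3 — namely that on $[a_i^+,a_{i-1}^-]$ one has $(-1)^{i-1}\prod_k(\lambda-b_k)>0$ and $(-1)^i\prod_k(\lambda-a_k)>0$ — I would first check that $P(\lambda)>0$ precisely on the $n$ disjoint ``bands'' $[a_i^+,a_{i-1}^-]$, that these exhaust the intervals between consecutive roots on which $P$ is positive, and that $P<0$ on the complementary gaps and for $|\lambda|$ large (since the leading coefficient of $P$ is $-1$).

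Next I would fix a single-valued branch of $w(\lambda)=\sqrt{P(\lambda)}$ on $\C\setminus\bigcup_i[a_i^+,a_{i-1}^-]$, placing the branch cuts \emph{along the bands}; this pairs up all $2n$ branch points, so $w$ is holomorphic off the cuts and its two boundary values satisfy $w_-=-w_+$ across each band. The differential $\eta=G(\lambda)\,d\lambda/w(\lambda)$ is then single-valued and holomorphic in the cut plane. The decisive quantitative input is the degree bound: since $\deg G\le n-2$ and $\deg P=2n$, we have $|\eta|=O(|\lambda|^{-2})\,|d\lambda|$ as $\lambda\to\infty$, so $\oint_{|\lambda|=R}\eta\to 0$ as $R\to\infty$. (This is exactly the step that forces the hypothesis $\deg G\le n-2$: for $\deg G=n-1$ there would be a nonzero residue at infinity.)

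I would then deform the large circle inward. As $\eta$ is holomorphic in the cut plane, Cauchy's theorem gives $\oint_{|\lambda|=R}\eta=\sum_i\oint_{C_i}\eta$, where $C_i$ is a small positively oriented loop hugging the band $[a_i^+,a_{i-1}^-]$; there are no poles between, so these are the only contributions. Because $w_-=-w_+$, each loop collapses to twice a real integral across its band, $\oint_{C_i}\eta=-2\int_{a_i^+}^{a_{i-1}^-}G(\lambda)\,d\lambda/w_+(\lambda)$, and letting $R\to\infty$ yields $\sum_i\int_{a_i^+}^{a_{i-1}^-}G(\lambda)\,d\lambda/w_+(\lambda)=0$. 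It remains only to identify $w_+$ on each band with $\pm$ the positive real root $\sqrt{P(\lambda)}$ occurring in the statement.

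The main obstacle — and the only place that needs genuine care — is tracking this branch to produce the correct sign. Evaluating $\arg\prod_j(\lambda-p_j)$ as $\lambda$ approaches a band from the upper half-plane, each root lying to the right of $\lambda$ contributes a factor $\pi$; passing from one band up to the next, the count of such roots drops by two, so $\arg w_+=\tfrac12\arg P$ changes by $\pi$ and $w_+$ flips sign. Consequently $w_+=(-1)^i\sqrt{P(\lambda)}$ up to a single global sign, the index $i$ matching the labeling because $[a_1^+,a_0^-]$ is the topmost band and $[a_n^+,a_{n-1}^-]$ the bottommost. Substituting this alternation turns $1/w_+$ into $(-1)^i/\sqrt{P}$, which is exactly the factor $(-1)^i$ in the statement, and the identity follows. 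I would verify the orientation convention and the band-to-index correspondence (the possible off-by-one and the overall sign) on the cases $n=2,3$; once those are pinned down, the remainder is routine. Equivalently, one may phrase the whole computation as the single homology relation $\sum_i(\text{band cycle})_i\sim 0$ among the $n$ $A$-cycles of the genus-$(n-1)$ hyperelliptic curve $y^2=P(\lambda)$, along which every holomorphic differential has vanishing total period.
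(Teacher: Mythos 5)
Your argument is correct, but note that there is nothing in this paper to compare it against: the authors explicitly omit the proof of this lemma, deferring to \cite[Lemma 4.2]{IK2} (the statement is a standard period relation, and the contour/period argument you give is the classical way to prove it). Your outline is sound at every step that matters: the sign count shows $-\prod_k(\lambda-b_k)\prod_k(\lambda-a_k)$ is positive exactly on the $n$ bands $[a_i^+,a_{i-1}^-]$ and negative on the gaps and near $\pm\infty$; cutting along the bands pairs all $2n$ (distinct, by the standing hypothesis) branch points, so $w=\sqrt{P}$ is single-valued off the cuts with $w_-=-w_+$; the bound $\deg G\le n-2$ makes $G\,d\lambda/w=O(|\lambda|^{-2})\,|d\lambda|$, killing the large circle; and collapsing the loops onto the cuts produces twice the band integrals. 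Two points you flag as needing care are in fact harmless. First, the global sign and any off-by-one in the band-to-index correspondence are immaterial: since the asserted identity has right-hand side $0$, all that is needed is that $w_+$ alternates in sign from one band to the next, and your root-counting argument (crossing the two roots $a_{i-1}^\pm$ changes $\arg P$ by $2\pi$, hence $\arg w_+$ by $\pi$) establishes exactly that, so the $n=2,3$ spot-checks are unnecessary. Second, the collapse of each loop $C_i$ onto its cut should be accompanied by the one-line remark that the integrand is $O(|\lambda-e|^{-1/2})$ at each endpoint $e$, so the contributions of the small circles around the endpoints vanish and the improper band integrals converge; this is routine but is the only analytic justification the deformation step requires.
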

\begin{lemma}\label{lemma:cond}
Let $J$ be any subset of $\{1,\dots,n-1\}$, and
let $B(\lambda)$ be the function defined by
\begin{equation}\label{eq:B}
\frac{A(\lambda)(\lambda-a_n)}{\prod_{j\in J}(\lambda-b_j)}=
\sum_{j\in J} \frac{e_j}{\lambda-b_j}+B(\lambda),
\quad e_j=\frac{A(b_j)(b_j-a_n)}{\prod_{\substack{l\in J\\
l\ne j}}(b_j-b_l)}.
\end{equation}
Then:
\begin{equation}\label{Brep}
B(\lambda)=\int_{D_k}\tilde A^{(k)}\left(\left(1-\sum_{l=1}^ks_l\right)\lambda+s_kb_{i_k}+\dots+s_1b_{i_1}\right)
ds_1\dots ds_k\,,
\end{equation}
where $J=\{i_1,\dots,i_k\}$ and
\begin{equation*}
D_k=\{(s_1,\dots,s_k)\in\R^k\,|\,s_i\ge 0\,(1\le i\le k),\,\sum_{i=1}^ks_i\le 1\}\,.
\end{equation*}
In particular, $B(\lambda)$ satisfies $(-1)^{\# J}B(\lambda)>0$ for $a_n\le \lambda\le a_0$ if $\# J\ge 2$.
\end{lemma}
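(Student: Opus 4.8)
The plan is to prove the representation formula \eqref{Brep} by induction on $k=\#J$, peeling off one partial-fraction term at a time, and then read off the sign claim directly from the integral representation together with the monotonicity hypothesis \eqref{newcond}.

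\emph{Base case and setup.} For $k=1$, say $J=\{i_1\}$, the definition \eqref{eq:B} reads $\tilde A(\lambda)/(\lambda-b_{i_1})=e_{i_1}/(\lambda-b_{i_1})+B(\lambda)$ with $e_{i_1}=\tilde A(b_{i_1})$, so $B(\lambda)=(\tilde A(\lambda)-\tilde A(b_{i_1}))/(\lambda-b_{i_1})$. This is exactly the divided difference, and the integral form of the mean-value theorem (the same device already used in the proof of Proposition~\ref{prop:newcond}) gives
\begin{equation*}
B(\lambda)=\int_0^1\tilde A'\bigl((1-s_1)\lambda+s_1 b_{i_1}\bigr)\,ds_1,
\end{equation*}
which is \eqref{Brep} for $k=1$ with $D_1=[0,1]$ and $\tilde A^{(1)}=\tilde A'$.

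\emph{Inductive step.} For the passage from $k-1$ to $k$, I would let $B_{k-1}(\lambda)$ denote the remainder obtained by subtracting the $k-1$ partial fractions over $J'=\{i_1,\dots,i_{k-1}\}$ from $\tilde A(\lambda)/\prod_{j\in J'}(\lambda-b_j)$, so that $\tilde A(\lambda)/\prod_{j\in J}(\lambda-b_j)=B_{k-1}(\lambda)/(\lambda-b_{i_k})$ plus the first $k-1$ terms. Peeling off the last term then amounts to forming the divided difference of $B_{k-1}$ at $b_{i_k}$, namely $B(\lambda)=(B_{k-1}(\lambda)-B_{k-1}(b_{i_k}))/(\lambda-b_{i_k})$, while one must separately check that $B_{k-1}(b_{i_k})$ is precisely the coefficient $e_{i_k}$ prescribed in \eqref{eq:B}; this identification of the residue is the one bookkeeping point that needs care, and it follows by evaluating the partial-fraction identity at $\lambda=b_{i_k}$. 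Applying the mean-value integral to the divided difference of $B_{k-1}$ differentiates the integrand once more and introduces the new variable $s_k$, converting the simplex $D_{k-1}$ into $D_k$ after the affine change $\lambda\mapsto(1-s_k)\lambda+s_k b_{i_k}$; substituting the inductive expression for $B_{k-1}$ and collecting the nested integrals yields \eqref{Brep}.

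\emph{Sign conclusion.} Once \eqref{Brep} is established, the sign statement is immediate: for $(s_1,\dots,s_k)\in D_k$ the argument of $\tilde A^{(k)}$ is a convex combination of $\lambda$ and the $b_{i_j}$, all of which lie in $[a_n,a_0]$, so the argument stays in $[a_n,a_0]$; by hypothesis \eqref{newcond}, $(-1)^k\tilde A^{(k)}>0$ there for $2\le k\le n$, and since the integrand has constant sign the integral inherits it, giving $(-1)^{\#J}B(\lambda)>0$ whenever $\#J\ge 2$. The main obstacle I anticipate is not analytic but combinatorial: verifying cleanly in the inductive step that peeling off the $j$-th term genuinely produces the residue $e_{i_k}$ with the correct product $\prod_{l\in J,\,l\ne k}(b_{i_k}-b_l)$ in the denominator, i.e.\ that the recursive divided-difference remainder matches the partial-fraction remainder defined in \eqref{eq:B}. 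This reduces to the standard fact that iterated divided differences of a function agree with the remainder after partial-fraction decomposition against distinct nodes, which is guaranteed by the assumption that $b_1,\dots,b_{n-1}$ are distinct.
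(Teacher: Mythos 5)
Your proposal is correct and takes essentially the same approach as the paper's proof: induction on $\#J$, writing the new remainder as the divided difference of the previous one at the new node (the residue bookkeeping you flag is exactly what the paper invokes when it asserts $B(\lambda)=\bigl(B_0(\lambda)-B_0(b_{i_{k+1}})\bigr)/(\lambda-b_{i_{k+1}})$ ``by the defining formula'', and your verification by evaluating the partial-fraction identity at $\lambda=b_{i_{k+1}}$ is the right justification), followed by the mean-value integral and the change of variables onto the simplex $D_k$, with the sign then read off from the monotonicity condition. Nothing further is needed.
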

\begin{proof}
We prove this by induction in $k=\#J$. When $k=0$, the assertion is trivial. Let $k\ge 0$ and assume
that the assertion is true for $J$ with $\#J\le k$. Suppose $J=\{i_1,\dots,i_{k+1}\}$ and put 
$J_0=J-\{i_{k+1}\}$.
Define $B_0(\lambda)$ as the function $B(\lambda)$ in the formula~\eqref{eq:B} for $J_0$.
By the induction assumption we have the formula~\eqref{Brep} for $B_0$.

By the defining formula~\eqref{eq:B}, the functions $B(\lambda)$ for $J$ and $B_0(\lambda)$ for $J_0$
are related as
\begin{equation*}
B(\lambda)=\frac{B_0(\lambda)-B_0(b_{i_{k+1}})}{\lambda-b_{i_{k+1}}}
=\int_0^1B_0'(t\lambda+(1-t)b_{i_{k+1}})\,dt\,.
\end{equation*}
By the induction assumption the right-hand side is equal to
\begin{equation*}\scriptstyle
\int_0^1\int_{D_k}\left(1-\sum_{l=1}^ks_l\right)\tilde A^{(k+1)}\left(\left(1-\sum_{l=1}^ks_l\right)
(t\lambda+(1-t)b_{i_{k+1}})+\sum_{l=1}^ks_lb_{i_l}\right)\,ds_1\dots ds_kdt
\end{equation*}
Therefore, changing the variable $t\to s_{k+1}=(1-\sum_{l=1}^ks_l)(1-t)$, we obtain the formula~\eqref{Brep}
for $J$.
\end{proof}

\begin{prop}\label{prop:newineq}
If $b_1,\dots,b_{n-1}$ and $a_0,\dots,a_n$ are all
distinct, then the following inequalities hold:
\begin{enumerate}
\item \begin{equation*}
\sum_{l=1}^n\int_{a_l^+}^{a_{l-1}^-}
\frac{(-1)^{n-l+\# I}A(\lambda)\,(\lambda-a_n)
\prod_{j\in I}(\lambda-b_j)
}
{\sqrt{-\prod_{k=1}^{n-1}(\lambda-b_k)
\cdot\prod_{k=0}^n(\lambda-a_k)}}\ 
d\lambda>0,
\end{equation*}
where $I$ is any (possibly empty)
subset of
$\{1,\dots, n-1\}$ such that $\# I\le n-3$;
\item \begin{equation*}
\frac{\partial }{\partial b_i}
\sum_{l=1}^n\int_{a_l^+}^{a_{l-1}^-}
\frac{(-1)^{l}A(\lambda)\,(\lambda-a_n)\,G(\lambda)\ d\lambda}
{\sqrt{-\prod_{k=1}^{n-1}(\lambda-b_k)
\cdot\prod_{k=0}^n(\lambda-a_k)}}
\end{equation*}
is negative for $G(\lambda)=\prod_{k\ne i}
(\lambda-b_k)$ and is positive for $G=
\prod_{k\ne i,j}(\lambda-b_k)$, $(j\ne i)$\ .
\item \begin{equation*}
\frac{\partial^2 }{\partial b_i^2}
\sum_{l=1}^n\int_{a_l^+}^{a_{l-1}^-}
\frac{(-1)^{l}A(\lambda)\,(\lambda-a_n)\,G(\lambda)\ d\lambda}
{\sqrt{-\prod_{k=1}^{n-1}(\lambda-b_k)
\cdot\prod_{k=0}^n(\lambda-a_k)}}
\end{equation*}
is positive for $G(\lambda)=\prod_{k\ne i}
(\lambda-b_k)$\ .
\end{enumerate}
\end{prop}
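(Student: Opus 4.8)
The plan is to treat (1) as a direct algebraic consequence of Lemmas~\ref{lemma:flat} and~\ref{lemma:cond}, and to obtain (2) and (3) by differentiating the integral of (1) in $b_i$ and reducing the result, after regularizing the turning point at $\lambda=b_i$, to integrals of the same shape whose signs are again governed by \eqref{newcond}. Throughout write $P(\lambda)=\prod_{k=1}^{n-1}(\lambda-b_k)$, $Q(\lambda)=\prod_{k=0}^n(\lambda-a_k)$, and $\tilde A(\lambda)=(\lambda-a_n)A(\lambda)$, so that each integrand is a multiple of $\tilde A(\lambda)G(\lambda)/\sqrt{-PQ}$.

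For (1) the essential point is the choice $J=\{1,\dots,n-1\}\setminus I$, for which $\#J=(n-1)-\#I\ge 2$ \emph{exactly} because $\#I\le n-3$. Since $\prod_{j\in I}(\lambda-b_j)=P(\lambda)/\prod_{j\in J}(\lambda-b_j)$, Lemma~\ref{lemma:cond} gives
\[
\tilde A(\lambda)\prod_{j\in I}(\lambda-b_j)
=\Big(\sum_{j\in J}\frac{e_j}{\lambda-b_j}+B(\lambda)\Big)P(\lambda)
=\sum_{j\in J}e_j\prod_{k\ne j}(\lambda-b_k)+B(\lambda)P(\lambda).
\]
The first sum is a polynomial of degree $\le n-2$, so upon dividing by $\sqrt{-PQ}$ and summing with the weights $(-1)^l$ it contributes nothing, by Lemma~\ref{lemma:flat}. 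Hence the whole expression equals $(-1)^{n+\#I}$ times $\sum_l\int_{a_l^+}^{a_{l-1}^-}(-1)^lB(\lambda)P(\lambda)/\sqrt{-PQ}\,d\lambda$. On $[a_l^+,a_{l-1}^-]$ one has $\mathrm{sign}\,P=(-1)^{l-1}$ (this is precisely the positivity $(-1)^{l-1}\prod_k(\lambda-b_k)>0$ used in \S3), so $(-1)^lP/\sqrt{-PQ}=-\sqrt{|P|/|Q|}<0$ on every interval, while Lemma~\ref{lemma:cond} gives $\mathrm{sign}\,B=(-1)^{\#J}=(-1)^{n-1-\#I}$ throughout $[a_n,a_0]$. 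Thus each summand has sign $(-1)^{n-\#I}$, and multiplying by the prefactor $(-1)^{n+\#I}$ gives $(-1)^{2n}=1>0$. This proves (1).

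For (2) and (3) the plan is to differentiate $\Phi_i[G]=\sum_l\int_{a_l^+}^{a_{l-1}^-}(-1)^l\tilde A(\lambda)G(\lambda)/\sqrt{-PQ}\,d\lambda$ in $b_i$, using $\partial_{b_i}P=-\prod_{k\ne i}(\lambda-b_k)$. For $G=\prod_{k\ne i}(\lambda-b_k)$ (which is independent of $b_i$) and for $G=\prod_{k\ne i,j}(\lambda-b_k)$ the reduced integrand is $\tilde A/(\lambda-b_i)$ and $\tilde A/\big((\lambda-b_i)(\lambda-b_j)\big)$ respectively, i.e.\ $J=\{i\}$ and $J=\{i,j\}$; the only $b_i$-dependence beyond these factors is through the single endpoint at which $\lambda=b_i$. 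Differentiation in $b_i$ raises the order of the pole at $b_i$ by one, which by the divided-difference identity $\partial_{b_i}[b_i,\dots,\lambda]\tilde A=[b_i,b_i,\dots,\lambda]\tilde A$ replaces the governing remainder $B$ by a divided difference of $\tilde A$ of one higher order. By the representation \eqref{Brep} a $k$-th divided difference carries the sign of $\tilde A^{(k)}$, which by \eqref{newcond} is $(-1)^k$: thus the first derivative with $G=\prod_{k\ne i}(\lambda-b_k)$ is controlled by $\tilde A''$, the first derivative with $G=\prod_{k\ne i,j}(\lambda-b_k)$ and the second derivative with $G=\prod_{k\ne i}(\lambda-b_k)$ are controlled by $\tilde A'''$. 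Carrying out the same sign bookkeeping as in (1) then yields the asserted signs (negative, positive, positive), and this is exactly why \eqref{newcond} is imposed up to order three.

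The main obstacle is the legitimacy of this differentiation. Near the endpoint $\lambda=b_i$ one has $P\sim(\lambda-b_i)$, so $\sqrt{-PQ}\sim|\lambda-b_i|^{1/2}$ and the naively differentiated integrand behaves like $|\lambda-b_i|^{-3/2}$: both the formal ``differentiation under the integral'' and the boundary term coming from the moving limit $a_i^\pm=b_i$ are divergent, and only their sum is finite. The plan is to tame this turning point by the substitution $\lambda=b_i\pm u^2$ on the interval carrying the singular endpoint, which fixes that endpoint at $u=0$, cancels one power of $u$ between $d\lambda$ and $\sqrt{-PQ}$, and makes the integrand smooth in $(u,b_i)$; the opposite turning point (independent of $b_i$) is regularized in the same way so that no divergent boundary term survives. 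Once the integrand is smooth and the boundary contributions are shown to cancel, differentiation under the integral is valid and one re-expresses the result in the $\lambda$-variable to match the family treated in (1), after which the sign determination is the routine tracking of $(-1)^k$ factors described above.
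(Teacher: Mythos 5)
Your part (1) is correct and is, in substance, the paper's own proof: the same choice $J=\{1,\dots,n-1\}\setminus I$ (with $\#J\ge 2$ exactly when $\#I\le n-3$), the same splitting of $\tilde A(\lambda)\prod_{j\in I}(\lambda-b_j)$ into a polynomial of degree $\le n-2$ (annihilated by Lemma~\ref{lemma:flat}) plus $B(\lambda)\prod_{k=1}^{n-1}(\lambda-b_k)$, and the same sign count; this is precisely the paper's passage to \eqref{arrangedform}. For (2) and (3) your overall strategy (differentiate, handle the turning point at $\lambda=b_i$, reduce to integrals governed by higher divided differences of $\tilde A$ and hence by \eqref{newcond}) is also the paper's, but the execution differs in one essential device, and this is where your sketch is thinnest. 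The paper never encounters a divergent quantity: using \eqref{eq:interp} with $J=\{i\}$ or $\{i,j\}$ and Lemma~\ref{lemma:flat} it \emph{first} rewrites the expression to be differentiated as $\sum_l\int (-1)^l B(\lambda)\prod_k(\lambda-b_k)\big/\sqrt{-\prod_k(\lambda-b_k)\prod_k(\lambda-a_k)}\,d\lambda$, whose integrand vanishes like $|\lambda-b_i|^{1/2}$ at the moving endpoint; differentiation under the integral sign is then legitimate, the endpoint contributes nothing, and the pole created when $\partial_{b_i}$ hits the square root is reabsorbed by one more divided-difference split plus Lemma~\ref{lemma:flat}, yielding the clean identities \eqref{firstdiff} and \eqref{secondderB} with explicit positive constants $\tfrac12$ and $\tfrac38$. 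Your route (quadratic substitution on the raw integral) can be made to work, but note that after substituting, the $b_i$-derivative produces $\lambda$-derivatives of the whole factor $\tilde A G/\sqrt{-GQ}$, i.e.\ logarithmic derivatives of $G$ and $Q$ as well, so the ``routine tracking of $(-1)^k$ factors'' you defer is exactly the nontrivial content of the paper's computation: one must verify that the two contributions (derivative hitting the $\tilde A$-factor versus the square root) combine with a \emph{positive} net coefficient, and that the lower-order coefficients of the divided-difference expansion give only polynomials of degree $\le n-2$, disposed of by Lemma~\ref{lemma:flat} after summing over $l$. That verification does succeed — each $\partial_{b_i}$ applied to $(\lambda-b_i)^{-a}\bigl(-\prod_k(\lambda-b_k)\prod_k(\lambda-a_k)\bigr)^{-1/2}$ produces the factor $(a+\tfrac12)(\lambda-b_i)^{-1}$, so the cumulative coefficients $\tfrac12$, $\tfrac34$, and finally $\tfrac38$ are automatically positive and no cancellation can flip a sign — but it must be written out; alternatively, adopt the paper's order of operations (Lemma~\ref{lemma:flat} first, then differentiate), which makes the $|\lambda-b_i|^{-3/2}$ divergence you worry about never appear at all.
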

\begin{proof}
Put $J=\{1,\dots,n-1\}-I$, $\tilde A(\lambda)=
(\lambda-a_n)A(\lambda)$ and
define $B(\lambda)$ by
\begin{equation}\label{eq:interp}
\frac{\tilde A(\lambda)}{\prod_{j\in J}(\lambda-b_j)}=
\sum_{j\in J}\frac1{\lambda-b_j}\frac{\tilde A(b_j)}
{\prod_{\substack{k\in J\\k\ne j}}(b_j-b_k)}
+B(\lambda)\ .
\end{equation}
Then by Lemma~\ref{lemma:cond}  we have
$(-1)^{\# J}B(\lambda)>0$ on the interval $[a_n,a_0]$.
Since the sum in (1) is equal to
\begin{equation}\label{arrangedform}
\sum_{l=1}^n\int_{a_l^+}^{a_{l-1}^-}
\frac{(-1)^{l-1+\# J}B(\lambda)
\prod_{k=1}^{n-1}(\lambda-b_k)
}
{\sqrt{-\prod_{k=1}^{n-1}(\lambda-b_k)
\cdot\prod_{k=0}^n(\lambda-a_k)}}\ d\lambda
\end{equation}
by Lemma~\ref{lemma:flat}, and since
$(-1)^{l-1}\prod_{k=1}^{n-1}(\lambda-b_k)$ is 
positive on every interval $(a_l^+,a_{l-1}^-)$,
we have the inequality (1).

To prove (2) for $G(\lambda)=\prod_{k\ne i,j}(\lambda-b_k)$, we use the formula (\ref{eq:interp}) with $J=
\{i,j\}$. In this case, 
\begin{equation*}
B(\lambda)=\int_0^1\!\!\int_0^{1-t}\tilde A''((1-t-s)\lambda+tb_i+
sb_j)\,dsdt\ ,
\end{equation*}
and the formula in (2) is written as
\begin{equation}\label{arrangedform2}
\frac{\partial }{\partial b_i}
\sum_{l=1}^n\int_{a_l^+}^{a_{l-1}^-}
\frac{(-1)^{l}B(\lambda)\,\prod_{k=1}^{n-1}(\lambda-b_k)}
{\sqrt{-\prod_{k=1}^{n-1}(\lambda-b_k)
\cdot\prod_{k=0}^n(\lambda-a_k)}}\ d\lambda\ .
\end{equation}
Then, in the same way as the proof of Proposition 4.1 (2) in \cite{IK2},
we see that
the above formula is equal to
\begin{equation}\label{firstdiff}
\frac12\sum_{l=1}^n\int_{a_l^+}^{a_{l-1}^-}
\frac{(-1)^{l}\left(\frac{\partial }{\partial b_i}B(\lambda)
\right)\prod_{k=1}^{n-1}(\lambda-b_k)}
{\sqrt{-\prod_{k=1}^{n-1}(\lambda-b_k)
\cdot\prod_{k=0}^n(\lambda-a_k)}}\ d\lambda\ ,
\end{equation}
which is positive, since $\frac{\partial }
{\partial b_i}B(\lambda)<0$.

In the case where $G(\lambda)=\prod_{k\ne i}(\lambda-b_k)$,
we also have the same formula as above with
\begin{equation}\label{fnb}
B(\lambda)=\int_0^1\tilde A'(t\lambda+(1-t)b_i)dt
\end{equation}
Since $\frac{\partial}{\partial b_i}B(\lambda)>0$ in this case,
the assertion follows.

(3) Differentiating the formula \eqref{firstdiff}
by $b_i$ under the equality \eqref{fnb}, we have
\begin{equation}\label{seconddiff}
\begin{gathered}
\frac12\sum_{l=1}^n\int_{a_l^+}^{a_{l-1}^-}
\frac{(-1)^{l}\left(\frac{\partial^2 }{\partial b_i^2}B(\lambda)
\right)\prod_{k=1}^{n-1}(\lambda-b_k)}
{\sqrt{-\prod_{k=1}^{n-1}(\lambda-b_k)
\cdot\prod_{k=0}^n(\lambda-a_k)}}\ d\lambda\\
-\frac14\sum_{l=1}^n\int_{a_l^+}^{a_{l-1}^-}
\frac{(-1)^{l}\left(\frac{\partial }{\partial b_i}B(\lambda)
\right)\prod_{k\ne i}(\lambda-b_k)}
{\sqrt{-\prod_{k=1}^{n-1}(\lambda-b_k)
\cdot\prod_{k=0}^n(\lambda-a_k)}}\ d\lambda\  .
\end{gathered}\end{equation}
By Lemma~\ref{lemma:flat} in \cite{IK2} the second line of this
formula is equal to 
\begin{equation}\label{arrangedform3}
-\frac14\sum_{l=1}^n\int_{a_l^+}^{a_{l-1}^-}
\frac{(-1)^{l}{\tilde B}(\lambda)
\prod_{k=1}^{n-1}(\lambda-b_k)}
{\sqrt{-\prod_{k=1}^{n-1}(\lambda-b_k)
\cdot\prod_{k=0}^n(\lambda-a_k)}}\ d\lambda\  ,
\end{equation}
where 
\begin{equation*}
\tilde B(\lambda)=\frac{\frac{\partial }{\partial b_i}B(\lambda)-\frac12 \tilde A''(b_i)}
{\lambda-b_i}=\frac12\frac{\partial^2 }{\partial b_i^2}B(\lambda)\ .
\end{equation*}
Note that $(\partial/\partial b_i)B(\lambda)|_{\lambda=b_i}=(1/2)\tilde A''(b_i)$.

Therefore the formula \eqref{seconddiff} is equal
to
\begin{equation}\label{secondderB}
\frac38\sum_{l=1}^n\int_{a_l^+}^{a_{l-1}^-}
\frac{(-1)^{l}\left(\frac{\partial^2 }{\partial b_i^2}B(\lambda)
\right)\prod_{k=1}^{n-1}(\lambda-b_k)}
{\sqrt{-\prod_{k=1}^{n-1}(\lambda-b_k)
\cdot\prod_{k=0}^n(\lambda-a_k)}}\ d\lambda\ .
\end{equation}
Since 
\begin{equation}
\frac{\partial^2 }{\partial b_i^2}B(\lambda)=
\int_0^1(1-t)^2\tilde A'''(t\lambda+(1-t)b_i)dt
\end{equation}
is negative, the assertion follows.
\end{proof}

In the later applications we also need certain limit 
cases of the above proposition, which may be stated as follows.
\begin{prop}\label{prop:limitineq}
Let $b^k=(b_1^k,\dots,b_{n-1}^k)$ $(k=1,2,\dots)$ be a sequence such that
\begin{equation*}
a_{i+1}<b_i^k<a_{i-1},\, b_i^k\ne a_i,\, b_i^k<b_{i-1}^k \quad \text{for any }k,i,
\end{equation*}
and such that the ordering of $a_i$ and $b_i^k$ does not change when $k$ varies for each $i$.
Suppose that $b^k$ converges to 
$b^\infty=(b_1,\dots,b_{n-1})$ as $k\to\infty$. Then, when $k\to \infty$, each formula in
$(1)$, $(2)$, $(3)$ in Proposition~\ref{prop:newineq}
for $b^k$ converges to a nonzero value. Namely, those inequalities are still valid in the limit case.
\end{prop}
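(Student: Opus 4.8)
The plan is to carry the whole argument out on the \emph{rewritten}, sign-definite forms produced inside the proof of Proposition~\ref{prop:newineq}, and to pass to the limit interval by interval after a substitution that uniformizes the endpoint singularities. Note first that for every $k$ the hypotheses $a_{i+1}<b_i^k<a_{i-1}$ and $b_i^k\neq a_i$ force all the $b_i^k$ and all the $a_j$ to be mutually distinct, so Lemma~\ref{lemma:flat}, Lemma~\ref{lemma:cond} and Proposition~\ref{prop:newineq} apply for each $k$. By \eqref{arrangedform}, \eqref{firstdiff}, \eqref{secondderB}, each of the three quantities for $b^k$ equals
\begin{equation*}
\sum_{l=1}^n\int_{a_l^+}^{a_{l-1}^-}\frac{(-1)^{\epsilon(l)}\,C(\lambda)\,\prod_{k=1}^{n-1}(\lambda-b_k)}{\sqrt{-\prod_{k=1}^{n-1}(\lambda-b_k)\cdot\prod_{k=0}^n(\lambda-a_k)}}\,d\lambda ,
\end{equation*}
where $C(\lambda)$ is $B(\lambda)$, $\tfrac{\partial}{\partial b_i}B(\lambda)$ or $\tfrac{\partial^2}{\partial b_i^2}B(\lambda)$ respectively, each of a fixed sign on $[a_n,a_0]$. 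The decisive feature is that the numerator now carries the \emph{full} product $P(\lambda)=\prod_{k=1}^{n-1}(\lambda-b_k)$. Moreover, by the integral representation \eqref{Brep} of Lemma~\ref{lemma:cond}, $C(\lambda)$ is jointly continuous in $(\lambda,b)$ and keeps its sign even when several $b_j$ coincide or meet some $a_k$; I therefore define the limiting integrand directly through \eqref{Brep} and must only show that the displayed sum converges to it, to a nonzero value.

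On each interval I set $m_l=(a_l^++a_{l-1}^-)/2$, $h_l=(a_{l-1}^--a_l^+)/2$ and substitute $\lambda=m_l+h_l\sin\theta$, $\theta\in[-\pi/2,\pi/2]$. Then the two endpoint factors of $-P(\lambda)\prod_k(\lambda-a_k)$ become $h_l^2\cos^2\theta$, so that $d\lambda/\sqrt{-PQ}=d\theta/\sqrt{R_l(\lambda)}$ with $R_l>0$ collecting the remaining factors, and the $l$-th integral becomes an integral over the fixed interval $[-\pi/2,\pi/2]$. The reason for retaining the entire factor $P$ is that it cancels exactly the singularities created in the limit: when a root $b_j$ collides with an endpoint $a_k$ of a surviving interval, $-PQ$ acquires a double zero there, but the matching factor $(\lambda-b_j)\to(\lambda-a_k)$ of $P$ offsets the extra $1/|\lambda-a_k|$ and leaves a bounded, indeed continuous, integrand (in the $\theta$-variable it behaves like $\sqrt{1-\sin\theta}$ near the endpoint). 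Hence, after the substitution, the integrand on each surviving interval extends to a function jointly continuous in $(\theta,b)$ up to $b^\infty$; by uniform continuity on the compact $\theta$-domain, the $l$-th integral for $b^k$ converges to the same expression evaluated at $b^\infty$.

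It remains to treat the collapsing intervals and to see that the total is nonzero. An interval $[a_l^+,a_{l-1}^-]$ shrinks to a point only when $b_i=b_{i+1}$ (two $b$-endpoints) or when a $b_j$ meets an adjacent $a_k$ with $a_l^+\to a_{l-1}^-$ (one $b$- and one $a$-endpoint); in either case at least one endpoint factor of $P$ is $O(h_l)$ on the shrunken interval, so the transformed integrand is $O(h_l)$ (resp.\ $O(h_l^2)$) uniformly in $\theta$ and the contribution tends to $0$. Thus in the limit only the intervals of positive length remain, and on each of these the limiting integrand retains, by the sign of $C(\lambda)$ from \eqref{Brep} together with the sign-definite factor in Proposition~\ref{prop:newineq}, the single sign asserted there. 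Finally, at least one such interval survives: the forbidden intervals $[\min(a_l,b_l),\max(a_l,b_l)]$ $(1\le l\le n-1)$ are pairwise disjoint (this is the condition $a_l^+\le a_{l-1}^-$), lie in $[a_n,a_0]$, and have total length $\sum_l|b_l-a_l|$, whose complement is the union of the intervals $[a_l^+,a_{l-1}^-]$. Since these $n-1$ forbidden intervals are attached to the interior nodes $a_1,\dots,a_{n-1}$, they cannot simultaneously exhaust neighbourhoods of both ends $a_0$ and $a_n$: the only way to cover the $a_0$-end is to push every $b_l\to a_{l-1}$, which forces $[a_n,\min(a_{n-1},b_{n-1})]$ to keep full length, and symmetrically for the $a_n$-end. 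Hence $\sum_l|b_l-a_l|<a_0-a_n$ in the limit, some interval has positive length, and with the strict sign there the limit of each formula is a nonzero number of the predicted sign. The main obstacle is the second step: checking that the explicit factor $P$ in the rewritten integrand cancels precisely the singularities produced when the $b_j$ collide with the $a_k$, so that the uniformizing substitution really yields a jointly continuous integrand up to the degenerate limit.
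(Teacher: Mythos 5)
Your strategy is genuinely different from the paper's. The paper regards each sum in Proposition~\ref{prop:newineq} as a single integral over $[a_n,a_0]$ of a piecewise-defined function $E^k$, bounds $|E^k|$ by the $k$-independent integrable function $c/\sqrt{\prod_{i=0}^n|\lambda-a_i|}$ (this is your ``full factor $P$'' cancellation used once, crudely, since $|P|/\sqrt{|P||Q|}=\sqrt{|P|/|Q|}$ with $|P|$ bounded), and applies Lebesgue's dominated convergence theorem; the nonvanishing of the limit comes from the observation that the fixed ordering of $b_i^k$ and $a_i$ forces some \emph{full} interval $[a_i,a_{i-1}]$ to be an interval of integration for every $k$, hence also in the limit. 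Your substitution $\lambda=m_l+h_l\sin\theta$ plus joint continuity is a viable replacement for dominated convergence on the surviving intervals, and your covering argument for the existence of a surviving interval, though more roundabout than the paper's sign-pattern argument, is correct.

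The gap is in your treatment of the collapsing intervals. The step ``at least one endpoint factor of $P$ is $O(h_l)$, so the transformed integrand is $O(h_l)$ (resp.\ $O(h_l^2)$) uniformly in $\theta$'' silently assumes that the residual factor $R_l$ stays bounded below on the collapsing interval, and the hypotheses of the proposition do not guarantee this: a \emph{third} root may collide with the collapse point. For instance, take $a_l<b_l^k<b_{l-1}^k$ (orderings fixed, as required) with $b_l^k\to a_l$ and $b_{l-1}^k\to a_l$, say $b_l^k-a_l=e^{-k}$ and $b_{l-1}^k-b_l^k=1/k$. The collapsing interval is $[b_l^k,b_{l-1}^k]$, both endpoint factors come from $P$, but $R_l$ contains the factor $(\lambda-a_l)$, which on this interval is only bounded below by $\delta_k=b_l^k-a_l$; your reasoning then yields only the bound $O(h_l^2/\sqrt{\delta_k})$, which does not tend to zero since $\delta_k$ and $h_l$ are independent parameters. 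The conclusion is still true, but to obtain it you must apply your own cancellation mechanism once more \emph{inside} the collapsing interval: pair the factor $(\lambda-b_l)$ of $P$ with the factor $(\lambda-a_l)$ of $R_l$ and use $(\lambda-b_l)/\sqrt{\lambda-a_l}\le\sqrt{\lambda-a_l}\le\sqrt{\delta_k+2h_l}$, so that the contribution is $O\bigl(h_l^{3/2}+h_l\sqrt{\delta_k}\bigr)\to 0$; the analogous repair is needed in your ``one $a$- and one $b$-endpoint'' case when the root on the far side of the $a$-endpoint also converges to it. (Your surviving-interval continuity claim should likewise be checked against such triple collisions; there the mechanism you describe does extend, because every offending factor of $R_l$ is matched by a factor of $P$ at equal or higher power.) This is exactly the kind of degenerate limit that the paper's single dominated-convergence bound absorbs with no case analysis, which is what that approach buys.
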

\begin{proof}
We first consider the case (1) in Proposition~\ref{prop:newineq}. Let us observe the formula~\eqref{arrangedform} for $b^k$ and take the limit $k\to \infty$. 
We regard the sum of the integrals as the integral over $[a_n,a_0]$ of the single function $E^k(\lambda)$,
where
\begin{equation*}
E^k(\lambda)=\begin{cases}
\frac{(-1)^{l-1+\# J}B(\lambda)
\prod_{i=1}^{n-1}(\lambda-b_i^k)}{\sqrt{-\prod_{i=1}^{n-1}(\lambda-b_i^k)
\cdot\prod_{i=0}^n(\lambda-a_i)}}\quad &(\lambda\in[a_l^+,a_{l-1}^-])\\
0\quad &(\lambda\not\in \cup_{i=1}^n[a_i^+,a_{i-1}^-])
\end{cases}.
\end{equation*}
In view of the formula~\eqref{Brep} we see that there is a constant $c$ which does not depend on $k$ such that
\begin{equation*}
|E^k(\lambda)|\le \frac{c}{\sqrt{\prod_{i=0}^n|\lambda-a_i|}}\qquad (a_n\le \lambda\le a_0)
\end{equation*}
for any $k$. Therefore, by Lebesgue's convergence theorem, we have
\begin{equation*}
\lim_{k\to\infty}\int_{a_n}^{a_0}E^k(\lambda)d\lambda=\int_{a_n}^{a_0}E^\infty(\lambda)d\lambda.
\quad E^\infty(\lambda)=\lim_{k\to\infty}E^k(\lambda),
\end{equation*}

Since there is at least one index $i$ such that $a_i^+= a_i$ and $a_{i-1}^-=a_{i-1}$ for each $k$ 
and since this index $i$ does not depend on $k$ by the assumption, it follows that $a_i^+= a_i$ and
$a_{i-1}^-= a_{i-1}$ for $k=\infty$. Therefore $E^\infty(\lambda)$ is positive on the open interval $(a_i,a_{i-1})$
and nonnegative on the whole interval $[a_n,a_0]$. Thus the assertion follows.

For (2) and (3) we use \eqref{firstdiff} and \eqref{secondderB} instead of \eqref{arrangedform}.
Since the proof goes in completely the same way as above, we omit the detail.
\end{proof}

\section{Zeros of Jacobi fields}
Let $\gamma(t)=(x_1(t),\dots,x_n(t))$ be a geodesic which is not totally contained in 
the totally geodesic submanifolds $N_i$ $(1\le i\le n-1)$.
Let us denote by $\sigma_i(t)$
the total variation of $f_i(x_i(t))$:
\begin{equation*}
\sigma_i(t)=\int_0^t\left|\frac{df_i(x_i(s))}{ds}
\right|ds\qquad (1\le i\le n)\ .
\end{equation*}
When $a_i^+< a_{i-1}^-$, this function is strictly increasing (cf. \S3), and we then define the time $t=t_i$ by the equality 
\begin{equation}\label{def:ti}
\sigma_i(t_i)=2(a_{i-1}^--a_i^+)\,,
\end{equation}
which represents a half of the period in some sense.
Note that $t_n$ is the same 
one as $t_0$ defined in \cite[\S6]{IK2}. 
Note also that, in view of \eqref{eq:fdiff},
the following equalities hold:
\begin{equation}\label{basiceq}
\begin{gathered}
\int_0^{t_i}\frac{(-1)^iG(f_i)}
{\sqrt{(-1)^{i-1}\prod_{k=1}^{n-1}(f_i-b_k)
}}\ 
\left|\frac{dx_i(t)}{dt}\right|
\ dt\\
=\frac12\int_0^{t_i}\frac{(-1)^iG(f_i)A(f_i)}
{\sqrt{-\prod_{k=1}^{n-1}(f_i-b_k)
\cdot\prod_{k=0}^n(f_i-a_k)}}\ 
\left|\frac{df_i(x_i(t))}{dt}\right|
\ dt\\
=\int_{a_i^+}^{a_{i-1}^-}
\frac{(-1)^iG(\lambda)
A(\lambda)\ d\lambda}
{\sqrt{-\prod_{k=1}^{n-1}(\lambda-b_k)
\cdot\prod_{k=0}^n(\lambda-a_k)}}\ .
\end{gathered}
\end{equation}
Those equalities will be frequently used below.

In the rest of this section we shall assume that
the corresponding $n-1$ constants $b_i$ 
and the $n+1$ constants $a_j$ are all distinct
unless otherwise stated.
We have already seen in \cite[Proposition 6.5]{IK2} that $t_n<t_i$ for any $i\le n-1$.
Here one can obtain a stronger result.
\begin{prop}\label{prop:order1}
$t_{n}<t_{n-1}<\dots<t_1$.
\end{prop}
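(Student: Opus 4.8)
The plan is to express each difference $t_{i} - t_{i+1}$ as a single integral over $[a_n,a_0]$ against a quadratic-integral 1-form of the type appearing in Proposition~\ref{prop:newineq}, and then to read off its sign from the inequalities established there. The starting point is the length formula \eqref{eq:geodlength}: integrating $dt = 2\omega_n$ over one ``half-period'' of the $i$-th coordinate, and using the basic identity \eqref{basiceq}, I would obtain an expression for $t_i$ as a sum of integrals of the form
\begin{equation*}
\int_{a_l^+}^{a_{l-1}^-}
\frac{(-1)^{l+1}\,\tilde G(\lambda)\,A(\lambda)\ d\lambda}
{\sqrt{-\prod_{k=1}^{n-1}(\lambda-b_k)\cdot\prod_{k=0}^n(\lambda-a_k)}},
\end{equation*}
where $\tilde G$ is a monic polynomial of degree $n-1$ that is at my disposal (any such $\tilde G$ gives the same $t_i$, by Lemma~\ref{lemma:flat}). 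The freedom in choosing $\tilde G$ is the crucial device: the plan is to pick, for the comparison $t_i$ versus $t_{i+1}$, a polynomial $\tilde G$ adapted to the index $i$ so that the resulting integrand matches one of the sign-definite forms of Proposition~\ref{prop:newineq}.

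Concretely, I would first rewrite $t_i$ using the factor $(\lambda - a_n)A(\lambda) = \tilde A(\lambda)$, so that the integrands acquire the exact shape $\tilde A(\lambda)\,G(\lambda)/\sqrt{\cdots}$ with $\deg G = n-2$ that Proposition~\ref{prop:newineq} governs. Since $t_i$ does not depend on the choice of the monic degree-$(n-1)$ polynomial, I can write $\tilde G(\lambda) = (\lambda - a_n)\,G(\lambda) + (\text{lower-order correction})$ and absorb the correction via Lemma~\ref{lemma:flat}; this reduces $t_i$ to an integral of precisely the Proposition~\ref{prop:newineq} type. The natural candidate for the comparison of consecutive $t_i$'s is $G(\lambda) = \prod_{k\ne i}(\lambda - b_k)$, whose associated integral Proposition~\ref{prop:newineq}(2) controls under $\partial/\partial b_i$. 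My expectation is that $t_i - t_{i+1}$ can be realized as such an integral with a definite parity $(-1)^{\#I}$, so that Proposition~\ref{prop:newineq}(1) — the sign statement for $\#I \le n-3$ — or the monotonicity statements (2) directly yield positivity.

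The step I expect to be the main obstacle is the bookkeeping that turns the difference $t_i - t_{i+1}$ into a single clean integral of the standard form. The two times $t_i$ and $t_{i+1}$ are defined by \emph{different} half-periods (involving the endpoints $a_i^+, a_{i-1}^-$ versus $a_{i+1}^+, a_i^-$), so the regions of integration and the combinatorics of which endpoints are $a$'s versus $b$'s differ between the two; matching them requires care with the $a_l^\pm$ conventions and with the signs $(-1)^l$ as $l$ crosses the index $i$. The essential analytic content, however, is already packaged in Proposition~\ref{prop:newineq}: once the difference is written as an integral against $\tilde A(\lambda)\prod_{k\ne i}(\lambda-b_k)/\sqrt{\cdots}$ of the correct parity, positivity is immediate. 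I would therefore spend the bulk of the argument verifying the reduction, choosing $\tilde G$ and applying Lemma~\ref{lemma:flat} to discard the unwanted lower-order terms, and then invoke the appropriate clause of Proposition~\ref{prop:newineq} to conclude $t_i > t_{i+1}$ for each $i$, which chained together gives $t_n < t_{n-1} < \dots < t_1$.
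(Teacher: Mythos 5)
Your reduction fails at precisely the step you set aside as ``bookkeeping,'' and the failure is structural, not technical. The identity \eqref{basiceq} converts a time integral into a $\lambda$-integral over $[a_l^+,a_{l-1}^-]$ only when the time interval is a full half-period of the $l$-th coordinate, i.e.\ only for the $l$-th summand integrated from $0$ to $t_l$. If you express $t_i$ via \eqref{eq:geodlength} and split $\int_0^{t_i}=\int_0^{t_l}+\int_{t_l}^{t_i}$ in each summand, the remainders $\int_{t_l}^{t_i}$ with $l\ne i$ are incomplete-period time integrals that do not reduce to integrals over $[a_n,a_0]$ at all. Consequently, if you use the same monic $\tilde G$ for $t_i$ and $t_{i+1}$, the $\lambda$-integral parts cancel identically in the difference and you are left with the tautology $t_i-t_{i+1}=\sum_l\int_{t_{i+1}}^{t_i}(\cdots)$; if you use different polynomials for the two times, the leftover time integrals do not combine into anything of the form treated by Proposition~\ref{prop:newineq}. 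Worse, the signs of those leftover integrals are governed by the orientations of the intervals $[t_l,t_i]$, i.e.\ by the very ordering of the $t_l$'s you are trying to establish, so no choice of $\tilde G$ together with Lemma~\ref{lemma:flat} can make the difference into a single sign-definite standard integral: $t_i-t_{i+1}$ is not a functional of the constants $(a_j,b_k)$ alone in the way your plan requires, but genuinely depends on the trajectory.

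The paper resolves this by contradiction rather than by direct pairwise comparison, and that switch is exactly what makes the signs controllable. Assuming $t_i\le t_k$ for some $i<k$, one sets $I=\{l\,:\,t_l\le t_k,\ l\ne k\}$ (note $n\in I$ by the earlier result $t_n<t_l$), lets $J$ be the set of $j$ such that $j$ and $j+1$ are either both in $I$ or both outside $I$, and applies the orbit equation \eqref{geodeqint} (with a polynomial of degree $\le n-2$, not the length formula) for $G(\lambda)=(\lambda-a_n)\prod_{j\in J}(\lambda-b_j)$, evaluated at $t=t_k$. The combinatorial choice of $J$ makes $(-1)^lG(f_l)$ carry one constant sign for $l\in I$ and the opposite sign for $l\notin I$; since $t_l\le t_k$ exactly when $l\in I$, every remainder $\int_{t_l}^{t_k}$ is then a nonnegative multiple of $(-1)^{n-\#J}$, while the contradiction hypothesis forces at least two membership changes along $1,\dots,n$, hence $\#J\le n-3$, so Proposition~\ref{prop:newineq}(1) gives the full-period part the same sign, strictly. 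Two same-signed quantities, one nonzero, cannot sum to zero --- contradiction. You correctly identified the raw materials (the factor $\tilde A(\lambda)=(\lambda-a_n)A(\lambda)$, polynomials of the form $(\lambda-a_n)\times(\deg\le n-2)$, and Proposition~\ref{prop:newineq} as the analytic engine), but the two missing ideas --- arguing by contradiction so that the unknown ordering itself fixes the signs, and the combinatorial construction of $J$ from that assumed ordering --- are what make the sign analysis possible at all.
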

\begin{proof}
Fix $k$ such that $2\le k\le n-1$ and assume that $t_i\le t_{k}$ for some $i\le k-1$. Put
\begin{equation*}
I=\{i\ |\ 1\le i\le n,\ t_i\le t_k,\ i\ne k\}
\ .
\end{equation*} 
Let $J$ be the set of $j$ such that
$1\le j\le n-1$ and either $j\in I$ and
$j+1\in I$, or
$j\not\in I$ and $j+1\not\in I$.
Since there is some $i\in I$ such that $i<k$ by
the assumption, and since $n\in I$ as remarked above and $k\not\in I$, it follows that $\# J\le n-3$.

We then consider the equality (the geodesic equation)
\begin{equation}\label{eq:T}
\begin{aligned}
&\sum_{l=1}^{n}\int_{t_l}^
{t_k}\frac{(-1)^lG(f_l)}
{\sqrt{(-1)^{l-1}\prod_{k=1}^{n-1}(f_l-b_k)
}}\left|\frac{dx_l(t)}{dt}\right|
\ dt\\
&+\sum_{l=1}^n\int_{a_l^+}^{a_{l-1}^-}
\frac{(-1)^lG(\lambda)
A(\lambda)\ d\lambda}
{\sqrt{-\prod_{k=1}^{n-1}(\lambda-b_k)
\cdot\prod_{k=0}^n(\lambda-a_k)}}=0\ ,
\end{aligned}
\end{equation}
where 
$G(\lambda)=
(\lambda-a_n)\prod_{j\in J}(\lambda-b_j)$\ .
Since the sign of $(-1)^lG(f_l)$ are the same for
any $l\in I$, and since $n\in I$, it follows that
\begin{equation*}
(-1)^{n-\#J+l} G(f_l)
\begin{cases} \ge 0\quad (l\in I)\\
\le 0\quad (l\not\in I)
\end{cases}.
\end{equation*}
 Also, we have
\begin{equation*}
t_k\ge t_l\quad
(l\in I),\quad t_k\le t_l\quad
(l\not\in I)\ .
\end{equation*}
Therefore the sign of the first line of the
formula (\ref{eq:T}) is $(-1)^{n-\# J}$.
On the other hand, the second line of
(\ref{eq:T}) is nonzero and its sign is, by Proposition 
\ref{prop:newineq} (1), equal to $(-1)^{n-\# J}$, 
which is a contradiction. Therefore we have 
$t_k<t_i$ for any $1\le i\le k-1$, and the
proposition thus follows.
\end{proof}

Let $H_i$ ($1\le i\le n-1$) denote the first
integral of the geodesic flow whose value is
expressed by $b_i$, i.e., $H_i$ are functions
on the unit cotangent bundle $U^*M$ defined by
the following identity in $\lambda$;
\begin{gather*}
\sum_{j=1}^{n-1}\left(
\prod_{\substack{1\le k\le n-1\\ k\ne j}}
(\lambda-a_k)\right)\,F_j(\mu)-\prod_{k=1}^{n-1}(\lambda-a_k)=-\prod_{i=1}^{n-1}(\lambda-H_i(\mu))\,,\\
H_1(\mu)\ge H_2(\mu)\ge \cdots \ge H_{n-1}(\mu)\,,\quad \mu\in U^*M.
\end{gather*}
We extend $H_i$ to $T^*M-\{0$-section\} as a function of degree $0$, i.e.,
\begin{equation*}
H_i(t\lambda)=H_i(\lambda),\qquad t>0, \lambda\in T^*M, \lambda\ne 0\,.
\end{equation*}
Then the $\pi_*$-image of the vector $X_{H_i}$ at $\flat(\dot\gamma(t))$ is perpendicular to
$\dot\gamma(t)$. 

In \cite[Proposition 5.1]{IK2} we proved that
the Jacobi fields on the manifolds we are considering possess a remarkable property,
which may be stated as follows.
\begin{prop}
There are smooth vector fields $V_i(t)$ $(1\le i\le n-1)$ 
along the geodesic $\gamma(t)$ satisfying
\begin{equation*}%\label{parallelvf}
\pi_*\left((X_{H_i})_{\flat(\dot\gamma(t))}\right)=h_i(t)V_i(t)\,,\quad |V_i(t)|=1
\end{equation*}
for some functions $h_i(t)$ and they have the 
following properties:
\begin{enumerate}
\item Each $V_i(t)$ is parallel along
$\gamma(t)$.
\item $V_1(t),\dots,V_{n-1}(t)$ are mutually orthogonal for any $t\in \R$.
\item Any Jacobi field $Z(t)$ satisfying $Z(0), Z'(0)\in \R V_i(0)$ is of the form $z(t)V_i(t)$ with some function $z(t)$ for any $t\in\R$ and any $i$.
\end{enumerate}
\end{prop}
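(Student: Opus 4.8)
The plan is to realize each $\pi_*X_{H_i}$ directly as the variation field of a family of geodesics generated by a symmetry of the geodesic flow, and then to extract orthogonality and parallelism from the separated (Stäckel) form of the metric. First I would record the involutivity: on the open dense subset of $U^*M$ where the $n-1$ roots $b_i$ are simple, each $H_i$ is by the implicit function theorem a smooth function of the coefficients $F_1,\dots,F_{n-1}$, so from $\{F_j,F_k\}=0$ and $\{F_j,E\}=0$ one gets $\{H_i,H_j\}=0$ and $\{H_i,E\}=0$; extending $H_i$ as a degree-$0$ homogeneous function preserves these brackets. Hence $[X_E,X_{H_i}]=X_{\{E,H_i\}}=0$, so the flow $\phi^i_s$ of $X_{H_i}$ commutes with the geodesic flow $\zeta_t$ and preserves $E$. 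Fixing $\mu=\flat(\dot\gamma(0))\in U^*M$ and setting $\Gamma_i(s,t)=\pi(\zeta_t\phi^i_s\mu)=\pi(\phi^i_s\zeta_t\mu)$, each curve $t\mapsto\Gamma_i(s,t)$ is a unit-speed geodesic, so $\Gamma_i$ is a geodesic variation and its variation field $J_i(t):=\pi_*\big((X_{H_i})_{\flat(\dot\gamma(t))}\big)$ is a Jacobi field along $\gamma$. Perpendicularity $g(J_i,\dot\gamma)=\sum_j\xi_j\,\partial H_i/\partial\xi_j=0$ then follows at once from Euler's identity for the degree-$0$ homogeneous $H_i$. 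Writing $J_i=h_iV_i$ with $|V_i|=1$ defines the candidate fields $V_i$ (perpendicular to $\dot\gamma$), and reduces every assertion about $V_i$ to one about the $J_i$.

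Next I would compute $J_i$ explicitly in the coordinate frame using the separation relation $\xi_i^2=(-1)^{i-1}\prod_{k=1}^{n-1}(f_i(x_i)-H_k)$, valid on $U^*M$. Differentiating in $\xi_m$ yields, wherever $\xi_i\ne0$,
\begin{equation*}
\sum_{l=1}^{n-1}\frac{\partial H_l/\partial\xi_m}{f_i(x_i)-H_l}=-\frac{2\delta_{im}}{\xi_i},
\end{equation*}
a (consistent, overdetermined) linear system governed by the poles $f_i-H_l$. Solving it gives a closed rational expression for the components $(J_i)_m=\partial H_i/\partial\xi_m$, and hence for $g(J_i,J_j)=\sum_m g_{mm}(J_i)_m(J_j)_m$ with $g_{mm}$ read off from \eqref{eq:metric}. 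A partial-fraction/interpolation identity of the same type that underlies Lemma~\ref{lemma:flat} then forces the cross terms to cancel, giving $g(J_i,J_j)=0$ for $i\ne j$; this is the mutual orthogonality (statement 2) on the generic set, to be extended to all $t$ by continuity.

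The main obstacle is the parallelism (statement 1). Here I would feed the explicit components $(J_i)_m$ from the previous step, together with the Christoffel symbols of the diagonal metric \eqref{eq:metric} expressed through the $f_l'$, into $\nabla_{\dot\gamma}V_i$ and show it vanishes; equivalently, that $\nabla_{\dot\gamma}J_i$ is pointwise proportional to $J_i$. The decisive point is that when the geodesic equations \eqref{xprime} and \eqref{geodeq} are substituted, the off-diagonal terms in the frame $\{V_j\}$ should telescope away precisely because the separation constants $b_k=H_k$ are conserved along $\gamma$; this is the step where the special structure of the Liouville metric, rather than mere involutivity, is indispensable, and where the computation is most delicate.

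Once $V_i$ is known to be parallel, statement 3 follows formally. From $J_i=h_iV_i$ and $J_i''=h_i''V_i$ the Jacobi equation forces $R(V_i,\dot\gamma)\dot\gamma=-(h_i''/h_i)V_i$ wherever $h_i\ne0$, so by continuity each $V_i$ is an eigenvector of the symmetric tidal operator $R(\,\cdot\,,\dot\gamma)\dot\gamma$, which preserves $\dot\gamma^\perp$; since the $V_i$ are orthonormal and span $\dot\gamma^\perp$, this operator is diagonal in the parallel frame $\{V_1,\dots,V_{n-1}\}$. A Jacobi field $Z$ with $Z(0),Z'(0)\in\R V_i(0)$ stays perpendicular to $\dot\gamma$ (as $g(Z,\dot\gamma)$ is linear with vanishing value and derivative at $0$), so expanding $Z=\sum_j z_jV_j$ decouples the Jacobi equation into scalar equations $z_j''+\kappa_j z_j=0$; the initial data kill every $z_j$ with $j\ne i$, whence $Z\in\R V_i(t)$ for all $t$. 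Finally, I would lift the genericity assumptions by noting that $J_i$, $V_i$, and the orthogonality and parallelism relations extend continuously across the branch locus $\bigcup_kJ_k$, so the conclusions hold for all $t\in\R$ along any geodesic not contained in some $N_i$.
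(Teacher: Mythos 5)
Your high-level skeleton is sound, and the routine pieces are correct: $J_i(t)=\pi_*\bigl((X_{H_i})_{\flat(\dot\gamma(t))}\bigr)$ is indeed a Jacobi field because the flow of $X_{H_i}$ commutes with the geodesic flow, perpendicularity to $\dot\gamma$ follows from Euler's identity for the degree-$0$ homogeneous $H_i$, and your deduction of statement (3) from (1) and (2) is the standard decoupling argument. Note that the paper itself gives no proof to compare against: the proposition is quoted from \cite[Proposition 5.1]{IK2}, so your attempt must stand as a self-contained proof. Judged that way, it contains one outright error and one fatal gap. The error: the identity $\xi_i^2=(-1)^{i-1}\prod_k(f_i(x_i)-H_k)$ holds only \emph{on} $U^*M$, and $\partial/\partial\xi_m$ is not tangent to $U^*M$, so you may not differentiate it in $\xi_m$. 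The identity valid on $T^*M$ is $2E\cdot\prod_k(f_i-H_k)=(-1)^{i+1}\xi_i^2$, and differentiating it inserts the extra term $2g^{mm}\xi_m$ on the right of every equation, i.e.\ the correct system is $\sum_l(\partial H_l/\partial\xi_m)/(f_i-H_l)=2g^{mm}\xi_m-2\delta_{im}/\xi_i$. Your system is not ``consistent, overdetermined''; it has \emph{no} solution: a solution would give $\sum_lw_l/(\lambda-H_l)=P(\lambda)/\prod_l(\lambda-H_l)$ with $\deg P\le n-2$ and $P(f_r)=0$ for all $r\ne m$, forcing $P\equiv0$ and hence $P(f_m)=2(-1)^m\xi_m=0$, a contradiction wherever $\xi_m\ne0$. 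The slip is repairable — the missing term contributes $2g^{mm}\xi_m\prod_k(\lambda-H_k)$ to $P$, which vanishes at $\lambda=H_l$, so the residue formula
$\partial H_l/\partial\xi_m=\frac{2(-1)^m\xi_m\prod_{r\ne m}(H_l-f_r)}{\prod_{r\ne m}(f_m-f_r)\prod_{k\ne l}(H_l-H_k)}$
survives and an interpolation identity of the kind you invoke does then give $g(J_i,J_j)=0$ for $i\ne j$ — but as written your step 2, on which everything later leans, is wrong.

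The gap is statement (1), which is the entire content of the proposition. That $\pi_*X_{H_i}$ is a Jacobi field perpendicular to $\dot\gamma$ is true for \emph{any} integrable geodesic flow with degree-$0$ homogeneous integrals; the ``remarkable property'' special to these Liouville manifolds is that these Jacobi fields never rotate, i.e.\ remain proportional to parallel fields. You correctly call this ``the main obstacle'' and then do not overcome it: you propose to substitute the (erroneously derived) components and the Christoffel symbols into $\nabla_{\dot\gamma}V_i$ and assert that the off-diagonal terms ``should telescope away''. No cancellation mechanism is exhibited, and nothing you actually established (involutivity, homogeneity, the separation relation) is shown to produce one; so the single claim that carries the theorem is left unproved, and with it your proof of (3), which presupposes (1). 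A smaller inaccuracy in the same direction: the zeros of $h_i$ are not confined to the branch locus $\bigcup_kJ_k$ — along a generic geodesic $J_i$ vanishes at every time in the discrete but infinite set $S_i$ (where $f_i(x_i(t))=b_i$, resp.\ $f_{i+1}(x_{i+1}(t))=b_i$), so $V_i=J_i/h_i$ is undefined there, and extending it across those times by parallel transport already presupposes (1) on the complement; your closing appeal to continuity across the branch locus does not address the actual degeneration set.
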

We prove here the following proposition, which will be necessary in later sections.
\begin{prop}\label{prop:dualtob}
The one-form
\begin{equation*}
\tilde \omega_i=\sum_{k=1}^n\frac{\epsilon_k(-1)^kG_i(f_k)\ dx_k}
{\sqrt{(-1)^{k-1}\prod_{l=1}^{n-1}(f_k-b_l)}},\quad G_i(\lambda)=\prod_{1\le l\le n-1\atop l\ne i}
(\lambda-b_l),
\end{equation*}
satisfies 
\begin{equation*}
\tilde \omega_i(V_k(t))=0\quad (k\ne i),\quad \tilde \omega(\dot\gamma(t))=0
\end{equation*}
at $\gamma(t)$ for any $t\in\R$ such that $(f_{i+1}(x_{i+1}(t))-b_i)(f_i(x_i(t))-b_i)\ne 0$ $(1\le  i\le n-1)$. Here $\epsilon_k=$ sign of $\xi_k=$ sign of $x_k'(t)$. At $t\in\R$ with $f_{i+1}(x_{i+1}(t))=b_i$ (resp. $f_i(x_i(t))=b_i$)
the one-form $dx_{i+1}$ (resp. $dx_i$) has the same property.
\end{prop}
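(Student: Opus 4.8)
The plan is to verify the vanishing $\tilde\omega_i(V_k(t))=0$ for $k\neq i$ and $\tilde\omega_i(\dot\gamma(t))=0$ by working in the cotangent-bundle formalism and exploiting the explicit description of the vector fields $V_k$ and $\dot\gamma$ in terms of the Hamiltonian vector fields of the first integrals. The key observation is that $\tilde\omega_i$ is, up to the sign factors $\epsilon_k$, precisely the pullback of a closed one-form built from the $b_l$'s (the same type of one-form appearing in the geodesic equations \eqref{geodeq} with $G(\lambda)=G_i(\lambda)$ of degree $n-2$), and that the $\pi_*$-images of $X_{H_k}$ and $X_E$ (i.e.\ $\dot\gamma$) form, together with the perpendicular structure described just before the proposition, a convenient frame against which to test $\tilde\omega_i$. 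Since $\dot\gamma(t)=\pi_*X_E$ and $X_E$ is a linear combination of the $X_{F_j}$, while $\tilde\omega_i$ is dual to the $b_i$-direction, the vanishing on $\dot\gamma$ should follow directly from the fact that $\tilde\omega_i$ with $G_i$ of degree $\le n-2$ is one of the geodesic-orbit one-forms $\omega_l$ (equation \eqref{eq:geod0}), hence annihilates the tangent to the geodesic.

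First I would make precise the relationship between the frame $\{V_1,\dots,V_{n-1},\dot\gamma\}$ and the one-forms $\omega_l$ dual to $\{\pi_*X_{F_j}\}$. Recall from \S3 that $\omega_l=\sum_i (b_{il}/2\xi_i)\,dx_i$ and that $\tilde\omega_i$ has the same shape with $G_i(f_k)$ replacing the $b_{kl}$-coefficient; since $\deg G_i=n-2$, one can expand $G_i(\lambda)=\prod_{l\neq i}(\lambda-b_l)$ in the basis $\{\prod_{k\neq j}(\lambda-a_k)\}_{j}$ relevant to the $b_{kl}$, and thereby write $\tilde\omega_i$ as an explicit linear combination of the $\omega_l$. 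The point is that $\tilde\omega_i$ is constructed to be ``dual to $b_i$'' in the sense that, under this expansion and the correspondence $H_j\leftrightarrow b_j$, it pairs to zero against all directions $\pi_*X_{H_k}$ with $k\neq i$ and against $\pi_*X_E$. I would then compute $\tilde\omega_i(V_k)$ by using $\pi_*(X_{H_k})=h_k V_k$ together with the pairing $\tilde\omega_i(\pi_*X_{H_k})$, reducing everything to a residue/partial-fractions identity among the polynomials $G_i$, $\prod(\lambda-b_l)$, and the elementary symmetric data encoded in $H_k\mapsto b_k$.

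The cleanest route for the core identity is to recognize it as a statement about the roots $b_l$ of $\Theta(\lambda)$: the expression $\tilde\omega_i(\pi_* X_{H_k})$ reduces to evaluating $G_i(\lambda)=\prod_{l\neq i}(\lambda-b_l)$ in a way that manifestly vanishes unless $k=i$, because $G_i(b_k)=0$ for all $k\neq i$ while $G_i(b_i)\neq 0$. I would carry out the pairing by differentiating the defining identity for the $H_j$ (equivalently, using $X_{H_k}$ as a combination of the $X_{F_j}$ with coefficients given by $\partial b_k/\partial F_j$, obtained from implicit differentiation of $\Theta$), and then observing that the resulting contraction against $\tilde\omega_i$ collapses to $G_i(b_k)$ times a nonzero factor. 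The vanishing $\tilde\omega_i(\dot\gamma)=0$ is then the special ``$E$-direction'' instance, which is exactly \eqref{eq:geod0}.

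The main obstacle is the boundary behavior at the points where $f_{i+1}(x_{i+1}(t))=b_i$ or $f_i(x_i(t))=b_i$, where the square roots in $\tilde\omega_i$ degenerate and the sign factors $\epsilon_k$ jump. There one must argue that, although $\tilde\omega_i$ itself is singular, the claimed replacement by $dx_{i+1}$ (resp.\ $dx_i$) inherits the same annihilation property; I expect to handle this by a careful limiting argument, examining which summand in $\tilde\omega_i$ blows up as $f_i\to b_i$ and showing that after clearing the common singular factor $\sqrt{f_k-b_i}$ the dominant term is precisely a nonzero multiple of $dx_i$ (resp.\ $dx_{i+1}$), so the pairing identities pass to the limit with $dx_i$ playing the role of $\tilde\omega_i$. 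This local analysis near the turning points of the coordinate functions $x_i(t)$, together with keeping track of the sign conventions $\epsilon_k=\operatorname{sign}(x_k'(t))$ across those points as described in \S3, is where the bulk of the careful work lies; the interior identity itself is essentially algebraic once the frame-and-residue setup is in place.
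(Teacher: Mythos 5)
Your proposal is correct, and its skeleton matches the paper's proof: both arguments reduce the interior statement to the fact that $\tilde\omega_i$ is (up to a factor $2$) the coframe element dual to $\pi_*(X_{H_i})$ in the frame $\{\pi_*(X_{H_1}),\dots,\pi_*(X_{H_{n-1}}),\pi_*(X_E)=\dot\gamma\}$, and both handle the degenerate times by exactly the rescaling limit you describe: multiplying $\tilde\omega_i$ by $\epsilon_i\sqrt{f_i(x_i)-b_i}$ (resp.\ $\epsilon_{i+1}\sqrt{b_i-f_{i+1}(x_{i+1})}$), only the singular summand survives, the limit being the nonzero multiple $\sqrt{G_i(f_i)}\,dx_i$ (resp.\ $\sqrt{G_i(f_{i+1})}\,dx_{i+1}$), so the annihilation identities pass to the limit by continuity of $V_k(t)$ and $\dot\gamma(t)$. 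Where you genuinely diverge is in how the duality is established. The paper gets it in one stroke: taking the Hamiltonian vector field of both sides of the identity $\prod_{l}(f_k(x_k)-H_l)\cdot 2E=(-1)^{k+1}\xi_k^2$ (equation \eqref{idHi}) and applying $\pi_*$ yields
\begin{equation*}
\sum_{m=1}^{n-1}\prod_{l\ne m}(f_k-b_l)\,\pi_*(X_{H_m})-2\prod_{l=1}^{n-1}(f_k-b_l)\,\pi_*(X_E)=(-1)^k2\xi_k\,\frac{\partial}{\partial x_k}\,,
\end{equation*}
from which the dual coframe $\eta_m=\sum_k\frac{(-1)^k\prod_{l\ne m}(f_k-b_l)}{2\xi_k}\,dx_k=\frac12\tilde\omega_m$ is read off directly, and all the vanishing statements are then tautological. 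Your route instead expands $G_i$ in the Lagrange basis $\bigl\{\prod_{m\ne j}(\lambda-a_m)\bigr\}_{j=1}^{n-1}$ to write $\tilde\omega_i$ as a combination of the $\omega_l$ dual to the $\pi_*(X_{F_j})$, expresses $X_{H_k}$ through the $X_{F_j}$ and $X_E$ by implicit differentiation of $\Theta$, and collapses the pairing to $G_i(b_k)\big/\prod_{m\ne k}(b_k-b_m)$, which vanishes precisely for $k\ne i$; as a bonus, $\tilde\omega_i(\dot\gamma)=0$ is immediate from the orbit equations \eqref{geodeq}, since $\deg G_i=n-2$. Both computations are sound; the paper's is shorter and bypasses the bookkeeping your version needs (in particular, the degree-zero homogeneous extension of the $H_k$ when writing $X_{H_k}$ as a combination of the $X_{F_j}$ and $X_E$), while yours makes the algebraic mechanism of the vanishing, namely $G_i(b_k)=0$ for $k\ne i$, completely explicit and ties the proposition back to the geodesic equations of \S3.
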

\begin{proof}
By the identity
\begin{equation}\label{idHi}
\prod_{l=1}^{n-1}(f_k(x_k)-H_l)\cdot 2E=(-1)^{k+1}\xi_k^2
\end{equation}
one obtains
\begin{equation*}
\sum_{i=1}^{n-1}\prod_{l\ne i}(f_k-b_l)\cdot\pi_*(X_{H_i})-2\prod_{l=1}^{n-1}(f_k-b_l)\cdot
\pi_*(X_E)=(-1)^k2\xi_k\frac{\partial}{\partial x_k}
\end{equation*}
for $(x,\xi)\in U^*M$ at which $H_l=b_l$ (and $2E=1$).
Then, taking the dual one-forms $\eta_1,\dots,\eta_n$ of $\pi_*(X_{H_1}),\dots,\pi_*(X_{H_{n-1}}),\pi_*(X_E)$, we have
\begin{equation*}
\eta_i=\sum_{k=1}^n\frac{(-1)^k\prod_{l\ne i}(f_k-b_l)}{2\xi_k}\,dx_k\quad(1\le i\le n-1)\,.
\end{equation*}
Then, taking \eqref{idHi} into account, we have the proposition for the points where $\pi_*(X_{H_i})\ne 0$
for any $i$. For the point where $\pi_*(X_{H_i})=0$, i.e., $f_{i+1}(x_{i+1})=b_i$ or $f_i(x_i)=b_i$, we can take the limit:
\begin{align*}
&\lim_{f_{i+1}(x_{i+1})\to b_i}\epsilon_{i+1}\sqrt{b_i-f_{i+1}(x_{i+1})}\,\tilde \omega_i=\sqrt{G_i(f_{i+1})}
\,dx_{i+1}\,,\\
&\lim_{f_i(x_i)\to b_i}\epsilon_i\sqrt{f_i(x_i)-b_i}\,\tilde\omega_i=\sqrt{G_i(f_i)}\,dx_i\,.
\end{align*}
Thus the proposition follows.
\end{proof}
Let us define the Jacobi field $Y_i(t)$ by the initial condition:
\begin{equation*}
Y_i(0)=0, \quad Y'_i(0)=V_i(0)\quad (1\le i\le n-1).
\end{equation*}
Then $Y_i(t)$ is of the form $y_i(t)V_i(t)$ for some function $y_i(t)$.
Let $t=r_i$ be the first zero of $Y_i(t)$ for $t>0$. We have 
already seen that $r_i\ge t_n$ for any $i$ 
(\cite[Proposition 5.3]{IK2}). Moreover,
let $S_i$ be the discrete subset of $\R$ such that
\begin{equation*}
t\in S_i\Longleftrightarrow
\begin{cases}
f_i(x_i(t))=b_i\quad \text{if } b_i=a_i^+\\
f_{i+1}(x_{i+1}(t))=b_i\quad \text{if } b_i=a_i^-
\end{cases}
\end{equation*}
as given in \cite[\S5]{IK2}. Let $s_i^1$ and $s_i^2$ be
the first and the second positive time in $S_i$
respectively. We then have, by the definition and
Proposition~5.1 in \cite{IK2},
\begin{gather}\label{primeineq}
s_i^1<r_i,\, t_i<s_i^2\quad (b_i=a_i^+)\ ,\quad
s_i^1<r_i,\, t_{i+1}<s_i^2\quad (b_i=a_i^-)\quad
\text{if}\quad 0\not\in S_i\\
\label{primeineq2}r_i=s_i^1=t_i\quad (b_i=a_i^+)\ ,\quad
r_i=s_i^1=t_{i+1}\quad (b_i=a_i^-)\quad
\text{if}\quad 0\in S_i\ .
\end{gather}

Now we prove the following proposition about the ordering of $r_i$ and $t_j$.
\begin{prop}\label{prop:main}
If \,$0\not\in S_j$, then \,$t_{j+1}<r_j<t_j$ for each $1\le j\le n-1$.
\end{prop}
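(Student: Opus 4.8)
The plan is to identify the Jacobi field $Y_j$ with a concrete variation of geodesics emanating from $p$, thereby converting the condition $y_j(r_j)=0$ into a sign-definite integral equation that can be compared with the defining relations for $t_j$ and $t_{j+1}$ through Proposition~\ref{prop:newineq}, in close parallel with the proof of Proposition~\ref{prop:order1}.

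First I would realize $Y_j$ as the $b_j$-variation of geodesics from $p$. Consider the family of unit geodesics $\gamma^s$ issuing from the fixed point $p=\gamma(0)$ whose integrals of motion satisfy $H_k=b_k$ for $k\ne j$ and $H_j=b_j+s$. Since the base point is held fixed, the variation field $W(t)=\partial_s\gamma^s(t)\big|_{s=0}$ is a Jacobi field with $W(0)=0$, and its initial covariant derivative $W'(0)=\sharp(\partial_s v^s|_0)$ is the metric dual of the covector in $T_p^*M$ that pairs as $\delta_{jk}$ with $dH_k$ and annihilates $dE$. Because $\pi_*X_{H_k}=h_kV_k$ and $\pi_*X_E=\dot\gamma$ form an orthogonal frame, this forces $W'(0)\in\R V_j(0)$ with nonzero coefficient exactly when $h_j(0)\ne0$, i.e. when $0\notin S_j$; this is the single place where the hypothesis enters. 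Hence $W=cY_j$ with $c\ne0$, so $r_j$ is the first positive zero of $W$. Since $W=y_jV_j$ lies on the line $\R V_j$, Proposition~\ref{prop:dualtob} gives $y_j(T)=0\iff\tilde\omega_j(W(T))=0$, as $\tilde\omega_j$ annihilates every $V_k$ ($k\ne j$) and $\dot\gamma$ while $\tilde\omega_j(V_j)\ne0$.

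Next I would turn $\tilde\omega_j(W(T))$ into an integral. Writing $W(T)=\sum_i(\partial x_i(T)/\partial b_j)\,\partial/\partial x_i$, one has
\[
\tilde\omega_j(W(T))=\sum_{i=1}^n\frac{\epsilon_i(-1)^iG_j(f_i)}{\sqrt{(-1)^{i-1}\prod_{k}(f_i-b_k)}}\frac{\partial x_i(T)}{\partial b_j},\qquad G_j(\lambda)=\prod_{k\ne j}(\lambda-b_k).
\]
The quantities $\partial x_i(T)/\partial b_j$ are obtained by differentiating in $b_j$, at fixed $T$, the $n-1$ integrated orbit relations \eqref{geodeqint} together with the length relation \eqref{eq:geodlength} for the geodesic from $p$ to $\gamma(T)$. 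Substituting the solution of this linear system and using the orthogonality relations of Proposition~\ref{prop:dualtob} to eliminate the cross terms, the expression should collapse, up to a nonvanishing factor and terms counting completed half-periods, to the $b_j$-derivative of an integral of exactly the type in Proposition~\ref{prop:newineq}, namely
\[
\frac{\partial}{\partial b_j}\sum_{l=1}^n\int_{a_l^+}^{a_{l-1}^-}\frac{(-1)^lA(\lambda)(\lambda-a_n)G_j(\lambda)\,d\lambda}{\sqrt{-\prod_k(\lambda-b_k)\cdot\prod_m(\lambda-a_m)}},\qquad G_j(\lambda)=\prod_{k\ne j}(\lambda-b_k),
\]
whose sign is controlled by Proposition~\ref{prop:newineq}(2). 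The identities \eqref{basiceq} are precisely what permit rewriting the partial-orbit integrals up to time $T$ in terms of these fixed $\lambda$-integrals over $[a_l^+,a_{l-1}^-]$, with the breakpoints $t_j$ and $t_{j+1}$ marking one completed oscillation of the $j$-th and the $(j+1)$-st coordinate respectively.

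Finally, the ordering would follow by contradiction exactly as in Proposition~\ref{prop:order1}. Supposing $r_j\ge t_j$ (respectively $r_j\le t_{j+1}$), I would split the integral for $\tilde\omega_j(W(r_j))$ at the times $t_l$, group the summands by the index sets on which $(-1)^lG_j(f_l)$ keeps a constant sign, and read off from Proposition~\ref{prop:newineq}(2) that the total acquires a definite nonzero sign, contradicting $\tilde\omega_j(W(r_j))=0$. The inequalities \eqref{primeineq} and \eqref{primeineq2} pin down the correct branch, i.e. which turning points of $f_j$ or $f_{j+1}$ constitute $S_j$, in the two cases $b_j=a_j^+$ and $b_j=a_j^-$, and the involutive symmetry interchanging the two bounds reduces the work to one side; when some $b_k$ meet some $a_m$ the sign-definiteness persists by the limiting Proposition~\ref{prop:limitineq}. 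I expect the main obstacle to be this second step: carrying out the $b_j$-differentiation of the separated relations in the presence of the moving turning points $a_l^\pm$ and the coupled length constraint, and organizing the resulting boundary contributions so that the surviving term is exactly the sign-definite integral of Proposition~\ref{prop:newineq}(2). Once that identification is secured, the sign bookkeeping is routine in the manner of Proposition~\ref{prop:order1}.
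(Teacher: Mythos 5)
You follow the same route as the paper: realize $cY_j$ as the variation through unit-speed geodesics from $p$ along which $H_k=b_k$ is held fixed for $k\ne j$ while $b_j$ varies, differentiate the integrated geodesic relations in this parameter, evaluate at $t=r_j$, and then argue by contradiction using the sign information of Proposition~\ref{prop:newineq}(2) and Proposition~\ref{prop:limitineq}, in the pattern of Proposition~\ref{prop:order1}. (Your detour through $\tilde\omega_j$ and Proposition~\ref{prop:dualtob} is harmless but unnecessary: once $W=cY_j$ with $c\ne 0$, the whole vector $W(r_j)$ vanishes, and that is what kills the boundary terms.) The difficulty is that the step you defer --- the ``collapse'' of the differentiated relations to the identity \eqref{eq:diff3} --- is not routine bookkeeping but the actual content of the proof, and your proposal contains no argument for it. What has to be shown is that, after decomposing each partial-orbit integral at its turning times, the interior break values are the constants $a_l^+$, $a_{l-1}^-$ (hence deformation-invariant, contributing no boundary terms), so that the only surviving boundary contribution is the one proportional to $dx_l(cY_j(t))$, which dies at $t=r_j$; and, crucially, for the $l=j$ summand one must invoke \eqref{primeineq} --- that is, the hypothesis $0\not\in S_j$ a second time, not only through $c\ne 0$ --- to guarantee that $f_j-b_j$ stays away from zero on the integration range $[t_j,r_j]$, since otherwise the differentiation in $b_j$ passes through the singularity $1/(f_j-b_j)$ and produces a divergent integral. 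So your remark that the hypothesis enters in a single place is inaccurate, and the analysis you label ``the main obstacle'' is precisely what a proof must supply.

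There is a second gap in the endgame. With the one polynomial you write down, $G_j(\lambda)=\prod_{k\ne j}(\lambda-b_k)$, the integrand of each summand in the first line of \eqref{eq:diff3} is pointwise nonpositive, so the $l$-th summand is $\le 0$ or $\ge 0$ according as $r_j\ge t_l$ or $r_j\le t_l$; a sign contradiction therefore arises only in the case $r_j\ge t_1$, where all summands are nonpositive against a strictly negative second line. To exclude $t_{m+1}\le r_j\le t_m$ for each $m\ne j$ one must repeat the whole computation with the modified polynomial $G(\lambda)=\prod_{k\ne m,j}(\lambda-b_k)$: then every summand of the first line becomes nonnegative, while the second line is strictly positive by the other case of Proposition~\ref{prop:newineq}(2). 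This change of polynomial according to the assumed position of $r_j$ (the analogue of choosing the set $J$ in the proof of Proposition~\ref{prop:order1}) cannot be replaced by ``grouping summands'' under the fixed $G_j$, because for intermediate positions of $r_j$ the groups carry opposite signs and no contradiction results.
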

\begin{proof}
We shall first prove that $r_j<t_1$ for any $j$. Suppose that
$r_j\ge t_1$ for some $j$. Let $G(\lambda)=\prod_{k\ne j}
(\lambda-b_k)$ and observe the following formula 
(a part of geodesic equations):
\begin{equation}\label{eq:geod3}
\begin{aligned}
&t=\sum_{l=1}^{n}\int_{t_l}^{t
}\frac{(-1)^lG(f_l)\,(f_l-a_n)}
{\sqrt{(-1)^{l-1}\prod_{k=1}^{n-1}(f_l-b_k)
}}\left|\frac{dx_l(t)}{dt}\right|
\ dt\\
&+\sum_{l=1}^n\int_{a_l^+}^{a_{l-1}^-}
\frac{(-1)^lG(\lambda)\,(\lambda-a_n)
A(\lambda)\ d\lambda}
{\sqrt{-\prod_{k=1}^{n-1}(\lambda-b_k)
\cdot\prod_{k=0}^n(\lambda-a_k)}}\ .
\end{aligned}
\end{equation}
We differentiate the above formula
in terms of the deformation parameter defining 
the Jacobi field $cY_{j}$,
$c$ being $\pm$ (the norm of $\partial/\partial b_j$ at 
$\gamma(0)$), and put $t=r_j$. Then, we
claim that the resulting formula is:
\begin{equation}\label{eq:diff3}
\begin{aligned}
&\frac1{2}\sum_{l=1}^{n}
\int_{t_l}^{r_j}\frac{(-1)^lG(f_l)\,(f_l-a_n)}
{(f_l-b_j)\sqrt{(-1)^{l-1}\prod_{k=1}^{n-1}(f_l-b_k)}}
\left|\frac{dx_l(t)}{dt}\right|
\ dt\\
&+\frac{\partial}{\partial b_j}
\sum_{l=1}^n\int_{a_l^+}^{a_{l-1}^-}
\frac{(-1)^lG(\lambda)\,(\lambda-a_n)
A(\lambda)\ d\lambda}
{\sqrt{-\prod_{k=1}^{n-1}(\lambda-b_k)
\cdot\prod_{k=0}^n(\lambda-a_k)}}=0\ .
\end{aligned}
\end{equation}

To prove this we first assume that $b_j=a_j^+$.
Let 
\begin{equation*}
\gamma(t,u)=(x_1(t,u),\dots,x_n(t,u))\quad (|u|<\epsilon)
\end{equation*}
be a variation of the geodesic $\gamma(t)=\gamma(t,0)$ such that
\begin{equation*}
\frac{\partial \gamma}{\partial u}(t,0)=cY_j(t),
\end{equation*}
and that the value of each first integral $H_k$ ($k\ne j$) for the geodesics $t\to\gamma(t,u)$ remains to be $b_k$ (constant) for any $u$. (In this case,
$b_j$ is a function of $u$ such that $db_j/du=1$ at $u=0$.) For $l\ne j$ and for $t>t_l$
satisfying
\begin{equation*}
f_l(x_l(t,0))\ne a_l^+, a_{l-1}^-\,,
\end{equation*}
we define times $t^*$ and $\hat t$ such that
$t_l< t^*\le\hat t<t$ and that
\begin{gather*}
f_l(x_l(t^*,u)),\ f_l(x_l(\hat t,u))=\ a_l^+\text{ or } a_{l-1}^-\,,\\
a_l^+<f_l(x_l(s,u))<a_{l-1}^- \quad \text{for } s\in[\,t_l,t^*)\cup(\hat t,t\,]\,.
\end{gather*}
Then one obtains the following expression for sufficiently small $|u|$:
\begin{gather*}
\int_{t_l}^t\frac{(-1)^lG(f_l)\,(f_l-a_n)}
{\sqrt{(-1)^{l-1}\prod_{k=1}^{n-1}(f_l-b_k)
}}\left|\frac{\partial x_l(s,u)}{\partial s}\right|\ ds\\
=\frac{\epsilon}2\int_{f_l(x_l(t_l,u))}^{f_l(x_l(t^*,u))}\frac{(-1)^lG(\lambda)\,(\lambda-a_n)
A(\lambda)\ d\lambda}
{\sqrt{-\prod_{k=1}^{n-1}(\lambda-b_k)
\cdot\prod_{k=0}^n(\lambda-a_k)}}\\
+\frac{k}2\int_{a_l^+}^{a_{l-1}^-}
\frac{(-1)^lG(\lambda)\,(\lambda-a_n)
A(\lambda)\ d\lambda}
{\sqrt{-\prod_{k=1}^{n-1}(\lambda-b_k)
\cdot\prod_{k=0}^n(\lambda-a_k)}}\\
+\,\frac{\epsilon'}2\int_{f_l(x_l(\hat t,u))}^{f_l(x_l(t,u))}\frac{(-1)^lG(\lambda)\,(\lambda-a_n)
A(\lambda)\ d\lambda}
{\sqrt{-\prod_{k=1}^{n-1}(\lambda-b_k)
\cdot\prod_{k=0}^n(\lambda-a_k)}}\,,
\end{gather*}
where $\epsilon,\epsilon'\,(=\pm 1)$ are the sign of 
$\partial f_j(x_l(s,u))/\partial s$ at $s=t_l$ and $s=t$ respectively and $k$ is a certain nonnegative integer.

We differentiate the above formula in $u$ and put
$u=0$. 
Since $f_l(x_l(t_l,u))$, $f_l(x_l(t^*,u))$, and
$f_l(x_l(\hat t,u))$ do not depend on $u$,
the right-hand side becomes
\begin{equation}\label{dif}
\begin{gathered}
\frac{\epsilon}4\int_{f_l(x_l(t_l,u))}^{f_l(x_l(t^*,u))}\frac{(-1)^lG(\lambda)\,(\lambda-a_n)
A(\lambda)\ d\lambda}
{(\lambda-b_j)\sqrt{-\prod_{k=1}^{n-1}(\lambda-b_k)
\cdot\prod_{k=0}^n(\lambda-a_k)}}\\
+\frac{k}4\int_{a_l^+}^{a_{l-1}^-}
\frac{(-1)^lG(\lambda)\,(\lambda-a_n)
A(\lambda)\ d\lambda}
{(\lambda-b_j)\sqrt{-\prod_{k=1}^{n-1}(\lambda-b_k)
\cdot\prod_{k=0}^n(\lambda-a_k)}}\\
+\,\frac{\epsilon'}4\int_{f_l(x_l(\hat t,u))}^{f_l(x_l(t,u))}\frac{(-1)^lG(\lambda)\,(\lambda-a_n)
A(\lambda)\ d\lambda}
{(\lambda-b_j)\sqrt{-\prod_{k=1}^{n-1}(\lambda-b_k)
\cdot\prod_{k=0}^n(\lambda-a_k)}}\\
+\frac{\epsilon'}2\frac{(-1)^lG(f_l)\,(f_l-a_n)A(f_l)}
{\sqrt{-\prod_{k=1}^{n-1}(f_l-b_k)\cdot\prod_{k=0}^n(f_l-a_k)}}\,f_l'(x_l)\,dx_l(cY_j(t))\,.
\end{gathered}
\end{equation}
In the same way, it turns out that the sum of the first three lines of the formula
\eqref{dif} is equal to
\begin{equation}\label{dif2}
\frac12\int_{t_l}^{t}\frac{(-1)^lG(f_l)\,(f_l-a_n)}
{(f_l-b_j)\sqrt{(-1)^{l-1}\prod_{k=1}^{n-1}(f_l-b_k)
}}\left|\frac{d x_l(t)}{d t}\right|\ dt\,.
\end{equation}
Thus, if $f_l(x_l(r_j))\ne a_l^+, a_{l-1}^-$, then putting $t=r_j$ in \eqref{dif}, we have the
desired formula for $l$. If $f_l(x_l(r_j))= a_l^+, a_{l-1}^-$, then taking $t<r_j$ and taking the
limit $t\to r_j$, one obtains the same formula.

In case there are no such times $t^*$ and $\hat t$,
i.e., if 
\begin{equation*}
a_l^+<f_l(x_l(s,u))<a_{l-1}^- \quad \text{for any } s\in[\,t_l, t\,]\,,
\end{equation*}
then instead of \eqref{dif} one has, 
\begin{equation}\label{dif3}
\begin{gathered}
\frac{\epsilon}4\int_{f_l(x_l(t_l,u))}^{f_l(x_l(t,u))}\frac{(-1)^lG(\lambda)\,(\lambda-a_n)
A(\lambda)\ d\lambda}
{(\lambda-b_j)\sqrt{-\prod_{k=1}^{n-1}(\lambda-b_k)
\cdot\prod_{k=0}^n(\lambda-a_k)}}\\
+\frac{\epsilon'}2\frac{(-1)^lG(f_l)\,(f_l-a_n)A(f_l)}
{\sqrt{-\prod_{k=1}^{n-1}(f_l-b_k)\cdot\prod_{k=0}^n(\lambda-a_k)}}\,f_l'(x_l)\,dx_l(cY_j(t))\,,
\end{gathered}
\end{equation}
the first line of which is again equal to \eqref{dif2}. Thus we have the same result in this case.

Next, we consider the remaining term in 
\eqref{eq:geod3}:
\begin{equation}\label{eq:j}
\int_{t_j}^t\frac{(-1)^jG(f_j)\,(f_j-a_n)}
{\sqrt{(-1)^{j-1}\prod_{k=1}^{n-1}(f_j-b_k)
}}\left|\frac{\partial x_j(t,u)}{\partial t}\right|\ dt\,.
\end{equation}
Let us differentiate \eqref{eq:j} in $u$ at $u=0$ and put $t=r_j$.
When $t$ is close to $r_j$, the inequalities \eqref{primeineq} imply that $f_j(x_j(s))-b_j$
does not vanish on the interval $t_j\le s\le t$.
Thus the derivative of \eqref{eq:j} in $u$ at $u=0$ is described as \eqref{dif} with $k=0$ or as \eqref{dif3}.
Therefore, putting $t=r_j$, one obtains
\begin{equation*}
\frac12\int_{t_j}^{r_j}\frac{(-1)^jG(f_j)\,(f_j-a_n)}
{(f_j-b_j)\sqrt{(-1)^{j-1}\prod_{k=1}^{n-1}(f_j-b_k)
}}\left|\frac{d x_j(t)}{d t}\right|\ dt\,.
\end{equation*}
Hence the formula \eqref{eq:diff3} follows when 
$b_j=a_j^+$. The case where $b_j=a_{j}^-$ is similar and we omit the detail.

Let us come back to the situation before the claim. Since $r_j\ge t_1\ge t_l$, the first line of
the formula (\ref{eq:diff3}) is nonpositive. However, since
the second line is negative by Proposition \ref{prop:newineq} (2),
it is a contradiction. Thus $r_j<t_1$ for any $j$.

Now, we have proved that $t_n<r_j<t_1$. Assume that
$t_{m+1}\le r_j\le t_m$ for some 
$m\ne j$ and put
$G(\lambda)=\prod_{l\ne m,j}(\lambda-b_l)$. Then,
differentiating the formula
\begin{equation}
\begin{aligned}
&\sum_{l=1}^{n}\int_{t_l}^{t}\frac{(-1)^lG(f_l)\,(f_l-a_n)}
{\sqrt{(-)^{l-1}\prod_{k=1}^{n-1}(f_l-b_k)}}
\left|\frac{dx_l(t)}{dt}\right|
\ dt\\
&+\sum_{l=1}^n\int_{a_l^+}^{a_{l-1}^-}
\frac{(-1)^lG(\lambda)\,(\lambda-a_n)
A(\lambda)\ d\lambda}
{\sqrt{-\prod_{k=1}^{n-1}(\lambda-b_k)
\cdot\prod_{k=0}^n(\lambda-a_k)}}=0
\end{aligned}
\end{equation}
by $cY_{j}$ as above and putting $t=r_j$, we have the same
formula as (\ref{eq:diff3}) with $G(\lambda)=\prod_{l\ne m,j}
(\lambda-b_l)$. In this case, each summand of the first line of
(\ref{eq:diff3}) is nonnegative, whereas the second line is positive
by Proposition~\ref{prop:newineq}; again a contradiction.
Thus we have $t_{j+1}<r_j<t_j$ for any $j$.
\end{proof}
As an application of Proposition \ref{prop:main},
we shall show that the distribution of conjugate 
points have some curious asymptotic property. 
Let $\gamma(t)=(x_1(t),
\dots,x_n(t))$ be a geodesic such that the corresponding
$n-1$ values $b_i$ of the first integrals $H_i$
and the $n+1$ constants $a_j$ are all distinct.
Let $L\subset M$ be the $\pi$-image of the Lagrange
torus in $T^*M$ determined by $\cap_{i}
H_i^{-1}(b_i)$ and containing the geodesic orbit
$\{\flat(\dot\gamma(t))\}$, where $\pi:T^*M\to M$ is
the bundle projection. As stated in \S3, $L$ is 
diffeomorphic to the product
$L_1\times\dots\times L_n$, where each $L_j$ is either  the whole
circle $\R/\alpha_j\Z$ or an arc in it. Let $t=r_i^k$ be the
$k$-th zero of the Jacobi field $Y_i(t)$ $(1\le i\le n-1)$; 
$0<r_i^1<r_i^2<\cdots$.
\begin{thm}The sequence $\{\gamma(r_i^k)\}_{k\in\N}$ of conjugate points
of $\gamma(0)$ approaches the boundary $\partial L$ of $L$ 
when $k\to\infty$, 
i.e., for any $\epsilon >0$, there is $N>0$ such that the distance
of $\gamma(r_i^k)$ from $\partial L$ is less than $\epsilon$, if
$k\ge N$.
\end{thm}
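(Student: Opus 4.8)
The plan is to show that at the conjugate times $r_i^k$ the elliptic coordinate governing the relevant sheet of $\partial L$ is forced to its turning value as $k\to\infty$. Assume for definiteness $b_i=a_i^+$ (the case $b_i=a_i^-$ is symmetric, with $f_{i+1}$ replacing $f_i$). The locus $\{f_i(x_i)=b_i\}\cap L$ is exactly the part of $\partial L$ coming from the lower turning of $L_i$; since along $\gamma$ the remaining coordinates stay in $L$, it suffices to prove that $f_i(x_i(r_i^k))\to b_i$, equivalently that $r_i^k$ converges to the set $S_i$ of turning times (on which $f_i=b_i$), in the sense that $|r_i^k-s|\to 0$ for the nearest element $s\in S_i$.

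First I would record a localization. Because $\{H_i,E\}=0$, the field $\pi_*X_{H_i}=h_iV_i$ is (the spatial part of) a Jacobi field along $\gamma$ in the \emph{same} parallel direction $V_i$ as $Y_i$; hence $h_i$ and $y_i$ solve one and the same scalar Jacobi equation, and the computation in the proof of Proposition~\ref{prop:dualtob} shows that the zeros of $h_i$ are precisely $S_i$. Since $y_i(0)=0\ne h_i(0)$ when $0\notin S_i$, Sturm separation places exactly one $r_i^k$ in each gap between consecutive elements of $S_i$; in particular the number $m_k$ of complete oscillations of $f_i$ before time $r_i^k$ is of order $k$.

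The heart of the matter is a secular estimate obtained by pushing the computation behind \eqref{eq:diff3} to large times. Differentiating the geodesic/length identity \eqref{eq:geod3} in the deformation parameter of $cY_i$ and setting $t=r_i^k$, but now over an interval $[t_i,r_i^k]$ that sweeps $m_k$ complete oscillations of $f_i$, the passages through the turning times contribute — via the splitting already used in \eqref{dif} — a term $m_kP_i$, where $P_i=\frac{\partial}{\partial b_i}\sum_{l}\int_{a_l^+}^{a_{l-1}^-}(\cdots)\,d\lambda$ is precisely the period integral of Proposition~\ref{prop:newineq}(2) for $G=\prod_{k\ne i}(\lambda-b_k)$, together with a single finite integral over the last, incomplete oscillation up to $r_i^k$. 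Thus the conjugate condition reads $m_kP_i+(\text{partial integral})=0$ up to bounded terms. By Proposition~\ref{prop:newineq}(2) (and Proposition~\ref{prop:limitineq} for the degenerate orderings of the $a_j,b_l$) one has $P_i\ne 0$ with a definite sign, so the secular term grows like $k$; the only way the partial integral can cancel it is for its integrand, which carries the factor $(f_i-b_i)^{-3/2}$, to blow up, i.e. for the endpoint value $f_i(x_i(r_i^k))$ to tend to $b_i$. Matching the leading singularity $(f_i-b_i)^{-1/2}\sim |r_i^k-s|^{-1}$ at the adjacent turning time $s\in S_i$ against the $O(k)$ secular term yields $|r_i^k-s|=O(1/k)$, whence $\gamma(r_i^k)\to\partial L$.

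The main obstacle is the rigorous bookkeeping of the singular passages. Unlike in Proposition~\ref{prop:main}, where the relevant intervals avoided $S_i$ and every integral in \eqref{eq:diff3} was finite, here the $l=i$ integrand has a non-integrable pole at each turning time, and one must show that differentiating across these times reproduces exactly the period multiple $m_kP_i$ plus a controlled remainder — this is the content of iterating the $t^{*},\hat t$ decomposition of \eqref{dif}. A secondary difficulty is to rule out that the partial integral diverges through some other mechanism and to secure the quantitative rate $|r_i^k-s|=O(1/k)$ rather than mere accumulation; this needs a uniform lower bound on the regular part of the partial integral, again furnished by the definite-sign statements of Proposition~\ref{prop:newineq}. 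Finally, the cases $0\in S_i$ and coincidences among the $a_j,b_l$ are handled by the limiting procedure of Proposition~\ref{prop:limitineq}.
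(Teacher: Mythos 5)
Your reduction of the theorem to showing $f_i(x_i(r_i^k))\to b_i$, and your opening step, are sound: $\pi_*X_{H_i}=h_iV_i$ and $Y_i=y_iV_i$ are Jacobi fields in the same parallel direction, so $y_i$ and $h_i$ solve the same scalar equation, and Sturm separation gives the interlacing $s_i^k<r_i^k<s_i^{k+1}$; this is precisely what the paper imports from Corollary 5.2 of \cite{IK2}. The genuine gap is in your secular estimate, which is the heart of your argument. You assert that the $m_k$ passages of $f_i$ through its turning value contribute a term $m_kP_i$ with $P_i=\frac{\partial}{\partial b_i}\sum_{l=1}^n\int_{a_l^+}^{a_{l-1}^-}(\cdots)\,d\lambda$, i.e.\ exactly the quantity whose sign Proposition~\ref{prop:newineq}~(2) controls. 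That identification is false. The coordinates $f_1,\dots,f_n$ oscillate with pairwise distinct periods $t_1>\dots>t_n$ (Proposition~\ref{prop:order1}), so by time $r_i^k$ the coordinate $f_l$ has completed roughly $2r_i^k/t_l$ half-oscillations, a count depending on $l$. Iterating the splitting of \eqref{dif} therefore produces not $m_k\,\frac{\partial}{\partial b_i}\sum_l I_l$ but $\sum_l\nu_l(r_i^k)\,\frac12\frac{\partial I_l}{\partial b_i}$ with unequal integer weights $\nu_l(r_i^k)$, where $I_l$ denotes the $l$-th period integral. Proposition~\ref{prop:newineq}~(2) controls only the equally weighted sum: its proof rests on Lemma~\ref{lemma:flat} and the interpolation \eqref{eq:interp}, which combine the $n$ integrals into a single integrand of fixed sign, and that mechanism is unavailable once the weights differ. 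Indeed, for $G=\prod_{k\ne i}(\lambda-b_k)$ each term $\frac{\partial I_l}{\partial b_i}$ with $l\ne i$ is negative (differentiate under the integral sign and use $(-1)^{l-1}\prod_k(\lambda-b_k)>0$ on $(a_l^+,a_{l-1}^-)$), while the term $l=i$ — the one in which the endpoint $\lambda=b_i$ of $I_i$ moves with the deformation — has no sign control anywhere in the paper and could a priori cancel the others. So neither the $O(k)$ growth nor the definite sign of your secular term is established, and the conclusion that the partial integral must blow up, hence $f_i(x_i(r_i^k))\to b_i$, does not follow.

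A second unaddressed point compounds this: the divergent partial integral enters with a sign depending on whether the last turning of $f_i$ before $r_i^k$ is the bottom one (value $b_i$, which moves under the deformation) or the top one (value $a_{i-1}^-$, fixed); the two cases give endpoint contributions of opposite signs, and your cancellation argument needs the singular term's sign to be opposite to the secular term's, which you never verify and cannot verify until the secular sign is pinned down. The paper's own proof avoids all singular bookkeeping. It uses, besides the interlacing, the strict monotonicity $|x_i(r_i^k)-x_i(s_i^k)|>|x_i(r_i^{k+1})-x_i(s_i^{k+1})|$ (again from \cite{IK2} and Proposition~\ref{prop:main}), and argues by compactness: if the gaps do not tend to $0$, a subsequence of the time-translated geodesics $\gamma(t+r_i^{k(l)})$ and Jacobi fields converges, and for the limit geodesic the gap at $t=0$ equals the gap at the first zero $t=T$, forcing $\int_0^T|df_i(\tilde x_i(t))/dt|\,dt=2(a_{i-1}^--a_i^+)$, i.e.\ $T=t_i$; this contradicts the strict inequality $r_i<t_i$ of Proposition~\ref{prop:main}. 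If you want your stronger quantitative rate $|r_i^k-s_i^k|=O(1/k)$, you would first have to prove a new inequality — sign-definiteness of the $1/t_l$-weighted sum of the $\partial I_l/\partial b_i$ — which Proposition~\ref{prop:newineq} does not supply.
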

\begin{proof}
We shall consider the case where $b_i=a_i^+$. The case where
$b_i=a_{i}^-$ will be similar. Let $\{s_i^k\}$, $0<s_i^1<s_i^2<\cdots$, be the set of times such that $f_i(x_i(s_i^k))=b_i$.
Then, by Corollary 5.2 in \cite{IK2} and Proposition \ref{prop:main},
we have $s_i^k<r_i^k<s_i^{k+1}<r_i^{k+1}$ and
\begin{equation*}
|x_i(r_i^k)-x_i(s_i^k)|>|x_i(r_i^{k+1})-x_i(s_i^{k+1})|\ .
\end{equation*}
Note that $\gamma(s_i^k)\in L_1\times\dots\times\partial L_i\times\dots\times L_n\subset\partial L$.

We shall show that 
\begin{equation*}
\lim_{k\to\infty} |x_i(r_i^k)-x_i(s_i^k)|=0\ ,
\end{equation*}
which will indicate the theorem. Assume that this is not the case.
Then, there is a subsequence $r_i^{k(l)}$ $(l=1,2,\dots)$ such that
\begin{equation*}
\lim_{l\to\infty} x_i(r_i^{k(l)})=a,\quad f_i(a)>b_i,\quad
\lim_{l\to\infty}\gamma(r_i^{k(l)})=p\in L\ .
\end{equation*}
Then, taking a subsequence if necessary, the sequence 
of geodesics 
\newline$\gamma(t+r_i^{k(l)})$ converges
to a geodesic $\tilde\gamma(t)=(\tilde x_1(t),\dots,\tilde x_n(t))$ and the Jacobi fields $Y_i(t+r_i^{k(l)})$ converges to a Jacobi filed $\tilde Y_i(t)$ such that
$\tilde Y_i(0)=0$ and $\tilde Y'_i(0)$ is a multiple of $\flat(\partial/\partial H_i)$. Let $t=T>0$ be the first zero of $\tilde Y_i(t)$. Then 
$\tilde x_i(T)=\lim_{l\to\infty} x_i(r_i^{k(l)+1})$ and
\begin{gather*}
\lim_{l\to\infty}\big|x_i(r_i^{k(l)})-
x_i(s_i^{k(l)})\big|=
\lim_{l\to\infty}\big|x_i(r_i^{k(l)+1})-
x_i(s_i^{k(l)+1})\big|\ ,\\
|f_i(\tilde x_i(0))-b_i|=|f_i(\tilde x_i(T))-b_i|
\end{gather*}

Therefore we have 
\begin{equation*}
\int_0^{T}\left|\frac{df_i(\tilde x_i(t))}{dt}\right|=
2(a_{i-1}^--a_i^+)\ ,
\end{equation*}
which contradicts Proposition \ref{prop:main}.
\end{proof}
% prop 3.2 'ðŠg'£'µ'Ä'¨'­'±'ƁB
%%
\section{Conjugate locus}
Let $p_0=(x_{1,0},\dots,x_{n,0})\in M$ be a general point, i.e., a point which 
is not contained in any hypersurfaces $N_i$ $(0\le i\le n)$. We shall 
determine the shape of the conjugate locus of $p_0$. 
In view of Proposition~\ref{prop:isometry} we may assume $0<x_{i,0}<\alpha_i/4$ for any $i$
without loss of generality. 
Although the first conjugate locus is our 
primary concern, we shall also 
consider the
$k$-th conjugate locus for $1\le k\le n-1$ as well (see Introduction). The reason
of doing so is that the first $n-1$ conjugate loci can be viewed
as a scattered image of the first conjugate locus of a point of
the sphere of constant curvature, which is a one point with 
multiplicity $n-1$, provided $M$ is sufficiently close to the 
standard sphere in some sense. 
%We may assume $0<x_{i,0}<\alpha_i/4$ without loss of generality (cf.\,\cite{IK2}, \S2).

We shall parametrize the unit cotangent space $U^*_{p_0}M$
by $(n-1)$-torus as follows: Putting $f_{i,0}=f_i(x_{i,0})$,
\begin{gather*}
\xi_i=\epsilon_i\sqrt{(-1)^{i-1}\prod_{k=1}^{n-1}(f_{i,0}-b_k(u_k))}\ ,\\
b_k(u_k)=f_{k+1,0}(\cos u_k)^2+
f_{k,0}(\sin u_k)^2\ ,
\end{gather*}
$u=(u_1,\dots,u_{n-1})\in (\R/2\pi\Z)^{n-1}$, where the sign 
$\epsilon_i$ is chosen to be equal to that of $\cos u_i\sin u_{i-1}$
if $2\le i\le n-1$, that of $\cos u_1$ if $i=1$, and that of
$\sin u_{n-1}$ if $i=n$.  
We denote by $[u]\in U_{p_0}^*M$ the corresponding covector. Observe that $(b_1,\dots, b_{n-1})$ is, in the
case of the ellipsoid, essentially 
the same as the elliptic coordinates
$(\mu_1,\dots,\mu_{n-1})$ on $U_pM$ described in Introduction. Accordingly, we define submanifolds
(with boundary) $C_k^\pm$ $(1\le k\le n-1)$ of $U_{p_0}^*M$ by
\begin{equation*}
C_k^-=\{[u]\,|\,  u_k=0,\pi\},\qquad C_k^+=\{[u]\,|\, u_k=\pm\frac{\pi}2\}\,.
\end{equation*}
Then $C_{k-1}^-\cup C_k^+$ is equal to the great sphere $\xi_k=0$,
they are diffeomorphic to
\begin{gather*}
C_k^-\simeq  S^{k-1}\times \bar D^{n-1-k},\quad C_k^+\simeq \bar D^{k-1}\times S^{n-1-k}\,,
\end{gather*}
and the boundaries $\partial C_k^\pm$ satisfy 
\begin{gather*}
\partial C_k^+=\partial C_{k-1}^-= C_k^+\cap C_{k-1}^-\simeq S^{k-2}\times S^{n-1-k}\quad (2\le k\le n-1)\,,\\
\partial C_{n-1}^-=\emptyset=\partial C_1^+\,.
\end{gather*}

We shall denote by 
\begin{equation*}
t\mapsto \gamma(t,u)=(x_1(t,u),\dots,x_n(t,u))
\end{equation*}
the geodesic such that $\gamma(0,u)=p_0$ and $\flat((\partial\gamma/\partial t)(0,u))=[u]\in U^*_{p_0}M$. Put
 \begin{equation*}
 Y_i(t,u)=\frac{\partial\gamma}{\partial u_i}(t,u)
 \qquad (1\le i\le n-1)
 \end{equation*}
 and let $t=r_i(u)$ be the first zero of the Jacobi field 
 $t\mapsto Y_i(t,u)$ for $t>0$. 
 Note that the Jacobi fields $Y_i$ are identical with constant multiple of the ones
 defined in the previous section. 
 When $[u]\in C_{i-1}^-\cap C_i^+$, $Y_{i-1}(t,u)$ 
and $Y_i(t,u)$ vanish identically. So, in this case,
we use the Jacobi fields $Z_{i-1}(t)$ and $Z_i(t)$
defined in \cite[\S5]{IK2}  (see also \S7.2 in this paper), and define $t=r_k(u)$ as the first zero of $Z_k(t)$ for $t>0$ $(k=i,i-1)$.
Actually, $Z_{i-1}(t)$ and $Z_i(t)$ are also 
defined for $[u]$ near the points in $C_{i-1}^-\cap C_i^+$ and they are linear combinations of $Y_{i-1}(t,u)$ and $Y_i(t,u)$ there. 
Thus the functions $r_i(t)$ are continuous at any 
$[u]\in U_{p_0}^*M$.
 
In view of Proposition~\ref{prop:main} and \cite[Proposition 5.5]{IK2} we obtain the following proposition.
\begin{prop}\label{prop:conj}
$r_i(u)\le r_{i-1}(u)$ for any $u\in(\R/2\pi\Z)^{n-1}$, and 
the equality holds if and only if 
$[u]\in C_{i-1}^-\cap C_i^+$, i.e., $b_i(u_i)=b_{i-1}(u_{i-1})$.
\end{prop}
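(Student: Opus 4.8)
The plan is to reduce the comparison of $r_i(u)$ and $r_{i-1}(u)$ to the strict chain of inequalities already packaged in Proposition~\ref{prop:main}. The starting observation is elementary and concerns the parametrization: since $0<x_{j,0}<\alpha_j/4$ forces $f_{j+1,0}<f_{j,0}$, the function $b_i(u_i)=f_{i+1,0}\cos^2 u_i+f_{i,0}\sin^2 u_i$ sweeps the interval $[f_{i+1,0},f_{i,0}]$ while $b_{i-1}(u_{i-1})=f_{i,0}\cos^2 u_{i-1}+f_{i-1,0}\sin^2 u_{i-1}$ sweeps $[f_{i,0},f_{i-1,0}]$. These intervals meet only at $f_{i,0}$, so $b_i(u_i)\le f_{i,0}\le b_{i-1}(u_{i-1})$ always, with $b_i=b_{i-1}$ exactly when both equal $f_{i,0}$, i.e. when $u_i=\pm\pi/2$ and $u_{i-1}\in\{0,\pi\}$. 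This is precisely $[u]\in C_{i-1}^-\cap C_i^+$. Thus the statement splits into: $r_i<r_{i-1}$ whenever $b_i<b_{i-1}$, and $r_i=r_{i-1}$ on the resonance locus.

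First I would treat a fully generic $[u]$, one for which all $b_k(u_k)$ and all $a_j$ are distinct and $0\notin S_j$ for every $j$ (equivalently, $[u]$ lies off every sphere $C_j^\pm$). Here the field $Y_i$ of the present section is a nonzero multiple of the Jacobi field $Y_i$ of \S5, so the two notions of first zero $r_i$ coincide, and Proposition~\ref{prop:main} applies to both indices. Reading it for $j=i$ gives $r_i<t_i$, and for $j=i-1$ gives $t_i<r_{i-1}$ (since $t_{(i-1)+1}=t_i$), so that $r_i<t_i<r_{i-1}$.

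Next I would remove the genericity hypotheses. When $[u]$ lies on a single critical sphere, one of $r_i,r_{i-1}$ is pinned to a half-period value by~\eqref{primeineq2}: on $C_i^+$ (off the intersection) $b_i=f_{i,0}>a_i$ gives $b_i=a_i^+$ and hence $r_i=t_i$, while on $C_{i-1}^-$ (off the intersection) $b_{i-1}=f_{i,0}<a_{i-1}$ gives $b_{i-1}=a_{i-1}^-$ and hence $r_{i-1}=t_i$; in either case the complementary index is still controlled by Proposition~\ref{prop:main}, or, if it too sits on a sphere, by the ordering $t_{i+1}<t_i<t_{i-1}$ of Proposition~\ref{prop:order1}, and one again reads off $r_i<r_{i-1}$. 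The remaining degeneracies, where some $b_k(u_k)$ meets an $a_j$ so that the all-distinct hypothesis fails, are absorbed by rerunning the contradiction argument of Proposition~\ref{prop:main} with Proposition~\ref{prop:limitineq} in place of Proposition~\ref{prop:newineq}: the latter keeps the decisive integral (the second line of~\eqref{eq:diff3}) nonzero in the limit, so the strict inequality is preserved. By the continuity of the functions $r_k$ already noted, this establishes $r_i\le r_{i-1}$ everywhere, with strictness off $C_{i-1}^-\cap C_i^+$.

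Finally, on the resonance locus $C_{i-1}^-\cap C_i^+$ the fields $Y_{i-1}$ and $Y_i$ vanish identically, so Proposition~\ref{prop:main} is silent and one must argue with the replacement fields $Z_{i-1}$ and $Z_i$ of \cite[\S5]{IK2}. Here I would invoke \cite[Proposition 5.5]{IK2}, which identifies the first zeros of $Z_{i-1}$ and $Z_i$ at such a point; since $r_k(u)$ is by definition the first zero of $Z_k$ in this degenerate case, this yields $r_i(u)=r_{i-1}(u)$. Combined with the strict inequality off the locus, this pins the equality set to be exactly $C_{i-1}^-\cap C_i^+$. I expect this last step to be the main obstacle, as it is the only place where the clean telescoping breaks down: one has to leave the $Y$-fields, match them to the $Z$-fields near the resonance, and rely on the external input \cite[Proposition 5.5]{IK2} to see that the two families of conjugate points genuinely merge there rather than merely touch.
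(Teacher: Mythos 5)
Your skeleton matches the paper's own proof at every joint: the generic case via Proposition~\ref{prop:main} together with \eqref{primeineq2}, the resonance locus $C_{i-1}^-\cap C_i^+$ via the $Z$-fields and \cite[Proposition 5.5]{IK2}, and a limiting argument based on Proposition~\ref{prop:limitineq} for the remaining degenerate parameters. The gap is in the step you compress into one sentence, namely that the degeneracies where some $b_k(u_k)$ equals $a_k$ are ``absorbed by rerunning the contradiction argument of Proposition~\ref{prop:main} with Proposition~\ref{prop:limitineq} in place of Proposition~\ref{prop:newineq}.'' This is exactly where the paper's proof does almost all of its work, and as stated the step does not go through. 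The identity \eqref{eq:diff3}/\eqref{eq:diff4} is only derived for parameters with $b_1,\dots,b_{n-1},a_0,\dots,a_n$ all distinct (its derivation differentiates the geodesic equation along the Jacobi variation using the non-degenerate turning-point structure of the orbit), so at a degenerate $u$ one cannot simply re-run that argument; one must work with a generic sequence $u^k\to u$ and pass to the limit. And there the decisive problem appears: limits of the strict inequalities $r_i(u^k)<t_i(u^k)$ only give $r_i(u)\le t_i(u)$. Keeping the second line of \eqref{eq:diff4} nonzero in the limit does not, by itself, restore strictness.

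What the paper actually does is quantitative. From the sign structure of \eqref{eq:diff4} at $u^k$ and the positive limit of its second line it extracts the uniform lower bound \eqref{ineq:ri} on the single negative summand, $\int_{r_i(u^k)}^{t_i(u^k)}(\cdots)\,dt\ge c>0$ for all large $k$; then, assuming $r_i(u)=t_i(u)$ for contradiction, it shows $t_i(u^k)-r_i(u^k)\to 0$ and proves bounds on the integrand near the turning point that are uniform in $k$ --- the equicontinuity estimates \eqref{boundxi}, and, in the harder sub-case $f_{i,0}=b_{i-1}(u_{i-1})$ (i.e.\ $[u]\in C_{i-1}^-$), bounds measured against $\sqrt{b_{i-1}(u_{i-1})-f_i}$, since there the raw integrand is unbounded at the turning point --- so that the integral tends to $0$, contradicting \eqref{ineq:ri}. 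None of this mechanism appears in your proposal, and your case (a) does not substitute for it: invoking Proposition~\ref{prop:main} or Proposition~\ref{prop:order1} on a ``single critical sphere'' is unavailable as soon as any $b_k(u_k)=a_k$, and such coincidences occur simultaneously with $[u]\in C_{i-1}^-$ or $C_i^+$. (A smaller slip: lying off all spheres $C_j^\pm$ is not equivalent to all $b_k(u_k)$ and $a_j$ being distinct, since $b_k(u_k)=a_k$ happens at interior values of $u_k$.) Relatedly, your closing assessment is inverted: the resonance locus is the easy part, quoted from \cite{IK2}; the genuine obstacle, and the bulk of the paper's proof, is the strictness at degenerate parameters that your argument passes over.
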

\begin{proof}
Let $t_i=t_i(u)$ be the value defined by the formula~\eqref{def:ti} for the geodesic $\gamma(t,u)$.
We proved in Proposition~\ref{prop:main}
and remarked in \eqref{primeineq2} that
\begin{equation*}
r_i(u)\le t_i(u)\le r_{i-1}(u);
\end{equation*}
and $r_i(u)= t_i(u)$ if and only if $u_i= \pm \pi/2$ and
$r_{i-1}(u)= t_i(u)$ if and only if $u_{i-1}= 0,\pi$,
provided
$b_1(u_1),\dots,b_{n-1}(u_{n-1})$ and $a_0,\dots, a_n$ are all distinct. We shall prove that $r_i(u)\ne t_i(u)$ if $u_i\ne \pm \pi/2$ and
$r_{i-1}(u)\ne t_i(u)$ if $u_{i-1}\ne 0,\pi$, not necessarily assuming $b_1,\dots,b_{n-1}$ and
$a_1,\dots,a_n$ are distinct, which will indicate
the proposition.

Let us observe the formula~\eqref{eq:diff3} for
$j$ being replaced by $i$:
\begin{equation}\label{eq:diff4}
\begin{aligned}
&\frac1{2}\sum_{l=1}^{n}
\int_{t_l}^{r_i}\frac{(-1)^lG(f_l)\,(f_l-a_n)}
{(f_l-b_i)\sqrt{(-1)^{l-1}\prod_{m=1}^{n-1}(f_l-b_m)}}
\left|\frac{\partial x_l(t,u)}{\partial t}\right|
\ dt\\
&+\frac{\partial}{\partial b_i}
\sum_{l=1}^n\int_{a_l^+}^{a_{l-1}^-}
\frac{(-1)^lG(\lambda)\,(\lambda-a_n)
A(\lambda)\ d\lambda}
{\sqrt{-\prod_{m=1}^{n-1}(\lambda-b_m)
\cdot\prod_{m=0}^n(\lambda-a_m)}}=0\ ,
\end{aligned}
\end{equation}
where $G(\lambda)=\prod_{m\ne i,i-1}(\lambda-b_m)$ and $b_m=b_m(u_m)$. This formula is effective for $u$ such that
$b_1(u_1),\dots,b_{n-1}(u_{n-1})$ and $a_1,\dots,a_n$
are all distinct. We now take any $u\in(\R/2\pi\Z)^{n-1}$ such that 
$u_i\ne \pm\pi/2$ (i.e., $b_i(u_i)<f_{i,0}$)
and take a sequence $u^k=(u_1^k,\dots,u_{n-1}^k)\in (\R/2\pi\Z)^{n-1}$ such that $u^k\to
u$ as $k\to \infty$ and such that $b_1(u_1^k),\dots,b_{n-1}(u_{n-1}^k)$ and $a_1,\dots,a_n$
are all distinct for any $k$. We also assume that the ordering of $a_j$ and $b_j(u_j^k)$ does not 
change when $k$ varies for each $j$.
By Proposition~\ref{prop:limitineq} we see that the second line of the formula~\eqref{eq:diff4}
for $(b_m)=(b_m(u_m^k))$ has a limit value as $k\to\infty$ and the value is still positive.
Also, each summand in the first line of the formula~\eqref{eq:diff4} is positive if $l\ne i$ and
is negative if $l=i$ for any $k$ by Proposition~\ref{prop:newineq}. Therefore, there is a constant $c>0$
such that
\begin{equation}\label{ineq:ri}
\int_{r_i(u^k)}^{t_i(u^k)}\frac{(-1)^iG(f_i)\,(f_i-a_n)}
{(f_i-b_i^k)\sqrt{(-1)^{i-1}\prod_{m=1}^{n-1}(f_i-b_m^k)}}
\left|\frac{\partial x_i(t,u^k)}{\partial t}\right|\,dt\ge c
\end{equation}
for sufficiently large $k$, where $b_m^k=b_m(u_m^k)$.
We note that 
\begin{equation*}
f_i(x_i(t_i(u^k),u^k))=f_{i,0},\quad
b_i^k<f_{i,0},\quad a_i<f_{i,0}<a_{i-1}\,.
\end{equation*}

We now consider the following two cases separately: $f_{i,0}<b_{i-1}(u_{i-1})$; 
and $f_{i,0}=b_{i-1}(u_{i-1})$. Let us first assume that $f_{i,0}<b_{i-1}(u_{i-1})$.
By the formula~\eqref{xprime} we see that there are (sufficiently small) constant $\delta>0$ and
positive constants $c_1, c_2$ not depending on $k$ such that 
\begin{equation}\label{boundxi}
c_1\le \left|\frac{\partial x_i(t,u^k)}{\partial t}\right|\le c_2\quad\text{if}\quad
|f_i(x_i(t,u^k))-f_{i,0})|\le \delta
\end{equation}
for any (sufficiently large) $k$. Since $|df_i/dx_i|$ is bounded both above and below if $(a_i-f_i)(f_i-a_{i-1})$ is bounded away from $0$ in view of
\eqref{eq:fdiff}, we also have
\begin{equation}
c'_1\le \left|\frac{\partial f_i(x_i(t,u^k))}{\partial t}\right|\le c'_2\quad\text{if}\quad
|f_i(x_i(t,u^k))-f_{i,0})|\le \delta\,,
\end{equation}
for some positive constants $c_1'$ and $c_2'$ not
depending on $k$. Thus the map $t\mapsto f_i(x_i(t,u^k))$ and its inverse are ``equicontinuous''
in $k$ on a neighborhood of $t=t_i(u)$.
Now, suppose that $r_i(u)=t_i(u)$. Then for large $k$ we have
\begin{equation*}
|f_i(x_i(t,u^k))-f_{i,0})|\le \delta\quad\text{for any}\quad t\in[r_i(u^k),t_i(u^k)]
\end{equation*}
and by \eqref{ineq:ri},
\begin{equation}\label{ineq:contradict}
c'(t_i(u^k)-r_i(u^k))\ge c
\end{equation}
for some constant $c'>0$ not depending on $k$. Thus, taking a limit $k\to\infty$, we have a contradiction.

Next, suppose that $f_{i,0}=b_{i-1}(u_{i-1})$. In this case $b_{i-1}(u_{i-1})$ is not equal to $b_{i-2}(u_{i-2})$, because
\begin{equation*}
b_{i-1}(u_{i-1})=f_{i,0}<a_{i-1}\le b_{i-2}(u_{i-2})\,.
\end{equation*}
Therefore we may assume that $u^k$ are chosen so 
that $u_{i-1}^k=u_{i-1}$ for any $k$.
Then $t=t_i(u^k)$ is the turning point of the functions $x_i(t,u^k)$ and $f_i(x_i(t,u^k))$ (including the case $k=\infty$), and by the same reason as in the previous case, the functions
\begin{equation*}
\frac{|\partial x_i(t,u^k)/\partial t|}{\sqrt{b_{i-1}(u_{i-1})-f_i(x_i(t,u^k))}},\qquad
\frac{|\partial f_i(x_i(t,u^k))/\partial t|}{\sqrt{b_{i-1}(u_{i-1})-f_i(x_i(t,u^k))}}\end{equation*}
are bounded both above and below by positive constants not depending on $k$ if 
\begin{equation}\label{bounddelta}
|f_i(x_i(t,u^k))-f_{i,0})|\le \delta,
\end{equation}
where $\delta$ is a constant not depending on $k$.
Thus, as in the previous case, we see that there
is a constant $\delta'>0$ not depending on $k$
such that the inequality \eqref{bounddelta} holds
for $t$ with 
\begin{equation}
|t-t_i(u^k)|\le  \delta'
\end{equation}
for any (sufficiently large) $k$.
Therefore, if we assume $r_i(u)=t_i(u)$, then
we again have the inequality \eqref{ineq:contradict} for large $k$, which is a contradiction.

We have thus proved that $r_i(u)<t_i(u)$ if $u_i\ne \pm\pi/2$. The implication of $t_i(u)<r_{i-1}(u)$
for $u$ with $u_{i-1}\ne 0,\pi$ is similar to the above, and
we omit the detail.
\end{proof}
\begin{thm}\label{thm:conj}
\begin{enumerate}
\item $t=r_{n-1}(u)$ represents the first conjugate point of $p_0$ 
 along the geodesic $\gamma(t,u)$. 
\item If the Riemannian manifold $M$ is close to 
the round sphere so that the second zero $t=r^2_{n-1}(u)$ of the Jacobi field $Y_{n-1}(t,u)$ is greater than
$r_1(u)$, then $t=r_i(u)$ represents the $(n-i)$-th conjugate point of $p_0$ 
 along the geodesic $\gamma(t,u)$ for $2\le i\le n-1$. 
\end{enumerate}
\end{thm}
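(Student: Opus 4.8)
The plan is to first convert the counting of conjugate points (with multiplicity) into the counting of zeros of the scalar functions $y_i(t,u)$, and then to read off both assertions from the ordering of first zeros in Proposition~\ref{prop:conj}. Along $\gamma(\cdot,u)$ the space of Jacobi fields vanishing at $t=0$ is $n$-dimensional and is spanned by the radial field $t\mapsto t\,\dot\gamma(t,u)$ together with the $n-1$ fields $Y_i(t,u)$, whose initial velocities span the orthogonal complement of $\dot\gamma(0,u)$. By the decomposition recalled above (\cite[Proposition 5.1]{IK2}) each $Y_i=y_i(t,u)\tilde V_i(t,u)$ with the $\tilde V_i$ parallel, unit, mutually orthogonal, and all orthogonal to $\dot\gamma$; hence $\{\tilde V_1,\dots,\tilde V_{n-1},\dot\gamma/|\dot\gamma|\}$ is a parallel orthonormal frame and a general Jacobi field vanishing at $0$ is $\sum_i c_i\,y_i(t,u)\tilde V_i(t,u)+c_0\,t\,\dot\gamma(t,u)$. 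For $T>0$ this vanishes at $t=T$ exactly when $c_0=0$ and $c_i\,y_i(T,u)=0$ for all $i$, so the conjugate points of $p_0$ along $\gamma(\cdot,u)$ are precisely the zeros of $\prod_i y_i(\cdot,u)$, with multiplicity equal to the number of indices $i$ for which $y_i(T,u)=0$. (At the degenerate directions $[u]\in C_{i-1}^-\cap C_i^+$ one replaces $Y_{i-1},Y_i$ by the fields $Z_{i-1},Z_i$ as in the text and passes to the general case by continuity of the $r_i$.) Statement (1) is then immediate: by Proposition~\ref{prop:conj}, $r_{n-1}(u)=\min_i r_i(u)$, so the least positive zero of $\prod_i y_i$ is $r_{n-1}(u)$, which is the first conjugate point.

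For (2) the same dictionary shows that $t=r_i(u)$ is the $(n-i)$-th conjugate point for every $i$ as soon as the first $n-1$ conjugate points, counted with multiplicity, are exactly $r_{n-1}(u)\le\cdots\le r_1(u)$; equivalently, no $y_j$ may acquire a second zero in $(0,r_1(u)]$. Writing $r_j^2(u)$ for the second positive zero of $y_j$, what must be shown is $r_j^2(u)>r_1(u)$ for every $j$. Since Proposition~\ref{prop:conj} also gives $r_1(u)=\max_i r_i(u)$, and since the hypothesis supplies $r_{n-1}^2(u)>r_1(u)$, the whole of (2) reduces to the single inequality
\begin{equation*}
r_{n-1}^2(u)\le r_j^2(u)\qquad(1\le j\le n-1),
\end{equation*}
that is, to the statement that the second zeros inherit the ordering of the first ones, $r_{n-1}^2$ being the smallest. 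This is the step I expect to be the main obstacle.

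The localization technique behind Proposition~\ref{prop:main} does not extend to the second zeros. That argument differentiated a geodesic integral along the deformation defining $Y_j$ and evaluated at $t=r_j$, the identity \eqref{eq:diff3} being valid because $f_j(x_j(s))-b_j$ keeps a constant sign on the relevant interval; but between $r_j$ and $r_j^2$ there is, by the interlacing $s_j^k<r_j^k<s_j^{k+1}$ of \cite[Corollary 5.2]{IK2}, a turning time $s_j^2$ at which $f_j=b_j$, and there the factor $1/(f_j-b_j)$ produces a non-integrable singularity, so the clean identity breaks down. I would therefore not attempt to order the second zeros through the integral identities of \S4. Instead, since the statement is made for $M$ close to the round sphere, I would argue by continuity from the constant case $A\equiv\mathrm{const}$: there every geodesic has $r_i\equiv\pi$ and $r_i^2\equiv 2\pi$ for all $i$ (conjugate points at the antipode and at $p_0$, each of multiplicity $n-1$), so on the compact manifold $U^*_{p_0}M$ the functions $u\mapsto r_i(u)$ and $u\mapsto r_i^2(u)$ converge uniformly to these constants as $A$ tends to a constant. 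Thus all $r_j^2$ are pushed toward $2\pi$ while $r_1$ stays near $\pi$, giving $r_j^2>r_1$ for every $j$ once $M$ is sufficiently close to the sphere; the hypothesis $r_{n-1}^2>r_1$ is the sharp form of this smallness, $r_{n-1}^2$ being the least and $r_1$ the greatest of the two clusters. The delicate point to control is the uniformity of this convergence near the degeneracy loci $C_i^\pm$, where the $y_i$ and the frame $\tilde V_i$ degenerate and $r_i$ is merely continuous; there I would revert to the fields $Z_i$ of \cite[\S5]{IK2} and exploit the continuity of $r_i$ and $r_i^2$ already built into their construction.
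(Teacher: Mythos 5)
Your reduction of conjugacy along $\gamma(\cdot,u)$ to the zeros of the scalar functions $y_i$, and your proof of part (1), are correct and coincide with what the paper does (it simply cites Proposition~\ref{prop:conj}). You have also put your finger on the point that the paper's own one-line proof passes over in silence: for part (2) the count requires $r_j^2(u)>r_1(u)$ for \emph{every} $j$, while the hypothesis names only $j=n-1$. However, your way of closing this gap does not prove the theorem as stated. As the Introduction makes explicit, the hypothesis of (2) is exactly the inequality $r_{n-1}^2(u)>r_1(u)$ for all $u$ --- and this is the form in which it is invoked later, in Theorem~\ref{thm:cusp} and Corollary~\ref{cor2} --- whereas your argument replaces it by ``$A$ is sufficiently close to a constant'' and then appeals to uniform convergence of $r_i$ and $r_i^2$ under a deformation of the metric. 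That establishes a different, weaker statement (the conclusion holds for all metrics in some unspecified neighborhood of the round one), not the stated conditional; under the literal hypothesis there is no family of metrics to deform. Moreover, even the weaker statement is only sketched: the uniformity near $C_i^\pm$, which you yourself flag as the delicate point, is precisely where the frame and the functions $y_i$ degenerate, and it is left unresolved.

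The inequality you declare to be the main obstacle, $r_{n-1}^2(u)\le r_j^2(u)$, in fact follows from material already established, with no new integral identities and no perturbation of the metric. Since $V_j(t)=V_j(\dot\gamma(t))$ is parallel and unit along $\gamma$, every ``$j$-th type'' Jacobi field based at any point $\gamma(s)$ (vanishing at $s$, with derivative along $V_j(\dot\gamma(s))$) is of the form $z(t)V_j(t)$, where $z$ solves the single scalar equation $z''+K_j(t)z=0$ with $K_j(t)=g\bigl(R(V_j(t),\dot\gamma(t))\dot\gamma(t),V_j(t)\bigr)$; in particular $y_j$ beyond its first zero is again such a field based at $\gamma(r_j^1)$. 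Write $z_j(s)$ for the first zero after $s$ of the $j$-th field based at $\gamma(s)$. For the fixed scalar equation above, Sturm's separation theorem shows that $s\mapsto z_j(s)$ is strictly increasing. On the other hand, Proposition~\ref{prop:main} (together with \eqref{primeineq2} and the limiting arguments used in the proof of Proposition~\ref{prop:conj}) yields $r_i\le t_i\le r_{i-1}$ for the canonical fields based at \emph{any} point of the geodesic, since nothing in \S{5} requires the base point to be a general point of $M$; hence $z_{n-1}(s)\le z_j(s)$ for every $s$. Chaining these two facts gives
\begin{equation*}
r_j^2(u)=z_j\bigl(r_j^1(u)\bigr)\ \ge\ z_{n-1}\bigl(r_j^1(u)\bigr)\ \ge\ z_{n-1}\bigl(r_{n-1}^1(u)\bigr)=r_{n-1}^2(u)>r_1(u)\,,
\end{equation*}
which is exactly what the count in (2) needs. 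So the correct route stays entirely within the stated hypothesis; the round sphere enters only, as in the paper's remark, to show that the hypothesis is not vacuous.
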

\begin{proof}
The assertions are immediate from Proposition \ref{prop:conj}. For (2), we remark 
that for the round sphere the first zeros of
$Y_i(t,u)$ $(1\le i\le n-1)$ coincide, and the
second zero of $Y_{n-1}(t,u)$ is greater than them. 
Therefore, if $M$ is ``close to the round sphere'',
then the second zero of $Y_{n-1}(t,u)$ appears
after the first zero of $Y_{1}(t,u)$.
\end{proof}
We put
\begin{align*}
\tilde{K}_{i}(p_0)&=\{r_i(u)\sharp [u]\in 
T_{p_0}M\ |\ u\in(\R/2\pi\Z)^{n-1}\}\\
K_{i}(p_0)&=\{\gamma(r_i(u),u)\ |\ u\in(\R/2\pi\Z)^{n-1}\}\ .
\end{align*}
Then $K_{n-1}(p_0)$ ($\tilde K_{n-1}(p_0)$)
represents the first (tangential) conjugate locus
of $p_0$, and if $M$ is close to the round sphere, then $K_{n-j}(p_0)$ ($\tilde K_{n-j}(p_0)$)
represents the $j$-th (tangential) conjugate locus
of $p_0$ for $2\le j\le n-1$.
For the smoothness of the functions $r_i(u)$ we have the following
\begin{lemma}
$u\mapsto r_i(u)$ is of $C^\infty$ for $[u]\not\in \partial C_i^{\pm}$, i.e., for $u$ with 
$b_i(u_i)\ne b_{i-1}(u_{i-1})$ and $b_i(u_i)\ne b_{i+1}(u_{i+1})$.
\end{lemma}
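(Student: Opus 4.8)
The plan is to realize $r_i$ as the first positive zero of a scalar function that depends smoothly on $(t,u)$ and whose zeros are simple, and then to invoke the implicit function theorem. First I would fix $u_0$ with $[u_0]\notin\partial C_i^\pm$, so that, by the property recalled above, the unit direction $V_i$ is smoothly defined on a neighbourhood of $[u_0]$ in $U^*_{p_0}M$; let $\tilde V_i(t,u)$ be the parallel vector field along $\gamma(t,u)$ with $\tilde V_i(0,u)=V_i([u])$, which then depends smoothly on $(t,u)$ on a neighbourhood of $\{(t,u_0)\mid 0\le t\le r_i(u_0)+1\}$. Since $\gamma(t,u)$ is smooth jointly in $(t,u)$, so is $Y_i(t,u)=\partial\gamma/\partial u_i$. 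Setting $y_i(t,u)=g(Y_i(t,u),\tilde V_i(t,u))$ gives a smooth function, and by the cited proposition $Y_i=y_i\tilde V_i$ with $|\tilde V_i|=1$; hence $y_i(t,u)=0$ if and only if $Y_i(t,u)=0$, so $r_i(u)$ is precisely the first positive zero of $t\mapsto y_i(t,u)$.

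The key step is transversality of this zero. Because $\tilde V_i$ is parallel and, by the same proposition, the curvature term $R(\tilde V_i,\dot\gamma)\dot\gamma$ is again a multiple of $\tilde V_i$, the Jacobi equation for $Y_i=y_i\tilde V_i$ collapses to a scalar linear second-order ODE $y_i''+\kappa_i(t)\,y_i=0$ with smooth coefficient, subject to $y_i(0,u)=0$ and $\partial_t y_i(0,u)\ne 0$. Thus $y_i(\cdot,u)$ is a nontrivial solution and all its zeros are simple: at $t=r_i(u)$ one has $y_i=0$, and $\partial_t y_i(r_i(u),u)\ne 0$, for otherwise uniqueness for the linear ODE would force $y_i\equiv 0$, contradicting $\partial_t y_i(0,u)\ne 0$.

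With smoothness of $y_i$ and $\partial_t y_i(r_i(u_0),u_0)\ne 0$ in hand, the implicit function theorem applied to the equation $y_i(t,u)=0$ at $(r_i(u_0),u_0)$ yields a smooth function $u\mapsto\rho(u)$ near $u_0$ with $\rho(u_0)=r_i(u_0)$ and $y_i(\rho(u),u)=0$. It remains to identify $\rho$ with $r_i$. This uses the continuity of $r_i$ established just before the lemma: since $r_i(u)\le\rho(u)$ and both are continuous with $r_i(u_0)=\rho(u_0)$, a strict inequality $r_i(u)<\rho(u)$ for $u$ arbitrarily close to $u_0$ would force two zeros of $y_i(\cdot,u)$ to collapse onto the single simple zero at $r_i(u_0)$, which is impossible because a simple zero is isolated and persists as a single simple zero under small perturbations. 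Hence $\rho=r_i$ near $u_0$, and $r_i$ is $C^\infty$ there.

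The main obstacle I expect is the first paragraph rather than the analytic core: one must verify that the sign-branch of $V_i$ (hence of $\tilde V_i$) can be chosen coherently and smoothly on a full neighbourhood of $[u_0]$. This is exactly where the hypothesis $[u_0]\notin\partial C_i^\pm$, that is $b_i(u_i)\ne b_{i-1}(u_{i-1})$ and $b_i(u_i)\ne b_{i+1}(u_{i+1})$, enters: on those boundaries the $i$-th direction degenerates with a neighbouring one and $V_i$ loses smoothness. Once this coherent smooth frame is secured, the transversality argument and the implicit function theorem close the proof routinely.
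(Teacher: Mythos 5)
Your proof is correct and takes essentially the same route as the paper: the paper likewise writes $Y_i(t,u)=y_i(t,u)V_i(t,u)$ (the decomposition being valid precisely off $\partial C_i^\pm$), notes that $(\partial/\partial t)y_i(t,u)\ne 0$ at $t=r_i(u)$, and concludes by the implicit function theorem. The details you supply — the scalar second-order ODE argument for simplicity of the zeros, and the use of the continuity of $r_i$ to identify the implicit-function-theorem branch with $r_i$ — are exactly the steps the paper leaves implicit.
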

\begin{proof}
Under the given assumption, $Y_i(t,u)$ is written as $y_i(t,u)V_i(t,u)$, as described in
the previous section. Since $(\partial/\partial t)y_i(t,u)\ne 0$ at $t=r_i(u)$, the lemma follows from the implicit function theorem.
\end{proof}

\begin{prop}\label{prop:reg}
\begin{enumerate}
\item 
$\displaystyle\frac{\partial}{\partial u_i}r_i(u)\ne 0$ if 
$\displaystyle [u]\not\in C_i^\pm$.
\item $\displaystyle\frac{\partial}{\partial u_i}r_i(u)= 0$ 
and $\displaystyle\frac{\partial^2}{\partial u_i^2}r_i(u)\ne 0$
for $[u]\in C_i^\pm$, $[u]\not\in \partial C_i^\pm$.
\end{enumerate}
\end{prop}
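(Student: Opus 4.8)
The plan is to reduce both assertions to the single inequality $\partial\rho_i/\partial b_i\neq 0$, where $\rho_i$ denotes the first conjugate time written as a function of the invariants $b=(b_1,\dots,b_{n-1})$. First I would observe that $r_i$ depends on $u$ only through $b_1(u_1),\dots,b_{n-1}(u_{n-1})$: changing the signs $\epsilon_k$ amounts to applying the isometries of Proposition~\ref{prop:isometry}, which preserve conjugate points, so $r_i(u)=\rho_i(b(u))$ for a function $\rho_i$ that, by the preceding Lemma together with the evenness of $r_i$ in $u_i$, is smooth up to the endpoints $b_i=f_{i,0},f_{i+1,0}$ of the range of $b_i$. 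Since $b_i(u_i)=f_{i+1,0}+(f_{i,0}-f_{i+1,0})\sin^2u_i$, we have $b_i'(u_i)=(f_{i,0}-f_{i+1,0})\sin 2u_i$, which vanishes exactly on $C_i^{\pm}$, and there $b_i''(u_i)=\pm 2(f_{i,0}-f_{i+1,0})\neq0$. Hence $\partial r_i/\partial u_i=(\partial\rho_i/\partial b_i)\,b_i'(u_i)$ yields (1) at once, while on $C_i^{\pm}$ the first derivative vanishes and $\partial^2 r_i/\partial u_i^2=(\partial\rho_i/\partial b_i)\,b_i''(u_i)$, so (2) also follows, provided $\partial\rho_i/\partial b_i\neq0$ in the interior of the range (for (1)) and at its endpoints (for (2)).

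To prove $\partial\rho_i/\partial b_i\neq 0$ I would use the implicit equation for $\rho_i$ already isolated in the proof of Proposition~\ref{prop:main}: formula~\eqref{eq:diff3} with $j=i$ says $\Psi(\rho_i,b)=0$, where $\Psi(t,b)$ is the left-hand side of~\eqref{eq:diff3}, a sum of a bulk integral $\tfrac12\sum_l\int_{t_l}^{t}(\cdots)$ and a tail $\frac{\partial}{\partial b_i}\sum_l\int_{a_l^+}^{a_{l-1}^-}(\cdots)$. Differentiating the identity $\Psi(\rho_i(b),b)\equiv0$ in $b_i$ gives $\partial_t\Psi\cdot(\partial\rho_i/\partial b_i)+\partial_{b_i}\Psi=0$ at $t=\rho_i$, so it suffices to show $\partial_{b_i}\Psi(\rho_i,b)\neq0$; since $\rho_i$ is already known to be finite, this forces both $\partial_t\Psi\neq0$ and $\partial\rho_i/\partial b_i\neq0$. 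The decisive simplification is that $\partial_{b_i}\Psi$ at fixed $t=\rho_i$ carries no second-variation contribution: after substituting $\lambda=f_l(x_l(t))$ the bulk integrand becomes explicit in $(\lambda,b)$, and the only geodesic-dependent limit is the endpoint value $f_l(x_l(\rho_i,b))$, whose $b_i$-derivative is proportional to the $V_i$-directed Jacobi field $\partial_{b_i}\gamma(\rho_i)$, which vanishes since $\rho_i$ is by definition its first zero. Thus $\partial_{b_i}\Psi(\rho_i,b)$ reduces to purely parametric derivatives.

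What remains is to sign these parametric derivatives. The tail contributes $\frac{\partial^2}{\partial b_i^2}\sum_l\int_{a_l^+}^{a_{l-1}^-}(\cdots)$ with $G=\prod_{k\neq i}(\lambda-b_k)$, which is precisely the quantity shown to be positive in Proposition~\ref{prop:newineq}(3); this is exactly where the strengthened hypothesis~\eqref{newcond}, through $(-1)^k\tilde A^{(k)}>0$, enters decisively. The bulk contribution, together with the terms produced by the moving turning value $a_i^+$, I would handle by the same substitution-and-sign bookkeeping as in Propositions~\ref{prop:main} and~\ref{prop:conj}: using the ordering $t_{i+1}<\rho_i<t_i$ from Proposition~\ref{prop:main} and the interlacing sign pattern of $(-1)^{l-1}\prod_m(f_l-b_m)$, each summand keeps a definite sign, the same as that of the tail, so that no cancellation occurs and $\partial_{b_i}\Psi(\rho_i,b)\neq0$. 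Finally, at and near the points of $C_i^{\pm}$ some of the $b_j$ coincide with the $a_j$ or with one another and the integrals become improper; there I would pass to the limit exactly as in Proposition~\ref{prop:conj}, invoking Proposition~\ref{prop:limitineq} to guarantee that all relevant signs survive, and check the evenness/smoothness of $\rho_i$ up to the endpoints needed for the second-derivative statement in (2).

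The main obstacle is precisely this last sign analysis: verifying that the bulk integral and the moving-endpoint contributions cannot cancel the definite-sign tail furnished by Proposition~\ref{prop:newineq}(3), uniformly down to the degenerate limits on $C_i^{\pm}$. Everything else is a bookkeeping repetition of the first-variation computation already carried out for Propositions~\ref{prop:main} and~\ref{prop:conj}; the genuinely new ingredient is the second-order positivity of Proposition~\ref{prop:newineq}(3), which is why condition~\eqref{newcond} was imposed.
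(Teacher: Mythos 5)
For part (1) your plan is essentially the paper's own proof: you differentiate the identity \eqref{eq:diff3} once more with respect to $b_i$ (the paper does this directly along the deformation $cY_i$, obtaining \eqref{derri}), you observe that the geodesic-endpoint contributions drop out at $t=r_i$ precisely because the Jacobi field $Y_i$ vanishes there, and you sign the two surviving terms: the bulk term via the ordering $t_{i+1}<r_i<t_i$ of Proposition~\ref{prop:main}, the complete-integral term via Proposition~\ref{prop:newineq}(3), with Proposition~\ref{prop:limitineq} absorbing coincidences among the remaining parameters. So far there is no divergence from the paper.

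The gap is in part (2), and it is twofold. First, your justification of $r_i(u)=\rho_i(b(u))$ is invalid: the isometries of Proposition~\ref{prop:isometry} have the hypersurfaces $N_k$ as fixed-point sets, so they do \emph{not} fix the general point $p_0$ and do not carry geodesics emanating from $p_0$ with one sign $\epsilon_k$ flipped to geodesics emanating from $p_0$; no isometry of $M$ realizes the flip $u_i\mapsto -u_i$ at $p_0$. (The weaker fact actually needed, $\partial r_i/\partial u_i=0$ on $C_i^\pm$ minus the boundary, follows instead from part (1): formula \eqref{derri} shows $\partial r_i/\partial u_i$ has the sign of $\sin 2u_i$, so by the smoothness Lemma its value at $u_i=0,\pm\pi/2,\pi$ must be $0$.) Second, and fatally, even granting the reduction, you must prove $d\rho_i/db_i\ne 0$ \emph{at the endpoint} of the $b_i$-range, and your proposal to get this ``by passing to the limit, invoking Proposition~\ref{prop:limitineq}'' cannot work: that proposition controls only the complete integrals of Proposition~\ref{prop:newineq}, whereas the other two ingredients of \eqref{derri} --- the bulk second-variation integral and the coefficient multiplying $\partial r_i/\partial u_i$ --- blow up as $[u]$ approaches $C_i^\pm$, because there the conjugate point collides with a turning point, $f_{i+1}(x_{i+1}(r_i(u),u))\to b_i(u_i)$ by \eqref{primeineq2}; moreover, nonvanishing of $d\rho_i/db_i$ in the interior never forces nonvanishing of its boundary limit, which is exactly the delicate point. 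The paper's proof of (2) supplies the missing mechanism: it rewrites the identity as \eqref{eq:cusp}, packing the divergences into one ``incomplete'' integral estimated in \eqref{eq:cusp2} by $c\,|x_{i+1,0}-x_{i+1}(r_i(u),u)|/|\sin u_i|^3$, and then Lemma~\ref{cusp3} shows $x_{i+1}(r_i(u),u)-x_{i+1,0}$ vanishes to \emph{third} order in $u_i$ with coefficient proportional to $\partial^2 r_i/\partial u_i^2$. Assuming $\partial^2 r_i/\partial u_i^2=0$ then kills this incomplete integral in the limit and contradicts the positivity of the remaining terms (Proposition~\ref{prop:newineq}(2) with Proposition~\ref{prop:limitineq}). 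This cubic-order cancellation argument is the genuinely new idea behind part (2), and it has no counterpart in your proposal.
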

\begin{proof}
(1) \,First we assume that $b_1(u),\dots,b_{n-1}(u)$ and $a_0,\dots,a_n$ are all distinct. 
Differentiating the formula \eqref{eq:diff3} ($j$ being $i$ here) in the proof of Proposition~\ref{prop:main}
in terms of the deformation parameter defining $cY_{j}$ once again, we have
\begin{equation}\label{derri}
\begin{aligned}
&\frac34\sum_{l=1}^{n}
\int_{t_l}^{r_i}\frac{(-1)^lG(f_l)}
{(f_l-b_i)^2\sqrt{(-1)^{l-1}\prod_{k=1}^{n-1}(f_l-b_k)}}
\left|\frac{\partial x_l(t,u)}{\partial t}\right|
\ dt\\
&+2\frac{\partial^2}{\partial b_i^2}
\sum_{l=1}^n\int_{a_l^+}^{a_{l-1}^-}
\frac{(-1)^lG(\lambda)
A(\lambda)\ d\lambda}
{\sqrt{-\prod_{k=1}^{n-1}(\lambda-b_k)
\cdot\prod_{k=0}^n(\lambda-a_k)}}\\
&+\frac{c}2\frac{\partial r_i}{\partial u_i}\sum_{l=1}^n \frac{(-1)^lG(f_l)}
{(f_l-b_i)\sqrt{(-1)^{l-1}\prod_{k=1}^{n-1}(f_l-b_k)}}
\left|\frac{\partial x_l(t,u)}{\partial t}\right|_{t=r_i(u)}=0\ ,
\end{aligned}
\end{equation}
where $G(\lambda)=(\lambda-a_n)\prod_{k\ne i}(\lambda-b_k)$,
\begin{equation*}
c=\left(\frac{db_i}{du_i}\right)^{-1}=\frac1{2\sin u_i\cos u_i (f_{i,0}-f_{i+1,0})}\,,
\end{equation*}
 and $f_l$ in the third line is equal to $f_l(x_l(r_i(u),u))$. Since $r_i<t_l$, $f_l-b_i>0$ for
$l\le i$, and $r_i>t_l$, $f_l<b_i$ for $l\ge i+1$, the first line of the above formula is
positive; while the second line is also positive by Proposition~\ref{prop:newineq} (3).
Therefore it follows that $\partial r_i/\partial u_i\ne 0$.

Next we consider the general case. As before, we take a sequence $u^k\in(\R/2\pi\Z)^{n-1}$
such that $u^k\to u$ as $k\to\infty$ and that
$b_j(u_j^k)$ $(1\le j\le n-1)$ and $a_l$ $(0\le l\le n)$ are all distinct for each $k$.
Let us consider the formula \eqref{derri} for 
$u^k$ and take a limit $k\to \infty$.
The second line then converges to a positive value 
by Proposition~\ref{prop:limitineq} and the first
line is positive for each $k$. For the third line,
we note that 
\begin{equation*}
f_l(x_l(r_i(u),u))\ne b_i(u_i)\quad (l=i,i+1)
\end{equation*}
by the proof of Proposition~\ref{prop:conj} and
\begin{equation*}
\sqrt{\prod_{m=1}^{n-1}|f_l(x_l(r_i(u^k),u^k))-b_m(u_m^k)|}
\left|\frac{\partial x_l}{\partial t}(r_i(u^k),u^k)\right|\le 1\,,
\end{equation*}
since
\begin{equation*}
\sqrt{\prod_{\substack{1\le m\le n\\m\ne l}}|f_m-f_l|}\, 
\left|\frac{\partial x_l}{\partial t}(r_i(u^k),u^k)\right|\le 1
\end{equation*}
by the expression of the metric \eqref{eq:metric} ($f_m=f_m(x_m(r_i(u^k),u^k))$) and
\begin{equation*}
|f_l-b_m|\le \begin{cases}
|f_l-f_m|\quad &(1\le m\le l-1)\\
|f_l-f_{m+1}|\quad &(l\le m\le n-1)
\end{cases}\,. 
\end{equation*}
Therefore the integral in the third line of \eqref{derri} for $u^k$ remains finite as $k\to\infty$.
Those facts indicate that $(\partial r_i/\partial u_i)(u)$ does not vanish.

(2) Let us consider the case where $u_i=0$ ($u_{i+1}\ne \pm\pi/2$). We describe \eqref{eq:diff3}
for $u$ with $u_i<0$ and $u_i$ being close to $0$ in the form:
\begin{equation}\label{eq:cusp}
\begin{gathered}
\frac12\sum_{\substack{1\le l\le n\\l\ne i+1}}\int_{t_l}^{r_i}\frac{(-1)^lG(f_l)}
{(f_l-b_i)\sqrt{(-)^{l-1}\prod_{k=1}^{n-1}(f_l-b_k)}}
\left|\frac{dx_l(t)}{dt}\right|
\ dt\\
+\frac14\int_{f_{i+1}(x_{i+1}(r_i(u),u))}^{f_{i+1}(x_{i+1,0})}\frac{(-1)^{i+1}G(\lambda)A(\lambda)\,d\lambda}
{(\lambda-b_i)\sqrt{-\prod_{k=1}^{n-1}(\lambda-b_k)
\cdot\prod_{k=0}^n(\lambda-a_k)}}\\
+\frac{\partial}{\partial b_i}\sum_{l=1}^n\int_{a_l^+}^{a_{l-1}^-}
\frac{(-1)^lG(\lambda)A(\lambda)\ d\lambda}
{\sqrt{-\prod_{k=1}^{n-1}(\lambda-b_k)
\cdot\prod_{k=0}^n(\lambda-a_k)}}=0\,,
\end{gathered}
\end{equation}
where $G=(\lambda-a_n)\prod_{k\ne i,i+1}(\lambda-b_k)$.
Note that this formula is effective for general 
$u$, i.e., for $u$ not
necessarily satisfying that $b_l(u_l)$ $(1\le l\le 
n-1)$ and $a_m$ $(0\le m\le n)$ are all distinct.
In fact, since the second and the third line have
definite values at such $u$, so is the first line.
Note also that in this case $b_i=a_i^-$. Since we are assuming $0<x_{i+1,0}<\alpha_{i+1}/4$, we have
\begin{equation*}
f'_{i+1}(x_{i+1}(t_{i+1}(u),u))=f'_{i+1}(x_{i+1,0})>0\,,
\end{equation*}
and since $u_i<0$ and $u_i$ is close to $0$, we also have
\begin{equation*}
t_{i+1}(u)<r_i(u),\quad \frac{\partial x_{i+1}}{\partial t}(t,u)<0\,.
\end{equation*}
Therefore, 
\begin{equation*}
f_{i+1}(x_{i+1}(r_i(u),u)) < f_{i+1}(x_{i+1,0})
<b_i(u_i)
\end{equation*}
when $u_i< 0$, and they all coincide when $u_i=0$, by \eqref{primeineq}, \eqref{primeineq2}, and Proposition~\ref{prop:main}.

We denote by $\Phi(\lambda)$ the integrand of the
second line in the formula \eqref{eq:cusp}:
\begin{equation*}
\Phi(\lambda)=\frac{(-1)^{i+1}G(\lambda)A(\lambda)}
{(\lambda-b_i)\sqrt{-\prod_{k=1}^{n-1}(\lambda-b_k)
\cdot\prod_{k=0}^n(\lambda-a_k)}}\,.
\end{equation*}
When $\lambda$ is in the interval of the integration;
\begin{equation*}
f_{i+1}(x_{i+1}(r_i(u),u)) \le \lambda\le f_{i+1}(x_{i+1,0})\,,
\end{equation*}
then $\Phi(\lambda)<0$ and 
\begin{equation*}
-\Phi(\lambda)\le c\,\left(b_i(u_i)-f_{i+1}(x_{i+1,0})\right)^{-\frac32}=\frac{c'}{|\sin u_i|^3}
\end{equation*}
for some positive constants $c$, $c'$. 
Thus one obtains
\begin{equation}\label{eq:cusp2}
0<-\int_{f_{i+1}(x_{i+1}(r_i(u),u))}^{f_{i+1}(x_{i+1,0})} \Phi(\lambda)\,d\lambda\le \frac{c\left|x_{i+1,0}-x_{i+1}(r_i(u),u)\right|}{|\sin u_i|^3}\,.
\end{equation}
for some (another) constant $c>0$.

Now we need the following lemma, which is essentially the same as \cite[Lemma~8.2]{IK3}.
\begin{lemma}\label{cusp3}
Regarded as a function of $u_i$ (other $u_j$'s being fixed),
\begin{gather*}
x_{i+1}(r_i(u),u)-x_{i+1,0}=c\,u_i^3+O(u_i^4)\,,\\
c=\frac13
\left(\frac{\partial^2 x_{i+1}}{\partial t\partial u_i}(r_i(u),u)
\frac{\partial^2 r_i}{\partial u_i^2}(u)\right)\bigg|_{u_i=0}\,.
\end{gather*}
\end{lemma}
\begin{proof}
We have 
\begin{equation*}
\frac{\partial}{\partial u_i}x_{i+1}(r_i(u),u)=
\frac{\partial x_{i+1}}{\partial t}(r_i(u),u)
\,\frac{\partial r_i}{\partial u_i}(u)\,.
\end{equation*}
Since $(\partial r_i/\partial u_i)(u)=(\partial x_{i+1}/\partial t)(r_i(u),u)=0$ when $u_i=0$, it therefore follows that
\begin{equation}\label{thirdder}
\frac{\partial^3}{\partial u_i^3}x_{i+1}(r_i(u),u)\big\vert_{u_i=0}=
2\,\frac{\partial^2 x_{i+1}}{\partial t\partial u_i}
(r_i(u),u)\,\frac{\partial^2 r_i}{\partial u_i^2}(u)\big|_{u_i=0}\,,
\end{equation}
which indicates the lemma.
\end{proof}
We continue the proof of Proposition~\ref{prop:reg}. Assume that
\begin{equation*}
\frac{\partial^2 r_i}{\partial u_i^2}(u)\big|_{u_i=0}=0\,.
\end{equation*}
Then by \eqref{eq:cusp2} and Lemma~\ref{cusp3} the second line of the formula \eqref{eq:cusp}
tends to $0$ when $u_i\to 0$. However, the first line of the formula \eqref{eq:cusp} is nonnegative and the third line us positive by Proposition~\ref{prop:newineq} (2) and Proposition~\ref{prop:limitineq}, which is a contradiction. Thus we have
\begin{equation*}
\frac{\partial^2 r_i}{\partial u_i^2}(u)\big|_{u_i=0}\ne 0
\end{equation*}
in this case. The case where $u_i=\pi$ is similar.

For the cases where $u_i=\pm\pi/2$, $b_i=a_i^+$ and one should consider the integral
concerning the variable $x_i$ in the formula \eqref{eq:cusp} instead of that concerning the variable $x_{i+1}$ as above. Then the argument is parallel as above and we shall omit the detail. This finishes th proof of Proposition~\ref{prop:reg}.
\end{proof}

We remark that in the above proof we have actually proved the following fact.
\begin{cor}\label{cusp4}
The constant $c$ which appeared in Lemma~\ref{cusp3} does not vanish.
\end{cor}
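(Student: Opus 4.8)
The plan is to read off $c\neq 0$ directly from the estimates already assembled for Proposition~\ref{prop:reg}~(2), observing that the contradiction produced there did not depend on \emph{which} factor of $c$ was assumed to vanish. Recall that $c=\tfrac13\bigl(\partial^2 x_{i+1}/\partial t\,\partial u_i\bigr)\bigl(\partial^2 r_i/\partial u_i^2\bigr)\big|_{u_i=0}$, so $c=0$ is equivalent to the vanishing of at least one of these two factors; I would show that \emph{any} such possibility contradicts the identity \eqref{eq:cusp}.

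First I would fix the case $u_i=0$, $u_{i+1}\neq\pm\pi/2$ (so $b_i=a_i^-$) and let $u_i\to 0^-$ in \eqref{eq:cusp}, whose three lines sum to zero. By Proposition~\ref{prop:newineq}~(2) together with its limiting form Proposition~\ref{prop:limitineq}, the third line converges to a strictly positive value, while the first line remains nonnegative throughout. Consequently the second line stays bounded away from zero in absolute value: its limit is the negative of the positive limit of the sum of the first and third lines, hence at least as large in modulus as that of the third line.

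Next I would confront this with the upper bound \eqref{eq:cusp2}, which controls the second line by $\tfrac14$ times a positive constant times $|x_{i+1,0}-x_{i+1}(r_i(u),u)|/|\sin u_i|^3$. Since $|\sin u_i|^3=|u_i|^3+O(u_i^5)$, Lemma~\ref{cusp3} shows that this ratio tends to $|c|$ as $u_i\to 0$. Hence the second line is asymptotically dominated by a constant multiple of $|c|$; were $c=0$, the second line would tend to zero, contradicting the previous paragraph. Therefore $c\neq 0$. The remaining cases $u_i=\pi$ and $u_i=\pm\pi/2$ are identical once the $x_{i+1}$-integral is replaced by the corresponding $x_i$-integral, exactly as at the end of the proof of Proposition~\ref{prop:reg}~(2).

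The main point requiring care is that the strict positivity of the limit of the third line—guaranteed by Proposition~\ref{prop:limitineq} rather than merely by Proposition~\ref{prop:newineq}, since $b_1,\dots,b_{n-1}$ need not be distinct in the limit—is what furnishes a positive lower bound for $|c|$; this is exactly what converts the one-sided estimate \eqref{eq:cusp2} into a two-sided pinch forcing $|c|>0$. No new computation is needed, as every ingredient has already been established for Proposition~\ref{prop:reg}~(2).
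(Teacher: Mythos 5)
Your proof is correct and takes essentially the same route as the paper: the paper states this corollary precisely as a remark that the proof of Proposition~\ref{prop:reg}~(2) already establishes it, since the contradiction obtained there---the second line of \eqref{eq:cusp} being pinched between the strictly positive limit of the third line (Proposition~\ref{prop:newineq}~(2) together with Proposition~\ref{prop:limitineq}) and the upper bound \eqref{eq:cusp2} combined with Lemma~\ref{cusp3}---uses only $c=0$, not the vanishing of any particular factor of $c$. Nothing needs to be added.
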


Thus, as a consequence of Proposition~\ref{prop:reg} and Corollary~\ref{cusp4}, we have the following theorem.
\begin{thm}\label{thm:cusp}
The following statements hold for each $i$ $(1\le i\le n-1)$. For $i\ne n-1$, we assume 
that the second zero $t=r_{n-1}^2(u)$ of the Jacobi field $Y_{n-1}(t,u)$ is greater than
$r_1(u)$ for any $u\in(\R/2\pi\Z)^{n-1}$.
\begin{enumerate}
\item The map $u\mapsto 
\gamma(r_i(u),u)$  is an immersion at $[u]$ with $[u]\not\in C_i^\pm$. 
In particular, $K_{i}(p_0)$ is a smooth hypersurface around
such points $\gamma(r_i(u),u)$.
\item For each $p=\gamma(r_i(u),u)$ with $[u]\in C_i^\pm$, $[u]\not\in \partial C_i^\pm$, 
there is a neighborhood $U$ of $p$ and a function $x,y$ on $U$
such that $dx\wedge dy\ne 0$ and $U\cap K_{i}(p_0)$ is given by $x^3=y^2$ and such that the edge of
vertices $x=y=0$ corresponds to $\{\gamma(r_i(u),u)\,|\,[u]\in C_i^\pm\}$. Namely, $K_{i}(p_0)$  is diffeomorphic to a cuspidal edge around $p$.
\end{enumerate}
\end{thm}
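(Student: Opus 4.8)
The plan is to read the singularities of $\Phi_i\colon u\mapsto\gamma(r_i(u),u)$ directly off its differential, written in the parallel frame $\{V_1(t),\dots,V_{n-1}(t),\dot\gamma(t)\}$ of \S5, which is orthogonal since each $V_j\perp\dot\gamma$. Differentiating $\Phi_i(u)=\gamma(r_i(u),u)$ gives
\[
d\Phi_i\Bigl(\frac{\partial}{\partial u_j}\Bigr)=\frac{\partial r_i}{\partial u_j}(u)\,\dot\gamma(r_i)+Y_j(r_i,u),
\]
where $Y_j(t,u)=y_j(t,u)V_j(t,u)$ is a scalar multiple of the unit parallel field $V_j$, so $Y_j(r_i,u)=0$ exactly when $r_i$ is a zero of $y_j$. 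Since $r_i$ is the first zero of $y_i$, the $i$-th column equals $\tfrac{\partial r_i}{\partial u_i}\dot\gamma(r_i)$, lying in the $\dot\gamma$-direction, while for $j\ne i$ the $j$-th column has $V_j$-component $y_j(r_i)$. First I would record that $y_j(r_i)\ne0$ for all $j\ne i$ near the point in question: by Proposition~\ref{prop:conj} one has the strict chain $r_{n-1}<\dots<r_1$ off the walls, so for $j<i$ we get $r_i<r_j$ and $r_i$ precedes the first zero of $y_j$; for $j>i$ we get $r_j<r_i\le r_1$, and---exactly as in the proof of Theorem~\ref{thm:conj}(2)---the hypothesis $r^2_{n-1}(u)>r_1(u)$ (needed only for $i\ne n-1$; vacuous for $i=n-1$, where every $j<n-1$) guarantees that up to time $r_1$ the only zeros of the $y_k$ are the first zeros, placing $r_i$ strictly inside $(r_j,r_j^2)$. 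Thus the $V_j$-components ($j\ne i$) are nonzero and mutually orthogonal.

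Part (1) is then immediate. For $[u]\notin C_i^\pm$, Proposition~\ref{prop:reg}(1) gives $\partial r_i/\partial u_i\ne0$; in the basis $\{V_1,\dots,V_{n-1},\dot\gamma\}$ the $n-2$ columns $j\ne i$ are independent through their distinct nonzero $V_j$-entries, and the $i$-th column supplies the only nonzero $\dot\gamma$-entry, so $d\Phi_i$ has rank $n-1$ and $\Phi_i$ is an immersion; its image is then locally an embedded hypersurface.

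For part (2), at $[u]\in C_i^\pm\setminus\partial C_i^\pm$ Proposition~\ref{prop:reg}(2) forces $\partial r_i/\partial u_i=0$, so the $i$-th column now vanishes and $d\Phi_i$ drops to rank $n-2$. Because the $y_j(r_i)$, $j\ne i$, stay nonzero on a neighborhood, the singular set is precisely the smooth hypersurface $C_i^\pm=\{\partial r_i/\partial u_i=0\}$, with null direction $\partial/\partial u_i$ transverse to it---the conceptual hallmark of a cuspidal edge. I would exhibit the normal form by separating the edge from the transverse cusp. The restriction $\Phi_i|_{C_i^\pm}$ is an immersion (the columns $j\ne i$ are still independent), giving a smooth $(n-2)$-dimensional edge $E\ni p$. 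On a transverse slice (fix the remaining coordinates $u'$, vary $u_i$) I would test against two target functions: $\rho$ with $d\rho_p=\langle\dot\gamma(r_i),\cdot\rangle$ and $d\rho_p(V_i)=0$, and $y=x_{i+1}-x_{i+1,0}$ (use $x_i-x_{i,0}$ on $C_i^+$). Since $\partial r_i/\partial u_i=0$ and $Y_i(r_i)=0$ at $u_i=0$, a chain-rule computation gives $\rho\circ\Phi_i=\rho(p)+\tfrac12\tfrac{\partial^2 r_i}{\partial u_i^2}\,u_i^2+O(u_i^3)$ with nonzero leading coefficient by Proposition~\ref{prop:reg}(2), while Lemma~\ref{cusp3} and Corollary~\ref{cusp4} give $y\circ\Phi_i=c\,u_i^3+O(u_i^4)$ with $c\ne0$ (note $y\equiv0$ along $E$). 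A plane-curve germ whose first component vanishes to order $2$ and second to order $3$ is an ordinary cusp, so each transverse slice is a cusp.

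The last and most delicate step is to upgrade this ``ordinary cusp in every transverse slice, immersed edge'' picture into a genuine cuspidal-edge germ, i.e.\ to produce honest functions $x,y$ on $M$ with $dx\wedge dy\ne0$ for which the full image is $\{x^3=y^2\}$ and $E=\{x=y=0\}$. Using the immersion $\Phi_i|_{C_i^\pm}$ I would take $(z_1,\dots,z_{n-2})$ pulled back to coordinates on $E$ and straighten the $u'$-dependence of the two nonvanishing coefficients $\partial^2 r_i/\partial u_i^2$ and $c$; the map then has the form $(u',u_i)\mapsto\bigl(u'+O(u_i^2),\,a(u')u_i^2+O(u_i^3),\,c(u')u_i^3+O(u_i^4)\bigr)$, which is $\mathcal A$-equivalent to the standard cuspidal edge $(u',u_i)\mapsto(u',u_i^2,u_i^3)$ by the finite determinacy of that germ, and eliminating $u_i$ (after rescaling the second coordinate) realizes the defining relation $x^3=y^2$. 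I expect this reduction---tracking the parameter dependence and higher-order terms while producing the defining equation as an actual pair of functions on $M$---to be the main technical obstacle, the purely geometric content (the exact orders $2$ and $3$) being exactly what Proposition~\ref{prop:reg}(2) and Corollary~\ref{cusp4} provide.
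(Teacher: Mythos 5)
Your part (1) is essentially the paper's own argument (the differential written in the frame $\{V_1,\dots,V_{n-1},\dot\gamma\}$, Proposition~\ref{prop:reg}(1), and the nonvanishing of $y_j(r_i)$ for $j\ne i$), and your order-$2$/order-$3$ computations in part (2) are exactly the paper's inputs (Proposition~\ref{prop:reg}(2), Lemma~\ref{cusp3}, Corollary~\ref{cusp4}). The genuine gap is the step you yourself flag as the crux: you propose to pass from ``immersed edge plus an ordinary cusp in each transverse slice'' to the normal form $x^3=y^2$ by invoking finite $\mathcal A$-determinacy of the germ $(u',u_i)\mapsto(u',u_i^2,u_i^3)$. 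That germ is \emph{not} finitely $\mathcal A$-determined: every point of its singular locus $\{u_i=0\}$ is again a cuspidal-edge point, hence an unstable point, so instability is non-isolated and the Mather--Gaffney criterion rules out finite determinacy. Concretely, the germ $(u',u_i)\mapsto\bigl(u',\,u_i^2,\,u_i^3+u_1^N u_i\bigr)$ has the same $N$-jet as the cuspidal edge for any $N$, but its singular set is $\{u_i=0,\ u_1=0\}$, of dimension $n-3$ rather than $n-2$, so it is not $\mathcal A$-equivalent to the cuspidal edge. Thus ``correct low-order terms $\Rightarrow$ equivalent to the model'' is precisely the implication that fails here, and the reduction cannot be outsourced to a determinacy theorem.

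What does suffice --- and what the paper actually does --- is the stronger \emph{divisibility} structure, exploited by hand. One must first choose the two transverse target functions to vanish identically along the edge $E$, not merely be normalized at the single point $p$ (your $\rho$ is only prescribed at $p$; since $\dot\gamma(r_i)\notin T_pE$ one can take $\rho$ with $\rho|_E=0$ and $d\rho(\dot\gamma)\ne 0$, while $z_1=x_{i+1}$ vanishes on $E$ by Lemma~\ref{cusp3}, and Proposition~\ref{prop:dualtob} guarantees the coordinate system can be completed). Then $z_2\circ\Phi_i=c_2(u_i,u')\,u_i^2$ and $z_1\circ\Phi_i=c_1(u_i,u')\,u_i^3$ with $c_1,c_2$ smooth and nonvanishing, i.e.\ the higher-order terms are genuinely divisible by $u_i^2$ and $u_i^3$. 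The paper then reduces this to the model by explicit changes of variables: substitute $v_i=\sqrt{c_2}\,u_i$ so that $z_2=v_i^2$ exactly; pass to source coordinates $(z_3,\dots,z_n,v_i)$; split the cubic coefficient into even and odd parts in $v_i$, $c_4(z',v_i)=c_5(z',v_i^2)+v_i\,c_6(z',v_i^2)$, so that $z_1=c_5(z',z_2)v_i^3+c_6(z',z_2)z_2^2$; and finally absorb the even part by the target change $\bar z_1=\bigl(z_1-c_6(z',z_2)z_2^2\bigr)/c_5(z',z_2)$, which gives $\bar z_1=v_i^3$, $z_2=v_i^2$ and hence the required pair $(x,y)=(z_2,\bar z_1)$ with $dx\wedge dy\ne0$ and image $\{x^3=y^2\}$. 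This even/odd splitting (or, alternatively, verification of a front-type criterion for cuspidal edges, which would require exhibiting a Legendrian lift of $\Phi_i$) is the missing idea; with it your outline becomes a complete proof, without it the last step does not close.
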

\begin{proof}
(1) By the assumption, we see that $n-1$ vectors
\begin{equation*}
\frac{\partial}{\partial u_i}\gamma(r_i(u),u)=\frac{\partial \gamma}{\partial t}(r_i(u),u)\frac{\partial r_i}{\partial u_i}(u)
\end{equation*}
and
\begin{equation*}
\frac{\partial}{\partial u_k}\gamma(r_i(u),u)=\frac{\partial \gamma}{\partial t}(r_i(u),u)\frac{\partial r_i}{\partial u_k}(u)+\frac{\partial \gamma}{\partial u_k}(r_i(u),u)\quad(k\ne i)
\end{equation*}
are linearly independent. Therefore the map $u\to \gamma(r_i(u),u)$ is an immersion.

(2) We fix $u_0=(u_{1,0},\dots,u_{n-1,0})$ such that
$[u_0]\in C_i^\pm$, $[u_0]\not\in \partial C_i^\pm$. We consider the case where $u_{i,0}=0$. Other cases ($u_{i,0}=\pi,\pm\pi/2$) will be similar.
From the assumption the $n-1$ vectors
\begin{equation*}
\frac{\partial\gamma}{\partial t}(r_i(u),u),\quad\frac{\partial\gamma}{\partial u_k}(r_i(u),u)\quad(k\ne i)
\end{equation*}
are linearly independent at $u=u_0$ and, by Proposition~\ref{prop:dualtob}, their $dx_{i+1}$-components vanish. Therefore, we can take a coordinate system $(z_1,\dots,z_n)$
around the point $p=\gamma(r_i(u_0),u_0)$ such that
\begin{equation*}
z_1=x_{i+1},\quad dz_2\left(\frac{\partial \gamma}{\partial t}(r_i(u_0),u_0)\right)\ne 0,\quad z_k(p)=0\text{ for any }k,
\end{equation*}
and the Jacobian of the map
\begin{equation*}
u\mapsto (z_3,\dots,z_n,u_i)
\end{equation*}
given by $z_k=z_k(\gamma(r_i(u),u))$ does not vanish
at $u=u_0$. Then, putting 
\begin{equation*}
u'=(u_1,\dots,u_{i-1},u_{i+1},\dots,u_{n-1}),
\end{equation*}
we have
\begin{equation*}
z_1(\gamma(r_i(u),u))=c_1(u_i,u')u_i^3,\quad
z_2(\gamma(r_i(u),u))=c_2(u_i,u')u_i^2\,,
\end{equation*}
where $c_1$ and $c_2$ are functions of $u=(u_i,u')$ which do not vanish at $u_0=(0,u'_0)$. We may
assume that $c_2$ is positive at $u_0$. Thus,
we can replace the coordinate function $u_i$
with
\begin{equation*}
v_i=\sqrt{c_2(u_i,u')}u_i
\end{equation*}
so that $(v_i,u')$ is the new coordinate system on
$U_{p_0}^*M$, and we have, putting $z_k(v_i,u')=
z_k(\gamma(r_i(u),u))$,
\begin{equation*}
z_1(v_i,u')=c_3(v_i,u')v_i^3,\quad z_2(v_i,u')=v_i^2
\end{equation*}
and $c_3(0,u'_0)\ne 0$. 

Since the map
\begin{equation*}
(v_i,u')\mapsto (z_3(v_i,u'),\dots,z_n(v_i,u'),v_i)
\end{equation*}
is a local diffeomorphism around the point
$(v_i,u')=(0,u'_0)$, we can take the inverse function so that $(v_i,u')$ is a function of
$(z_3,\dots,z_n,v_i)$. Therefore, the map
$u\mapsto\gamma(r_i(u),u)$ is described as a map
\begin{equation*}
(z_3,\dots,z_n,v_i)\mapsto(z_1,\dots,z_n)
\end{equation*}
such that
\begin{equation*}
z_1=c_4(z_3,\dots,z_n,v_i)v_i^3,\quad z_2=v_i^2\,.
\end{equation*}
and $c_4(0,\dots,0)\ne 0$.

We put $z'=(z_3,\dots,z_n)$ and
\begin{equation*}
c_{4,\pm}(z',v_i)=\frac12(c_4(z',v_i)\pm c_4(z',-v_i))\,.
\end{equation*}
Then, since $c_{4,+}$ is a even function in $v_i$,
there is a $C^\infty$ function $c_5$ such that
\begin{equation*}
c_{4,+}(z',v_i)=c_5(z',v_i^2).
\end{equation*}
Similarly, we have
\begin{equation*}
c_{4,-}(z',v_i)=v_i\,c_6(z',v_i^2)
\end{equation*}
for some $C^\infty$ function $c_6$.
Thus we have
\begin{gather*}
z_1=(c_5(z',v_i^2)+v_i\,c_6(z',v_i^2))\,v_i^3\\
=c_5(z',z_2)\,v_i^3+c_6(z',z_2)\,z_2^2
\end{gather*}
Therefore, replacing the coordinate function $z_1$
with
\begin{equation*}
\bar z_1=\frac{z_1-c_6(z',z_2)\,z_2^2}{c_5(z',z_2)}\,,
\end{equation*}
we see that the map $u\mapsto \gamma(r_i(u),u)$
is expressed as the map
\begin{equation*}
(z',v_i)\mapsto (\bar z_1,z_2,z')
\end{equation*}
such that
\begin{equation*}
\bar z_1=v_i^3,\quad z_2=v_i^2\,.
\end{equation*}
Thus the theorem has been proved.
\end{proof}

%%%%%
\section{Singularities arising at points with double conjugacy}
\subsection{Definition of $D_4^+$ Lagrangian singularity}
We first review the notion of Lagrangian singularity and that of generating family which
describes a Lagrangian submanifold. After that, we state the definition of $D_4^+$ Lagrangian
singularity. For the statements of this subsection we refer to \cite{AGV} for Lagrangian singularities
and \cite{W} for versal deformations.

\subsubsection*{Lagrangian singularity}
Let $N$ be a manifold and let $L$ be a Lagrangian submanifold of $T^*N$.
A {\it Lagrangian singularity} is a singularity of the map $\pi\circ i: L\to N$, where
$i$ and $\pi$ denote the inclusion $L\to T^*N$ and the bundle projection $T^*N\to N$
respectively. More precisely, for points $\lambda_0\in L$ and $q_0=\pi(\lambda_0)\in N$,
we consider the ``Lagrangian equivalence class'' of the map-germ $(\pi\circ i): (L,\lambda_0) \to (N,q_0)$. Two such map-germs
$(L,\lambda_0) \to (N,q_0)$ and $(\pi'\circ i'): (L',\lambda'_0) \to (N',q'_0)$ are said to be 
Lagrangian equivalent if there is a diffeomorphism $\phi:(N,q_0)\to (N',q'_0)$ and a symplectic 
diffeomorphism
$\Phi:(T^*N,\lambda_0)\to (T^*N', \lambda'_0)$ such that the diagram
\begin{equation*}
\begin{CD}
(T^*N,\lambda_0) @>\Phi>> (T^*N',\lambda'_0)\\
@V\pi VV	@VV\pi' V\\
(N,q_0) @>>\phi > (N',q'_0)
\end{CD}
\end{equation*}
is commutative and such that $\Phi(L,\lambda_0)=(L',\lambda'_0)$. Actually, $\Phi$ is described as 
\begin{equation*}
\Phi(\lambda)=(\phi^*)^{-1}(\lambda)+dh_{\phi(\pi(\lambda))}\,,\quad \lambda\in T^*N
\end{equation*}
for some function $h$ on $N'$ in this case.

\subsubsection*{Generating family}
Let $(L,\lambda_0)\subset T^*N$ and $(N,q_0)$ be as above. Let $x=(x_1,\dots,x_n)$ be a coordinate system on $N$ so that $q_0$ corresponds to $a=(a_1,\dots,a_n)$ $(n=\dim N)$. A function $F(u,x)=F(u_1,\dots,u_k,x_1,\dots,x_n)$ defined on a neighborhood of 
$(b,a)\in \R^k\times \R^n$ is called a ``generating family'' for $L$ around
the reference point $\lambda_0\in L$ if it satisfies
\begin{enumerate}
\item $0\in \R^k$ is a regular value of the map 
\begin{equation*}
d_uF: (u,x)\mapsto (\partial F/\partial u_1,\dots,\partial F/\partial u_k)
\end{equation*}
and $d_uF(b,a)=0$. Thus $C=(d_uF)^{-1}(0)$ is a $n$-dimensional manifold and $(b,a)\in C$.
\item The map
\begin{equation*}
d_xF: C\ni (u,x)\mapsto \sum_{l=1}^n(\partial F/\partial x_l)(u,x)\,dx_l\in T^*_xN\subset T^*N
\end{equation*}
gives an embedding of $C$ into $T^*N$ whose image is $L$ (a neighborhood of $\lambda_0$) and $d_xF(b,a)=\lambda_0$.
\end{enumerate}
It can be seen that 
\begin{equation*}
k\ge \dim\ker \left((\pi\circ i)_*: T_{\lambda_0}L\to T_{q_0}N\right)\,.
\end{equation*}
If the equality holds, then the generating family is called {\it minimal}.
A way of obtaining a minimal generating family is as follows: Let $(x, \xi)$ be the canonical coordinate system of $T^*N$ associated with a coordinate system $x=(x_1,\dots, x_n)$ on $N$.
One can choose $x$ so that 
\begin{equation*}
(\pi\circ i)^*(dx_{j})= 0\quad \text{at } \lambda_0\quad(1\le j\le k),  
\end{equation*}
where $k=\dim\ker ((\pi\circ i)_*)_{\lambda_0}$. Then $(\xi_1,\dots,\xi_k,x_{k+1},\dots,x_n)$
form a coordinate system of $L$ around $\lambda_0$ and
\begin{equation*}
-\sum_{i=1}^kx_id\xi_i+\sum_{j=k+1}^n\xi_jdx_j
=\sum_{i=1}^n\xi_idx_i-d\left(\sum_{i=1}^k \xi_ix_i\right)
\end{equation*}
is a closed form on $L$. Thus there is a function 
\begin{equation*}
\hat F=\hat F(\xi_1,\dots,\xi_k,x_{k+1},\dots,x_n)
\end{equation*}
on $L$ so that 
\begin{equation*}
\partial \hat F/\partial \xi_i=-x_i|_L\,, \quad \partial \hat F/\partial x_j=\xi_j|_L\qquad (1\le i\le k,\ k+1\le j\le n)\,.
\end{equation*}
Then
\begin{equation*}
F(\xi_1,\dots,\xi_k,x_1,\dots,x_n)=\sum_{i=1}^k\xi_ix_i +\hat F(\xi_1,\dots,\xi_k,x_{k+1},\dots,x_n)
\end{equation*}
is the desired minimal generating family.

Let $G(v_1,\dots,v_{k'},y_1,\dots,y_n)$ with the base point $(b',a')$ be another minimal generating family for a Lagrangian submanifold 
$(\tilde L, \tilde \lambda_0)\subset T^*\tilde N$. Then
those two minimal generating families are said to be $\mathcal R^+$-equivalent
if $k'=k$ and there is a diffeomorphism $\Psi:\R^k\times\R^n\to \R^k\times\R^n$ $((b,a)\mapsto
(b',a'))$ of the form
\begin{equation*}
\Psi(u,x)=(\psi(u,x),\phi(x))
\end{equation*}
and a function $h(x)$ so that $F(u,x)=G(\Psi(u,x))+h(x)$. The following criterion is crucial
(see the theorem in \cite[p.304]{AGV} and its proof).
\begin{thm}
Two minimal generating families $F(u,x)$ and $G(v,y)$ are $\mathcal R^+$-equivalent
if and only if the corresponding Lagrangian submanifolds $(L,\lambda_0)\subset T^*N$ and 
$(\tilde L,\tilde \lambda_0)\subset T^*\tilde N$ are Lagrangian equivalent.
\end{thm}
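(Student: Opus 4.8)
The plan is to prove both implications, handling the direction ``$\mathcal R^+$-equivalence $\Rightarrow$ Lagrangian equivalence'' by an explicit construction and the reverse direction by reduction to a uniqueness statement for minimal generating families.

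For the forward direction I would start from the data $\Psi(u,x)=(\psi(u,x),\phi(x))$ and $h(x)$ with $F(u,x)=G(\Psi(u,x))+h(x)$ and manufacture the symplectic diffeomorphism $\Phi$ directly. Since $\Psi$ is fiber-preserving, its Jacobian is block triangular, so $\partial\psi/\partial u$ is invertible, and from
\[
\frac{\partial F}{\partial u_i}=\sum_j\frac{\partial G}{\partial v_j}(\Psi)\,\frac{\partial\psi_j}{\partial u_i}
\]
one sees that $\Psi$ carries $C_F=(d_uF)^{-1}(0)$ diffeomorphically onto $C_G=(d_vG)^{-1}(0)$. Differentiating in $x$ and restricting to $C_F$, where the $\partial G/\partial v_j$ vanish, gives $d_xF=\phi^*(d_yG\circ\Psi)+dh$; that is, along $L$ one has $d_yG(\Psi(u,x))=((d\phi_x)^*)^{-1}(d_xF(u,x)-dh_x)$. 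Hence the map
\[
\Phi(\lambda)=\bigl((d\phi_{\pi(\lambda)})^*\bigr)^{-1}\bigl(\lambda-dh_{\pi(\lambda)}\bigr)
\]
covers $\phi$, sends $L$ onto $L'$, and is symplectic: the cotangent lift of $\phi$ preserves the canonical $1$-form, and fiberwise translation by the closed form $\pi^*dh$ preserves $d\alpha$. Rewriting the translation as $d\tilde h$ with $\tilde h=-h\circ\phi^{-1}$ puts $\Phi$ into exactly the form displayed in the definition of Lagrangian equivalence.

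For the converse I would not produce $\Psi$ and $h$ by hand but invoke two structural facts about generating families from \cite{AGV}. First, a Lagrangian equivalence transports a generating family to a generating family of the target germ that is $\mathcal R^+$-equivalent to the original: given the fiber-preserving symplectomorphism $\Phi$ taking $L$ to $L'$, one pulls $F$ back through $\Phi$ and $\phi$ and checks, by the bookkeeping above run in reverse, that the result generates $L'$ and differs from $F$ only by a fiber diffeomorphism and an additive function of $x$. Second, and this is where the real content lies, any two minimal generating families of one and the same Lagrangian germ are $\mathcal R^+$-equivalent. Chaining these, if $F$ generates $L$ and $G$ generates $L'\cong L$, then $F$ is $\mathcal R^+$-equivalent to some minimal family generating $L'$, which in turn is $\mathcal R^+$-equivalent to $G$.

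The main obstacle is the uniqueness statement in the last step, i.e.\ the rigidity of minimal generating families. The standard route I would follow is to prove first the weaker ``stable'' uniqueness — any two generating families of a fixed germ become $\mathcal R^+$-equivalent after adding nondegenerate quadratic forms in auxiliary variables — via the splitting (parametrized Morse) lemma together with a homotopy/Moser argument that integrates an $x$-dependent vector field to build $\Psi$ and $h$. Minimality then removes the stabilization: the number of parameters of a minimal family equals $\dim\ker(\pi\circ i)_*$ at $\lambda_0$, which is a Lagrange invariant, whereas each added quadratic variable raises the parameter count while leaving this kernel dimension fixed, so two minimal families admit no nontrivial stabilizations and are already $\mathcal R^+$-equivalent. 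I expect the homotopy step, together with the careful verification that the resulting equivalence has the prescribed fiber-preserving form $\Psi(u,x)=(\psi(u,x),\phi(x))$, to be the technically delicate part.
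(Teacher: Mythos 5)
First, a point of reference: the paper does not actually prove this theorem — it is quoted as a known criterion, with a pointer to the theorem on p.~304 of Arnold--Gusein-Zade--Varchenko \cite{AGV} and its proof. So your attempt has to be measured against that standard singularity-theory argument, not against anything internal to the paper. Your forward direction is complete, correct, and is exactly the standard construction: fiber-preservation of $\Psi$ gives the block-triangular Jacobian, hence $\Psi(C_F)=C_G$; differentiating $F=G\circ\Psi+h$ in $x$ along $C_F$ (where $d_vG\circ\Psi$ vanishes) yields $d_xF=(d\phi)^*(d_yG\circ\Psi)+dh$; and the map $\Phi(\lambda)=\bigl((d\phi_{\pi(\lambda)})^*\bigr)^{-1}\bigl(\lambda-dh_{\pi(\lambda)}\bigr)$ is symplectic, covers $\phi$, carries $L$ to $L'$, and can be rewritten in the form $\Phi(\lambda)=(\phi^*)^{-1}(\lambda)+d\tilde h_{\phi(\pi(\lambda))}$ required by the paper's definition of Lagrangian equivalence.

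The gap is in the last step of the converse. You invoke stable uniqueness — any two generating families of Lagrangian-equivalent germs become $\mathcal R^+$-equivalent after adding nondegenerate quadratic forms in auxiliary variables — and then claim that minimality ``removes the stabilization'' because adding quadratic variables raises the parameter count while $\dim\ker(\pi\circ i)_*$ is invariant, so that ``two minimal families admit no nontrivial stabilizations.'' That inference is wrong: the stable equivalence relates $F\oplus Q_1$ to $G\oplus Q_2$, and equality of total parameter counts only forces $Q_1$ and $Q_2$ to involve the same number $m$ of extra variables; it does not force $m=0$. To conclude $F\sim G$ you need a cancellation (reduction) lemma: note that minimality is equivalent to the vanishing of the fiber Hessian at the base point, so the fiber Hessians of $F\oplus Q_1$ and $G\oplus Q_2$ at their base points are $0\oplus Q_1$ and $0\oplus Q_2$; since an $\mathcal R^+$-equivalence preserves the rank and signature of this Hessian, $Q_1$ and $Q_2$ have equal signature, and one must then prove that a common nondegenerate quadratic summand can be stripped off, i.e.\ $F\oplus Q\sim G\oplus Q$ implies $F\sim G$. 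That cancellation is a genuine theorem — it is where the parametrized splitting lemma really does its work (cf.\ Wassermann \cite{W}) — and not a consequence of the invariance of the kernel dimension. Without it your converse is incomplete; with it, your outline coincides with the proof in \cite{AGV} that the paper cites.
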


\subsubsection*{Versal deformation of a function germ}
Let $F(u,x)$ be a function germ on $\R^k\times\R^n$ at $(b,a)$ and put
\begin{equation*}
f(u)=f(u_1,\dots, u_k)=F(u,a).
\end{equation*}
Such $F$ is called a deformation (or an unfolding) of the function germ $(f(u), b)$. We are interested in the case where $F(u,x)$ is a {\it versal} deformation of $f$. We do not explain the original definition of versality here; the following characterization by Mather is enough for our purpose (see \cite[\S3]{W} for
the proof of the next two theorems and a detailed explanation on the theory of versal deformations).
\begin{thm}
The function germ $(F(u,x),(b,a))$ is a versal deformation of the function germ $(f(u), b)$
if and only if the quotient space
\begin{equation*}
\mathcal{E}_k\big/\left(\frac{\partial f}{\partial u_1},\dots,\frac{\partial f}{\partial u_k}\right)
\end{equation*}
is spanned by elements represented by constant functions and
\begin{equation*}
\frac{\partial F}{\partial x_j}(u,a)\quad (1\le j\le n)
\end{equation*}
as a vector space. 
\end{thm}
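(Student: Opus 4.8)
The plan is to prove the two implications of the ``if and only if'' separately, treating necessity as elementary bookkeeping and reserving the genuine analytic work for sufficiency. Throughout I write $\dot F_j=(\partial F/\partial x_j)(u,a)\in\mathcal E_k$ for the base--direction velocities carried by $F$, and let $J_f=(\partial f/\partial u_1,\dots,\partial f/\partial u_k)$ denote the Jacobian ideal. The criterion to be proved then reads: $F$ is versal if and only if
\begin{equation*}
\mathcal E_k=J_f+\R\cdot 1+\R\,\dot F_1+\cdots+\R\,\dot F_n\,.
\end{equation*}
The right--hand side is precisely the tangent space at $f$ to the orbit under the three admissible $\mathcal R^+$--moves (fibre reparametrization of $u$, base change in $x$, addition of a function of $x$), so the statement is exactly the infinitesimal versality criterion.

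For \emph{necessity} (versal $\Rightarrow$ the spanning condition) I would differentiate the factorization furnished by versality. Given an arbitrary germ $g\in\mathcal E_k$, form the one--parameter deformation $\tilde F(u,s)=f(u)+s\,g(u)$ over the base $s\in\R$. Versality of $F$ provides a base map $s\mapsto\phi(s)\in\R^n$ with $\phi(0)=a$, a fibre reparametrization $u\mapsto\psi(u,s)$ with $\psi(u,0)=u$, and an additive term $h(s)$ with $h(0)=0$, such that $\tilde F(u,s)=F(\psi(u,s),\phi(s))+h(s)$. Differentiating at $s=0$ gives
\begin{equation*}
g=\sum_{i=1}^k\frac{\partial\psi_i}{\partial s}(u,0)\,\frac{\partial f}{\partial u_i}
+\sum_{j=1}^n\frac{\partial\phi_j}{\partial s}(0)\,\dot F_j
+\frac{\partial h}{\partial s}(0)\,,
\end{equation*}
where $\partial\psi_i/\partial s(\cdot,0)\in\mathcal E_k$ while $\partial\phi_j/\partial s(0)$ and $\partial h/\partial s(0)$ are real constants. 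Hence $g\in J_f+\sum_j\R\,\dot F_j+\R\cdot 1$, which is the asserted spanning.

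For \emph{sufficiency} (the spanning condition $\Rightarrow$ versal) I would use the homotopy (path) method. Let $G$ be an arbitrary deformation of $f$; after adjoining dummy parameters one may compare $F$ and $G$ over a common base and connect them by a family $F_t$ with $F_0=F$ and $F_t(\cdot,a)=f$ for all $t$. One then seeks $t$--dependent fibre reparametrizations $\psi_t$, base maps $\phi_t$, and additive terms $h_t$ trivializing the family; differentiating the trivialization identity $F_t(\psi_t(u,x),\phi_t(x))+h_t(x)=F_0(u,x)$ in $t$ produces the homological equation
\begin{equation*}
\frac{\partial F_t}{\partial t}
+\sum_{i=1}^k X_i\,\frac{\partial F_t}{\partial u_i}
+\sum_{j=1}^n Y_j\,\frac{\partial F_t}{\partial x_j}
+Z=0\,,
\end{equation*}
whose unknowns $X_i=X_i(u,x,t)$, $Y_j=Y_j(x,t)$, $Z=Z(x,t)$ are the infinitesimal generators of the desired diffeomorphisms. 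Solving this for each $t$ with smooth dependence on $t$ and then integrating the vector fields $\sum_iX_i\,\partial/\partial u_i$ and $\sum_jY_j\,\partial/\partial x_j$ from $0$ to $1$ yields the equivalence.

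The \emph{crux}, and the step I expect to be hardest, is the solvability of this homological equation, and it is here that the Malgrange--Mather preparation theorem is indispensable. Restricted to the central fibre $x=a$, the equation only asks that $\partial F_t/\partial t|_{x=a}$ lie in $J_f+\R\cdot 1+\sum_j\R\,\dot F_j$, which is granted by the hypothesis; the real difficulty is to upgrade this single--fibre solvability to a solution on a full neighborhood of $(b,a)$ depending smoothly on $x$ and $t$. This is a parametrized Nakayama argument: the preparation theorem converts the fibrewise spanning over $\R$ into the statement that the relevant quotient module of $\mathcal E_{k+n}$ is finitely generated over the base ring $\mathcal E_n$ by $1,\dot F_1,\dots,\dot F_n$, and this module decomposition over the base is exactly what permits the smooth choice of $X_i,Y_j,Z$. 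Carrying this reduction out rigorously is the content referred to in \cite{W}.
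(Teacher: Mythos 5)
You should first be aware that the paper contains no proof of this statement at all: it is quoted as Mather's characterization of versality, and the authors explicitly defer the proof to Wassermann (``see \cite[\S3]{W} for the proof of the next two theorems''). So there is no in-paper argument to compare yours against; the relevant benchmark is the standard proof in that reference, and your proposal reproduces its architecture. Your necessity half is complete and correct: inducing the one-parameter deformation $f+sg$ from $F$ and differentiating the identity $f(u)+s\,g(u)=F(\psi(u,s),\phi(s))+h(s)$ at $s=0$ places $g$ in $J_f+\sum_j\R\,\dot F_j+\R\cdot 1$ precisely because $\phi$ and $h$ depend on $s$ alone (modulo the usual normalization that the inducing morphism restricts to the identity over the base point, which you should state).

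The sufficiency half, however, is a roadmap rather than a proof, and you say so yourself. The two steps you leave unexecuted are exactly where the content of the theorem lives: (i) the reduction of an arbitrary deformation $G$ over an arbitrary base to a one-parameter homotopy $F_t$ joining it to a pullback of $F$, and (ii) the solvability, smoothly in all parameters, of the homological equation. For (ii), naming the Malgrange--Mather preparation theorem is not the same as deploying it: one must set up the correct module over germs in $(u,x,t)$, derive its finite generation over the base ring $\mathcal{E}_{n+1}$ from the fibrewise spanning hypothesis via Nakayama, and --- the point where genuine care is needed --- check that the solution $(X_i,Y_j,Z)$ can be chosen with $Y_j$ and $Z$ independent of $u$, which is the constraint distinguishing $\mathcal{R}^+$-versality from an unconstrained division problem; after that the time-dependent vector fields must still be integrated to produce $\psi_t,\phi_t,h_t$. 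So, judged as a self-contained proof, your proposal has a genuine gap in the sufficiency direction. Judged against the paper, which proves nothing and simply cites \cite{W}, your necessity argument plus your (correctly structured) outline of sufficiency is strictly more than the authors provide, and it follows the same route as their cited source.
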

Here $\mathcal E_k$ denotes the algebra of function germs in $(u_1,\dots,u_k)$ at $u=b$ and 
$(\dots,\partial f/\partial x_j,\dots)$ stands for its ideal generated by 
$\partial f/\partial x_j$ $(1\le j\le k)$.
\begin{thm}
Let $(F(u,x),(b,a))$
and $(H(v,y),(b',a'))$
be two deformation germs on $\R^k\times\R^n$
of $f(u)=F(u,a)$ and $h(v)=H(v,a')$ respectively. 
Suppose $F$ and $H$ are versal deformations. 
Then the two deformation germs $F$ and $H$ are $\mathcal R^+$-equivalent if and only if the
function germs $(f(u),b)$ and $(h(v),b')$ are 
equivalent, i.e., there is a diffeomorphism germ
$\phi:(\R^k,b)\to(\R^k,b')$ and a constant $c\in\R$ such that $f=h\circ \phi+c$.
\end{thm}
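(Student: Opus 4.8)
The plan is to prove the two implications separately: the forward direction ($\mathcal R^+$-equivalence of the deformations $\Rightarrow$ equivalence of the central germs) is a direct computation, while the reverse direction rests on the uniqueness of versal deformations. Throughout I keep the convention that $u,v\in\R^k$ are the variables being deformed and $x,y\in\R^n$ are the deformation parameters.

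For the forward direction, suppose $F$ and $H$ are $\mathcal R^+$-equivalent, so that $F(u,x)=H(\psi(u,x),\phi(x))+g(x)$ for a diffeomorphism germ $\Psi(u,x)=(\psi(u,x),\phi(x))$ with $\Psi(b,a)=(b',a')$ and some function $g$ on the parameter space. First I would restrict to the central fibre $x=a$, where $\phi(a)=a'$, obtaining $f(u)=h(\psi(u,a))+g(a)$. The derivative of $\Psi$ is block triangular with respect to the splitting $(u,x)$ (its lower-left block vanishes since $\phi$ is independent of $u$), so invertibility of $D\Psi$ forces $\partial\psi/\partial u$ to be invertible at $(b,a)$; hence $u\mapsto\psi(u,a)$ is a diffeomorphism germ $(\R^k,b)\to(\R^k,b')$. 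With $c=g(a)$ this exhibits $f=h\circ\phi_0+c$, i.e. the central germs are equivalent. This step is routine.

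For the reverse direction, suppose $f=h\circ\phi_0+c$ for a diffeomorphism germ $\phi_0:(\R^k,b)\to(\R^k,b')$ and a constant $c$. The idea is to first replace $H$ by a versal deformation of $f$ itself and then appeal to uniqueness. I would set $H'(u,y)=H(\phi_0(u),y)+c$; restricting to $y=a'$ gives $H'(u,a')=h(\phi_0(u))+c=f(u)$, so $H'$ is a deformation of $f$, and $H'$ is $\mathcal R^+$-equivalent to $H$ by construction (the reparametrization $(u,y)\mapsto(\phi_0(u),y)$, whose base map is the identity, together with the constant function $y\mapsto c$). The key technical point is that $H'$ is again \emph{versal}: using the Mather criterion stated above, the algebra isomorphism $\phi_0^*:\mathcal E_k\to\mathcal E_k$ ($\varphi\mapsto\varphi\circ\phi_0$) carries the Jacobian ideal generated by the $\partial h/\partial v_l$ onto the one generated by the $\partial f/\partial u_i$ — because $\partial f/\partial u_i=\sum_l\bigl((\partial h/\partial v_l)\circ\phi_0\bigr)\,\partial(\phi_0)_l/\partial u_i$ with the matrix $(\partial(\phi_0)_l/\partial u_i)$ invertible — and it sends the classes of $\partial H/\partial y_j(\cdot,a')$ to those of $\partial H'/\partial y_j(\cdot,a')$, so the versality of $H$ transfers to $H'$.

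Now both $F$ and $H'$ are versal deformations of the \emph{same} germ $f$ with the \emph{same} number $n$ of parameters. The main obstacle is exactly here: I would invoke the uniqueness (isomorphism) theorem for versal deformations from Thom--Mather theory (see \cite[\S3]{W} and \cite{AGV}), which asserts that any two such deformations are $\mathcal R^+$-equivalent, each being $\mathcal R^+$-equivalent to a miniversal deformation of $f$ augmented by $n-\mu$ trivial parameters, where $\mu$ is the common codimension of $f$ (equivalently of $h$, these being right-equivalent). Granting this, $F\sim_{\mathcal R^+}H'\sim_{\mathcal R^+}H$, and the reverse implication follows. The heart of the matter, and the only nonelementary ingredient, is this uniqueness statement, whose proof proceeds by the homotopy (Moser-type) method of integrating a time-dependent vector field constructed from the infinitesimal versality relation; I would cite it rather than reproduce it.
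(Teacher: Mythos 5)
The paper does not prove this theorem at all: it is quoted as background, and the proof is explicitly deferred to Wassermann (``see \cite[\S3]{W} for the proof of the next two theorems''). So there is no internal argument to compare yours against; what can be said is that your proposal is correct and does strictly more than the paper does. Your forward direction is sound: block-triangularity of $D\Psi$ forces $u\mapsto\psi(u,a)$ to be a diffeomorphism germ, and restriction to $x=a$ gives $f=h\circ\psi(\cdot,a)+g(a)$. Your reverse direction is also correctly structured: passing to $H'(u,y)=H(\phi_0(u),y)+c$, which is $\mathcal R^+$-equivalent to $H$ and is a deformation of $f$; verifying via the Mather criterion (exactly as stated in the paper) that versality transfers, since the algebra isomorphism $\phi_0^*$ carries the Jacobian ideal of $h$ onto that of $f$ (the chain rule with invertible $\partial(\phi_0)_l/\partial u_i$) and matches the parameter derivatives; and then citing the uniqueness theorem that any two $\mathcal R^+$-versal deformations of the \emph{same} germ in the same number of parameters are $\mathcal R^+$-equivalent. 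Be clear-eyed that this last cited ingredient is the entire analytic content of the theorem: your argument is a correct reduction of the two-germ statement to the one-germ uniqueness statement (proved in \cite{W} by the homotopy method), not an independent proof. Since the paper itself cites the whole theorem, this division of labor is perfectly acceptable, and your version has the merit of isolating which part is routine and which part is deep. One small correction to your parenthetical: in the $\mathcal R^+$ setting a miniversal deformation of $f$ has $\mu-1$ essential parameters, not $\mu$, because constants are factored out (compare the $D_4^+$ example in the paper, where $\mu=4$ but the versal family $u_1^3+u_1u_2^2+x_1u_1+x_2u_2+x_3u_2^2$ has three essential parameters); so ``augmented by $n-\mu$ trivial parameters'' should read $n-(\mu-1)$. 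This does not affect the validity of your argument.
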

The $\mathcal R^+$-equivalence in the above theorem is the same as that
for generating families. If $(F(u,x),(b,a))$ is 
a versal deformation of $(f(u),b)$, then it is known that the function germ $f(u)$ is {\it finitely determined}, i.e., there is a positive integer $l$ such that any function germ $(h(u),b)$
whose $l$-jet is equal to the $l$-jet of $f(u)$ at $b$ is equivalent to $(f(u), b)$. 
(In this case $(f(u),b)$ is said to be $l$-determined. ) 
Therefore we 
have the following criterion for Lagrangian 
equivalence of Lagrangian singularities.
\begin{thm}\label{criterion}
Let $(F(u,x), (b,a))$, a function germ on $\R^k\times \R^n$, be a minimal generating family for
a Lagrangian submanifold $(L,\lambda_0)\subset T^*N$. Suppose $F$ is a versal deformation of $f(u)=F(u,a)$ at $b$ and $f(u)$ is $l$-determined.
Let $H(v,y), (b',a'))$ be another function germ on
$\R^k\times\R^n$ and is a minimal generating 
family of a Lagrangian submanifold $(L',\lambda'_0)\subset T^*N'$. Suppose also that $H$ is a versal
deformation o $h(v)=H(v,a')$ at $b'$. 
Then the Lagrangian
singularity $\pi\circ i:(L,\lambda_0)\to (N,q_0)$
is Lagrangian equivalent to $\pi'\circ i:(L',\lambda'_0)\to (N',q'_0)$ if and only if there
is a diffeomorphism germ $\phi:(\R^k,b)\to(\R^k,b')$ 
and a constant $c\in\R$ such that
the $l$-jets of $h(\phi(u))+c$ and $f(u)$ at $b$
coincide.
\end{thm}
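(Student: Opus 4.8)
The plan is to chain the three results already established in this subsection together with the finite-determinacy hypothesis, so that essentially no new computation is needed; the theorem is really a formal consequence of them. The logical skeleton is: Lagrangian equivalence $\iff$ $\mathcal R^+$-equivalence of generating families $\iff$ equivalence of the function germs $f$ and $h$ $\iff$ coincidence of their $l$-jets (up to a diffeomorphism and constant).

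First I would apply the criterion relating generating families to Lagrangian equivalence: since $F$ and $H$ are minimal generating families of $(L,\lambda_0)$ and $(L',\lambda'_0)$, the two Lagrangian singularities are Lagrangian equivalent if and only if $F$ and $H$ are $\mathcal R^+$-equivalent. Next, because $F$ and $H$ are by hypothesis versal deformations of $f(u)=F(u,a)$ and $h(v)=H(v,a')$, the characterization of $\mathcal R^+$-equivalence for versal deformations reduces this to the equivalence of the function germs $(f,b)$ and $(h,b')$, i.e.\ to the existence of a diffeomorphism germ $\tilde\phi:(\R^k,b)\to(\R^k,b')$ and a constant $\tilde c\in\R$ with $f=h\circ\tilde\phi+\tilde c$ as an \emph{exact} identity of germs. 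Thus it suffices to prove that such an exact identity exists precisely when the jet condition of the statement holds.

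One implication is immediate: an exact identity $f=h\circ\tilde\phi+\tilde c$ forces the $l$-jets of $h\circ\tilde\phi+\tilde c$ and $f$ at $b$ to agree, so one takes $\phi=\tilde\phi$ and $c=\tilde c$. The substantive direction is the converse, and this is where finite determinacy is used. Assuming the $l$-jet of $h\circ\phi+c$ equals that of $f$ at $b$, I would set $g=h\circ\phi+c$; then $g$ and $f$ share the same $l$-jet, so the $l$-determinacy of $f$ yields a diffeomorphism germ $\psi$ fixing $b$ and a constant $c''$ with $f=g\circ\psi+c''$. Substituting $g=h\circ\phi+c$ and using that a constant function is unchanged under composition gives $f=h\circ(\phi\circ\psi)+(c+c'')$, an exact identity of the required form with $\tilde\phi=\phi\circ\psi$ and $\tilde c=c+c''$.

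The hard part is only the bookkeeping of the three equivalence notions and of the additive constants, together with composing the jet-level diffeomorphism $\phi$ with the determinacy diffeomorphism $\psi$ in the correct order (and, if needed, inverting $\psi$ to orient the equivalence of germs the right way); the entire geometric content is carried by the cited theorems, so I anticipate no genuine analytic difficulty.
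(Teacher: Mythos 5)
Your proposal is correct and follows exactly the route the paper intends: the paper states this theorem as an immediate consequence of the three preceding results (generating families versus Lagrangian equivalence, the versality characterization of $\mathcal R^+$-equivalence, and finite determinacy), and your argument simply chains these together, with the only genuine content being the bookkeeping of constants and the composition $\tilde\phi=\phi\circ\psi$ in the converse direction, which you handle correctly.
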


\subsubsection*{$D_4^+$ singularity}
The equivalence class of the function germ $f(u_1,u_2)=u_1^3+u_1u_2^2$ at
$0\in\R^2$ is called the $D_4^+$ singularity.
It is $3$-determined and the quotient space 
\begin{equation*}
\mathcal{E}_2\big/\left(\frac{\partial f}{\partial u_1},\frac{\partial f}{\partial u_2}\right)
\end{equation*}
is spanned by $1,u_1,u_2$, and $u_2^2$. Put
\begin{equation*}
F(u_1,u_2,x_1,\dots,x_n)=u_1^3+u_1u_2^2+x_1u_1+x_2u_2
+x_3u_2^2+\sum_{j=4}^nc_jx_j\,,
\end{equation*}
where $c_4,\dots,c_n\in\R$.
Then $(F(u,x),(0,0))$ is a versal deformation of
$(f(u),0)$. Putting 
\begin{equation*}
C=\{(u,x)\,|\,\partial F/\partial u_1=\partial F/\partial u_2=0\},
\end{equation*}
we define a germ of a Lagrangian submanifold
$(L,\lambda_0)\subset T^*\R^n$ as the image of the
map
\begin{equation*}
C\ni (u,x)\mapsto \sum_{j=1}^n
\frac{\partial F}{\partial x_j}(u,x)dx_j\in T^*\R^n\,,\quad \lambda_0=\sum_{j=1}^n\frac{\partial F}{\partial x_j}(0,0)dx_j\,.
\end{equation*}
Namely, $L\subset T^*\R^n=\{(x,\xi)\}$ is parametrized by $(u_1,u_2,x_3.\dots,x_n)$ as
\begin{equation*}
x_1=-(3u_1^2+u_2^2),\quad x_2=-2(u_1+x_3)u_2, 
\quad\xi=(u_1,u_2,u_2^2,c_4,\dots,c_n)\,.
\end{equation*}
The Lagrangian equivalence class represented by
\begin{equation*}
\pi\circ i: (L,\lambda_0)\to (\R^n,0)
\end{equation*}
is called the $D_4^+$ Lagrangian singularity.
\subsection{Singularities at points with double
conjugacy}
We now come back to the situation at \S6.
Let $p_0=x_0=(x_{1,0},\dots,x_{n,0})\in M$ be a general point and let $\lambda_0=(x_0,\xi_0)\in U^*_{p_0}M$ be a covector
where $b_j=b_{j-1}$ for a fixed $j$ ($2\le j\le n-1$), i.e., $\lambda_0\in C_j^+\cap C_{j-1}^-$. We shall denote by $b_{k,0}$ the value of $b_k$ at $\lambda_0$ $(1\le k\le n-1)$. Since the coordinate functions $u_{j-1}$ and $u_j$ on $U^*_{p_0}M$ in the previous section are not appropriate at $\lambda_0$,
we introduce the following functions $\nu_1$, $\nu_2$ instead:
\begin{equation*}
2\nu_1=b_j+b_{j-1}-2f_{j,0},\quad \nu_2=\epsilon\sqrt{(b_{j-1}-f_{j,0})(f_{j,0}-b_j)}\,,
\end{equation*}
where $f_{j,0}=f_j(x_{j,0})$ and $\epsilon=\pm 1$ is chosen so that it is the sign of
$\xi_j$. Thus $\nu_1$ and $\nu_2$ are smooth functions on $U_{p_0}^*M$ around $\lambda_0$, $d\nu_1\wedge
d\nu_2\ne 0$, and
\begin{gather*}
\xi_j=\nu_2\sqrt{(-1)^{j}\prod_{k\ne j,j-1}(f_{j,0}-b_k)}\,,\\
\xi_i=\epsilon_i\sqrt{(-1)^{i-1}\prod_{k\ne j,j-1}(f_{i,0}-b_k)}\\
\times\sqrt{(f_{i,0}-f_{j,0})^2-2(f_{i,0}-f_{j,0})\nu_1- \nu_2^2}\quad (i\ne j)\,,
\end{gather*}
where $\epsilon_i$ is the same one as in \S6 and $f_{i,0}=f_i(x_{i,0})$.

Also we take coordinate functions $(\tilde w_1,\dots,\tilde w_{n-3})$ instead of $b_k$'s $(k\ne j,j-1)$ so that the product structure
\begin{equation*}
\{d\nu_1=d\nu_2=0\}\times\{d\tilde w_k=0, 1\le k\le n-3\}
\end{equation*}
coincides with that of
\begin{equation*}
\{db_{j}=db_{j-1}=0\}\times\{db_k=0, 1\le k\le n-1, k\ne j,j-1\}\,.
\end{equation*}
(One can take them as $u_k$'s $(k\ne j,j-1)$ if $\lambda_0$ is not
contained in any $\partial C_k^\pm$ other than $\partial C_j^+=\partial C_{j-1}^-$.)
We put
\begin{equation*}
\tilde S=\{\lambda\in W\subset U_{p_0}^*M\, |\,\nu_1(\lambda)=\nu_2(\lambda)=0\}\,,
\end{equation*}
where $W$ is a neighborhood of $\lambda_0$ in $U^*_{p_0}M$. We shall use the abbreviated notations
\begin{equation*}
\nu=(\nu_1,\nu_2),\quad \tilde w=(\tilde w_1,\dots,\tilde w_{n-3}),\quad \lambda=(\nu,\tilde w)
\in U^*_{p_0}M\,.
\end{equation*}
We take $\tilde w$ so that $\lambda_0=(0,0)$.

Let $\gamma(t)=\gamma(t,\nu,\tilde w)=(x_1(t,\nu,\tilde w),\dots, x_n(t,\nu,\tilde w))$ be the geodesic
such that $\gamma(0)=p_0$ and $\flat(\dot\gamma(0))=(\nu,\tilde w)$. 
Let $Z_{j-1}(t)$ and $Z_j(t)$ be the Jacobi fields
along the geodesic $\gamma(t)$ defined by the initial conditions
\begin{equation*}
Z_{j-1}(0)=0,\ Z_j(0)=0,\quad Z'_{j-1}(0)=
\frac12\,\sharp\left(\frac{\partial}{\partial \nu_1}\right),\ 
Z'_j(0)=\frac12\,\sharp\left(\frac{\partial}{\partial \nu_2}\right)\,.
\end{equation*}
They are equal with the Jacobi fields $Z_{j-1,0}(t)$ and $Z_{j,0}(t)$ given in p.271 of our
previous paper \cite[\S5]{IK2}.
There we proved the following proposition (\cite[p.272]{IK2}), which we also need here.
Let $t=\tau_1>0$ be the first zero of $Z_j(t)$ along the geodesic $\gamma(t,0,0)$.%
\begin{prop}
\begin{enumerate}
\item $Z_{j-1}(\tau_1)=Z_j(\tau_1)=0$\,.
\item $Z_{j-1}(t)$ and $Z_j(t)$ are linearly independent for any $0<t<\tau_1$.
\end{enumerate}
\end{prop}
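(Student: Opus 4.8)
The plan is to recognize $\gamma(t,0,0)$ as the \emph{degenerate} geodesic along which $b_j=b_{j-1}=f_{j,0}$, so that by the discussion in \S3 the coordinate $x_j(t)\equiv x_{j,0}$ is constant and $\xi_j\equiv 0$. Both $Z_{j-1}$ and $Z_j$ vanish at $t=0$, with initial derivatives $\tfrac12\sharp(\partial/\partial\nu_1)$ and $\tfrac12\sharp(\partial/\partial\nu_2)$ that are linearly independent, since $d\nu_1\wedge d\nu_2\neq0$ and $\sharp$ is an isomorphism. My strategy is to show that these two initial velocities span a parallel $2$-plane $P(t)$ along $\gamma$ on which the Jacobi operator $R_t=R(\cdot,\dot\gamma)\dot\gamma$ acts as a single scalar $\kappa(t)$. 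The Jacobi equation then decouples into one scalar equation $z''+\kappa(t)z=0$ shared by both fields, and both assertions follow simultaneously.

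The key step is the claim that $R_t$ is scalar on $P(t)$. Recall (the proposition reproduced above, from \cite[\S5]{IK2}) that along any geodesic the orthonormal parallel frame $V_1,\dots,V_{n-1}$ diagonalizes $R_t$: since a Jacobi field with initial data in $\R V_i(0)$ stays in $\R V_i(t)$, each line $\R V_i(t)$ is $R_t$-invariant, so $R_t V_i(t)=\kappa_i(t)V_i(t)$, and $R_t$ is self-adjoint and diagonal (with $\dot\gamma$ as the $0$-eigenvector). On the degenerate geodesic the construction $\pi_*(X_{H_i})=h_iV_i$ breaks down precisely for $i=j-1,j$, but the $2$-plane $P(0)=\lim(\R V_{j-1}(0)+\R V_j(0))$ survives as the eigenspace for the merging eigenvalue; because $\nu_1,\nu_2$ depend only on $b_{j-1},b_j$, the initial velocities of $Z_{j-1},Z_j$ lie in $P(0)$, which is orthogonal to $\dot\gamma(0)$. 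Along $\gamma(t,0,0)$ the two eigenvalues coincide, $\kappa_{j-1}(t)=\kappa_j(t)=:\kappa(t)$ for all $t$, and since $R_t$ is self-adjoint a repeated eigenvalue forces $R_t|_{P(t)}=\kappa(t)\,\mathrm{Id}$. I would verify the pointwise identity $\kappa_{j-1}=\kappa_j$ directly from the expressions for the $V_i$ and the curvature in \cite[\S5]{IK2}, where the two formulas visibly agree once $b_{j-1}=b_j$.

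For the conclusion, let $E_1(t),E_2(t)$ denote the parallel transports of the two initial velocities, and let $z(t)$ solve $z''+\kappa z=0$, $z(0)=0$, $z'(0)=1$. Then $Z_{j-1}(t)=z(t)E_1(t)$ and $Z_j(t)=z(t)E_2(t)$ (up to the fixed factor $\tfrac12$). Hence $\tau_1$, the first zero of $Z_j$, is the first zero of $z$, so $Z_{j-1}(\tau_1)=z(\tau_1)E_1(\tau_1)=0$ as well, giving part (1). For $0<t<\tau_1$ we have $z(t)\neq0$, so $Z_{j-1}(t),Z_j(t)$ are nonzero multiples of the independent parallel fields $E_1(t),E_2(t)$, hence linearly independent, giving part (2).

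The main obstacle is the one nonformal point: establishing the \emph{pointwise} identity $\kappa_{j-1}(t)=\kappa_j(t)$ for every $t$ along the degenerate geodesic, i.e.\ that $R_t$ is genuinely scalar on $P(t)$ rather than merely having a repeated eigenvalue in the limit, together with the clean passage to the $\nu$-coordinates at the singular point $\nu=0$ where the frame $V_{j-1},V_j$ itself degenerates. Should the scalar picture prove delicate to set up directly, the alternative is to note $Z_{j-1}=\tfrac12\,\partial\gamma/\partial\nu_1$ and $Z_j=\tfrac12\,\partial\gamma/\partial\nu_2$, represent their scalar coefficients by integral formulas of the type used in \S5--\S6 (obtained by differentiating the geodesic integrals in $\nu_1,\nu_2$), and read off the common first zero and the independence from those formulas; this is in fact how the corresponding statement is established in \cite[\S5]{IK2}.
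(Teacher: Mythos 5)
You should note first that the paper contains no proof of this proposition at all: it is quoted from \cite[p.~272]{IK2}, and the method there is the analytic one that the present paper reuses everywhere (integral identities obtained by differentiating the geodesic equations in the parameters $b_k$, plus limiting arguments as $b_{j-1}-b_j\to 0$; see e.g.\ the proof of $\theta(\tau_1,0)=2\pi$ in \S 7.3, where the simultaneous vanishing enters through $r_j\le t_j\le r_{j-1}$ and continuity as $\nu\to 0$). Your route is therefore genuinely different, and its deductive part is sound: \emph{if} the limit $2$-plane $P(t)$ of $\mathrm{span}(V_{j-1}(t),V_j(t))$ is parallel, $R_t$-invariant, and $R_t|_{P(t)}$ is scalar, then the Jacobi equation splits, every Jacobi field vanishing at $0$ with derivative in $P(0)$ equals $z(t)$ times a parallel field for one common solution $z$ of $z''+\kappa z=0$, and (1), (2) follow at once. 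The initial data are also handled correctly: $Z_{j-1},Z_j$ are limits of linear combinations of $Y_{j-1},Y_j$ (as stated in \S 6), so $Z_{j-1}'(0),Z_j'(0)$ lie in $P(0)$, and they are independent and orthogonal to $\dot\gamma(0)$.

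The gap is exactly at the step you flag, and the verification you propose for it would fail. Along $\gamma(t,0,0)$ one has $f_j(x_j(t))\equiv f_{j,0}=b_{j-1}=b_j$, so $\pi_*(X_{H_{j-1}})$ and $\pi_*(X_{H_j})$ vanish \emph{identically} (the factors $h_i(t)$ are $\equiv 0$ for $i=j-1,j$): the frames $V_{j-1}(t),V_j(t)$ simply do not exist on this geodesic, and every formula you might compare degenerates to $0=0$. Moreover \cite[\S 5]{IK2} contains no expression for the curvature eigenvalues $\kappa_i$; the decomposition $Y_i=y_iV_i$ is derived there from the first integrals, not from a computation of $R_t$, so there is nothing into which to substitute $b_{j-1}=b_j$. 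The scalar property is in fact true, but it needs a limiting argument rather than a substitution; for instance, approach $\lambda_0$ along rays $(\nu_1,\nu_2)=r(\cos\alpha,\sin\alpha)$, $r\to 0^+$. Since $b_{j-1}=f_{j,0}+\nu_1+|\nu|$ and $b_j=f_{j,0}+\nu_1-|\nu|$, the limiting directions of $db_{j-1}$ and $db_j$, hence the limiting lines $\R V_{j-1}(t)$, $\R V_j(t)$ (transported by continuity of curvature and of parallel transport), depend nontrivially on $\alpha$ (they turn at rate $\alpha/2$); this produces more than two eigendirections of the self-adjoint operator $R_t|_{P(t)}$, which forces it to be scalar and also yields the invariance of $P(t)$. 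With that inserted your proof closes; otherwise one falls back on the integral-formula argument you mention at the end, which is indeed essentially the proof of \cite{IK2}.
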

We now assume that the following condition is satisfied:
\begin{equation}\label{assumption}
\begin{aligned}
&\text{\it  There is no
Jacobi field $Y(t)\not\equiv 0$ with $Y(0)=0, Y(\tau_1)=0$} \\
&\text{\it along the geodesic $\gamma(t,0,0)$
other than linear combinations}\\
&\text{\it  of $Z_j(t)$ and $Z_{j-1}(t)$.}
\end{aligned}
\end{equation}
This condition is automatically satisfied when $j=n-1$ by Proposition~\ref{prop:conj}.
Put 
\begin{equation*}
\tilde L=\{t\lambda\,|\, |t-\tau_1|<\epsilon,\ \lambda\in W\subset U^*_{p_0}M\}\subset T^*_{p_0}M
\end{equation*}
for a small constant $\epsilon>0$ and let $\phi:\tilde L\to T^*M$ by
\begin{equation*}
\phi(t\lambda)=\zeta_1(t\lambda)=t\zeta_t(\lambda)\,,
\end{equation*}
where $\{\zeta_t\}$ denotes the geodesic flow on $T^*M$.
Put 
\begin{equation*}
L=\phi(\tilde L),\quad \lambda_1=\phi(\tau_1\lambda_0)\,.
\end{equation*}
Then $L$ is a Lagrangian submanifold of $T^*M$, and we have the following
\begin{thm}\label{thm:d4plus}
The map-germ $\pi|_L:(L,\lambda_1)\to (M,p_1)$ is a $D_4^+$ Lagrangian singularity.
\end{thm}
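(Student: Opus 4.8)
The plan is to verify the statement through the criterion of Theorem~\ref{criterion}: it suffices to produce a minimal generating family $F(u_1,u_2,x)$ for the germ $(L,\lambda_1)$, to check that $F$ is a versal deformation of its central germ $f(u)=F(u,a)$, and to show that the $3$-jet of $f$ is equivalent to the $D_4^+$ germ $u_1^3+u_1u_2^2$, which is $3$-determined. The first task is to pin down the corank. The kernel of $(\pi|_L)_\ast$ at $\lambda_1$ is identified with the space of Jacobi fields along $\gamma(t,0,0)$ vanishing at $t=0$ and $t=\tau_1$; by the preceding Proposition ($Z_{j-1}(\tau_1)=Z_j(\tau_1)=0$, with $Z_{j-1},Z_j$ linearly independent on $(0,\tau_1)$) together with the standing assumption \eqref{assumption}, this space is exactly $\mathrm{span}(Z_{j-1},Z_j)$, hence two-dimensional. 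Thus $k=2$, and following the recipe of \S7.1 I choose base coordinates $x=(x_1,\dots,x_n)$ on $M$ near $p_1$ adapted to the two kernel directions, so that $(\pi|_L)^\ast dx_1=(\pi|_L)^\ast dx_2=0$ at $\lambda_1$ with $\xi_1,\xi_2$ dual to $\nu_1,\nu_2$, and form the minimal generating family $F=u_1x_1+u_2x_2+\hat F(u_1,u_2,x_3,\dots,x_n)$. By minimality the Hessian of $f=F(\,\cdot\,,a)$ vanishes at the origin, so $f$ is a corank-$2$ germ whose leading part is the cubic $3$-jet $C(u_1,u_2)$, equal to the cubic part of $\hat F(\,\cdot\,,a')$.

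The core of the argument is the computation and classification of $C$. The critical points of $f$ near $0$ are precisely the covectors of $L$ lying over $p_1=\gamma(\tau_1,0,0)$, so $C$ is obtained by differentiating to third order, in the two deformation parameters attached to $Z_{j-1}$ and $Z_j$, the geodesic identity of the type \eqref{eq:geod3}--\eqref{eq:diff3}; here the single parameter $b_j$ of \S6 is replaced by the pair $(b_{j-1},b_j)$, equivalently by $(\nu_1,\nu_2)$. A binary cubic form is a $D_4$ germ exactly when it has no repeated root, and it is of type $D_4^+$ exactly when its projective zero set consists of one real and two complex points. I therefore have to establish two facts about $C$: that its discriminant is nonzero, and that it has exactly one real root. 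Both are read off from the signs of the coefficients of $C$, and those coefficients are the second and third $b$-derivatives of the integrals treated in Proposition~\ref{prop:newineq}(2),(3), evaluated in the relevant limit by Proposition~\ref{prop:limitineq}. The definite signs furnished there---negativity of $\partial_{b_i}B$, positivity of $\partial^2_{b_i}B$, and the sign pattern of the mixed first derivatives---force $C$, after a real linear change of $(u_1,u_2)$, into the shape $\alpha u_1^3+\beta u_1u_2^2$ with $\alpha\beta>0$, i.e. the elliptic ($+$) case rather than the hyperbolic ($-$) one; the nondegeneracy $\alpha,\beta\neq0$ is the two-parameter analogue of Corollary~\ref{cusp4}. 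This passage from the integral inequalities of \S5 to the real-root count of the binary cubic is where essentially all the work lies, and it is the main obstacle.

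It remains to confirm versality and to assemble the conclusion. By Mather's criterion (the first theorem on versal deformations stated above) I must check that the class of the constant $1$ together with the classes of $\partial F/\partial x_l(u,a)$ span $\mathcal E_2\big/(\partial f/\partial u_1,\partial f/\partial u_2)$, which for the $D_4^+$ germ is the four-dimensional space $\langle 1,u_1,u_2,u_2^2\rangle$. Since $x_1,x_2$ are dual to the kernel directions one has $\partial F/\partial x_1=u_1$ and $\partial F/\partial x_2=u_2$, producing the classes $u_1,u_2$; a further base coordinate---supplied by one of the transverse directions $\tilde w$, which exist because $2\le j\le n-1$ forces $n\ge3$---yields through $\partial F/\partial x_l=\partial\hat F/\partial x_l=\xi_l|_L$ a germ with nonvanishing $u_2^2$-component, giving the remaining class. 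This is again a nondegeneracy that follows from the explicit structure of $\hat F$ recorded in \S6--\S7. With $F$ thus shown to be a minimal generating family that is a versal deformation of a germ whose $3$-jet is equivalent to $u_1^3+u_1u_2^2$, and with the standard $D_4^+$ generating family of \S7.1 carrying the very same invariants, Theorem~\ref{criterion} applied with $l=3$ gives that $\pi|_L:(L,\lambda_1)\to(M,p_1)$ is Lagrangian equivalent to the $D_4^+$ model. This proves the theorem.
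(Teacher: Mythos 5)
Your overall architecture is the same as the paper's: identify the corank as $2$, build a minimal generating family, show its central germ has $3$-jet equivalent to $u_1^3+u_1u_2^2$ (which is $3$-determined), verify versality via Mather's criterion, and conclude by Theorem~\ref{criterion}. But the two places where you locate the substance of the argument are, respectively, not justified and actually wrong, and both gaps are essential.

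First, the classification of the cubic $3$-jet. You claim that the sign information of Proposition~\ref{prop:newineq}(2),(3) (via Proposition~\ref{prop:limitineq}) ``forces $C$, after a real linear change of $(u_1,u_2)$, into the shape $\alpha u_1^3+\beta u_1u_2^2$ with $\alpha\beta>0$.'' Those sign inequalities cannot decide elliptic versus hyperbolic type: they are scalar positivity/negativity statements about certain Abel integrals, and both $u_1(u_1^2+u_2^2)$ and $u_1(u_1^2-u_2^2)$ are compatible with any such signs after rescaling. What actually pins down the type in the paper is the precise coefficient structure of the third-order expansion (Proposition~\ref{mainprop}): the expansions $y_0=2c\nu_1\nu_2+c'\nu_2(t-\tau_1)+\cdots$ and $y_1=c(3\nu_1^2+\nu_2^2)+c'\nu_1(t-\tau_1)+\cdots$, whose $3{:}1$ ratio and vanishing cross-terms come from the algebraic identity
\begin{equation*}
\frac{\partial^2 u_i^\alpha}{\partial \nu_1^2}(x_i,0)=\frac{3\,u_i^\alpha(x_i,0)}{(f_i-f_{j,0})^2},\qquad
\frac{\partial^2 u_i^\alpha}{\partial \nu_2^2}(x_i,0)=\frac{u_i^\alpha(x_i,0)}{(f_i-f_{j,0})^2},\qquad
\frac{\partial^2 u_i^\alpha}{\partial \nu_1\partial\nu_2}(x_i,0)=0,
\end{equation*}
i.e.\ from the way $(\nu_1,\nu_2)$ enter the square root $\sqrt{(f_i-f_{j,0})^2-2\nu_1(f_i-f_{j,0})-\nu_2^2}$, not from any inequality. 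This is what yields the cubic $-ce(\nu_1^3+\nu_1\nu_2^2)$, whose quadratic factor $\nu_1^2+\nu_2^2$ is positive definite, hence $D_4^+$. The monotonicity inequalities enter only at the very end, to show the overall constant $C$ is nonzero (and there only part (2) of Proposition~\ref{prop:newineq} is used, not (3)). Your proposal acknowledges this step is ``the main obstacle'' but supplies no computation for it; since the entire content of Proposition~\ref{mainprop} and \S7.3 lives here, this is a genuine missing proof, not a detail.

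Second, versality. You propose to obtain the class of $u_2^2$ in $\mathcal{E}_2\big/(\partial f/\partial u_1,\partial f/\partial u_2)$ from a deformation direction supplied by one of the transverse coordinates $\tilde w$. This fails outright in the lowest case $n=3$: there the coordinates $\tilde w_1,\dots,\tilde w_{n-3}$ are vacuous, yet the theorem still holds. Even for $n>3$ the transverse directions do not carry the needed quadratic term; in the paper the $u_2^2$ class is produced by the direction conjugate to the \emph{time} coordinate $y_2$ along the geodesic flow, via Lemma~\ref{eta2}: the restriction $\hat\eta_2|_{L'}$ has quadratic part $c_1(\hat\eta_0^2+\hat\eta_1^2)+\cdots$ with $c_1\ne 0$, and this nonvanishing again rests on the expansions of Proposition~\ref{mainprop} together with the Lagrangian condition (the constant $c'\ne 0$). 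So both the type determination and the versality check ultimately require the explicit expansion machinery that your proposal defers; without it, neither step can be completed along the lines you describe.
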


To prove this theorem we shall prepare good coordinate functions $y_0,y_1,y_2$, and $w_k$ $(1\le k\le n-3)$
around the point $p_1=\gamma(\tau_1,0,0)\in M$ so that the criterion in Theorem~\ref{criterion} will be
easily applicable. First, we define $y_0,y_1$ and $y_2$. If the condition
\begin{equation}\label{req2}
f_l(x_l(\tau_1,0,0))\ne b_{k,0}\qquad \text{for any }k\ne j-1, j\,\text{and }1\le l\le n
\end{equation}
is satisfied, then we put:
\begin{align*}
y_0=&A_1(f_{j,0})(f_j(x_j)-f_{j,0 })\\
y_\alpha=&\sum_{1\le i\le n\atop{i\ne j}}
\int_{x_{i,1}}^{x_i}\epsilon_i\sqrt{(-1)^{i-1}
\prod_{k\ne j-1,j}(f_i(x_i)-b_{k,0})}
\,\frac{(f_i(x_i)-f_{j,0})^{\alpha}}{|f_i(x_i)-f_{j,0}|}\,dx_i\\ 
&\hfil (\alpha=1,2)\,,
\end{align*}
where $x_{i,1}=x_i(\tau_1,0,0)$, $\epsilon_i$ is the sign of $(\partial x_i/\partial t)(\tau_1,0,0)$, and
\begin{equation}\label{a1def}
A_1(\lambda)=\frac{\sqrt{(-1)^j\prod_{k\ne j,j-1}(\lambda-b_k)}\,A(\lambda)}{2\sqrt{(-1)^j\prod_{l=0}^n(\lambda-a_l)}}\,.
\end{equation}

If \eqref{req2} is not satisfied for some $k$, then we put
\begin{equation*}
I=\{i\, |\, 1\le i\le n,\, i\ne j,\, f_i(x_{i,1})\ne b_{l,0}
\text{ for any }l\ne j,j-1\}
\end{equation*}
and
\begin{align*}
y_\alpha=\sum_{i\in I}
\int_{x_{i,1}}^{x_i}\epsilon_i\sqrt{(-1)^{i-1}
\prod_{k\ne j-1,j}(f_i(x_i)-b_{k,0})}
\,\frac{(f_i(x_i)-f_{j,0})^{\alpha}}{|f_i(x_i)-f_{j,0}|}\,dx_i\\ 
\hfil (\alpha=1,2)\,.
\end{align*}
Next, we shall define $(w_1,\dots,w_{n-3})$.
First we define them on the submanifold 
\begin{equation*}
S=\{\gamma(t,0,\tilde w)\,|\, (0,\tilde w)\in \tilde S\subset U^*_{p_0}M,\, |t-\tau_1|<\epsilon\}
\end{equation*}
by $w_k(\gamma(t,0,\tilde w))=\tilde w_k$ $(1\le k\le n-3)$. Note that $S$ is really a submanifold due to the  assumption~\eqref{assumption}. Along $S$ we define mutually orthogonal unit vector fields $V_1$ and $V_2$ which are normal to $S$. Then we extend $w_k$'s
to a neighborhood of $p_1$ in $M$ so that they
satisfy
\begin{equation*}
dw_k(V_i)=0\qquad (1\le k\le n-3,\,i=1,2)
\end{equation*}
at each point on $S$.
\begin{lemma}
$dy_0\wedge dy_1\ne 0$ at $p_1$,
$dy_0=dy_1=0$ on $T_{p_1}S$, and 
\begin{equation*}\frac{d}{dt}y_2(\gamma(t,0,0))\ne 0\,. 
\end{equation*}
In particular, $(y_0,y_1,y_2,w_1,\dots,w_{n-3})$
form a coordinate system of $M$ around $p_1$.
\end{lemma}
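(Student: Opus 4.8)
The plan is to exploit the degeneration of the coordinate $x_j$ along $S$. At every point of $S$ one has $b_j=b_{j-1}=f_{j,0}$, so by the analysis of the case $a_j<b_j=b_{j-1}<a_{j-1}$ in \S3 the function $x_j$ is constant along each generating geodesic $\gamma(t,0,\tilde w)$; since $\gamma(0,0,\tilde w)=p_0$, this forces $x_j\equiv x_{j,0}$ and $f_j\equiv f_{j,0}$ on all of $S$, and in particular $\xi_j\equiv0$ along $\gamma(t,0,0)$. Two facts follow at once. First, every vector tangent to $S$ annihilates $dx_j$. Second, $y_0=A_1(f_{j,0})(f_j(x_j)-f_{j,0})$ depends only on $x_j$, so $y_0\equiv0$ on $S$ and $dy_0=A_1(f_{j,0})f_j'(x_{j,0})\,dx_j$ at $p_1$; this is a nonzero multiple of $dx_j$, since $A_1(f_{j,0})>0$ by \eqref{a1def} and $f_j'(x_{j,0})>0$ (recall $0<x_{j,0}<\alpha_j/4$). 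Thus $dy_0\neq0$ and $dy_0$ kills $T_{p_1}S$.

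Next I would analyze $dy_1$. Differentiating the defining integral gives $dy_1=\sum_{i\neq j}\epsilon_i\,\mathrm{sign}(f_i-f_{j,0})\sqrt{(-1)^{i-1}\prod_{k\neq j,j-1}(f_i-b_{k,0})}\,dx_i$, a combination of the $dx_i$ with $i\neq j$; its coefficients are nonzero because the radicands equal $\xi_i^2/(f_i-f_{j,0})^2>0$ at $b=b_0$ (for the indices retained when \eqref{req2} fails one restricts to the set $I$, where the same positivity holds). Hence $dy_1\neq0$, and as it carries no $dx_j$ it is not proportional to $dy_0$, giving $dy_0\wedge dy_1\neq0$ and the first assertion. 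For the vanishing on $T_{p_1}S$ I would use $b_{j,0}=b_{j-1,0}=f_{j,0}$, so that $(f_i-f_{j,0})\prod_{k\neq j,j-1}(f_i-b_{k,0})=G_{j-1}(f_i)$; a direct comparison then shows that the $dx_i$--coefficient of $dy_1$ is $(-1)$ times that of the form $\tilde\omega_{j-1}$ of Proposition~\ref{prop:dualtob}, for every $i\neq j$. Thus $dy_1$ and $-\tilde\omega_{j-1}$ differ only in the $dx_j$--slot, where $\tilde\omega_{j-1}$ is singular; but every vector of $T_{p_1}S$ kills $dx_j$, so on $T_{p_1}S$ one may replace $dy_1$ by $-\tilde\omega_{j-1}$. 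Proposition~\ref{prop:dualtob} yields $\tilde\omega_{j-1}(\dot\gamma)=0$ — the apparent $i=j$ contribution is harmless, since $G_{j-1}(f_{j,0})$ contains the factor $f_{j,0}-b_{j,0}=0$ — while each $(\partial\gamma/\partial\tilde w_k)(\tau_1)$ lies in the span of the parallel fields $V_m(\tau_1)$ with $m\neq j-1,j$, on all of which $\tilde\omega_{j-1}$ vanishes. Hence $dy_1=0$ on $T_{p_1}S$, which is the second assertion.

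For the third assertion I would write $(f_i-f_{j,0})^2/|f_i-f_{j,0}|=|f_i-f_{j,0}|$, so that $dy_2=\sum_{i\neq j}\epsilon_i\sqrt{(-1)^{i-1}\prod_{k\neq j,j-1}(f_i-b_{k,0})}\,|f_i-f_{j,0}|\,dx_i$. Evaluating on $\dot\gamma(\tau_1)$ and using that at $b=b_0$ the radical times $|f_i-f_{j,0}|$ equals $|\xi_i|$, together with $\epsilon_i=\mathrm{sign}\,\xi_i$ and $\xi_i=g_{ii}x_i'$, the $i$-th summand collapses to $g^{ii}\xi_i^2$. Therefore $\frac{d}{dt}y_2(\gamma(t,0,0))\big|_{t=\tau_1}=\sum_{i\neq j}g^{ii}\xi_i^2=2E-g^{jj}\xi_j^2=1$, since $\xi_j\equiv0$ on $\gamma(t,0,0)$ and $2E=1$ on the unit bundle; in particular it is nonzero.

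Finally, to deduce that the $n$ functions form a coordinate system I would pair the differentials with the basis $V_1,V_2,\dot\gamma,(\partial\gamma/\partial\tilde w_1),\dots,(\partial\gamma/\partial\tilde w_{n-3})$ of $T_{p_1}M$, where $V_1,V_2$ are the chosen unit normals to $S$. By construction $dw_k(V_i)=dw_k(\dot\gamma)=0$ and $dw_k(\partial\gamma/\partial\tilde w_l)=\delta_{kl}$; by the first two assertions $dy_0,dy_1$ annihilate $T_{p_1}S$ and form a basis of its two-dimensional annihilator, so they give an invertible $2\times2$ block against $V_1,V_2$; and $dy_2(\dot\gamma)\neq0$ by the third. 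Expanding the pairing matrix along the $\dot\gamma$--column, whose only nonzero entry is $dy_2(\dot\gamma)$, leaves a block-diagonal minor consisting of that invertible $2\times2$ block and the identity block $dw_k(\partial\gamma/\partial\tilde w_l)$; hence the determinant is a nonzero product and the functions are a local coordinate system. I expect the principal difficulty to be the rigorous justification of the identity $dy_1=-\tilde\omega_{j-1}$ at the singular covector $\lambda_0$, where $\tilde\omega_{j-1}$ genuinely blows up, and the accompanying limiting statements in Proposition~\ref{prop:dualtob}; a secondary technical point is the non-generic case in which \eqref{req2} fails, where one must work with the restricted index set $I$ and verify that the surviving coefficients still make $dy_0\wedge dy_1\neq0$ and keep $\tfrac{d}{dt}y_2\neq0$.
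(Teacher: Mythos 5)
Your proposal is correct and follows essentially the same route as the paper's own proof: $dy_0$ is handled via the constancy of $f_j(x_j)$ along $S$, the vanishing of $dy_1$ on $T_{p_1}S$ is obtained by identifying $dy_1$ (up to sign) with a limit of the dual forms of Proposition~\ref{prop:dualtob} (the paper uses the symmetric combination $\tilde\omega_j+\tilde\omega_{j-1}$, whose $dx_i$-components for $i\ne j$ coincide in the limit with those of your single form $\tilde\omega_{j-1}$), and the nonvanishing of $\frac{d}{dt}y_2$ comes from the arc-length relation \eqref{eq:geodlength}, which your explicit evaluation $\frac{d}{dt}y_2=\sum_{i\ne j}g^{ii}\xi_i^2=2E=1$ simply makes quantitative. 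The limiting issue at the degenerate covector that you flag as the principal difficulty is treated at exactly the same level of brevity in the paper (it asserts $\lim_{\lambda\to\lambda_1}(\tilde\omega_j+\tilde\omega_{j-1})=2dy_1$ without further justification), so relative to the paper's argument nothing essential is missing from yours; your added linear-algebra verification of the coordinate-system claim is detail the paper omits.
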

\begin{proof}
$dy_0\wedge dy_1\ne 0$ is obvious, since $y_1$
does not contain the variable $x_j$. Since $f_j(x_j)$ remains constant ($=f_{j,0}$) on the
geodesic $\gamma(t,0,\tilde w)$, we have $dy_0|_{TS}=0$ and $dy_0\ne 0$ at each point on $S$.
For $y_1$, we observe Proposition~\ref{prop:dualtob}, which is effective for $\lambda\in U^*_{p_1}M$
such that $b_1,\dots,b_{n-1}$ and $a_1,\dots,a_n$ are all distinct. We then have
\begin{equation*}
\lim_{\lambda\to\lambda_1}(\tilde \omega_j+\tilde \omega_{j-1})=2dy_1\,,
\end{equation*}
and it therefore follows that $dy_1=0$ on $TS$. Also, for $y_2$, we observe the formula \eqref{eq:geodlength}.
Taking a limit as above, we have 
\begin{equation*}
\frac{d}{dt}y_2(\gamma(t,0,0))\ne 0\,.
\end{equation*}
\end{proof}

Let $(\eta_0,\eta_1,\eta_2,v_1,\dots,v_{n-3})$ be the canonical fiber coordinates of $T^*M$
associated with the coordinate system $(y_0,y_1,y_2,w_1,\dots,w_{n-3})$ of $M$.
Using the coordinate system $(t,\nu,\tilde w)$ on $\tilde L$, we put
\begin{gather*}
y_\alpha\circ\phi(t,\nu,\tilde w)=y_\alpha(t,\nu,\tilde w)\quad  (0\le \alpha\le 2),\\
w_k\circ\phi(t,\nu,\tilde w)=w_k(t,\nu,\tilde w)\quad  (1\le k\le n-3),
\end{gather*}
and we also define $\eta_\alpha(t,\nu,\tilde w)$ and $v_k(t,\nu,\tilde w)$ in the same way.
Note that
\begin{equation*}
\pi\circ\phi(t,\nu,\tilde w)=\gamma(t,\nu,\tilde w).
\end{equation*}
Therefore we have $y_\alpha(t,\nu,\tilde w)=y_\alpha(\gamma(t,\nu,\tilde w))$, etc..
For those functions we have the following proposition; the proof 
will be given in the next subsection.
\begin{prop}\label{mainprop}
There are nonzero constants $c$ and $c'$ such that:
\begin{align*}
y_0(t,\nu,0)= & \,2c\nu_1\nu_2+c'\nu_2(t-\tau_1)+O((|\nu|+|t-\tau_1|)^3)\\
y_1(t,\nu,0)= & \,c(3\nu_1^2+\nu_2^2)+c'\nu_1(t-\tau_1)+O((|\nu|+|t-\tau_1|)^3)\\
y_2(t,\nu,0)= & \,t-\tau_1+O((|\nu|+|t-\tau_1|)^3)\\
w_k(t,\nu,0)= & \,O((|\nu|+|t-\tau_1|)^3)\,.
\end{align*}
\end{prop}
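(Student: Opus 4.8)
The plan is to compute each of the four functions along the geodesic family $\gamma(t,\nu,0)$ by reducing everything to the single oscillating coordinate $x_j$, and then to read off the $D_4^+$ normal form from the way the double conjugacy at $\tau_1$ forces the low-order terms to cancel. The starting observation is that for $\nu\neq 0$ the range of $f_j$ is the short interval $[b_j,b_{j-1}]$ of length $b_{j-1}-b_j=2\sqrt{\nu_1^2+\nu_2^2}$, so by \eqref{xprime} and \eqref{eq:fdiff} the coordinate $f_j(x_j(t))$ performs a simple-harmonic-type oscillation
$$f_j(x_j(t))-f_{j,0}=\nu_1+\rho\,\sin\phi(t),\qquad \rho=\sqrt{\nu_1^2+\nu_2^2},$$
where the phase satisfies $\phi(t)=\phi(0)+\int_0^t\omega\,ds$ with a local frequency $\omega$ read off from the remaining (slowly varying) factors in \eqref{xprime}, and the data $\gamma(0)=p_0$, $\flat(\dot\gamma(0))=(\nu,0)$ translate into $\rho\sin\phi(0)=-\nu_1$, $\rho\cos\phi(0)=\nu_2$. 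Since $y_0=A_1(f_{j,0})(f_j(x_j)-f_{j,0})$ by definition (with $A_1$ as in \eqref{a1def}), the expansion of $y_0$ is immediate once $\phi(t)$ is known.

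The key input is the total phase at $\tau_1$. By the proposition on $Z_{j-1},Z_j$ recalled above, \emph{both} Jacobi fields $Z_{j-1}=c'\,\partial\gamma/\partial\nu_1$ and $Z_j=c\,\partial\gamma/\partial\nu_2$ vanish at $\tau_1$. Reading off their $x_j$-components from the oscillation gives $\partial_{\nu_1}f_j|_{\nu=0}=1-\cos\omega_0 t$ and $\partial_{\nu_2}f_j|_{\nu=0}=\sin\omega_0 t$ (with $\omega_0$ the frequency at $\nu=0$); the simultaneous vanishing at $t=\tau_1$ forces $\sin\omega_0\tau_1=0$ and $1-\cos\omega_0\tau_1=0$, hence $\omega_0\tau_1=2\pi$, i.e. $x_j$ completes exactly one full oscillation by the first conjugate time, which is what annihilates the linear-in-$\nu$ terms. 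Writing $\delta=t-\tau_1$ and $\Psi(\nu)=\int_0^{\tau_1}\omega\,ds=2\pi+a\nu_1+O(|\nu|^2)$ (the linear term in $\nu_2$ being absent since it averages out over a period), substituting $\phi(\tau_1)=\phi(0)+\Psi$ and Taylor expanding yields $f_j-f_{j,0}=a\nu_1\nu_2+\omega_0\nu_2\delta+O(3)$, whence $y_0=2c\nu_1\nu_2+c'\nu_2\delta+O(3)$ with $c=\tfrac12 A_1(f_{j,0})\,a$ and $c'=A_1(f_{j,0})\,\omega_0$ (here $O(3)$ abbreviates $O((|\nu|+|\delta|)^3)$). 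Nonvanishing of $c'$ is clear, and $c\neq 0$ amounts to $a=d\Psi/d\nu_1|_0\neq 0$, which I would deduce from the monotonicity condition \eqref{newcond} exactly as the strict orderings of \S5 were deduced from it.

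For $y_1$ and $y_2$ I would use the orbit integral \eqref{geodeqint} (for $\alpha=1$) and the length integral \eqref{eq:geodlength} (for $\alpha=2$) with $G_\alpha(\lambda)=(\lambda-f_{j,0})^\alpha\prod_{k\neq j,j-1}(\lambda-b_{k,0})$, of degree $n-2$ and monic of degree $n-1$ respectively; these are tailored so that at $\tilde w=0$ the $i\neq j$ summands reproduce $\tfrac{d}{dt}y_\alpha$ up to the factor $|f_i-f_{j,0}|/\sqrt{(f_i-b_j)(f_i-b_{j-1})}=1+\nu_1/(f_i-f_{j,0})+O(|\nu|^2)$. Thus $\tfrac{d}{dt}y_\alpha$ equals minus the explicit $i=j$ summand plus a correction linear in $\nu_1$, whose integral over $[\tau_1,t]$ produces the mixed term $c'\nu_1\delta$ for $y_1$ and, for $y_2$, a term that vanishes because the relevant sum $\sum_{i\neq j}(\cdots)/(f_i-f_{j,0})$ is the $i=j$-deleted part of a degree-$(n-2)$ relation and hence equals zero at $\nu=0$ by Lemma~\ref{lemma:flat}. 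The constants $Y_\alpha(\tau_1,\nu,0)$ carry the pure $\nu^2$ content: their first $\nu$-derivatives vanish because $\partial_\nu\gamma(\tau_1,0,0)=0$ (conjugacy), so only the Hessian of the geodesic variation survives, giving $c(3\nu_1^2+\nu_2^2)$ for $y_1$ and $0$ for $y_2$; this last vanishing, together with $w_k=O(3)$, follows from the unit-speed normalization and the orthogonality built into the frame ($g(Z,\dot\gamma)\equiv 0$ and $dw_k(V_i)=0$ on $S$, the latter using \eqref{assumption} so that $S$ is a submanifold). That $\partial_{\nu_2}y_1=\partial_{\nu_1}y_0$ (closedness of the Lagrangian $L$) forces the same $c,c'$ in $y_0$ and $y_1$.

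The main obstacle I anticipate is the phase bookkeeping of the second and third paragraphs: making rigorous the local frequency $\omega$ when the $j$-th oscillation is coupled to the other coordinates, pinning $\omega_0\tau_1=2\pi$ from the double conjugacy, and tracking the two distinct sources of linear-in-$\nu_1$ corrections — the shift of the oscillation center inside $\omega$, and the square-root density ratio above — so that they assemble into exactly the $D_4^+$ coefficients rather than spurious lower-order terms. Controlling the error uniformly is delicate because the $i=j$ integrand carries the near-singular factor $1/\sqrt{(f_j-b_j)(b_{j-1}-f_j)}$; the substitution to the phase variable $\phi$, under which this becomes $1/(\rho|\cos\phi|)$ and combines with $df_j=\rho\cos\phi\,d\phi$, is precisely what removes the singularity and reduces each contribution to an elementary trigonometric integral.
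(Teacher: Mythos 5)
Your overall strategy is the same as the paper's (phase parametrization of the $j$-th oscillation, the orbit and length integrals with the polynomials $(\lambda-f_{j,0})^\alpha\prod_{k\ne j,j-1}(\lambda-b_k)$, vanishing first $\nu$-derivatives from conjugacy, the quadratic form from second derivatives, nondegeneracy from the monotonicity condition), but two steps have genuine gaps. First, the phase pinning: vanishing of $Z_{j-1}(\tau_1)$ and $Z_j(\tau_1)$ forces only $1-\cos\theta(\tau_1,0)=0$ and $\sin\theta(\tau_1,0)=0$, i.e.\ $\theta(\tau_1,0)\in 2\pi\Z_{>0}$, \emph{not} $\theta(\tau_1,0)=2\pi$. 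If $\theta(\tau_1,0)$ were $4\pi$, the $x_j$-components of both Jacobi fields would already vanish at the earlier time $t'$ with $\theta(t',0)=2\pi$, and this does not contradict $\tau_1$ being their first zero, because at $t'$ they could be nonzero in the other coordinate directions; your argument gives no upper bound. The paper closes this by a different mechanism: for $\nu\ne 0$ the total variation of $f_j$ over the half-period time $t_j$ equals $2(b_{j-1}-b_j)$, which forces $\theta(t_j,\nu)=2\pi$ exactly, and then the squeeze $r_j\le t_j\le r_{j-1}$ (Propositions~\ref{prop:main} and \ref{prop:conj}) plus continuity as $\nu\to 0$ gives $\theta(\tau_1,0)=2\pi$. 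Relatedly, at $\nu=0$ the phase is not linear in $t$ (the other coordinates still move), so expressions like $1-\cos\omega_0 t$ with a constant $\omega_0$ are not correct; one must work with $\theta(t,0)$ and only its value and $t$-derivative at $\tau_1$ enter, giving $c'=A_1(f_{j,0})\,\partial\theta/\partial t(\tau_1,0)$.

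Second, and more seriously, the nonvanishing of $c$ — the whole point of the $D_4^+$ conclusion — is not proved but deferred to ``exactly as the strict orderings of \S5 were deduced.'' In the paper $c=-C/2$ with
\begin{equation*}
C=\sum_{i\ne j}\int_0^{\tau_1}\frac{u_i^0(x_i(t,0),0)}{f_i(x_i(t,0))-f_{j,0}}\left|\frac{\partial x_i}{\partial t}(t,0)\right|dt+2\pi A_1'(f_{j,0})\,,
\end{equation*}
and $C>0$ requires a dedicated argument: one introduces $W_i=U_i^1+(f_{j,0}-a_n)U_i^0$, compares $\partial W_i/\partial\nu_1$ at $\tau_1$ with its value at the individual half-period times $t_i$ using the orderings $t_i\ge \tau_1$ for $i<j$ and $t_i\le\tau_1$ for $i>j$ (Proposition~\ref{prop:main}) together with the signs of the integrands, and identifies $\sum_i\partial W_i/\partial\nu_1(t_i,0)$ with a limit of the period-integral derivative so that Proposition~\ref{prop:newineq}~(2) and the limit statement Proposition~\ref{prop:limitineq} apply. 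None of this is present in your sketch, and it does not follow formally from \S5. Finally, several attributions in your third paragraph are off: the vanishing of $\partial^2 y_2/\partial t\partial\nu_1$ and of the $\nu$-Hessian of $y_2$, and the $3:1$ ratio in the Hessian of $y_1$, come from differentiating the identities $\sum_i U_i^\alpha=0$ (or $=t$) and using that $u_j^\alpha(\cdot,0)\equiv 0$ for $\alpha=1,2$ — not from Lemma~\ref{lemma:flat}, not from unit-speed normalization, and not from Lagrangian closedness (the latter is used in the paper only afterwards, for the fiber coordinates $\eta_\alpha$ in Lemma~\ref{lemeta}).
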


By this proposition we have the following lemmas.
\begin{lemma}
\begin{enumerate}
\item $\dim\ker (\pi|_L)_*=2$ at $\lambda_1$.
\item The system of functions $(\eta_0,\eta_1,y_2,w_1,\dots,w_{n-3})$ becomes a coordinate system of $L$
around the point $\lambda_1$.
\end{enumerate}
\end{lemma}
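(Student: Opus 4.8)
The plan is to read off both assertions from the Jacobian of the projected map $(t,\nu,\tilde w)\mapsto\gamma(t,\nu,\tilde w)=\pi\circ\phi(t,\nu,\tilde w)$ at the base point $(t,\nu,\tilde w)=(\tau_1,0,0)$, expressed in the coordinate system $(y_0,y_1,y_2,w_1,\dots,w_{n-3})$ on $M$ constructed above. Every entry I need is supplied by Proposition~\ref{mainprop}, by the defining relation $w_k(\gamma(t,0,\tilde w))=\tilde w_k$ on $S$, and by the fact (established in the preceding lemma) that $dy_0$ and $dy_1$ vanish on $T_{p_1}S$. So the bulk of the work is bookkeeping of first derivatives, with one genuinely symplectic step isolated at the end.

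For part (1) I would differentiate the expansions of Proposition~\ref{mainprop} at the base point. Since the lowest-order $\nu$-terms of $y_0,y_1$ are quadratic and those of $y_2$ and $w_k$ are cubic, the first derivatives $\partial/\partial\nu_1$ and $\partial/\partial\nu_2$ of all four of $y_0,y_1,y_2,w_k$ vanish there; hence $\partial/\partial\nu_1,\partial/\partial\nu_2\in\ker(\pi|_L)_*$. In the complementary directions one has $\partial/\partial t\mapsto\partial/\partial y_2$, because $\partial y_2/\partial t=1$ while $y_0,y_1,w_k$ have vanishing $t$-derivative along $\nu=\tilde w=0$, and $\partial/\partial\tilde w_k\mapsto\partial/\partial w_k$ modulo $\partial/\partial y_2$, because $w_l(\gamma(t,0,\tilde w))=\tilde w_l$ forces $\partial w_l/\partial\tilde w_k=\delta_{kl}$ whereas $\partial y_0/\partial\tilde w_k=\partial y_1/\partial\tilde w_k=0$ (the $\tilde w$-directions at $\nu=0,\ t=\tau_1$ are tangent to $S$). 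These $n-2$ images are upper-triangular in the basis $\partial/\partial y_2,\partial/\partial w_1,\dots,\partial/\partial w_{n-3}$, hence linearly independent, so $(\pi|_L)_*$ has rank $n-2$ and $\dim\ker(\pi|_L)_*=2$, spanned by $\partial/\partial\nu_1,\partial/\partial\nu_2$.

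For part (2) I would verify that $y_0,y_1$ are \emph{adapted} position coordinates in the sense of the minimal generating family construction recalled at the start of this section, and then invoke that construction with $k=2$. Concretely, $(\pi|_L)^*dy_\alpha=d\bigl(y_\alpha(t,\nu,\tilde w)\bigr)$ for $\alpha=0,1$; its $t$- and $\nu$-derivatives at the base point vanish by Proposition~\ref{mainprop}, and its $\tilde w$-derivatives vanish because $\partial/\partial\tilde w_k$ projects into $T_{p_1}S$, on which $dy_0$ and $dy_1$ vanish. Thus $(\pi|_L)^*dy_0=(\pi|_L)^*dy_1=0$ at $\lambda_1$. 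Combining this with $\dim\ker(\pi|_L)_*=2$ from part (1), the construction guarantees that the two conjugate momenta together with the remaining base coordinates, namely $(\eta_0,\eta_1,y_2,w_1,\dots,w_{n-3})$, form a coordinate system of $L$ near $\lambda_1$.

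The only point that is not pure differentiation is the independence, along the two-dimensional kernel, of $d\eta_0,d\eta_1$ from $dy_2,dw_1,\dots,dw_{n-3}$ in part (2); this is exactly the symplectic statement quoted above, that adapted position coordinates always yield the complementary momenta as coordinates on the Lagrangian submanifold. Relying on it lets me avoid computing $\eta_0,\eta_1$ explicitly (which Proposition~\ref{mainprop} does not provide), and I expect this to be the main conceptual step; all remaining steps are the routine derivative evaluations described above.
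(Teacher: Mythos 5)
Your proposal is correct and takes essentially the same route as the paper: both arguments reduce to showing, via Proposition~\ref{mainprop}, the relation $\partial w_k/\partial\tilde w_l=\delta_{kl}$, and the vanishing of $dy_0,dy_1$ on $T_{p_1}S$, that $(\pi|_L)^*dy_0=(\pi|_L)^*dy_1=0$ while $(\pi|_L)^*(dy_2\wedge dw_1\wedge\dots\wedge dw_{n-3})\ne 0$ at $\lambda_1$, and then invoking the Lagrangian property of $L$ (the adapted-coordinates fact recalled in the generating-family construction of \S{7.1}) to obtain (2). Your write-up merely makes explicit two points the paper's shorter proof leaves implicit, namely the rank counting for (1) and the role of tangency to $S$ in handling the $\tilde w$-derivatives.
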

\begin{proof}
Proposition~\ref{mainprop} and the fact that $\partial w_k/\partial \tilde w_l=\delta_{kl}$
at $p_1$ imply that $(\pi|_L)^*(dy_0)=(\pi|_L)^*(dy_1)=0$ at $\lambda_1$ and
\begin{equation*}
(\pi|_L)^*(dy_2\wedge dw_1\wedge\dots\wedge dw_{n-3})\ne 0 \text{ at }\lambda_1\,.
\end{equation*}
Since $L$ is a Lagrangian submanifold, the lemma follows from those facts.
\end{proof}
\begin{lemma}\label{lemeta}
\begin{align*}
\eta_0(\tau_1,\nu,0)=& \eta_0(\tau_1,0,0)+e\nu_2+O(|\nu|^2)\,,\\
\eta_1(\tau_1,\nu,0)=& \eta_1(\tau_1,0,0)+e\nu_1+O(|\nu|^2)\,
\end{align*}
for some nonzero constant $e$.
\end{lemma}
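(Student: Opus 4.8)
The plan is to read off the two linear coefficients from the explicit description of the fibre coordinates along $L$ and then to let the Lagrangian condition force the two coefficients to coincide. First I would record the basic identity $\phi^*\alpha=t\,dt$ on $\tilde L$. Since $\phi(t,\nu,\tilde w)=\zeta_1(t\lambda)=t\,\flat(\dot\gamma(t,\nu,\tilde w))$, the form $\phi^*\alpha$ evaluated on $\partial/\partial t$ is $t\,g(\dot\gamma,\dot\gamma)=t$, while on $\partial/\partial\nu_i$ and $\partial/\partial\tilde w_k$ it is $t\,g(\dot\gamma,\partial\gamma/\partial\nu_i)$, etc., and these vanish because a variation of unit-speed geodesics issuing from the fixed point $p_0$ has variation field orthogonal to $\dot\gamma$. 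In particular $\eta_\alpha(t,\nu,\tilde w)=t\,g(\dot\gamma,\partial/\partial y_\alpha)$, and taking $d$ gives $\phi^*(d\alpha)=0$, i.e. $L$ is Lagrangian. Differentiating $\eta_\alpha$ in $\nu_i$ at $(\tau_1,0,0)$ and using the double conjugacy $Z_{j-1}(\tau_1)=Z_j(\tau_1)=0$ (so that the variation field vanishes at the endpoint, killing the connection terms, and the symmetry of the Levi-Civita connection applies) yields the clean formula
\[
\frac{\partial\eta_\alpha}{\partial\nu_i}(\tau_1,0,0)=2\tau_1\,g\Bigl(Z'_{(i)}(\tau_1),\tfrac{\partial}{\partial y_\alpha}\Big|_{p_1}\Bigr),\qquad Z_{(1)}=Z_{j-1},\ Z_{(2)}=Z_j .
\]

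Next I would identify the relevant coordinate vectors at $p_1$. Because $f_j(x_j)$ is constant along every geodesic with $\nu=0$, the vector $\partial/\partial x_j$ is orthogonal to $T_{p_1}S$, so $dy_1,dy_2,dw_k$ all annihilate it; hence $\partial/\partial y_0=\bigl(A_1(f_{j,0})f_j'(x_{j,1})\bigr)^{-1}\partial/\partial x_j$ and $\partial/\partial y_1\perp\partial/\partial x_j$ at $p_1$. Combining the momentum identity $\xi_m(t)^2=(-1)^{m-1}\prod_k(f_m(x_m(t))-b_k)$ with the fact that $Z_{j-1}(\tau_1)=Z_j(\tau_1)=0$ forces $\partial x_m/\partial\nu_i(\tau_1,0,0)=0$, I would compute the components $(Z'_{(i)}(\tau_1))^m=\tfrac12 g^{mm}\,\partial_{\nu_i}\xi_m(\tau_1)$. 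Using Proposition~\ref{mainprop} in the form $f_j(x_j(\tau_1,\nu,0))-f_{j,0}=O(|\nu|^2)$, which gives $\xi_j(\tau_1)=\pm\nu_2\sqrt{(-1)^j\prod_{k\ne j,j-1}(f_{j,0}-b_{k,0})}+O(|\nu|^2)$, this produces $(Z'_{j-1}(\tau_1))^j=0$, $(Z'_j(\tau_1))^m=0$ for $m\ne j$, and $(Z'_j(\tau_1))^j\ne 0$. Feeding these into the formula above together with the two orthogonality facts gives at once $\partial\eta_0/\partial\nu_1=0$, $\partial\eta_1/\partial\nu_2=0$, and $\partial\eta_0/\partial\nu_2=:e\ne 0$.

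Finally, to show $\partial\eta_1/\partial\nu_1$ equals the same $e$, I would invoke Lagrangianity. Pulling $\phi^*(d\alpha)=0$ back to the slice $\{t=\tau_1,\ \tilde w=0\}$ and reading off the coefficient of $d\nu_1\wedge d\nu_2$ yields
\[
\sum_\beta\Bigl(\frac{\partial\eta_\beta}{\partial\nu_1}\frac{\partial y_\beta}{\partial\nu_2}-\frac{\partial\eta_\beta}{\partial\nu_2}\frac{\partial y_\beta}{\partial\nu_1}\Bigr)+\sum_k\Bigl(\frac{\partial v_k}{\partial\nu_1}\frac{\partial w_k}{\partial\nu_2}-\frac{\partial v_k}{\partial\nu_2}\frac{\partial w_k}{\partial\nu_1}\Bigr)=0 .
\]
Since $y_2,w_k=O(|\nu|^3)$ at $t=\tau_1$ by Proposition~\ref{mainprop}, the $\beta=2$ and the $w_k$ terms are $O(|\nu|^2)$ and drop out at first order, while the $\beta=0,1$ terms are evaluated from the quadratic parts $y_0=2c\nu_1\nu_2$, $y_1=c(3\nu_1^2+\nu_2^2)$. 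The linear-in-$\nu$ part of the identity then reads $2c\,\nu_2\bigl(\partial\eta_1/\partial\nu_1-\partial\eta_0/\partial\nu_2\bigr)+2c\,\nu_1\bigl(\partial\eta_0/\partial\nu_1-3\,\partial\eta_1/\partial\nu_2\bigr)=0$, and since $c\ne 0$ this forces $\partial\eta_1/\partial\nu_1=\partial\eta_0/\partial\nu_2=e$ (the remaining relation being $0=0$). Together with the results of the previous paragraph this gives the two stated expansions.

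The step I expect to be the main obstacle is the component computation in the third paragraph: verifying that, at the double conjugate point, varying $\nu_2$ excites only the $x_j$-direction while varying $\nu_1$ leaves it fixed, i.e. the three statements about $Z'_{(i)}(\tau_1)$. This is where the separated structure of the elliptic coordinates and the vanishing $Z_{j-1}(\tau_1)=Z_j(\tau_1)=0$ must be used carefully, and it is the place where Proposition~\ref{mainprop} enters in an essential way. By contrast, the equality of the two coefficients in the last paragraph, which is the geometrically significant point for recognizing a $D_4^+$ singularity, comes essentially for free once $L$ is known to be Lagrangian.
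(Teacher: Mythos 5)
Your route is genuinely different from the paper's, and in the generic case it is correct. The paper never computes any Jacobi-field components in this lemma: it pulls back $\sum_\alpha d\eta_\alpha\wedge dy_\alpha+\sum_k dv_k\wedge dw_k=0$ keeping $t$ as a free variable, so that the $c'(t-\tau_1)$ cross terms in Proposition~\ref{mainprop} supply a \emph{third} linear relation $\partial\eta_0/\partial\nu_1=\partial\eta_1/\partial\nu_2$ in addition to your two slice relations; the three relations together already force $\partial\eta_0/\partial\nu_1=\partial\eta_1/\partial\nu_2=0$ and $\partial\eta_0/\partial\nu_2=\partial\eta_1/\partial\nu_1=e$, and $e\ne0$ then follows from $(\eta_0,\eta_1,y_2,w)$ being a coordinate system on $L$. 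Your replacement of that third relation by a direct computation — $\phi^*\alpha=t\,dt$, the formula $\partial\eta_\alpha/\partial\nu_i(\tau_1,0,0)=2\tau_1\,g\bigl(Z'_{(i)}(\tau_1),\partial/\partial y_\alpha\bigr)$, the identification of $\partial/\partial y_0$ with a nonzero multiple of $\partial/\partial x_j$, and the expansion $\xi_j(\tau_1,\nu,0)=\pm\nu_2\sqrt{(-1)^j\prod_{k\ne j,j-1}(f_{j,0}-b_{k,0})}+O(|\nu|^2)$ — is sound, and it buys an explicit geometric formula for $e$ rather than an indirect nonvanishing argument.

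The genuine gap is exactly where you flagged it: the claim $(Z'_j(\tau_1))^m=0$ for all $m\ne j$. Your derivation differentiates the momentum identity $\xi_m^2=(-1)^{m-1}\prod_k(f_m-b_k)$ in $\nu_2$ and then divides by $\xi_m(\tau_1,0,0)$; this is legitimate only under the paper's condition \eqref{req2}, i.e.\ when $f_m(x_m(\tau_1,0,0))\ne b_{k,0}$ for every $m\ne j$ and $k\ne j,j-1$. The paper explicitly allows \eqref{req2} to fail — that is precisely why $y_1,y_2$ are defined with the restricted index set $I$ — and for $m\notin I$ one has $\xi_m(\tau_1,0,0)=0$, so the differentiated identity reads $0=0$ and gives nothing; resolving the $0/0$ by l'H\^opital along $t\to\tau_1$ only returns a tautology, because $\dot\xi_m(\tau_1,0,0)$ is itself proportional to the same factor that multiplies $(Z'_j(\tau_1))^m$. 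So in the degenerate case your componentwise vanishing is unproven (and is not needed: only the combination $g(Z'_j(\tau_1),\partial/\partial y_1)=0$ matters). Fortunately your own architecture contains the repair: the only inputs you actually need from the Jacobi-field computation are $\partial\eta_0/\partial\nu_1=0$ and $\partial\eta_0/\partial\nu_2=e\ne0$, both of which involve only the $x_j$-components $(Z'_{(i)}(\tau_1))^j$ and survive in the degenerate case; your slice identity then yields \emph{both} missing coefficients, namely $\partial\eta_1/\partial\nu_2=\tfrac13\,\partial\eta_0/\partial\nu_1=0$ from the coefficient of $\nu_1$ and $\partial\eta_1/\partial\nu_1=\partial\eta_0/\partial\nu_2=e$ from the coefficient of $\nu_2$. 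With that rearrangement (or by adding the $t$-direction relation, as the paper does) your proof becomes complete in all cases.
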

\begin{proof}
Since $L$ is Lagrangian, it follows that
\begin{equation*}
\phi^*\left(\sum_{\alpha=0}^2 d\eta_\alpha\wedge dy_\alpha+\sum_{k=1}^{n-3}dv_k\wedge dw_k
\right)=0\,.
\end{equation*}
Taking the coefficients of $d\nu_1\wedge d\nu_2$ of the left-hand side, one therefore obtains, by Proposition~\ref{mainprop},
\begin{gather*}
a^0_1 (2c\nu_1+c'(t-\tau_1))-a^0_2\cdot 2c\nu_2+a^1_1\cdot 2c\nu_2\\
-a^1_2(6c\nu_1+c'(t-\tau_1))+O((|\nu|+|t-\tau_1|)^2)=0\,,
\end{gather*}
where $a^\alpha_i=\partial \eta_\alpha(t,\nu,0)/\partial \nu_i$.
This implies that
\begin{equation*}
a^0_1=3a^1_2,\quad a^0_2=a^1_1,\quad a^0_1=a^1_2\quad\text{at }\nu=0,\, t=\tau_1.
\end{equation*}
Therefore $a^0_1=a^1_2=0$ there and we obtain the desired formula. Since $d\eta_0\ne 0$ at $\lambda_1$,
we also have $e\ne 0$.
\end{proof}

We now define the function $\hat F(\eta_0,\eta_1,y_2,w_1,\dots,w_{n-3})$ on $L$ as an integral of the
closed form 
\begin{equation*}
-y_0d\eta_0-y_1d\eta_1+\hat\eta_2dy_2+\sum_{k=1}^{n-3}\hat v_kdw_k =\alpha-d(\eta_0y_0+\eta_1y_1)-dh\,,
\end{equation*}
where $\alpha$ denotes the canonical $1$-form,
\begin{equation*}
\hat \eta_\alpha=\eta_\alpha-\eta_\alpha(\tau_1,0,0),\quad \hat v_k=v_k-v_k(\tau_1,0,0)
\end{equation*}
for $0\le\alpha\le 2$ and $1\le k\le n-3$, and
\begin{equation*}
h=h(y_2,w_1,\dots,w_{n-3})=\eta_2(\tau_1,0,0)y_2+\sum_{k=1}^{n-3}v_k(\tau_1,0,0)w_k\,.
\end{equation*}

We may take $\hat F$ so that $\hat F=0$ at $\lambda_1\in L$.
Then, as stated in the previous subsection, 
\begin{equation*}
F(\eta_0,\eta_1,y_0,y_1,y_2,w_1,\dots,w_{n-3})=\eta_0y_0+\eta_1y_1+\hat F+h
\end{equation*}
becomes a generating family for $L$. (Note that
$F$ contains $(y_0,y_1)$ as independent variables, while $\hat F$ does not.) 
\begin{lemma}\label{final}
$F(\eta_0, \eta_1,0,\dots,0)=-ce^{-2}(\hat\eta_0^2\hat\eta_1+\hat\eta_1^3)+O(|\hat\eta|^4)$\,.
\end{lemma}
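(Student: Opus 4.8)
The plan is to identify $F(\eta_0,\eta_1,0,\dots,0)$ with the restriction of $\hat F$ to the two-dimensional locus
\begin{equation*}
\Sigma=\{y_2=0,\ w_1=\dots=w_{n-3}=0\}\subset L,
\end{equation*}
and then to read off its $3$-jet from Proposition~\ref{mainprop} and Lemma~\ref{lemeta}. First I would observe that at $y_0=y_1=y_2=w=0$ the terms $\eta_0y_0$, $\eta_1y_1$, and $h$ all vanish, so $F(\eta_0,\eta_1,0,\dots,0)=\hat F|_\Sigma$, regarded as a function of $(\eta_0,\eta_1)$. Since
\begin{equation*}
d\hat F=-y_0\,d\eta_0-y_1\,d\eta_1+\hat\eta_2\,dy_2+\sum_k\hat v_k\,dw_k,
\end{equation*}
and $dy_2,dw_k$ pull back to zero on $\Sigma$, the restriction obeys $d(\hat F|_\Sigma)=-y_0\,d\eta_0-y_1\,d\eta_1$. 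Thus it suffices to express $y_0,y_1,\eta_0,\eta_1$ along $\Sigma$ in terms of the two parameters $(\nu_1,\nu_2)$ and integrate.

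Next I would parametrize $\Sigma$ by $(\nu_1,\nu_2)$. Using that $\partial w_k/\partial\tilde w_l=\delta_{kl}$ at $p_1$, the map $\tilde w\mapsto w$ is a local diffeomorphism, so $w(t,\nu,\tilde w)=0$ solves for $\tilde w=\tilde w^*(t,\nu)$; since $w_k(t,\nu,0)=O((|\nu|+|t-\tau_1|)^3)$ by Proposition~\ref{mainprop} and $w=\tilde w$ at $\nu=0$, this forces $\tilde w^*=O(|\nu|^3)$. Feeding this into $y_2=(t-\tau_1)+O(3)=0$ then gives $t-\tau_1=O(|\nu|^3)$. Hence along $\Sigma$ the parameters $t-\tau_1$ and $\tilde w$ are of order $|\nu|^3$, so their effect on the $3$-jet of $y_0,y_1$ and the $2$-jet of $\eta_0,\eta_1$ is negligible, and one may evaluate every expansion at $(t,\tilde w)=(\tau_1,0)$.

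Substituting Proposition~\ref{mainprop} gives $y_0|_\Sigma=2c\nu_1\nu_2+O(|\nu|^3)$ and $y_1|_\Sigma=c(3\nu_1^2+\nu_2^2)+O(|\nu|^3)$, while Lemma~\ref{lemeta} gives $\hat\eta_0|_\Sigma=e\nu_2+O(|\nu|^2)$, $\hat\eta_1|_\Sigma=e\nu_1+O(|\nu|^2)$, hence $d\eta_0=e\,d\nu_2+O(|\nu|)\,d\nu$ and $d\eta_1=e\,d\nu_1+O(|\nu|)\,d\nu$. Plugging these in yields
\[
d(\hat F|_\Sigma)=-ce\bigl[(3\nu_1^2+\nu_2^2)\,d\nu_1+2\nu_1\nu_2\,d\nu_2\bigr]+O(|\nu|^3)\,d\nu=-ce\,d(\nu_1^3+\nu_1\nu_2^2)+O(|\nu|^3)\,d\nu .
\]
Integrating with $\hat F|_\Sigma=0$ at $\lambda_1$ gives $\hat F|_\Sigma=-ce(\nu_1^3+\nu_1\nu_2^2)+O(|\nu|^4)$; inverting the linear relations to $\nu_1=e^{-1}\hat\eta_1+O(|\hat\eta|^2)$, $\nu_2=e^{-1}\hat\eta_0+O(|\hat\eta|^2)$ then produces $-ce^{-2}(\hat\eta_0^2\hat\eta_1+\hat\eta_1^3)+O(|\hat\eta|^4)$, exactly the asserted formula.

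The hard part will be the order bookkeeping in the second and third paragraphs—specifically, certifying that the deviation between the convenient locus $\{\tilde w=0,\ t=\tau_1\}$ and the genuine locus $\Sigma=\{w=0,\ y_2=0\}$ enters only at order $\ge 4$, so that Proposition~\ref{mainprop} and Lemma~\ref{lemeta}, which are stated only along $\tilde w=0$, really do suffice. By contrast, the exactness identity $d(\nu_1^3+\nu_1\nu_2^2)=(3\nu_1^2+\nu_2^2)\,d\nu_1+2\nu_1\nu_2\,d\nu_2$, which is what conjures the $D_4^+$ normal form out of the two quadratic leading terms of $y_0$ and $y_1$, is an immediate algebraic check once the expansions are in hand.
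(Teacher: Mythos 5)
Your proof is correct, and its core computation coincides with the paper's: both extract the cubic jet of $\hat F$ in $(\nu_1,\nu_2)$ from $d\hat F=-y_0\,d\eta_0-y_1\,d\eta_1+\hat\eta_2\,dy_2+\sum_k\hat v_k\,dw_k$ via Proposition~\ref{mainprop} and Lemma~\ref{lemeta}, and then invert $\hat\eta_0=e\nu_2+O(|\nu|^2)$, $\hat\eta_1=e\nu_1+O(|\nu|^2)$. The genuine difference lies in how one handles the mismatch between the locus $\Sigma=\{y_2=w=0\}$, where $\hat F(\eta_0,\eta_1,0,\dots,0)$ lives, and the computational slice $\{t=\tau_1,\ \tilde w=0\}$, where Proposition~\ref{mainprop} applies. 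You move the locus: you parametrize $\Sigma$ by $\nu$ with the implicit function theorem and show it is $O(|\nu|^3)$-tangent to the slice, after which $\hat\eta_2\,dy_2+\sum_k\hat v_k\,dw_k$ vanishes identically on $\Sigma$ and only $-y_0\,d\eta_0-y_1\,d\eta_1$ survives. The paper moves the function: it computes the jet of $\phi^*\hat F$ along the slice itself (there the extra terms are merely $O(|\nu|^3)\,d\nu$, not zero), and then transfers the result from $\hat F$ to $\hat F(\eta_0,\eta_1,0,\dots,0)$ by the Hadamard decomposition $\hat F(\eta_0,\eta_1,y_2,w)-\hat F(\eta_0,\eta_1,0,\dots,0)=Ay_2+\sum_kB_kw_k$, where $A$ and $B_k$ vanish at $\lambda_1$; since $y_2,w_k=O(|\nu|^3)$ along the slice, this difference is $O(|\nu|^4)$. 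The paper's device sidesteps exactly what you flag as the hard part: no implicit-function jet bookkeeping is needed, only the vanishing of $A,B_k$ at the base point. Your bookkeeping does close--the mixed derivatives not controlled by Proposition~\ref{mainprop}, such as $\partial^2 w_k/\partial\nu_i\partial\tilde w_l$, only ever enter multiplied by factors already known to be $O(|t-\tau_1|^2)$, so smoothness alone suffices--but in a complete write-up you should state the intermediate estimate as $\tilde w^*(t,\nu)=O\bigl((|t-\tau_1|+|\nu|)^3\bigr)$, which becomes $O(|\nu|^3)$ only after substituting $t=t^*(\nu)$; also needed there is the identity $w_k(t,0,\tilde w)=\tilde w_k$ (valid by the definition of $w_k$ on $S$), not just $\partial w_k/\partial\tilde w_l=\delta_{kl}$ at the single point $p_1$.
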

\begin{proof}
It is enough to show that $\hat F(\eta_0,\eta_1,0,\dots,0)$ is equal to the right-hand side.
First we have, by Proposition~\ref{mainprop} and Lemma~\ref{lemeta}, 
\begin{equation*}
\phi^*d\hat F(\tau_1,\nu,0)=-ce(2\nu_1\nu_2d\nu_2+(3\nu_1^2+\nu_2^2)d\nu_1)+O(|\nu|^3)d\nu\,,
\end{equation*}
and therefore
\begin{equation*}
\phi^*\hat F(\tau_1,\nu,0)=-ce(\nu_1^3+\nu_1\nu_2^2)+O(|\nu|^4)\,.
\end{equation*}
We then need to evaluate the difference
\begin{equation*}
\hat F(\eta_0,\eta_1,y_2,w_1,\dots,w_{n-3})-\hat F(\eta_0,\eta_1,0,\dots,0),
\end{equation*}
which is equal to $Ay_2+\sum_{k=1}^{n-3}B_kw_k$, where
\begin{gather*}
A=A(\eta_0,\eta_1,y_2,w_1,\dots,w_{n-3})=\int_0^1\hat\eta_2(\eta_0,\eta_1,sy_2,sw_1,\dots,sw_{n-3})ds,\\
B_k=B_k(\eta_0,\eta_1,y_2,w_1,\dots,w_{n-3})=\int_0^1
\hat v_k(\eta_0,\eta_1,sy_2,sw_1,\dots,sw_{n-3})ds.
\end{gather*}
Let us pull back this formula by $\phi$ at $(\tau_1,\nu,0)$. Since $y_2(\tau_1,0,0)=w_k(\tau_1,0,0)=0$,
we have $\phi^*A(\tau_1,0,0)=\phi^*B_k(\tau_1,0,0)=0$. Therefore it follows that
\begin{equation*}
\phi^*(Ay_2+\sum_{k=1}^{n-3}B_kw_k)(\tau_1,\nu,0)+O(|\nu|^4)=0
\end{equation*}
and thus
\begin{equation*}
\phi^*(\hat F(\eta_0,\eta_1,0,\dots,0))(\tau_1,\nu,0)=-ce(\nu_1^3+\nu_1\nu_2^2)+O(|\nu|^4)\,.
\end{equation*}
The lemma then follows from Lemma~\ref{lemeta}.
\end{proof}

\begin{lemma}\label{eta2}
The function $\hat\eta_2$, restricted to the submanifold $L'$:
\begin{equation*}
L'=\{(\eta_0,\eta_1,y_2,w)\in L\,|\, y_2=w=0\}
\end{equation*}
is described as 
\begin{equation*}
\hat\eta_2|_{L'}=c_1(\hat\eta_0^2+\hat\eta_1^2)+c_2\hat\eta_0\hat\eta_1+c_3(\hat\eta_0^2+3\hat\eta_1^2)+O(|\hat\eta|^3)\,,
\end{equation*}
where $c_1,c_2,c_3$ are constants and $c_1\ne 0$.
\end{lemma}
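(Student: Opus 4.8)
The plan is to compute $\hat\eta_2$ along $L'$ from a clean pullback identity for the canonical one-form under $\phi$, and then to read off its Hessian with the help of Proposition~\ref{mainprop} and Lemma~\ref{lemeta}. First I would establish
\begin{equation*}
\phi^*\alpha = t\,dt\qquad\text{on }\tilde L .
\end{equation*}
Indeed, writing $\gamma(t)=\pi\circ\phi(t,\nu,\tilde w)$, one has $\alpha(\phi_*\partial_t)=\langle\,t\flat\dot\gamma,\dot\gamma\,\rangle=t$ since $|\dot\gamma|=1$, while for a transverse direction $\alpha(\phi_*\partial_{\nu_i})=t\,g(\dot\gamma,J_i)$ with $J_i=\partial_{\nu_i}\gamma$ a Jacobi field vanishing at $t=0$; as $g(\dot\gamma,J_i)(t)=t\,g(\dot\gamma(0),J_i'(0))$ and $J_i'(0)=\partial_{\nu_i}(\sharp\lambda)$ is tangent to the unit sphere, $g(\dot\gamma(0),J_i'(0))=\tfrac12\partial_{\nu_i}|\sharp\lambda|^2=0$, and similarly in the $\tilde w$-directions. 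On the other hand $\phi^*\alpha=\sum_\alpha\eta_\alpha\,dy_\alpha+\sum_k v_k\,dw_k$, where $y_\alpha,w_k,\eta_\alpha,v_k$ now denote the functions of $(t,\nu,\tilde w)$. Comparing the two expressions and evaluating on $\partial_t$ and on $\partial_{\nu_i}$ gives, at $\tilde w=0$,
\begin{gather*}
\sum_\alpha\eta_\alpha\,\partial_t y_\alpha+\sum_k v_k\,\partial_t w_k=t,\\
\sum_\alpha\eta_\alpha\,\partial_{\nu_i}y_\alpha+\sum_k v_k\,\partial_{\nu_i}w_k=0\qquad(i=1,2).
\end{gather*}

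Next I would pin down the reference covector $\lambda_1$. Reading the first relation at $\nu=0$ gives $\eta_2(\lambda_1)=\tau_1$; reading the first-order-in-$\nu$ part of the two $\partial_{\nu_i}$-relations and using $c\ne0$ in Proposition~\ref{mainprop} forces $\eta_0(\lambda_1)=\eta_1(\lambda_1)=0$, while the orthogonality of the $\tilde w_k$-Jacobi fields to $\dot\gamma$ (same computation as above) gives $v_k(\lambda_1)=0$. In particular $\hat\eta_2$ carries no linear term in $\nu$. Because on $L'$ one has $y_2=w=0$, hence $t=\tau_1+O(|\nu|^3)$ and $\tilde w=O(|\nu|^3)$, evaluating $\hat\eta_2$ along $L'$ agrees to second order with evaluating it on the slice $t=\tau_1$, $\tilde w=0$; so it suffices to expand $\eta_2(\tau_1,\nu,0)$ to second order in $\nu$ and substitute $\hat\eta_0=e\nu_2+O(|\nu|^2)$, $\hat\eta_1=e\nu_1+O(|\nu|^2)$ from Lemma~\ref{lemeta}.

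I would then solve the first relation for $\eta_2$. With $\eta_0(\lambda_1)=\eta_1(\lambda_1)=v_k(\lambda_1)=0$, the products $\eta_0\partial_t y_0$ and $\eta_1\partial_t y_1$ contribute, via Proposition~\ref{mainprop} and Lemma~\ref{lemeta}, the rotationally symmetric term $ec'(\nu_1^2+\nu_2^2)$, and the $v_k$-terms are $O(|\nu|^3)$; since $\partial_t y_2=1+P_2(\nu)+O(|\nu|^3)$ this yields
\begin{equation*}
\hat\eta_2=-ec'(\nu_1^2+\nu_2^2)-\tau_1\,P_2(\nu)+O(|\nu|^3),
\end{equation*}
where $P_2$ is the quadratic part of $\partial_t y_2$ at $t=\tau_1$. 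Substituting $\nu_1=\hat\eta_1/e$, $\nu_2=\hat\eta_0/e$ and regrouping produces the asserted shape, with $c_1,c_2,c_3$ the Hessian coefficients; the symmetric piece already contributes $-c'/e\,(\hat\eta_0^2+\hat\eta_1^2)$. Reducing modulo the Jacobian ideal $(\hat\eta_0\hat\eta_1,\ \hat\eta_0^2+3\hat\eta_1^2)$ shows that $c_1\ne0$ is equivalent to the coefficient of $\nu_1^2$ in $\hat\eta_2$ not being three times that of $\nu_2^2$.

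The main obstacle is the term $P_2$, which Proposition~\ref{mainprop} does not supply and which must be extracted from the explicit formula for $y_2$. Here I would compare $\partial_t y_2$ with the exact arc-length rate $1=\sum_i\sqrt{(-1)^{i-1}\prod_k(f_i-b_k)}\,|dx_i/dt|$ obtained from \eqref{eq:geodlength} with $\tilde G=\prod_k(\lambda-b_k)$: for $i\ne j$ the two integrands differ only through the replacement of $|f_i-f_{j,0}|$ by $\sqrt{(f_i-f_{j,0})^2-2\nu_1(f_i-f_{j,0})-\nu_2^2}$, whose quadratic part is the symmetric $-\tfrac{\nu_1^2+\nu_2^2}{2|f_i-f_{j,0}|}$, while the single $i=j$ term is $O(|\nu|^2)$ (using \eqref{xprime}) and provides the remaining, asymmetric, contribution. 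Carrying out this expansion writes the full Hessian of $\hat\eta_2$ as an explicit sum over $i$ evaluated at the reference geodesic $\gamma(\cdot,0,0)$. The delicate step is to verify that these terms cannot conspire to make the coefficient of $\nu_1^2$ equal three times that of $\nu_2^2$; I expect this nonvanishing of $c_1$ to follow from the definite signs forced by the monotonicity hypothesis \eqref{newcond}, by the same positivity mechanism as in Propositions~\ref{prop:newineq} and \ref{prop:limitineq}, which prevents the nonzero symmetric contribution from being cancelled. Everything preceding this sign bookkeeping is a direct substitution.
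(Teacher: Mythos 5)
Your preparatory steps are correct and some are genuinely nice: the Gauss-lemma identity $\phi^*\alpha=t\,dt$, the component relations it yields, the facts $\eta_2(\lambda_1)=\tau_1$, $\eta_0(\lambda_1)=\eta_1(\lambda_1)=0$, the absence of a linear term in $\hat\eta_2$, the reduction from $L'$ to the slice $t=\tau_1$, $\tilde w=0$, and the criterion that $c_1\ne0$ means the Hessian of $\hat\eta_2|_{L'}$ does not lie in the span of $\hat\eta_0\hat\eta_1$ and $\hat\eta_0^2+3\hat\eta_1^2$. But the argument fails exactly at the step that is the content of the lemma. Solving the $t$-component of $\phi^*\alpha=t\,dt$ for $\eta_2$ gives
\begin{equation*}
\hat\eta_2(\tau_1,\nu,0)=-ec'(\nu_1^2+\nu_2^2)-\tau_1P_2(\nu)-\sum_k v_k(\lambda_1)\,Q_k(\nu)+O(|\nu|^3)\,,
\end{equation*}
where $P_2$ and $Q_k$ are the quadratic-in-$\nu$ parts of $\partial_t y_2(\tau_1,\nu,0)$ and $\partial_t w_k(\tau_1,\nu,0)$; Proposition~\ref{mainprop} determines none of these. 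Two problems arise. First, your claim $v_k(\lambda_1)=0$ does not follow from the orthogonality $g\bigl(\dot\gamma,\partial\gamma/\partial\tilde w_k\bigr)=0$: feeding that orthogonality through the relations (using $\partial w_l/\partial\tilde w_k=\delta_{kl}$ at $p_1$) yields only $v_k(\lambda_1)=-\tau_1\,dy_2\bigl(\partial_{\tilde w_k}\gamma(\tau_1,0,0)\bigr)$, i.e.\ the Gauss lemma relates the two quantities rather than killing them, and there is no reason for $dy_2$ to annihilate these Jacobi fields. Second, and fatally, the unknown quadratic terms are not ``asymmetric'': the correct expansion (obtained by the paper) is
\begin{equation*}
\hat\eta_2(\tau_1,\nu,0)=-\tfrac{ec'}2(\nu_1^2+\nu_2^2)+2c\nu_1\nu_2\,\tfrac{\partial\eta_0}{\partial t}+c(3\nu_1^2+\nu_2^2)\,\tfrac{\partial\eta_1}{\partial t}+O(|\nu|^3)\,,
\end{equation*}
so $\tau_1P_2+\sum_kv_k(\lambda_1)Q_k$ must itself contain the rotationally symmetric piece $-\tfrac{ec'}2(\nu_1^2+\nu_2^2)$: the ``conspiracy'' you hope to rule out actually occurs and cancels exactly half of your explicit term. (Concretely, the $i=j$ contribution you dismiss as asymmetric involves $(f_j-b_j)(b_{j-1}-f_j)=(\nu_1\sin\theta+\nu_2\cos\theta)^2$, which at $\theta=2\pi$ is $\nu_2^2$ and certainly has a symmetric component.) Hence the positivity heuristic cannot close the proof; to finish along your lines you would have to compute $P_2$, $Q_k$ and $v_k(\lambda_1)$ outright, a substantial computation of the same type as the proof of Proposition~\ref{mainprop} that you have not carried out.

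The idea you are missing is the one the paper uses to avoid these unknowns altogether: instead of the one-form identity, use the Lagrangian (two-form) identity and read off the coefficients of $d\nu_i\wedge dt$ in $\phi^*\bigl(\sum_\alpha d\eta_\alpha\wedge dy_\alpha+\sum_k dv_k\wedge dw_k\bigr)=0$ at $(\tau_1,\nu,0)$. There all terms involving the unknown derivatives of $v_k$, $w_k$, $y_2$ are multiplied by $O(|\nu|^2)$ factors and disappear into the error, while the surviving unknowns $\partial_t\eta_0(\lambda_1)$, $\partial_t\eta_1(\lambda_1)$ enter only through $2c\nu_1\nu_2$ and $c(3\nu_1^2+\nu_2^2)$ — combinations which, after substituting Lemma~\ref{lemeta}, land precisely in the harmless $c_2$, $c_3$ slots — and the symmetric coefficient is pinned at the known nonzero value $c_1=-c'/(2e)$. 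Confining every uncomputed quantity to the part of the quadratic form that the lemma does not need to control is what makes the paper's proof close, and it is exactly what your decomposition does not achieve.
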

\begin{proof}
We compute the coefficients of $d\nu_i\wedge dt$
in the $2$-form
\begin{equation*}
\phi^*\left(\sum_{\alpha=0}^2 d\eta_\alpha\wedge dy_\alpha+\sum_{k=1}^{n-3}dv_k\wedge dw_k
\right)=0\,.
\end{equation*}
at the points $(\tau_1,\nu,0)$.
By Proposition~\ref{mainprop} and Lemma~\ref{lemeta}, we have
\begin{equation*}
-2c\nu_2\frac{\partial \eta_0}{\partial t}-6c\nu_1
\frac{\partial \eta_1}{\partial t}+ec'\nu_1+
\frac{\partial \eta_2}{\partial \nu_1}+O(|\nu|^2)=0
\end{equation*}
as the coefficients of $d\nu_1\wedge dt$, and
\begin{equation*}
-2c\nu_1\frac{\partial \eta_0}{\partial t}-2c\nu_2
\frac{\partial \eta_1}{\partial t}+ec'\nu_2+
\frac{\partial \eta_2}{\partial \nu_2}+O(|\nu|^2)=0
\end{equation*}
as the coefficients of $d\nu_2\wedge dt$. Therefore we obtain
\begin{equation*}
\hat\eta_2(\tau_1,\nu,0)=-\frac{ec'}2(\nu_1^2+\nu_2^2)+2c\nu_1\nu_2\frac{\partial\eta_0}{\partial t}+c(3\nu_1^2+\nu_2^2)\frac{\partial \eta_1}{\partial t}+O(|\nu|^3)\,.
\end{equation*}
We note that $ec'\ne 0$.

Here we need, as in the previous lemma, to observe
the difference $\hat\eta_2|_{L}-\hat\eta_2|_{L'}$.
Since it is described in the form
\begin{equation*}
Ay_2+\sum_{k=1}^{n-3}B_kw_k
\end{equation*}
for certain functions $A$ and $B_k$, as in the proof of the previous lemma, it follows that 
\begin{equation*}
\phi^*(\hat\eta_2|_{L}-\hat\eta_2|_{L'})(\tau_1,\nu,0)=O(|\nu|^3)\,.
\end{equation*}
Therefore we have 
\begin{equation*}
\phi^*(\hat\eta_2|_{L'})(\tau_1,\nu,0)=
-\frac{ec'}2(\nu_1^2+\nu_2^2)+2c\nu_1\nu_2\frac{\partial\eta_0}{\partial t}+c(3\nu_1^2+\nu_2^2)\frac{\partial \eta_1}{\partial t}+O(|\nu|^3)\,.
\end{equation*}
Thus the lemma follows by Lemma~\ref{lemeta}.
\end{proof}
We now prove Theorem~\ref{thm:d4plus}. By Lemma~\ref{final} the function germ 
$f=F(\eta_0,\eta_1,0,\dots,0)$
is equivalent to the $D^+_4$ function germ, since the latter is $3$-determined. 
Also, since $\partial F/\partial y_2=\hat\eta_2$,
we see by Lemma~\ref{eta2} that $F$ is a versal
deformation of $f$.
Therefore, applying the criterion of Theorem~\ref{criterion} to the generating family $F$ for $(L,\lambda_1)$,
we see that the map-germ $\pi|_L:(L,\lambda_1)\to(M,p_1)$ is a $D_4^+$ Lagrangian singularity.
This completes the proof of Theorem~\ref{thm:d4plus} under the assumption of Proposition~\ref{mainprop}.\qed

As direct consequences of Theorem~\ref{thm:d4plus}, we have the following corollaries.
\begin{cor}\label{cor1}
The germ of the map $\pi\circ \zeta_1: T^*_{p_0}M\to M$ at $\lambda\in \flat(\tilde K_{n-1})$ is a
$D_4^+$ Lagrangian singularity if $\lambda/|\lambda|\in\partial C_{n-1}^+$.
\end{cor}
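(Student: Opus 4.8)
The plan is to deduce the corollary as the special case $j=n-1$ of Theorem~\ref{thm:d4plus}. First I would pin down the reference covector. A point $\lambda\in\flat(\tilde K_{n-1}(p_0))$ with $\lambda/|\lambda|\in\partial C_{n-1}^+$ is of the form $\lambda=r_{n-1}(u)[u]$ with $\lambda_0:=[u]\in\partial C_{n-1}^+$; since $[u]$ is a unit covector, $|\lambda|=r_{n-1}(u)$ and indeed $\lambda/|\lambda|=[u]$. By the boundary relations of \S6, $\partial C_{n-1}^+=C_{n-1}^+\cap C_{n-2}^-$, so the condition $\lambda_0\in\partial C_{n-1}^+$ is exactly $b_{n-1}(u_{n-1})=b_{n-2}(u_{n-2})$, which is the double-conjugacy condition placing us in the setting of Theorem~\ref{thm:d4plus} with $j=n-1$. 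Moreover, Proposition~\ref{prop:conj} gives $r_{n-1}(u)=r_{n-2}(u)$ precisely at such $[u]$, and this common value is the first zero of $Z_{n-1}(t)$ along $\gamma(t,0,0)$, i.e. $r_{n-1}(u)=\tau_1$. Hence $\lambda=\tau_1\lambda_0$ is exactly the covector $\tau_1\lambda_0$ of Theorem~\ref{thm:d4plus}.

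Next I would observe that the standing hypothesis \eqref{assumption} is free in this case. For $j=n-1$ the value $\tau_1=r_{n-1}(u)$ is the first conjugate time (Theorem~\ref{thm:conj}(1)), so no nonzero Jacobi field can vanish at both $0$ and $\tau_1$ except for linear combinations of $Z_{n-2}(t)$ and $Z_{n-1}(t)$; this is precisely the remark recorded just after \eqref{assumption}. Thus Theorem~\ref{thm:d4plus} applies with no extra assumption.

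It then remains to recognize the germ of $\pi\circ\zeta_1$ as the Lagrangian map treated in Theorem~\ref{thm:d4plus}. The cotangent fiber $T^*_{p_0}M$ is a Lagrangian submanifold of $T^*M$, and since the time-one geodesic flow $\zeta_1$ is a symplectomorphism of $T^*M$, the image $\mathcal L:=\zeta_1(T^*_{p_0}M)$ is again Lagrangian; on a neighborhood of $\lambda_1:=\zeta_1(\tau_1\lambda_0)=\phi(\tau_1\lambda_0)$ it coincides with the submanifold $L=\phi(\tilde L)$ of Theorem~\ref{thm:d4plus}. Because $\zeta_1|_{T^*_{p_0}M}:T^*_{p_0}M\to\mathcal L$ is a diffeomorphism, the map $\pi\circ\zeta_1$ restricted to a neighborhood of $\lambda=\tau_1\lambda_0$ is simply the Lagrangian map $\pi|_{\mathcal L}$ read through this diffeomorphism; its Lagrangian singularity at $\lambda$ is therefore, by definition, the Lagrangian singularity of $\pi|_L$ at $\lambda_1$. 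By Theorem~\ref{thm:d4plus} the latter is a $D_4^+$ Lagrangian singularity, and the corollary follows.

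I do not expect a substantive obstacle, since every analytic ingredient---most notably the $D_4^+$ normal-form computation encoded in Proposition~\ref{mainprop}---is already contained in Theorem~\ref{thm:d4plus}. The only point demanding care is the bookkeeping in the last paragraph: identifying the source Lagrangian of $\pi\circ\zeta_1$ with the cotangent fiber $T^*_{p_0}M$ and verifying that $\zeta_1$ carries it onto $L$, so that the $D_4^+$ classification transfers verbatim from $\pi|_L$ to $\pi\circ\zeta_1$.
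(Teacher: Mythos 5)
Your proposal is correct and takes essentially the same route as the paper, which offers no separate argument but presents the corollary as a direct consequence of Theorem~\ref{thm:d4plus}; your three steps---identifying $\lambda=\tau_1\lambda_0$ with $\lambda_0\in C_{n-1}^+\cap C_{n-2}^-$ (so $j=n-1$, $b_{n-1}=b_{n-2}$), checking the hypothesis \eqref{assumption}, and transferring the $D_4^+$ classification from $\pi|_L$ at $\lambda_1$ to $\pi\circ\zeta_1$ at $\tau_1\lambda_0$ through the diffeomorphism $\zeta_1|_{\tilde L}$---are exactly the bookkeeping the paper leaves implicit. One small refinement: the validity of \eqref{assumption} for $j=n-1$ should be credited to Proposition~\ref{prop:conj} (which forces $r_i(u)>\tau_1$ for $i\le n-3$, since $[u]$ cannot lie in $C_i^+\cap C_{i-1}^-$ for two different indices), not to $\tau_1$ being the first conjugate time, as first conjugacy alone does not bound the multiplicity or identify which Jacobi fields vanish there.
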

\begin{cor}\label{cor2}
Suppose that the second zero $t=r^2_{n-1}(u)$ of the Jacobi field $Y_{n-1}(t,u)$ is greater than $r_1(u)$
for any $[u]\in U^*_{p_0}M$.
Then the germ of the map $\pi\circ\zeta_1: T^*_{p_0}M\to M$ at $\lambda\in \flat(\tilde K_i)$ is a 
$D_4^+$ Lagrangian singularity if $\lambda/|\lambda|\in \partial C_i^+\cup \partial C_i^-$ $(1\le i\le n-2)$.
\end{cor}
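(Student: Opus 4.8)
The plan is to deduce both corollaries from Theorem~\ref{thm:d4plus} by identifying, for each boundary stratum, the index $j$ to which that theorem applies. Recall that $\partial C_i^+ = C_i^+\cap C_{i-1}^-$ is the locus $b_i(u_i)=b_{i-1}(u_{i-1})$ and that $\partial C_i^- = \partial C_{i+1}^+ = C_{i+1}^+\cap C_i^-$ is the locus $b_{i+1}(u_{i+1})=b_i(u_i)$. Thus for $\lambda/|\lambda|=[u]\in\partial C_i^+$ I take $j=i$, and for $[u]\in\partial C_i^-$ I take $j=i+1$; in either case $2\le j\le n-1$, so Theorem~\ref{thm:d4plus} is available once its hypothesis \eqref{assumption} is verified. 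First I would record that $\pi\circ\zeta_1:T^*_{p_0}M\to M$ is precisely the Lagrangian map of the Lagrangian submanifold $\zeta_1(T^*_{p_0}M)$ (the cotangent fibre $T^*_{p_0}M$ is Lagrangian and $\zeta_1$ is a symplectomorphism), and that a neighbourhood of $\lambda_1=\phi(\tau_1\lambda_0)$ in this Lagrangian is exactly the set $L$ of Theorem~\ref{thm:d4plus}. Since the reparametrization $\zeta_1|_{\tilde L}:\tilde L\to L$ is a diffeomorphism, the germ of $\pi\circ\zeta_1$ at $\tau_1\lambda_0$ and the germ of $\pi|_L$ at $\lambda_1$ define the same Lagrangian singularity. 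It then remains to match the base point: for $\lambda\in\flat(\tilde K_i)$ with $\lambda/|\lambda|=[u]$ one has $\lambda=r_i(u)[u]$, and on $C_{i-1}^-\cap C_i^+$ the value $r_i(u)$ was defined in \S6 as the first zero of $Z_i$, which is $\tau_1$ for $j=i$; for $[u]\in\partial C_i^-$ the same definition together with Proposition~\ref{prop:conj} gives $r_i(u)=r_{i+1}(u)=\tau_1$ with $j=i+1$. Hence $\lambda=\tau_1\lambda_0$ and the two germs coincide.

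With the reduction in place, the only substantive point is the verification of \eqref{assumption}: that the conjugate point $\gamma(\tau_1,0,0)$ has multiplicity exactly two, i.e.\ the space of normal Jacobi fields vanishing at $t=0$ and $t=\tau_1$ is spanned by $Z_j(t)$ and $Z_{j-1}(t)$. The space of normal Jacobi fields vanishing at $0$ is spanned by $Y_1,\dots,Y_{n-1}$ (with $Y_i,Y_{i-1}$ replaced by $Z_i,Z_{i-1}$ at the degenerate covector), each of the form $y_k(t)V_k(t)$, so the multiplicity at $\tau_1$ equals the number of indices $k$ with $y_k(\tau_1)=0$. For Corollary~\ref{cor1} ($j=n-1$) this is automatic: by Proposition~\ref{prop:conj} the ordering $r_{n-1}\le\dots\le r_1$ holds with $r_{n-1}=r_{n-2}=\tau_1$ at a generic point of $\partial C_{n-1}^+$ and $r_k>\tau_1$ for $k\le n-3$, while $\tau_1=r_{n-1}$ is the very first conjugate point, so no field can have reached a second zero. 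Thus exactly $Y_{n-1}$ and $Y_{n-2}$ vanish at $\tau_1$, and no extra hypothesis is needed.

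For Corollary~\ref{cor2} the new difficulty is that, when $i<n-1$, the fields $Y_k$ with $k>i$ already have their first zeros $r_k<\tau_1$, so one must rule out that any of their second (or later) zeros equals $\tau_1$. This is exactly where the hypothesis $r_{n-1}^2(u)>r_1(u)$ is used: I would invoke the conjugate-point count underlying Theorem~\ref{thm:conj}(2), which identifies the first $n-1$ conjugate points along $\gamma_{[u]}$ with the ordered first zeros $r_{n-1}\le\dots\le r_1$. Consequently no Jacobi field attains its second zero before $r_1\ge r_i=\tau_1$, so $y_k(\tau_1)\ne 0$ for every $k>i$. Combined with Proposition~\ref{prop:conj} (which at a generic point of $\partial C_i^\pm$ forces $r_k\ne\tau_1$ for $k\le i-2$ and gives the single coincidence $r_i=r_{i-1}$, resp.\ $r_{i+1}=r_i$), this shows that precisely two functions $y_k$ vanish at $\tau_1$, establishing \eqref{assumption}; Theorem~\ref{thm:d4plus} then yields the $D_4^+$ Lagrangian singularity.

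The main obstacle is the second-zero control in the previous paragraph: reducing it cleanly to the single assumption $r_{n-1}^2>r_1$ requires knowing that $r_{n-1}^2$ is the smallest second zero, i.e.\ an ordering $r_{n-1}^2\le\dots\le r_1^2$ analogous to the ordering of the first zeros. I expect this to follow from the oscillation structure of the $y_k(t)$ established in \S5 (the interlacing $s_i^k<r_i^k<s_i^{k+1}$ from \cite[Cor.~5.2]{IK2} together with Proposition~\ref{prop:main}), but it is the step that must be argued rather than merely quoted. A secondary point is to restrict attention to points of $\partial C_i^\pm$ lying on no deeper stratum (no triple coincidence $b_{i-1}=b_i=b_{i+1}$), since at such points the multiplicity would exceed two and \eqref{assumption} would fail; this is the implicit genericity under which the corollaries are stated.
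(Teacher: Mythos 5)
Your proposal is correct and takes essentially the same route as the paper: there the corollary is stated, together with Corollary~\ref{cor1}, as a direct consequence of Theorem~\ref{thm:d4plus} with no further argument, and your reduction --- taking $j=i$ on $\partial C_i^+$ and $j=i+1$ on $\partial C_i^-$, identifying the germ of $\pi\circ\zeta_1$ at $\tau_1\lambda_0$ with that of $\pi|_L$ at $\lambda_1$, and checking assumption~\eqref{assumption} by counting which of the functions $y_k$ vanish at $\tau_1$ --- is precisely what that deduction amounts to. The step you flag as the remaining obstacle (passing from the hypothesis on $r_{n-1}^2$ alone to second-zero control for every $Y_k$ with $k>i$, so that the multiplicity at $\tau_1$ is exactly two) is not addressed in the paper either, so your write-up is, if anything, more explicit than the paper's own proof.
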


\subsection{Proof of Proposition~\ref{mainprop}}
In this subsection we shall always assume $\tilde w=0$, so we shall shortly write $(t,\nu)$ instead of writing $(t,\nu,0)$. 
Also, the value of $b_k$ $(k\ne j,j-1)$ will be fixed to be $b_{k,0}$ throughout this subsection.

First, we would like to define a function 
$\theta(t,\nu)$ satisfying
\begin{equation}\label{theta}
f_j(x_j(t,\nu))-f_{j,0}=\nu_1(1-\cos\theta(t,\nu))+\nu_2\sin\theta(t,\nu)\,.
\end{equation}
\begin{lemma}\label{lemma:theta}
There is a unique $C^\infty$ function $\theta(t,\nu)$ for small $|\nu|$ and $t\in\R$ satisfying \eqref{theta} and the initial condition $\theta(0,\nu)=0$. Moreover, it satisfies
\begin{equation*}
\frac{\partial\theta}{\partial t}(t,\nu)>0\qquad \text{for any } (t,\nu)\,.
\end{equation*}
\end{lemma}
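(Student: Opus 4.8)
The plan is to construct $\theta$ explicitly as the solution of a first–order ODE in $t$ whose right–hand side is a manifestly positive smooth function, and then to verify that this $\theta$ solves \eqref{theta}. Write $g(t,\nu)=f_j(x_j(t,\nu))-f_{j,0}-\nu_1$. From the definition of $\nu_1,\nu_2$ the oscillation interval $[b_j,b_{j-1}]$ of $f_j$ is centred at $f_{j,0}+\nu_1$ with half–width $|\nu|=\sqrt{\nu_1^2+\nu_2^2}$, and a direct computation gives $(b_{j-1}-f_j)(f_j-b_j)=|\nu|^2-g^2$; moreover the right–hand side of \eqref{theta} equals $\nu_1+|\nu|\sin(\theta+\delta)$ for a constant phase $\delta$, so it sweeps exactly the interval $[\nu_1-|\nu|,\,\nu_1+|\nu|]$ traced by $f_j-f_{j,0}$. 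This matching of ranges is what makes \eqref{theta} pointwise solvable; the whole issue is to choose the branch smoothly and monotonically. To that end I would extract from \eqref{xprime} and \eqref{eq:fdiff} the relation $\tfrac{d}{dt}f_j(x_j(t,\nu))=\epsilon(t)\sqrt{|\nu|^2-g^2}\,\Omega_0$, where $\epsilon(t)=\text{sign}(x_j'(t))=\pm1$ and one computes
\[
\Omega_0=\frac{(-1)^j\prod_{k\ne j,j-1}(f_j-b_k)}{A_1(f_j)\,(-1)^{n-j}\prod_{l\ne j}(f_l-f_j)}
\]
with $A_1$ as in \eqref{a1def}. Because $p_0$ is general and $|\nu|$ is small, the values $f_l$ $(l\ne j)$ stay strictly separated from $f_j$ and $(-1)^j\prod_{k\ne j,j-1}(f_j-b_k)>0$, so $\Omega_0$ is $C^\infty$ and bounded away from $0$ and $\infty$ on the relevant domain (including $\nu=0$). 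I then \emph{define} $\theta(t,\nu)$ as the solution of $\partial\theta/\partial t=\Omega_0$ with $\theta(0,\nu)=0$; it is automatically $C^\infty$ in $(t,\nu)$ and satisfies $\partial\theta/\partial t=\Omega_0>0$, yielding the asserted monotonicity at once.

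To check that this $\theta$ satisfies \eqref{theta}, set $G(t,\nu)=g(t,\nu)-\big(-\nu_1\cos\theta+\nu_2\sin\theta\big)$, so that \eqref{theta} is equivalent to $G\equiv0$. Since $g(0,\nu)=-\nu_1$ and $\theta(0,\nu)=0$, we have $G(0,\nu)=0$. Differentiating, and using $\partial_t(-\nu_1\cos\theta+\nu_2\sin\theta)=(\nu_1\sin\theta+\nu_2\cos\theta)\,\Omega_0$ together with the formula for $\tfrac{d}{dt}f_j$ above, one gets on the locus $\{G=0\}$, where $\sqrt{|\nu|^2-g^2}=|\nu_1\sin\theta+\nu_2\cos\theta|$,
\[
\frac{\partial G}{\partial t}=\Big(\epsilon(t)\,|\nu_1\sin\theta+\nu_2\cos\theta|-(\nu_1\sin\theta+\nu_2\cos\theta)\Big)\Omega_0 .
\]
Thus $G\equiv0$ will follow from $G(0,\nu)=0$ by a uniqueness argument, \emph{provided} $\epsilon(t)=\text{sign}(\nu_1\sin\theta+\nu_2\cos\theta)$ along the solution.

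I expect this sign matching to be the main obstacle. At $t=0$ it holds because $\text{sign}(x_j'(0))=\text{sign}(\xi_j)=\text{sign}(\nu_2)$, while $f_j'(x_{j,0})>0$ (as $0<x_{j,0}<\alpha_j/4$), and $\nu_1\sin\theta+\nu_2\cos\theta=\nu_2$ at $\theta=0$; hence $\epsilon(0)=\text{sign}(\nu_2)$. Both $\epsilon(t)$ and $\nu_1\sin\theta+\nu_2\cos\theta$ can change sign only at the turning points $f_j=b_j,b_{j-1}$, where simultaneously $\sqrt{|\nu|^2-g^2}=0$ and (on $\{G=0\}$) $\nu_1\sin\theta+\nu_2\cos\theta=0$. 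A local second–order analysis there, using $x_j''\ne0$ from the discussion in \S3 and $\partial\theta/\partial t>0$, shows that the two signs flip together, so the agreement established at $t=0$ propagates for all $t$; this closes the verification of \eqref{theta}. Finally, uniqueness of the $C^\infty$ solution follows from the same computation: any smooth $\theta$ satisfying \eqref{theta} with $\theta(0,\nu)=0$ must obey $\partial_t\theta=\pm\Omega_0$ off the discrete set of turning points, and smoothness together with the normalization at $t=0$ forces the sign to be $+$ throughout, identifying it with the ODE solution constructed above.
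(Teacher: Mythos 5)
Your route is genuinely different from the paper's, and its constructive half is sound: the identity $\frac{d}{dt}f_j(x_j(t,\nu))=\epsilon(t)\sqrt{|\nu|^2-g^2}\,\Omega_0$ does follow from \eqref{xprime}, \eqref{eq:fdiff} and \eqref{a1def}, the function $\Omega_0$ is positive and bounded away from $0$ and $\infty$ in the relevant range, and $\theta(t,\nu):=\int_0^t\Omega_0(s,\nu)\,ds$ is then well defined with $\partial\theta/\partial t=\Omega_0>0$. (One secondary point you pass over: if some $b_{k,0}=a_k$ with $k\ne j,j-1$, the geodesic crosses the branch locus $J_k$, where the individual functions $f_l(x_l(t,\nu))$ are only Lipschitz in $t$; smoothness of $\Omega_0$ must then be extracted from the symmetric product $\prod_{l\ne j}(f_l-f_j)$, which is invariant under $f_k\leftrightarrow f_{k+1}$.) But your scheme shifts the entire content of the lemma onto the verification that this $\theta$ satisfies \eqref{theta}, and there the argument has a genuine gap.

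The relation you derive for $\partial G/\partial t$ is valid only on the locus $\{G=0\}$, so to conclude $G\equiv 0$ you need a uniqueness mechanism for the first-order equation $w'=\pm\sqrt{|\nu|^2-(w-\nu_1)^2}\,\Omega_0$ that both sides of \eqref{theta} obey. That equation is non-Lipschitz exactly at the turning values $w=\nu_1\pm|\nu|$: Picard--Lindel\"of and Gronwall fail there, and this is precisely where solutions of such equations branch. Hence your key step --- ``the two signs flip together, so the agreement established at $t=0$ propagates'' --- is not a proof: sign agreement plus a common turning point does not exclude that $f_j(x_j(t,\nu))-f_{j,0}$ and $\nu_1(1-\cos\theta)+\nu_2\sin\theta$ separate immediately \emph{after} that turning point; uniqueness of non-sticking solutions is an additional statement that must be proved and is missing. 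The degenerate data are also mishandled: when $\nu_2=0$ your initialization $\epsilon(0)=\mathrm{sign}(\nu_2)$ is vacuous (the geodesic then starts at a turning point, the worst case), and when $\nu=0$ equation \eqref{theta} reads $0=0$ and constrains nothing. For the same reason your closing uniqueness argument (``the normalization at $t=0$ forces the sign to be $+$ throughout'') is false pointwise in $\nu$: for $\nu_2=0$ the right-hand side of \eqref{theta} is even in $\theta$, so $-\theta$ is a second smooth solution with the same normalization; uniqueness can only be asserted for the function jointly smooth in $(t,\nu)$, using density of $\{\nu_2\ne 0\}$.

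Both defects are reparable inside your framework, and more cheaply than by a turning-point analysis: away from turning points each of $F_1=f_j(x_j(t,\nu))-f_{j,0}$ and $F_2=\nu_1(1-\cos\theta)+\nu_2\sin\theta$ satisfies the \emph{linear} second-order equation $w''=-(w-\nu_1)\,\Omega_0^2+w'\,(\partial_t\Omega_0)/\Omega_0$, hence satisfies it for all $t$ by continuity; both have Cauchy data $w(0)=0$, $w'(0)=\nu_2\,\Omega_0(0,\nu)$, so uniqueness for linear equations gives $F_1\equiv F_2$ for every $\nu$, with no sign bookkeeping and with $\nu_2=0$ and $\nu=0$ included. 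For comparison, the paper never solves an ODE: for $\nu_2\ne0$ it reads $\theta$ off directly as the oscillation angle of $f_j$ (smooth and monotone because the turning points of $f_j$ are nondegenerate), and it obtains the smooth extension across $\nu_2=0$ and $\nu=0$ --- where $f_j$ degenerates to a constant and carries no angle at all --- from the geodesic equation \eqref{geodeqint}, recast as $\int_0^{\theta(t,\nu)}A_1(f_j)\,d\theta'=-\sum_{i\ne j}U_i(t,\nu)$, whose right-hand side is smooth in $(t,\nu)$ and strictly increasing in $t$; joint smoothness, uniqueness and $\partial\theta/\partial t>0$ then come simultaneously.
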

\begin{proof}
The formula \eqref{theta} is equivalent to 
\begin{equation*}
f_j(x_j(t,\nu))=b_{j-1}\left(\cos ((\theta+\alpha)/2)\right)^2
+b_{j}\left(\sin ((\theta+\alpha)/2)\right)^2\,,
\end{equation*}
where $\theta=\theta(t,\nu)$ and $\alpha$ is 
defined by
\begin{equation*}
-(\nu_1,\nu_2)=\sqrt{\nu_1^2+\nu_2^2}\,(\cos\alpha,\sin\alpha)\,.
\end{equation*}
If $\nu_2\ne 0$, then $b_{j-1}>f_j(x_j(0,\nu))>b_j$ and the function $t\mapsto f_j(x_j(t,\nu))$
oscillates between $b_j$ and $b_{j-1}$ and the second derivaties do not vanish at the turning points.
Therefore the assertion easily follows in this case, and we have $\partial\theta/\partial t\ne 0$ for any $t$.
Since 
\begin{equation*}
\frac{d}{dt}f_j(x_j(t,\nu))\vert_{t=0}=\nu_2\frac{\partial\theta}{\partial t}(0,\nu)\,,
\end{equation*}
and since
\begin{equation*}
\text{sign of }\nu_2= \text{ sign of }\xi_j= \text{ sign of } df_j(x_j)/dt \text{ at }t=0\,, 
\end{equation*}
it follows that $\partial\theta/\partial t>0$ at $t=0$ and so for any $t$. (Note that $\partial f_j/\partial x_j>0$ by the assumption posed at the beginning of \S6.) 

Now let us verify that the function $\theta(t,\nu)$ thus obtained for $\nu_2\ne 0$ is smoothly
extended to points where $\nu_2=0$.
Putting $G(\lambda)=\prod_{k\ne j,j-1}(\lambda-b_k)$ in the formula \eqref{geodeqint}, we have
\begin{equation*}
\sum_{i=1}^n U_i(t,\nu)=0\,,
\end{equation*}
where $U_i(t,\nu)$ is given by
\begin{equation*}
\int_0^{t}\frac{(-1)^i\prod_{k\ne j,j-1}(f_i-b_k)\,|{\partial x_i(t,\nu)}/{\partial t}|\ dt}
{\sqrt{(-1)^{i-1}\prod_{k\ne j,j-1}(f_i-b_k)
\left((f_i-f_{j,0})^2-2\nu_1(f_i-f_{j,0})-\nu_2^2\right)}}\,.
\end{equation*}
When $\nu_2\ne 0$, $U_j$ is rewritten as
\begin{equation*}
U_j(t,\nu)=\int_0^{\theta(t,\nu)}
A_1(f_j)\,d\theta\,,
\end{equation*}
where $f_j=f_{j,0}+\nu_1(1-\cos\theta)+\nu_2\sin\theta$ and $A_1(\lambda)$ is as in \eqref{a1def}.
This formula redefine $\theta(t,\nu)$, which is effective for the case $\nu_2=0$ and is of $C^\infty$
anywhere. Here, we again have $\partial\theta/\partial t>0$, since
\begin{equation*}
(-1)^i\prod_{k\ne j,j-1}(f_i(x_i(t,\nu))-b_k)
\,|{\partial x_i(t,\nu)}/{\partial t}|\le 0
\end{equation*}
for any $i\ne j,j-1$ and is strictly negative for
$i$ with $L_i$ being the whole circle.
\end{proof}
It should be noted that when $\nu=0$ the function
$f_j(x_j(t,\nu))$ is identically equal to 
$f_{j,0}$ (constant), but the function $\theta(t,0)$ is strictly increasing in $t$.  
\begin{lemma}
$\theta(\tau_1,0)=2\pi.$
\end{lemma}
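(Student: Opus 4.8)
The plan is to prove the identity first for nearby \emph{non-degenerate} covectors, where $f_j(x_j(t,\nu))$ genuinely oscillates, and then to recover the degenerate case by continuity as $\nu\to 0$. So I would fix $\nu$ with $\nu_2\ne 0$ and $|\nu|$ small. Then $b_{j-1}>f_{j,0}>b_j$, and by Lemma~\ref{lemma:theta} the function $\theta(t,\nu)$ is strictly increasing in $t$ while $f_j(x_j(t,\nu))=f_{j,0}+\nu_1(1-\cos\theta)+\nu_2\sin\theta$ oscillates between $b_j$ and $b_{j-1}$. The central claim is that $\theta(t_j(\nu),\nu)=2\pi$, where $t_j(\nu)$ is the half-period defined in \eqref{def:ti} by $\sigma_j(t_j)=2(a_{j-1}^--a_j^+)$.

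To establish this claim I would rewrite the total variation $\sigma_j$ as an integral in $\theta$. Since $f_j$ depends on $t$ only through $\theta$ and $\partial\theta/\partial t>0$, a change of variable gives $\sigma_j(t)=\int_0^{\theta(t,\nu)}|\nu_1\sin\theta+\nu_2\cos\theta|\,d\theta$. Over one full turn the integrand has the form $\sqrt{\nu_1^2+\nu_2^2}\,|\cos(\theta-\phi)|$ for a suitable phase $\phi$, so $\int_0^{2\pi}|\nu_1\sin\theta+\nu_2\cos\theta|\,d\theta=4\sqrt{\nu_1^2+\nu_2^2}$. On the other hand, for small $|\nu|$ one has $a_{j-1}^-=b_{j-1}$ and $a_j^+=b_j$, and a direct computation from the defining relations $2\nu_1=b_j+b_{j-1}-2f_{j,0}$, $\nu_2^2=(b_{j-1}-f_{j,0})(f_{j,0}-b_j)$ gives $b_{j-1}-b_j=2\sqrt{\nu_1^2+\nu_2^2}$, whence $2(a_{j-1}^--a_j^+)=4\sqrt{\nu_1^2+\nu_2^2}$. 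Because both $\sigma_j(t)$ and $\theta(t,\nu)$ are strictly increasing in $t$, the equation $\sigma_j(t)=2(a_{j-1}^--a_j^+)$ has the unique solution $\theta(t,\nu)=2\pi$; hence $\theta(t_j(\nu),\nu)=2\pi$ for all such $\nu$.

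Finally I would pass to the limit $\nu\to 0$. For generic $\nu$ near $0$ (so that the $b_k$ and $a_l$ are distinct and $0\notin S_j$), Proposition~\ref{prop:main}, applied both with index $j$ and with index $j-1$, yields $r_j(\nu)<t_j(\nu)<r_{j-1}(\nu)$. The functions $r_j$ and $r_{j-1}$ are continuous on $U^*_{p_0}M$, and at $\lambda_0$ they both equal $\tau_1$, the common first zero of $Z_j$ and $Z_{j-1}$ (by the proposition quoted from \cite{IK2} stating $Z_{j-1}(\tau_1)=Z_j(\tau_1)=0$ with $Z_{j-1},Z_j$ independent on $(0,\tau_1)$). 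Squeezing then gives $t_j(\nu)\to\tau_1$ as $\nu\to 0$, and since $\theta$ is jointly continuous in $(t,\nu)$ by Lemma~\ref{lemma:theta}, we conclude $\theta(\tau_1,0)=\lim_{\nu\to 0}\theta(t_j(\nu),\nu)=2\pi$. The main obstacle is precisely this last step: $\tau_1$ is defined through the vanishing of the Jacobi field $Z_j$ along the degenerate geodesic $\gamma(t,0,0)$, whereas $t_j(\nu)$ is a dynamical half-period that is not even defined at $\nu=0$ (where $f_j$ is constant); reconciling the two requires exactly the order relations $r_j\le t_j\le r_{j-1}$ together with the continuity of the conjugate-time functions $r_i$ established in \S6.
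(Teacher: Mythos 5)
Your proof is correct and takes essentially the same route as the paper's: you establish $\theta(t_j(\nu),\nu)=2\pi$ for nondegenerate $\nu$ by rewriting the total variation $\sigma_j$ as an integral in $\theta$ and using $b_{j-1}-b_j=2\sqrt{\nu_1^2+\nu_2^2}$, then pass to the limit $\nu\to 0$ by squeezing $t_j$ between $r_j$ and $r_{j-1}$ (via Propositions~\ref{prop:main} and~\ref{prop:conj}) and invoking the continuity of the conjugate-time functions, with $r_j=r_{j-1}=\tau_1$ at $\lambda_0$. The only differences are cosmetic: you verify $b_{j-1}-b_j=2\sqrt{\nu_1^2+\nu_2^2}$ explicitly and restrict to generic $\nu$, which suffices for the limiting argument.
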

\begin{proof}
When $\nu\ne 0$, we have
\begin{equation*}
2(b_{j-1}-b_j)=\sigma_j(t_j)=\int_0^{t_j}\left|\frac{df_j(x_j(s,\nu))}{ds}\right|ds\,.
\end{equation*}
The right-hand side is equal to 
\begin{gather*}
\int_0^{t_j}\left|\nu_1\sin\theta(s,\nu)+\nu_2\cos\theta(s,\nu)\right|\frac{\partial\theta}{\partial s}ds\\
=\sqrt{\nu_1^2+\nu_2^2}\int_0^{\theta(t,\nu)}
|\sin(\theta+\alpha)|d\theta\,,
\end{gather*}
where $\alpha$ is the same one as in the proof of 
Lemma~\ref{lemma:theta}.
Since $b_{j-1}-b_j=2\sqrt{\nu_1^2+\nu_2^2}$, it 
therefore follows that $\theta(t_j,\nu)=2\pi$.

On the other hand, by Propositions~\ref{prop:main} and \ref{prop:conj},
we have $r_j\le t_j\le r_{j-1}$ when
$\nu\ne 0$, and $r_{j-1},r_j$ tend to $\tau_1$ when $\nu$ tends to $0$. Therefore we have $\theta(\tau_1,0)=2\pi$ by continuity.
\end{proof}
We now consider the geodesic equations \eqref{geodeqint} and \eqref{eq:geodlength} for the following polynomials $G(\lambda)$:
\begin{equation*}
G(\lambda)=\prod_{k\ne j,j-1}(\lambda-b_k)\cdot
(\lambda-f_{j,0})^\alpha\qquad(\alpha=0,1,2)\,.
\end{equation*}
Let us put, for each $\alpha=0,1,2$:
\begin{gather*}
u_i^\alpha(x_i,\nu)=\frac{-\sqrt{(-1)^{i-1}\prod_{k\ne j,j-1}(f_i-b_{k})}\,(f_i-f_{j.0})^\alpha}
{\sqrt{(f_i-f_{j,0})^2-2\nu_1(f_i-f_{j,0})-\nu_2^2}}\qquad (i\ne j)\,,\\
u_j^\alpha(\theta,\nu)=A_1(f_j)\,(f_j-f_{j,0})^\alpha\,,
\end{gather*}
and
\begin{gather*}
U_i^\alpha(t,\nu)=\int_0^tu_i^\alpha(x_i(t,\nu),\nu)\left|\frac{\partial x_i}{\partial t}(t,\nu)\right|dt
\quad(i\ne j)\,,\\
U_j^\alpha(t,\nu)=\int_0^{\theta(t,\nu)}u_j^\alpha(\theta,\nu)\,d\theta\,,
\end{gather*}
where $f_i=f_i(x_i)$ $(i\ne j)$ and
\begin{equation*}
f_j=f_{j,0}+\nu_1(1-\cos\theta)+\nu_2\sin\theta\,.
\end{equation*}
We then have
\begin{equation}\label{large-u}
\sum_{1\le i\le n\atop{i\ne j}}U_i^\alpha(t,\nu)
+U_j^\alpha(t,\nu)=\begin{cases}
0\quad(\alpha=0,1)\\
t\quad(\alpha=2)
\end{cases}.
\end{equation}
The functions $y_0$, $y_1$, and $y_2$ clearly have
the following relations with the above functions: 
\begin{align}
y_0=&A_1(f_{j,0})(f_j(x_j)-f_{j,0})\,,\\
dy_\alpha=&\sum_{i\in I}\epsilon_i u_i^\alpha(x_i,0)dx_i\quad (\alpha=1,2)\,.
\label{eq:yalpha}\end{align}
Therefore for the proof of the proposition it
is necessary to calculate the first and second
derivatives of the above functions at $(t,\nu)=(\tau_1,0)$.

Let us start with the derivatives in $t$:
\begin{align}\label{der-t1}
\frac{\partial U_i^\alpha}{\partial t}(t,\nu)=&
\epsilon_iu_i^\alpha(x_i(t,\nu),\nu)\frac{\partial x_i}{\partial t}\quad (i\ne j)\\
\frac{\partial U_j^\alpha}{\partial t}(t,\nu)=&
\frac{\partial \theta}{\partial t}(t,\nu)\,u_j^\alpha(\theta(t,\nu),\nu)\,.
\label{der-t2}\end{align}
Taking the fact
\begin{equation}
U_j^\alpha(t,0)=0\quad\text{for any }t\in\R\text{ and }\alpha=1,2
\end{equation}
into account, we have
\begin{equation*}
\sum_{i\ne j}\frac{\partial U_i^\alpha}{\partial t}(t,0)=\begin{cases}
0\quad(\alpha=1)\\
1\quad(\alpha=2)
\end{cases}.
\end{equation*}
Since $u_i^\alpha(x_{i,1},0)=(\partial x_i/\partial t)(\tau_1,0)=0$ if $f_i(x_{i,1})=b_k$ for some $k\ne j,j-1$, it therefore follows that
\begin{equation}\label{der-t12}
\frac{\partial y_\alpha}{\partial t}(\tau_1,0)
=\begin{cases}
0\quad(\alpha=1)\\
1\quad(\alpha=2)
\end{cases},\quad
\frac{\partial^2 y_\alpha}{\partial t^2}(\tau_1,0)=0\quad(\alpha=1,2)\,.
\end{equation}
We also have
\begin{equation}\label{y0t}
y_0(t,0)=0\quad \text{for any }t\in\R\,.
\end{equation}

Next, let us consider the derivatives
$\partial U_i^\alpha/\partial t\partial \nu_k$.
We have from \eqref{der-t1} and \eqref{der-t2}:
\begin{gather*}
\frac{\partial U_i^\alpha}{\partial t\partial \nu_k}(t,\nu)=\epsilon_i\frac{\partial u_i^\alpha}{\partial x_i}(x_i(t,\nu),\nu)\frac{\partial x_i}{\partial \nu_k}\frac{\partial x_i}{\partial t}\\
+\epsilon_i\frac{\partial u_i^\alpha}{\partial \nu_k}(x_i(t,\nu),\nu)\frac{\partial x_i}{\partial t}+\epsilon_iu_i^\alpha(x_i(t,\nu),\nu)\frac{\partial^2x_i}{\partial t\partial \nu_k}
\end{gather*}
and
\begin{gather*}
\frac{\partial U_j^\alpha}{\partial t\partial \nu_k}(t,\nu)=\frac{\partial^2\theta}{\partial t\partial\nu_k}(t,\nu)\,u_j^\alpha(\theta,,\nu)+\frac{\partial\theta}{\partial t}\frac{\partial \theta}{\partial\nu_k}\frac{\partial u_j^\alpha}{\partial \theta}(\theta,\nu)\\
+\frac{\partial\theta}{\partial t}\frac{\partial u_j^\alpha}{\partial \nu_k}(\theta,\nu)\,.
\end{gather*}
Since $\partial x_i/\partial \nu_k$ $(k=1,2)$ is the coefficient of $\partial/\partial x_i$ in the
Jacobi field $Z_j$ or $Z_{j-1}$, it vanishes at
$(t,\nu)=(\tau_1,0)$. Also, we have
\begin{equation*}
u_j^\alpha(2\pi,0)=\frac{\partial u_j^\alpha}{\partial \theta}(2\pi,0)=\frac{\partial u_j^\alpha}{\partial \nu_k}(2\pi,0)=0
\end{equation*}
for $\alpha=1,2$. Therefore we have
\begin{equation}\label{der-tnu}
\sum_{i\ne j}\epsilon_i\frac{\partial u_i^\alpha}{\partial \nu_k}(x_{i,1},0)\frac{\partial x_i}{\partial t}(\tau_1,0)+\sum_{i\ne j}\epsilon_iu_i^\alpha(x_{i,1},0)\frac{\partial^2x_i}{\partial t\partial \nu_k}(\tau_1,0)=0
\end{equation}
for $\alpha=1,2$. Note that the second sum in the left-hand side of
the above equality is equal to $(\partial y_{\alpha}/\partial t\partial \nu_k)(\tau_1,0)$ and
\begin{equation*}
\frac{\partial u_i^\alpha}{\partial \nu_k}(x_{i,1},0)=\begin{cases}
u_{i}^{\alpha-1}(x_{i,1},0)\quad &(k=1)\\
0\quad &(k=2)\end{cases}\,.
\end{equation*}
Therefore,
\begin{equation}\label{der-tnu2}
\frac{\partial^2 y_\alpha}{\partial t\partial\nu_2}(\tau_1,0)=0\qquad (\alpha=1,2)
\end{equation}
and
\begin{equation}\label{der-tnu1}
\begin{gathered}
\frac{\partial^2 y_\alpha}{\partial t\partial\nu_1}(\tau_1,0)=-\sum_{i\ne j}u_i^{\alpha-1}(x_{i,1},0)\frac{\partial x_i}{\partial t}(\tau_1,0)
=-\sum_{i\ne j}\frac{\partial U_i^{\alpha-1}}{\partial t}(\tau_1,0)\\
=\frac{\partial U_j^{\alpha-1}}{\partial t}(\tau_1,0)=\begin{cases}
A_1(f_{j,0})\frac{\partial\theta}{\partial t}(\tau_1,0)\quad &(\alpha=1)\\
0\quad &(\alpha=2)
\end{cases}\,.
\end{gathered}
\end{equation}

For $y_0$, we have
\begin{equation*}
\frac{\partial y_0}{\partial t}(t,\nu)=A_1(f_{j,0})
(\nu_1\sin\theta+\nu_2\cos\theta)\frac{\partial \theta}{\partial t}\,.
\end{equation*}
Thus,
\begin{equation}\label{der-y0tnu}
\frac{\partial^2 y_0}{\partial t\partial \nu_k}
(\tau_1,0)=\begin{cases}
0\quad &(k=1)\\
A_1(f_{j,0})\frac{\partial \theta}{\partial t}(\tau_1,0)\quad &(k=2)
\end{cases}\,.
\end{equation}

Next, we shall consider the derivatives in $\nu_1$
and $\nu_2$.
For the first derivatives we have:
\begin{lemma}\label{lem:1stdernu}
$\frac{\partial y_\alpha}{\partial \nu_k}(\tau_1,0)=0$ for any $\alpha=0,1,2$ and $k=1,2$.
\end{lemma}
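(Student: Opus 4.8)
The plan is to exploit the fact that $y_0$, $y_1$, $y_2$ are genuine functions of the base point on $M$, so that their dependence on $\nu$ enters only through the foot point $\gamma(t,\nu)$. Indeed, $y_0=A_1(f_{j,0})(f_j(x_j)-f_{j,0})$ depends only on the coordinate $x_j$, and $y_1$, $y_2$ are the integrals displayed above, which are functions of the coordinates $x_i$ alone. Writing $y_\alpha(t,\nu)=y_\alpha(\gamma(t,\nu))$ and applying the chain rule, I would obtain
\begin{equation*}
\frac{\partial y_\alpha}{\partial\nu_k}(\tau_1,0)=(dy_\alpha)_{p_1}\!\left(\frac{\partial\gamma}{\partial\nu_k}(\tau_1,0)\right)\qquad(\alpha=0,1,2,\ k=1,2).
\end{equation*}
Thus the whole lemma reduces to showing that the variation vector $\partial\gamma/\partial\nu_k$ vanishes at $t=\tau_1$.

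The second step is to identify this variation field with the Jacobi fields $Z_{j-1}$ and $Z_j$. Along the reference geodesic $\gamma(t,0)$ the field $t\mapsto(\partial\gamma/\partial\nu_k)(t,0)$ is a Jacobi field vanishing at $t=0$, since all the geodesics issue from the common point $p_0$; and its covariant derivative at $t=0$ equals $\sharp(\partial/\partial\nu_k)$ by the symmetry of the connection applied to $\flat(\dot\gamma(0,\nu))=(\nu,0)$. Comparing with the prescribed initial data $Z'_{j-1}(0)=\tfrac12\sharp(\partial/\partial\nu_1)$ and $Z'_j(0)=\tfrac12\sharp(\partial/\partial\nu_2)$, and using $Z_{j-1}(0)=Z_j(0)=0$, I conclude $(\partial\gamma/\partial\nu_1)(t,0)=2Z_{j-1}(t)$ and $(\partial\gamma/\partial\nu_2)(t,0)=2Z_j(t)$. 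The Proposition stated just before \eqref{assumption} gives $Z_{j-1}(\tau_1)=Z_j(\tau_1)=0$, whence $(\partial\gamma/\partial\nu_k)(\tau_1,0)=0$ for $k=1,2$, and the displayed chain-rule identity yields $\partial y_\alpha/\partial\nu_k(\tau_1,0)=0$ for all $\alpha$ and $k$ simultaneously. This is exactly the same mechanism already used in the paper to kill the terms $\partial x_i/\partial\nu_k$ at $(\tau_1,0)$ in the derivation of \eqref{der-tnu}.

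There is no serious obstacle here; the content of the lemma is precisely that the two distinguished Jacobi fields close up at $\tau_1$. The only point needing care is the normalization bookkeeping in the second step matching $\partial\gamma/\partial\nu_k$ with $Z_{j-1},Z_j$, but since the values vanish at $\tau_1$ the exact constants are irrelevant to the conclusion. As an independent cross-check for $\alpha=0$ one may argue directly from \eqref{theta}, which gives $y_0(t,\nu)=A_1(f_{j,0})\bigl(\nu_1(1-\cos\theta)+\nu_2\sin\theta\bigr)$; its $\nu_1$- and $\nu_2$-derivatives at $(\tau_1,0)$ reduce to $A_1(f_{j,0})(1-\cos\theta(\tau_1,0))$ and $A_1(f_{j,0})\sin\theta(\tau_1,0)$ respectively, both of which vanish because $\theta(\tau_1,0)=2\pi$.
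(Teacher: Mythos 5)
Your proof is correct and is essentially the paper's own argument: the paper likewise applies the chain rule and kills the resulting terms using the fact that $\partial x_i/\partial\nu_k$ are components of the Jacobi fields $Z_{j-1}$, $Z_j$, which vanish at $t=\tau_1$ by the quoted proposition. The only cosmetic difference is that the paper treats $\alpha=0$ separately via $\theta(\tau_1,0)=2\pi$ (precisely your cross-check), whereas you handle all three functions $y_\alpha$ uniformly through the vanishing of the full variation vector $\partial\gamma/\partial\nu_k(\tau_1,0)$.
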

\begin{proof}
For the case $\alpha=0$ the assertion follows from
the fact that $\partial f_j/\partial \nu_k=0$ when
$\theta=2\pi$. For $\alpha=1,2$, we have
\begin{equation*}
\frac{\partial y_\alpha}{\partial \nu_k}(\tau_1,0)
=\sum_{i\in I}\epsilon_iu_i^\alpha(x_{i,1},0)\frac{\partial x_i}{\partial \nu_k}(\tau_1,0)\,.
\end{equation*}
Since $\partial x_i/\partial \nu_k$ is a component of the Jacobi fields $Z_j, Z_{j-1}$, it vanishes at
$(\tau_1,0)$. Thus the assertion follows.
\end{proof}

To compute the second derivatives in $\nu_1,\nu_2$,
we begin with the following lemma.
\begin{lemma}\label{der-unu} For each $\alpha=0,1,2$ and $k=1,2$,
\begin{equation*}
\frac{\partial U_i^\alpha}{\partial \nu_k}(t,\nu)
=\epsilon_i u_i^\alpha(x_i(t,\nu),\nu)\frac{\partial x_i}{\partial \nu_k}+
\int_0^t\frac{\partial u_i^\alpha}{\partial \nu_k}
\left|\frac{\partial x_i}{\partial t}\right|dt\,.
\end{equation*}
\end{lemma}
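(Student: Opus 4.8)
The plan is to differentiate the integral
$U_i^\alpha(t,\nu)=\int_0^t u_i^\alpha(x_i(s,\nu),\nu)\,\left|\tfrac{\partial x_i}{\partial s}(s,\nu)\right|\,ds$
under the integral sign — which is legitimate since the upper limit $t$ is an independent variable, not a function of $\nu$ — and then to show that the part of the integrand coming from the implicit $\nu_k$-dependence (through $x_i(s,\nu)$ and through $|\partial_s x_i|$) is a total $s$-derivative integrating to the stated boundary term. Writing $\partial u_i^\alpha/\partial\nu_k$ for the explicit partial derivative (with $x_i$ held fixed), differentiation under the integral sign yields $\partial_{\nu_k}U_i^\alpha=\int_0^t(\partial u_i^\alpha/\partial\nu_k)\,|\partial_s x_i|\,ds+R$, where the first term is already the integral in the lemma and $R=\int_0^t\big[(\partial u_i^\alpha/\partial x_i)\,(\partial_{\nu_k}x_i)\,|\partial_s x_i|+u_i^\alpha\,\partial_{\nu_k}|\partial_s x_i|\big]\,ds$. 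Everything thus reduces to evaluating $R$.

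First I would treat $R$ on each maximal subinterval of $[0,t]$ on which $\eta:=\mathrm{sign}(\partial_s x_i)$ is constant. There $|\partial_s x_i|=\eta\,\partial_s x_i$ and, the sign being locally stable, $\partial_{\nu_k}|\partial_s x_i|=\eta\,\partial_s(\partial_{\nu_k}x_i)$; hence the integrand of $R$ equals $\partial_s\big(\eta\,u_i^\alpha(x_i(s,\nu),\nu)\,\partial_{\nu_k}x_i\big)$. Integrating piecewise gives $R=\big[\eta\,u_i^\alpha\,\partial_{\nu_k}x_i\big]_0^t$, provided the contributions at the interior turning points (where $\eta$ jumps) cancel.

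This cancellation is the crux, and I would settle it using the description of the motion of $x_i$ in \S3. The sign $\eta$ flips only at genuine turning points of $x_i$, i.e. where $\partial_s x_i=0$; by \S3 these occur exactly where $f_i(x_i)$ reaches a boundary value of the form $b_i$ or $b_{i-1}$, whereas at the boundary values of the form $a_i$ or $a_{i-1}$ the function $f_i$ folds back but $x_i$ passes through monotonically, so $\eta$ does not change there. At every $b$-type turning point the relevant constant ($b_i$ or $b_{i-1}$, always one of the $b_k$ with $k\ne j,j-1$ in the configuration around $\lambda_0$) appears inside the radical $\sqrt{(-1)^{i-1}\prod_{k\ne j,j-1}(f_i-b_k)}$ in the numerator of $u_i^\alpha$, so $u_i^\alpha$ vanishes there. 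Consequently $\eta\,u_i^\alpha$ extends continuously across each turning point (both one-sided limits being $0$), and the interior boundary terms telescope to zero.

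It then remains to evaluate the two genuine endpoints. At $s=0$ every geodesic starts at the fixed point $p_0$, so $x_i(0,\nu)=x_{i,0}$ is independent of $\nu$ and $\partial_{\nu_k}x_i(0,\nu)=0$, killing the lower term. At $s=t$, for $t$ near $\tau_1$ we have $\eta(t)=\epsilon_i$ by the definition of $\epsilon_i$ as the sign of $(\partial x_i/\partial t)(\tau_1,0,0)$, so the upper term is $\epsilon_i\,u_i^\alpha(x_i(t,\nu),\nu)\,\partial_{\nu_k}x_i$ — precisely the boundary term in the statement. This yields the claimed formula. The only real difficulty is the turning-point bookkeeping above; away from the (discrete) turning set all the manipulations are differentiations of smooth functions and the exchange of $\partial_s$ with $\partial_{\nu_k}$, which are routine.
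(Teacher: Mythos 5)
Your proof is correct and follows essentially the same route as the paper's: differentiate under the integral sign, recognize the implicit $\nu_k$-dependence as a piecewise total $t$-derivative $\epsilon_i\,\partial_t\bigl(u_i^\alpha\,\partial_{\nu_k}x_i\bigr)$, and integrate by parts on each interval of constant sign, with the interior boundary terms cancelling because $u_i^\alpha$ vanishes at the turning points and the lower endpoint term vanishing because $\partial_{\nu_k}x_i(0,\nu)=0$. The only difference is that you spell out, via the \S3 description of the motion of $x_i$ and the location of the $b$-type turning values among the $b_k$ with $k\ne j,j-1$, why $u_i^\alpha$ vanishes at every turning point, a fact the paper's proof simply asserts.
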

\begin{proof}
We have
\begin{equation*}
\frac{\partial U_i^\alpha}{\partial \nu_k}(t,\nu)
=\int_0^t\epsilon_i(t,\nu)\left[\left(\frac{\partial u_i^\alpha}{\partial x_i}\frac{\partial x_i}{\partial \nu_k}+\frac{\partial u_i^\alpha}{\partial \nu_k}
\right)\frac{\partial x_i}{\partial t}+u_i^\alpha
\frac{\partial^2 x_i}{\partial t\partial \nu_k}\right]dt\,.
\end{equation*}
Here $\epsilon_i(t,\nu)\quad(=\pm 1)$ stands for the sign of 
$(\partial x_i/\partial t)(t,\nu)$, which is locally constant in $t$ for each fixed $\nu$ outside the turning point, i.e., the point $t$
where $(\partial x_i/\partial t)(t,\nu)=0$.

Observe that the integrand is equal to
\begin{equation*}
\epsilon_i\frac{\partial}{\partial t}\left(
\frac{\partial x_i}{\partial \nu_k}u_i^\alpha\right)+\frac{\partial u_i^\alpha}{\partial \nu_k}
\left|\frac{\partial x_i}{\partial t}\right|\,.
\end{equation*}
Since $u_i^\alpha(x_i(t,\nu),\nu)$ vanishes at
each turning point and $(\partial x_i/\partial \nu_k)(t,\nu)$ vanishes at $t=0$,
we therefore obtain
\begin{equation*}
\int_0^t\epsilon_i(t,\nu)\frac{\partial}{\partial t}\left(
\frac{\partial x_i}{\partial \nu_k}u_i^\alpha\right)dt=\epsilon_i\frac{\partial x_i}{\partial \nu_k}(t,\nu)u_i^\alpha(x_i(t,\nu),\nu)\,.
\end{equation*}
Thus the lemma follows.
\end{proof}
\begin{lemma}\label{der-thetanu}
\begin{equation*}
\frac{\partial \theta}{\partial \nu_1}(\tau_1,0)=
-A_1(f_{j,0})^{-1}C,\quad \frac{\partial\theta}{\partial \nu_2}(\tau_1,0)=0\,,
\end{equation*}
where 
\begin{equation*}
C=\sum_{i\ne j}\int_0^{\tau_1}\frac{u_i^0(x_i(t,0),0)}{f_i(x_i(t,0))-f_{j,0}}\left|\frac{\partial x_i}{\partial t}(t,0)\right|\,dt+2\pi A_1'(f_{j,0})\,.
\end{equation*}
\end{lemma}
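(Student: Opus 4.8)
The plan is to start from the conservation identity \eqref{large-u} in the case $\alpha=0$, namely $\sum_{i\ne j}U_i^0(t,\nu)+U_j^0(t,\nu)=0$, differentiate it with respect to $\nu_k$, and evaluate at $(t,\nu)=(\tau_1,0)$. Since the angle $\theta$ (with $\theta(\tau_1,0)=2\pi$) appears only inside $U_j^0$, this produces a linear relation between $\partial\theta/\partial\nu_k(\tau_1,0)$ and integrals over the other coordinate functions, which I can then solve for the desired derivatives.

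First I would treat the terms with $i\ne j$ using Lemma~\ref{der-unu}. At $(\tau_1,0)$ the boundary contribution $\epsilon_i u_i^0\,(\partial x_i/\partial\nu_k)$ vanishes, because $\partial x_i/\partial\nu_k$ is a component of the Jacobi field $Z_j$ or $Z_{j-1}$, which is zero at $t=\tau_1$; this is precisely the vanishing invoked in Lemma~\ref{lem:1stdernu}. Hence $\partial U_i^0/\partial\nu_k(\tau_1,0)=\int_0^{\tau_1}(\partial u_i^0/\partial\nu_k)\,|\partial x_i/\partial t|\,dt$. Differentiating the defining expression for $u_i^0$ with $x_i$ (hence $f_i$) held fixed gives, at $\nu=0$,
\begin{equation*}
\frac{\partial u_i^0}{\partial\nu_1}(x_i,0)=\frac{u_i^0(x_i,0)}{f_i(x_i)-f_{j,0}},\qquad
\frac{\partial u_i^0}{\partial\nu_2}(x_i,0)=0,
\end{equation*}
the second identity holding because the $\nu_2$-dependence of the radicand enters only through the term $-\nu_2^2$, whose $\nu_2$-derivative vanishes at $\nu_2=0$. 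Thus for $k=1$ the sum over $i\ne j$ reproduces exactly the integral part of the constant $C$, whereas for $k=2$ it vanishes identically.

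Next I would compute $\partial U_j^0/\partial\nu_k(\tau_1,0)$ from $U_j^0(t,\nu)=\int_0^{\theta(t,\nu)}A_1(f_j)\,d\theta$, where $f_j=f_{j,0}+\nu_1(1-\cos\theta)+\nu_2\sin\theta$, by the Leibniz rule. The boundary term yields $A_1(f_{j,0})\,\partial\theta/\partial\nu_k(\tau_1,0)$ (using $\theta(\tau_1,0)=2\pi$ and $f_j|_{\nu=0}\equiv f_{j,0}$), while the integrand term yields $A_1'(f_{j,0})\int_0^{2\pi}(\partial f_j/\partial\nu_k)|_{\nu=0}\,d\theta$. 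Since $\int_0^{2\pi}(1-\cos\theta)\,d\theta=2\pi$ and $\int_0^{2\pi}\sin\theta\,d\theta=0$, this contributes $2\pi A_1'(f_{j,0})$ when $k=1$ and $0$ when $k=2$.

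Combining the two pieces through the differentiated identity, for $k=1$ I obtain $C+A_1(f_{j,0})\,\partial\theta/\partial\nu_1(\tau_1,0)=0$, and for $k=2$ I obtain $A_1(f_{j,0})\,\partial\theta/\partial\nu_2(\tau_1,0)=0$; since $A_1(f_{j,0})\ne 0$ at a general point (by \eqref{a1def}, the radicands are nonvanishing and $A>0$), the two claimed formulas follow at once. The main obstacle I anticipate is the bookkeeping at the turning points: one must justify differentiating $U_i^0$ under the integral sign near the points where $\partial x_i/\partial t$ vanishes, and confirm that the boundary terms recorded in Lemma~\ref{der-unu} are the only surviving contributions, which rests on the simultaneous vanishing of $u_i^0$ and of the relevant Jacobi-field components there.
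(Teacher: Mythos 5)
Your proposal is correct and follows essentially the same route as the paper: differentiate the $\alpha=0$ case of \eqref{large-u} in $\nu_k$ at $(\tau_1,0)$, use Lemma~\ref{der-unu} (with vanishing of the Jacobi-field boundary terms) for the $i\ne j$ summands, and apply the Leibniz rule to $U_j^0$ with $\int_0^{2\pi}(1-\cos\theta)\,d\theta=2\pi$ and $\int_0^{2\pi}\sin\theta\,d\theta=0$. The turning-point issue you flag is exactly what Lemma~\ref{der-unu} already settles, so no gap remains.
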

\begin{proof}
We use the formula
\begin{equation*}
\sum_{i\ne j}\frac{\partial U_i^0}{\partial \nu_k}(\tau_1,0)+\frac{\partial U_j^0}{\partial \nu_k}(\tau_1,0)=0\,.
\end{equation*}
By Lemma~\ref{der-unu} we have
\begin{equation*}
\frac{\partial U_i^0}{\partial \nu_k}(\tau_1,0)
=\begin{cases}
\int_0^{\tau_1}\frac{u_i^0(x_i(t,0),0)}{f_i(x_i(t,0))-f_{j,0}}\left|\frac{\partial x_i}{\partial t}\right|\,dt\quad &(k=1)\\
0\quad &(k=2)
\end{cases}\,.
\end{equation*}
Also, we have
\begin{equation*}
\frac{\partial U_j^0}{\partial \nu_k}(\tau_1,0)=
\frac{\partial\theta}{\partial\nu_k}(\tau_1,0)
A_1(f_{j,0})+\int_0^{2\pi}A_1'(f_{j,0})\frac{\partial f_j}{\partial \nu_k}\,d\theta
\end{equation*}
and
\begin{equation*}
\frac{\partial f_j}{\partial\nu_k}=
\begin{cases} 1-\cos\theta\quad &(k=1)\\
\sin\theta\quad &(k=2)
\end{cases}\,.
\end{equation*}
Therefore the lemma follows.
\end{proof}
\begin{cor}\label{cor:y0der2}
\begin{equation*}
\frac{\partial^2 y_0}{\partial \nu_1^2}(\tau_1,0)=
\frac{\partial^2 y_0}{\partial \nu_2^2}(\tau_1,0)=
0,\quad \frac{\partial^2 y_0}{\partial \nu_1 \partial \nu_2}(\tau_1,0)=-C\,.
\end{equation*}
\end{cor}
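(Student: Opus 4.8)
The plan is to reduce everything to the single explicit expression for $y_0$ and then differentiate. By definition $y_0=A_1(f_{j,0})(f_j(x_j)-f_{j,0})$, and substituting the relation \eqref{theta} gives, with the abbreviation $a=A_1(f_{j,0})$,
\begin{equation*}
y_0(t,\nu)=a\bigl[\nu_1(1-\cos\theta)+\nu_2\sin\theta\bigr],\qquad \theta=\theta(t,\nu).
\end{equation*}
Thus the three quantities in the statement are obtained by differentiating this expression twice in $\nu_1,\nu_2$ and evaluating at $(t,\nu)=(\tau_1,0)$. First I would record the two simplifying facts at the evaluation point: recall $\theta(\tau_1,0)=2\pi$ established above, so $\cos\theta=1$ and $\sin\theta=0$ there, and since $\nu=0$ any term still carrying an explicit factor $\nu_1$ or $\nu_2$ drops out.

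Next I would carry out the differentiations by the product and chain rules. Writing $\partial_k=\partial/\partial\nu_k$, one finds
\begin{equation*}
\partial_1 y_0=a\bigl[(1-\cos\theta)+(\nu_1\sin\theta+\nu_2\cos\theta)\partial_1\theta\bigr],\quad
\partial_2 y_0=a\bigl[\sin\theta+(\nu_1\sin\theta+\nu_2\cos\theta)\partial_2\theta\bigr].
\end{equation*}
Differentiating once more and imposing $\theta=2\pi$, $\nu=0$ at $(\tau_1,0)$, I find that every summand of $\partial_1^2 y_0$ carries a factor $\sin\theta$ or an explicit $\nu_i$, and every summand of $\partial_2^2 y_0$ carries a factor $\sin\theta$, an explicit $\nu_i$, or the factor $\partial_2\theta$; since $\sin\theta=0$, $\nu=0$, and (by Lemma~\ref{der-thetanu}) $\partial_2\theta(\tau_1,0)=0$, both pure second derivatives vanish. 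For the mixed derivative the sole term not annihilated by $\sin\theta=0$ or by a $\nu$-factor comes from differentiating the explicit $a\sin\theta$ in $\partial_2 y_0$ with respect to $\nu_1$, which produces $a\cos\theta\,\partial_1\theta$; all other terms contain either $\sin\theta$ or an explicit $\nu_i$ and so vanish at $(\tau_1,0)$.

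Finally I would insert the values of the first derivatives of $\theta$ supplied by Lemma~\ref{der-thetanu}, namely $\partial_2\theta(\tau_1,0)=0$ and $\partial_1\theta(\tau_1,0)=-A_1(f_{j,0})^{-1}C=-a^{-1}C$, to obtain
\begin{equation*}
\frac{\partial^2 y_0}{\partial\nu_1\partial\nu_2}(\tau_1,0)=a\cos\theta\,\partial_1\theta\big|_{(\tau_1,0)}=a\cdot 1\cdot(-a^{-1}C)=-C,
\end{equation*}
together with the two vanishing pure second derivatives already established. I do not anticipate any genuine obstacle here: this is a direct corollary of the preceding lemma, and the only care required is the bookkeeping of which chain-rule terms survive once $\theta=2\pi$ and $\nu=0$ are imposed. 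No new integral or estimate is needed, since the single nonzero contribution is exactly the constant $C$ computed in Lemma~\ref{der-thetanu}.
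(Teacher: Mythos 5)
Your proposal is correct and is essentially the paper's own proof: the paper likewise writes $y_0(t,\nu)=A_1(f_{j,0})\bigl(\nu_1(1-\cos\theta(t,\nu))+\nu_2\sin\theta(t,\nu)\bigr)$ and deduces the corollary directly from Lemma~\ref{der-thetanu}, leaving the chain-rule bookkeeping (which you carry out explicitly, using $\theta(\tau_1,0)=2\pi$ and $\nu=0$ to kill all terms except $A_1(f_{j,0})\cos\theta\,\partial\theta/\partial\nu_1=-C$ in the mixed derivative) to the reader.
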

\begin{proof}
Since
\begin{equation*}
y_0(t,\nu)=A_1(f_{j,0})(\nu_1(1-\cos\theta(t,\nu))+\nu_2\sin\theta(t,\nu))\,,
\end{equation*}
the assertion easily follows from the previous lemma.
\end{proof}

Finally, we shall consider the second derivatives
of $y_1$ and $y_2$.
\begin{lemma}
\begin{gather*}
\frac{\partial^2 U_i^\alpha}{\partial\nu_k\partial \nu_l}(\tau_1,0)=\epsilon_i\frac{\partial^2 x_i}{\partial \nu_k\partial\nu_l}(\tau_1,0)\,u_i^\alpha(x_{i,1},0)\\
+\int_0^{\tau_1}\frac{\partial^2 u_i^\alpha}{\partial \nu_k\partial \nu_l}(x_i(t,0),0)
\left|\frac{\partial x_i}{\partial t}\right|dt
\end{gather*}
for $k,l=1,2$, $\alpha=1,2$, and $i\ne j$.
\end{lemma}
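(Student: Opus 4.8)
The plan is to differentiate the first-derivative formula of Lemma~\ref{der-unu} once more, in $\nu_l$, and then evaluate at $(t,\nu)=(\tau_1,0)$. Two facts built into the present setup drive everything: the functions $\partial x_i/\partial \nu_k$ $(k=1,2)$ are components of the Jacobi fields $Z_{j-1}$ and $Z_j$, hence vanish both at $t=0$ and at $t=\tau_1$; and $u_i^\alpha(x_i,\nu)$, together with each of its $\nu$-derivatives, carries the common numerator factor $\sqrt{(-1)^{i-1}\prod_{k\ne j,j-1}(f_i-b_{k})}$, so all of these functions vanish at the turning points of $x_i$, i.e.\ where $\partial x_i/\partial t=0$. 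The second fact is exactly the one used in the proof of Lemma~\ref{der-unu}, now applied one order higher.

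I would start from
\begin{equation*}
\frac{\partial U_i^\alpha}{\partial \nu_k}(t,\nu)
=\epsilon_i u_i^\alpha(x_i(t,\nu),\nu)\frac{\partial x_i}{\partial \nu_k}
+\int_0^t\frac{\partial u_i^\alpha}{\partial \nu_k}\left|\frac{\partial x_i}{\partial t}\right|dt,
\end{equation*}
where $\partial u_i^\alpha/\partial \nu_k$ is the partial derivative taken with $x_i$ held fixed. Differentiating the first summand in $\nu_l$ yields one term proportional to $\partial x_i/\partial \nu_k$, which dies at $(\tau_1,0)$ because $Z_{j-1}(\tau_1)=Z_j(\tau_1)=0$; the only survivor is $\epsilon_i u_i^\alpha(x_{i,1},0)\,(\partial^2 x_i/\partial \nu_l\partial \nu_k)(\tau_1,0)$, which is exactly the boundary term in the asserted formula.

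For the integral summand the idea is to repeat, with $g=\partial u_i^\alpha/\partial \nu_k$ in place of $u_i^\alpha$, the manipulation from the proof of Lemma~\ref{der-unu}. Expanding $\partial_{\nu_l}\!\left(g\,|\partial x_i/\partial t|\right)$ and grouping the two pieces built from $\partial x_i/\partial \nu_l$ and its $t$-derivative into the single total derivative $\epsilon_i\,\partial_t\!\left(g\,\partial x_i/\partial \nu_l\right)$, I integrate this total derivative: it telescopes to the boundary value $\epsilon_i\,g\,(\partial x_i/\partial \nu_l)$ at $t=\tau_1$, the interior turning-point jumps contributing nothing because $g$ vanishes there and the $t=0$ endpoint contributing nothing because $\partial x_i/\partial \nu_l(0)=0$. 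At $(\tau_1,0)$ this remaining boundary value again vanishes, since $\partial x_i/\partial \nu_l(\tau_1,0)=0$, leaving only $\int_0^{\tau_1}(\partial^2 u_i^\alpha/\partial \nu_l\partial \nu_k)(x_i(t,0),0)\,|\partial x_i/\partial t|\,dt$, which is the integral in the statement.

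The delicate point --- and the main obstacle --- is this telescoping, because $\epsilon_i(t,\nu)$ jumps at every turning point of $x_i$, so $\epsilon_i\,\partial_t(g\,\partial x_i/\partial \nu_l)$ is only piecewise the $t$-derivative of $\epsilon_i g\,(\partial x_i/\partial \nu_l)$, and one must check that the sign jumps leave no residual interior contributions. This is precisely where I invoke the vanishing of $g$ at the turning points, which follows from the explicit shape of $u_i^\alpha$: its whole $\nu$-dependence resides in the denominator $\sqrt{(f_i-f_{j,0})^2-2\nu_1(f_i-f_{j,0})-\nu_2^2}=\sqrt{(f_i-b_j)(f_i-b_{j-1})}$, so each $\nu$-derivative keeps the numerator factor $\sqrt{(-1)^{i-1}\prod_{k\ne j,j-1}(f_i-b_{k})}$ that vanishes exactly at the points where $\partial x_i/\partial t=0$ for $i\ne j$.
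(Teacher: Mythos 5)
Your proposal is correct and follows essentially the same route as the paper's proof: differentiate the first-derivative formula of Lemma~\ref{der-unu} once more in $\nu_l$, use the vanishing of $\partial x_i/\partial\nu_k$ (a Jacobi-field component) at $(\tau_1,0)$ to isolate the boundary term, and absorb the remaining pieces of the integrand into the total derivative $\epsilon_i\,\partial_t\bigl(\partial_{\nu_k}u_i^\alpha\cdot\partial_{\nu_l}x_i\bigr)$, whose integral telescopes to zero. The paper compresses the telescoping step into ``by the same reason as in the proof of Lemma~\ref{der-unu}''; your explicit verification that every $\nu$-derivative of $u_i^\alpha$ retains the factor $\sqrt{(-1)^{i-1}\prod_{k\ne j,j-1}(f_i-b_k)}$ vanishing at the turning points of $x_i$ (for $i\ne j$) is exactly the justification that phrase refers to.
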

\begin{proof}
We differentiate the formula in Lemma~\ref{der-unu} by $\nu_l$ and put $(t,\nu)=(\tau_1,0)$.
Then the first term in the right-hand side becomes the first term of the right-hand side of the above formula,
since $\partial x_i/\partial \nu_k$ vanishes at $(\tau_1,0)$. Also the second term becomes
\begin{equation*}
\int_0^{\tau_1}\frac{\partial}{\partial \nu_l}\left[\frac{\partial u_i^\alpha}{\partial \nu_k}(x_i(t,\nu),\nu)
\left|\frac{\partial x_i}{\partial t}(t,\nu)\right|\right]_{\nu=0}dt\,.
\end{equation*}
This integrand is equal to
\begin{equation*}
\frac{\partial^2 u_i^\alpha}{\partial \nu_k\partial \nu_l}(x_i(t,0),0)
\left|\frac{\partial x_i}{\partial t}\right|+
\epsilon_i(t,\nu)\frac{\partial}{\partial t}\left[\frac{\partial u_i^\alpha}{\partial \nu_k}\frac{\partial x_i}{\partial \nu_l}\right]_{\nu=0}
\end{equation*}
By the same reason as in the proof of Lemma~\ref{der-unu}, we have
\begin{equation*}
\int_0^{\tau_1} \epsilon_i(t,\nu)\frac{\partial}{\partial t}\left[\frac{\partial u_i^\alpha}{\partial \nu_k}\frac{\partial x_i}{\partial \nu_l}\right]_{\nu=0}dt=\frac{\partial u_i^\alpha}{\partial \nu_k}(x_{i,1}0)\left|\frac{\partial x_i}{\partial \nu_l}(\tau_1,0)\right|=0\,.
\end{equation*}
Thus the lemma follows.
\end{proof}
\begin{lemma}\label{lem:ujalphader}
\begin{gather*}
\frac{\partial^2 U_j^\alpha}{\partial \nu_1^2}(\tau_1,0)=\begin{cases}
6\pi A_1'(j_{j,0})\quad&(\alpha=1)\\
6\pi A_1(f_{j,0})\quad &(\alpha=2)
\end{cases}\\
\frac{\partial^2 U_j^\alpha}{\partial \nu_2^2}(\tau_1,0)=\begin{cases}
2\pi A_1'(j_{j,0})\quad&(\alpha=1)\\
2\pi A_1(f_{j,0})\quad &(\alpha=2)
\end{cases}\\
\frac{\partial^2 U_j^\alpha}{\partial \nu_1\partial \nu_2}(\tau_1,0)=0\quad (\alpha=1,2)
\end{gather*}
\end{lemma}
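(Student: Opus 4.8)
The plan is to reduce each of the required second derivatives to a single elementary trigonometric integral over $[0,2\pi]$, by exploiting the strong vanishing of the integrand $u_j^\alpha$ at the endpoint. First I would recall two facts already at hand: that $\theta(\tau_1,0)=2\pi$ by the previous lemma, and that
\[
u_j^\alpha(2\pi,0)=\frac{\partial u_j^\alpha}{\partial\theta}(2\pi,0)=\frac{\partial u_j^\alpha}{\partial\nu_k}(2\pi,0)=0\qquad(\alpha=1,2),
\]
as was observed above during the computation of $\partial^2 U_j^\alpha/\partial t\,\partial\nu_k$. These hold because $u_j^\alpha=A_1(f_j)(f_j-f_{j,0})^\alpha$ carries a factor $(f_j-f_{j,0})^\alpha$ that vanishes at $\nu=0$, while at $\theta=2\pi$ one has $1-\cos\theta=\sin\theta=0$.

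Next I would differentiate
\[
U_j^\alpha(t,\nu)=\int_0^{\theta(t,\nu)}u_j^\alpha(\theta,\nu)\,d\theta
\]
twice in $\nu$. The result is a sum of boundary terms, each of which carries a factor of $u_j^\alpha$, $\partial_\theta u_j^\alpha$, or $\partial_{\nu_k}u_j^\alpha$ evaluated at $(\theta,\nu)=(2\pi,0)$, together with the interior integral of $\partial^2 u_j^\alpha/\partial\nu_k\partial\nu_l$. By the triple vanishing recalled above, every boundary term drops out at $(t,\nu)=(\tau_1,0)$, leaving
\[
\frac{\partial^2 U_j^\alpha}{\partial\nu_k\partial\nu_l}(\tau_1,0)=\int_0^{2\pi}\frac{\partial^2 u_j^\alpha}{\partial\nu_k\partial\nu_l}\Big|_{\nu=0}\,d\theta.
\]

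It then remains to compute the $\nu$-Hessian of $u_j^\alpha=A_1(f_{j,0}+\rho)\,\rho^\alpha$ at $\nu=0$, where $\rho=f_j-f_{j,0}=\nu_1(1-\cos\theta)+\nu_2\sin\theta$ is linear in $\nu$. For $\alpha=2$ only the leading term $A_1(f_{j,0})\rho^2$ contributes, giving $A_1(f_{j,0})\,\partial^2_{\nu_k\nu_l}(\rho^2)$; for $\alpha=1$ the linear term $A_1(f_{j,0})\rho$ has zero second derivative, and the next term $A_1'(f_{j,0})\rho^2$ supplies $A_1'(f_{j,0})\,\partial^2_{\nu_k\nu_l}(\rho^2)$. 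Writing $p=1-\cos\theta$ and $q=\sin\theta$, one has $\partial^2_{\nu_1^2}(\rho^2)=2p^2$, $\partial^2_{\nu_2^2}(\rho^2)=2q^2$, and $\partial^2_{\nu_1\nu_2}(\rho^2)=2pq$; the three integrals $\int_0^{2\pi}(1-\cos\theta)^2\,d\theta=3\pi$, $\int_0^{2\pi}\sin^2\theta\,d\theta=\pi$, and $\int_0^{2\pi}(1-\cos\theta)\sin\theta\,d\theta=0$ then give exactly the asserted values, with $A_1(f_{j,0})$ for $\alpha=2$ and $A_1'(f_{j,0})$ for $\alpha=1$.

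The only genuinely delicate point is the first reduction: one must confirm that no boundary contribution survives at $(\tau_1,0)$. This rests on the triple vanishing at $(2\pi,0)$, and in particular the cross term $\partial_\theta u_j^\alpha(2\pi,0)\,\partial_{\nu_l}\theta\,\partial_{\nu_k}\theta$ needs $\partial_\theta u_j^\alpha(2\pi,0)=0$, which is precisely where the special endpoint value $\theta=2\pi$ (as opposed to a generic turning value) enters. Once this is established, the remaining Taylor expansion and trigonometric integration are entirely routine.
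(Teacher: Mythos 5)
Your proof is correct and follows essentially the same route as the paper: differentiate the variable-endpoint integral $U_j^\alpha(t,\nu)=\int_0^{\theta(t,\nu)}u_j^\alpha\,d\theta$ twice in $\nu$, kill every boundary term using the vanishing of $u_j^\alpha$, $\partial_\theta u_j^\alpha$, and $\partial_{\nu_k}u_j^\alpha$ at $(\theta,\nu)=(2\pi,0)$, and reduce to $\int_0^{2\pi}\partial^2_{\nu_k\nu_l}u_j^\alpha(\theta,0)\,d\theta$, which your Taylor expansion and trigonometric integrals evaluate correctly (the paper leaves this last computation implicit). One minor correction to your closing remark: since $u_j^\alpha(\cdot,0)\equiv 0$ identically in $\theta$, the factor $\partial_\theta u_j^\alpha(\theta,0)$ vanishes for every $\theta$, so the place where the special value $\theta=2\pi$ is genuinely needed is not the cross term you single out but the vanishing of $\partial_{\nu_k}u_j^1(2\pi,0)=A_1(f_{j,0})\,\partial_{\nu_k}f_j\vert_{\theta=2\pi}$, which uses $1-\cos 2\pi=\sin 2\pi=0$.
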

\begin{proof}
A direct computation yields
\begin{gather*}
\frac{\partial^2 U_i^\alpha}{\partial\nu_k\partial\nu_l}(t,\nu)=\frac{\partial^2\theta}{\partial\nu_k\partial\nu_l}(t,\nu)\,u_j^\alpha(\theta,\nu)
+\frac{\partial\theta}{\partial\nu_k}(t,\nu)\left(\frac{\partial u_j^\alpha}{\partial \nu_l}+\frac{\partial u_j^\alpha}{\partial \theta}\frac{\partial\theta}{\partial\nu_l}\right)\\
+\frac{\partial\theta}{\partial\nu_l}(t,\nu)\frac{\partial u_j^\alpha}{\partial \nu_k}(\theta,\nu)
+\int_0^{\theta(t,\nu)
}\frac{\partial^2 u_j^\alpha}{\partial\nu_k\partial \nu_l}(\theta,\nu)\,d\theta\,.
\end{gather*}
Since 
\begin{equation*}
u_j^\alpha(\theta,\nu)=A_1(f_j)(f_j-f_{j,0})^\alpha,\quad f_j=f_{j,0}+\nu_1(1-\cos\theta)+\nu_2\sin\theta,
\end{equation*}
it is easily seen that, for $\alpha=1,2$,
the functions
\begin{equation*}
u_j^\alpha,\quad \frac{\partial u_j^\alpha}{\partial \nu_k},\quad \frac{\partial u_j^\alpha}{\partial\nu_l},\quad \frac{\partial u_j^\alpha}{\partial \theta}
\end{equation*}
vanish at $(\theta,\nu)=(2\pi,0)$.
Since $\theta(\tau_1,0)=2\pi$, we therefore obtain
\begin{equation*}
\frac{\partial^2 U_j^\alpha}{\partial\nu_k\partial\nu_l}(\tau_1,0)=\int_0^{2\pi}
\frac{\partial^2 u_j^\alpha}{\partial\nu_k\partial \nu_l}(\theta,0)\,d\theta\,.
\end{equation*}
From this formula the lemma follows immediately.
\end{proof}
\begin{lemma}
\begin{equation*}
\frac{\partial^2 y_\alpha}{\partial\nu_k\partial \nu_l}(\tau_1,0)=\sum_{i\ne j}\epsilon_i\frac{\partial^2 x_i^\alpha}{\partial\nu_k\partial\nu_l}(\tau_1,0)\,u_i^\alpha(x_{i,1},0)\quad (\alpha=1,2)
\end{equation*}
\end{lemma}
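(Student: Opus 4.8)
The plan is to take advantage of a feature that distinguishes the coordinate functions $y_\alpha$ ($\alpha=1,2$) from the integrals $U_i^\alpha$: by their very construction the integrands defining $y_\alpha$ involve only the \emph{fixed} constants $b_{k,0}$, and not the parameters $\nu$. Hence, writing $y_\alpha(t,\nu)=y_\alpha(\gamma(t,\nu))$, the entire $\nu$-dependence of $y_\alpha$ enters solely through the coordinate functions $x_i(t,\nu)$ of the geodesic. Concretely, \eqref{eq:yalpha} gives $\partial y_\alpha/\partial x_i=\epsilon_i u_i^\alpha(x_i,0)$ as a function of $x_i$ alone, so the second $\nu$-derivatives of $y_\alpha$ can be obtained by a plain application of the chain rule, with no contribution from differentiating any integrand in $\nu$.

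First I would differentiate once, obtaining
\[
\frac{\partial y_\alpha}{\partial\nu_k}(t,\nu)=\sum_{i\ne j}\epsilon_i\,u_i^\alpha(x_i(t,\nu),0)\,\frac{\partial x_i}{\partial\nu_k}(t,\nu),
\]
and then differentiate again in $\nu_l$. This produces two groups of terms, one proportional to $\frac{\partial u_i^\alpha}{\partial x_i}\,\frac{\partial x_i}{\partial\nu_k}\,\frac{\partial x_i}{\partial\nu_l}$ and one proportional to $u_i^\alpha\,\frac{\partial^2 x_i}{\partial\nu_k\partial\nu_l}$. The decisive point occurs at the evaluation point $(\tau_1,0)$: since the variation $\gamma(t,\nu)$ fixes the initial point $p_0$ and has initial velocity $\sharp((\nu,0))$, the variation field $\frac{\partial\gamma}{\partial\nu_k}(t,0)$ is a Jacobi field vanishing at $t=0$ with initial covariant derivative $\sharp(\partial/\partial\nu_k)$, hence a constant multiple of $Z_{j-1}(t)$ (for $k=1$) or $Z_j(t)$ (for $k=2$). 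By the proposition asserting $Z_{j-1}(\tau_1)=Z_j(\tau_1)=0$, every component $\frac{\partial x_i}{\partial\nu_k}(\tau_1,0)$ vanishes, so the first group of terms drops out entirely and only
\[
\frac{\partial^2 y_\alpha}{\partial\nu_k\partial\nu_l}(\tau_1,0)=\sum_{i\ne j}\epsilon_i\,u_i^\alpha(x_{i,1},0)\,\frac{\partial^2 x_i}{\partial\nu_k\partial\nu_l}(\tau_1,0)
\]
remains, which is the assertion.

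The only remaining bookkeeping is to reconcile the range of summation with the definition of $y_\alpha$: when \eqref{req2} fails, $y_\alpha$ is a sum over $i\in I$ rather than over all $i\ne j$. But for $i\ne j$ with $i\notin I$ one has $f_i(x_{i,1})=b_{l,0}$ for some $l\ne j,j-1$, whence the factor $\sqrt{(-1)^{i-1}\prod_{k\ne j,j-1}(f_i-b_k)}$ in $u_i^\alpha$ vanishes and $u_i^\alpha(x_{i,1},0)=0$; such indices contribute nothing, so the sum over $I$ coincides with the sum over all $i\ne j$. I do not expect a genuine obstacle here: the entire content is the observation that $y_\alpha$ depends on $\nu$ only through the $x_i$, together with the Jacobi-field vanishing that annihilates the cross term, which is precisely what makes this computation far shorter than the corresponding ones for $U_i^\alpha$ and $U_j^\alpha$ carried out above.
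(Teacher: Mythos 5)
Your proof is correct and follows essentially the same route as the paper's: both differentiate the relation $dy_\alpha=\sum_{i\in I}\epsilon_i u_i^\alpha(x_i,0)\,dx_i$ twice in $\nu$ via the chain rule, kill the cross terms using the vanishing of $\partial x_i/\partial\nu_k$ at $(\tau_1,0)$ (these being components of the Jacobi fields $Z_j$, $Z_{j-1}$, which vanish at $\tau_1$), and reconcile the sum over $I$ with the sum over all $i\ne j$ by observing that $u_i^\alpha(x_{i,1},0)=0$ for $i\notin I$. The only difference is cosmetic: the paper performs the $I$-versus-$\{i\ne j\}$ reconciliation at the outset, while you do it as a final bookkeeping step.
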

\begin{proof}
First, note that the sum in the right-hand side is equal to the sum in such $i$ that $i\in I$, since for $i\ne j$ with $i\not\in I$ the value $f_i(x_{i,1})$ is equal to some $b_k$ $(k\ne j,j-1)$ and $u_i^\alpha(x_{i,1},0)=0$
in this case. By \eqref{eq:yalpha} we have
\begin{equation*}
\frac{\partial y_\alpha}{\partial \nu_k}(t,\nu)=
\sum_{i\in I}\epsilon_i\frac{\partial x_i^\alpha}{\partial\nu_k}(t,\nu)\,u_i^\alpha(x_{i}(t,\nu),0)
\end{equation*}
Noting the fact that $\partial x_i/\partial \nu_k$ vanishes at $(\tau_1,0)$, we obtain the lemma by differentiating this formula with $\nu_l$. 
\end{proof}

From the above lemmas and the formula
\begin{equation*}
\sum_{i\ne j}\frac{\partial^2 U_i^\alpha}{\partial\nu_k\partial\nu_l}(\tau_1,0)+\frac{\partial^2 U_j^\alpha}{\partial\nu_k\partial\nu_l}(\tau_1,0)
=0\,,
\end{equation*}
we have
\begin{equation}\label{yalphader}
\frac{\partial^2 y_\alpha}{\partial\nu_k\partial \nu_l}(\tau_1,0)=-\sum_{i\ne j}\int_0^{\tau_1}\frac{\partial^2 u_i^\alpha}{\partial \nu_k\partial \nu_l}(x_i(t,0),0)
\left|\frac{\partial x_i}{\partial t}\right|dt
-\frac{\partial^2 U_j^\alpha}{\partial\nu_k\partial\nu_l}(\tau_1,0)\,.
\end{equation}
Thus we need to compute the integrals in the right-hand
side of the above formula. The following lemma is straightforward.
\begin{lemma}\label{lem:uialphader}
\begin{gather*}
\frac{\partial^2 u_i^\alpha}{\partial \nu_1^2}(x_i,0)=
\frac{3u_i^\alpha(x_i,0)}{(f_i-f_{j,0})^2},\quad
\frac{\partial^2 u_i^\alpha}{\partial \nu_2^2}(x_i,0)=
\frac{u_i^\alpha(x_i,0)}{(f_i-f_{j,0})^2},\\
\frac{\partial^2 u_i^\alpha}{\partial\nu_1\partial \nu_2}(x_i,0)=0 \qquad (\alpha=1,2)\,.
\end{gather*}
\end{lemma}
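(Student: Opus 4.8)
The plan is to exploit the fact that for $i\ne j$ the quantity $u_i^\alpha(x_i,\nu)$ depends on $\nu=(\nu_1,\nu_2)$ only through the radical in its denominator; both the numerator and the factor $(f_i-f_{j,0})^\alpha$ are independent of $\nu$. Accordingly I would abbreviate $g=f_i(x_i)-f_{j,0}$ and write
\begin{equation*}
u_i^\alpha(x_i,\nu)=C_i(x_i)\,g^{\alpha}\,P(\nu)^{-1/2},\qquad P(\nu)=g^2-2\nu_1 g-\nu_2^2,
\end{equation*}
where $C_i(x_i)=-\sqrt{(-1)^{i-1}\prod_{k\ne j,j-1}(f_i(x_i)-b_k)}$ carries no $\nu$-dependence. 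Since $u_i^\alpha(x_i,0)=C_i\,g^{\alpha}|g|^{-1}$, it then suffices to compute the three second-order derivatives of the single scalar function $P^{-1/2}$ at $\nu=0$, after which the asserted identities follow upon multiplying back by the $\nu$-independent factor $C_i\,g^\alpha$.

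The next step is purely mechanical. Because $P$ is a quadratic polynomial in $\nu$, its derivatives are immediate: $\partial P/\partial\nu_1=-2g$ and $\partial P/\partial\nu_2=-2\nu_2$, while the only nonvanishing second derivative is $\partial^2 P/\partial\nu_2^2=-2$. I would then invoke the elementary chain-rule formula
\begin{equation*}
\frac{\partial^2 (P^{-1/2})}{\partial\nu_k\partial\nu_l}=\tfrac34\,P^{-5/2}\frac{\partial P}{\partial\nu_k}\frac{\partial P}{\partial\nu_l}-\tfrac12\,P^{-3/2}\frac{\partial^2 P}{\partial\nu_k\partial\nu_l}
\end{equation*}
and evaluate at $\nu=0$, where $P=g^2$ and hence $P^{-3/2}=|g|^{-3}$, $P^{-5/2}=|g|^{-5}$. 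For the $(1,1)$-entry this yields $\tfrac34|g|^{-5}(2g)^2=3g^2|g|^{-5}=3|g|^{-3}$; for the $(2,2)$-entry the first term drops out and the second gives $|g|^{-3}$; and for the mixed $(1,2)$-entry both terms vanish at $\nu=0$ (the first because $\partial P/\partial\nu_2=0$ there, the second because $\partial^2 P/\partial\nu_1\partial\nu_2=0$).

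Finally I would multiply each result by $C_i\,g^\alpha$ and recognize $C_i\,g^\alpha|g|^{-1}=u_i^\alpha(x_i,0)$; this turns the surviving factor $|g|^{-3}=g^{-2}|g|^{-1}$ into $u_i^\alpha(x_i,0)/(f_i-f_{j,0})^2$ and reproduces exactly the three stated formulas, with the coefficient $3$ in the $\nu_1^2$ case and $1$ in the $\nu_2^2$ case. There is no real obstacle here: the lemma is a direct computation, and the only point requiring any care is bookkeeping with the absolute value $|g|$ (keeping $g^2$ and $|g|$ distinct), which is harmless because the final answer is expressed through $u_i^\alpha(x_i,0)$ and the even power $(f_i-f_{j,0})^2$.
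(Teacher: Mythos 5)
Your computation is correct: writing $u_i^\alpha(x_i,\nu)=C_i(x_i)\,g^\alpha P(\nu)^{-1/2}$ with $P(\nu)=g^2-2\nu_1 g-\nu_2^2$ and differentiating $P^{-1/2}$ twice at $\nu=0$ yields exactly the stated factors $3$, $1$, and $0$, and the bookkeeping with $|g|$ is handled properly. This is precisely the direct calculation the paper has in mind — it states the lemma is "straightforward" and omits the proof — so your argument coincides with the intended one.
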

\begin{cor}\label{cor:derynu2}
\begin{gather*}
\frac{\partial^2 y_1}{\partial\nu_1^2}(\tau_1,0)=-3C,\quad \frac{\partial^2 y_1}{\partial\nu_2^2}(\tau_1,0)=-C,\\
\frac{\partial^2 y_1}{\partial\nu_1\partial\nu_2}(\tau_1,0)=0,\quad \frac{\partial^2 y_2}{\partial\nu_k\partial\nu_l}(\tau_1,0)=0\quad (k,l=1,2)\,,
\end{gather*}
where $C$ is the constant given in Lemma~\ref{der-thetanu}.
\end{cor}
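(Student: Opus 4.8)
The plan is to read off all four second derivatives directly from the master formula \eqref{yalphader}, feeding in the explicit values supplied by Lemma~\ref{lem:uialphader} and Lemma~\ref{lem:ujalphader}, and then recognizing the resulting combinations. The essential observation that makes everything collapse is the homogeneity relation $u_i^\alpha(x_i,0)=u_i^0(x_i,0)\,(f_i-f_{j,0})^\alpha$, which is immediate from the definition of $u_i^\alpha$. Consequently $u_i^\alpha(x_i,0)/(f_i-f_{j,0})^2=u_i^0(x_i,0)\,(f_i-f_{j,0})^{\alpha-2}$, so for $\alpha=1$ this equals $u_i^0/(f_i-f_{j,0})$ and for $\alpha=2$ it equals $u_i^0$. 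This is exactly what converts the integrands produced by Lemma~\ref{lem:uialphader} into the integrands appearing in the constant $C$ of Lemma~\ref{der-thetanu} (for $\alpha=1$) or into the $\alpha=0$ integrands $u_i^0$ (for $\alpha=2$).

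First I would treat $y_1$. Substituting $\partial^2 u_i^1/\partial\nu_1^2=3u_i^1/(f_i-f_{j,0})^2=3u_i^0/(f_i-f_{j,0})$ into \eqref{yalphader}, together with $\partial^2 U_j^1/\partial\nu_1^2=6\pi A_1'(f_{j,0})$ from Lemma~\ref{lem:ujalphader}, gives
\begin{equation*}
\frac{\partial^2 y_1}{\partial\nu_1^2}(\tau_1,0)=-3\sum_{i\ne j}\int_0^{\tau_1}\frac{u_i^0(x_i(t,0),0)}{f_i-f_{j,0}}\left|\frac{\partial x_i}{\partial t}\right|dt-6\pi A_1'(f_{j,0})=-3C,
\end{equation*}
since the bracketed sum plus $2\pi A_1'(f_{j,0})$ is precisely $C$. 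The identical substitution with the $\nu_2$-entries ($\partial^2 u_i^1/\partial\nu_2^2=u_i^0/(f_i-f_{j,0})$ and $\partial^2 U_j^1/\partial\nu_2^2=2\pi A_1'(f_{j,0})$) yields $-C$.

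Next I would handle $y_2$, where the cancellation is complete rather than merely producing $C$. Here the key auxiliary input is the geodesic constraint \eqref{large-u} at $\alpha=0$: evaluating $\sum_{i\ne j}U_i^0+U_j^0=0$ at $(\tau_1,0)$, and using $U_j^0(\tau_1,0)=\int_0^{2\pi}A_1(f_{j,0})\,d\theta=2\pi A_1(f_{j,0})$ (because $\theta(\tau_1,0)=2\pi$), gives $\sum_{i\ne j}\int_0^{\tau_1}u_i^0\,|\partial x_i/\partial t|\,dt=-2\pi A_1(f_{j,0})$. Then with $\partial^2 u_i^2/\partial\nu_1^2=3u_i^0$ and $\partial^2 U_j^2/\partial\nu_1^2=6\pi A_1(f_{j,0})$, formula \eqref{yalphader} gives $6\pi A_1(f_{j,0})-6\pi A_1(f_{j,0})=0$; analogously the $\nu_2^2$-case gives $2\pi A_1(f_{j,0})-2\pi A_1(f_{j,0})=0$. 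Finally, all mixed second derivatives vanish for free, since $\partial^2 u_i^\alpha/\partial\nu_1\partial\nu_2=0$ by Lemma~\ref{lem:uialphader} and $\partial^2 U_j^\alpha/\partial\nu_1\partial\nu_2=0$ by Lemma~\ref{lem:ujalphader}.

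The computation is entirely mechanical once the two lemmas are in hand, so there is no genuine analytic obstacle; the only point requiring care is bookkeeping the two different ways the pieces recombine. For $y_1$ the boundary term $2\pi A_1'(f_{j,0})$ from $U_j$ must be matched against the same term hidden inside the definition of $C$, and for $y_2$ one must invoke the correct order-$0$ geodesic identity \eqref{large-u} to see that the interior integrals and the $U_j$-contribution annihilate each other exactly. Keeping the factors of $3$ (from $\partial^2/\partial\nu_1^2$) versus $1$ (from $\partial^2/\partial\nu_2^2$) aligned between the $u_i^\alpha$-integrals and the $U_j^\alpha$-terms is what guarantees the clean outcome $(-3C,-C,0,0)$.
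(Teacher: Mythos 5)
Your proposal is correct and is essentially the paper's own argument: both read the four second derivatives off formula \eqref{yalphader}, using Lemma~\ref{lem:uialphader} together with the homogeneity $u_i^\alpha(x_i,0)=u_i^0(x_i,0)(f_i-f_{j,0})^\alpha$ and Lemma~\ref{lem:ujalphader}, recognize the resulting bracket as $C$ in the $\alpha=1$ case, and invoke the $\alpha=0$ geodesic identity \eqref{large-u} (equivalently, $\sum_{i\ne j}U_i^0(\tau_1,0)+U_j^0(\tau_1,0)=0$) to get the cancellation in the $\alpha=2$ case. The only difference is presentational: the paper writes the $\alpha=2$ bracket directly as $\sum_{i\ne j}U_i^0+U_j^0$, while you solve for the interior integrals and substitute, which is the same computation.
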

\begin{proof}
First we consider the case where $\alpha=2$ and $k=l$. By 
\eqref{yalphader} and Lemmas \ref{lem:ujalphader} and
\ref{lem:uialphader}, we have
\begin{equation*}
\frac{\partial^2 y_2}{\partial \nu_k^2}(\tau_1,0)=-e\left[\sum_{i\ne j}U_i^0(\tau_1,0)+U_j^0(\tau_1,0)\right]=0\,,
\end{equation*}
where $e=3$ or $1$ according as $k=1$ or $2$ respectively. Similarly, for the case where $\alpha=1$,
we have
\begin{align*}
\frac{\partial^2 y_1}{\partial \nu_k^2}(\tau_1,0)=&-e\left[\sum_{i\ne j}\int_0^{\tau_1}\frac{u_i^0(x_i(t,0),0)}{f_i(x_i(t,0))-f_{j,0}}\left|\frac{\partial x_i}{\partial t}\right|\,dt+2\pi A_1'(f_{j,0})\right]\\
=&-eC\,,
\end{align*}
Also, for the case where $k\ne l$, we have
\begin{equation*}
\frac{\partial y_\alpha}{\partial u_k\partial u_l}(\tau_1,0)=0\quad (\alpha=1,2)\,.
\end{equation*}
Thus the corollary follows.
\end{proof}

By the formulas \eqref{der-t12}, \eqref{y0t}, \eqref{der-tnu2}, \eqref{der-tnu1}, \eqref{der-y0tnu},
and by Lemma~\ref{lem:1stdernu}, Corollary~\ref{cor:y0der2},
Corollary~\ref{cor:derynu2}, we obtain the formulas in
Proposition~\ref{mainprop} by putting
\begin{equation*}
c=-\frac{C}2,\qquad c'=A_1(f_{j,0})\frac{\partial\theta}{\partial t}(\tau_1,0)\,.
\end{equation*}
It is clear that $c'\ne 0$. Therefore, to complete the
proof of the proposition, it is enough to show the 
following lemma.
\begin{lemma}
$C> 0$.
\end{lemma}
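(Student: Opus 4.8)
The plan is to reduce the claim to a single sign computation and then extract that sign from the monotonicity condition \eqref{newcond}, in the same spirit as Proposition~\ref{prop:newineq}.

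First I would record the reformulation already at hand. By Lemma~\ref{der-thetanu} one has $C=-A_1(f_{j,0})\,\frac{\partial\theta}{\partial\nu_1}(\tau_1,0)$, and $A_1(f_{j,0})>0$ by \eqref{a1def}, since at $\lambda=f_{j,0}\in(a_j,a_{j-1})$ both quantities under the square roots are positive and $A>0$. Thus it suffices to prove $\frac{\partial\theta}{\partial\nu_1}(\tau_1,0)<0$, i.e. $C>0$ with
\begin{equation*}
C=\sum_{i\ne j}\frac{\partial U_i^0}{\partial\nu_1}(\tau_1,0)+2\pi A_1'(f_{j,0}),\qquad \frac{\partial U_i^0}{\partial\nu_1}(\tau_1,0)=\int_0^{\tau_1}\frac{u_i^0(x_i(t,0),0)}{f_i(x_i(t,0))-f_{j,0}}\Big|\frac{\partial x_i}{\partial t}\Big|\,dt .
\end{equation*}
Here the absence of a boundary term in each $\partial U_i^0/\partial\nu_1$ is exactly Lemma~\ref{der-unu} combined with $Z_{j-1}(\tau_1)=Z_j(\tau_1)=0$, which forces $\partial x_i/\partial\nu_k(\tau_1,0)=0$.

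Next I would remove the apparent double pole at $\lambda=f_{j,0}$. Using \eqref{xprime} at $\nu=0$ (where $b_j=b_{j-1}=f_{j,0}$, so that $\prod_{k=1}^{n-1}(f_i-b_k)=(f_i-f_{j,0})^2\prod_{k\ne j,j-1}(f_i-b_k)$), a Lagrange–interpolation identity collapses the pointwise sum at each fixed $t$, with $f_l=f_l(x_l(t,0))$, into
\begin{equation*}
\sum_{i\ne j}\frac{u_i^0(x_i,0)}{f_i-f_{j,0}}\Big|\frac{\partial x_i}{\partial t}\Big|=\frac{d}{d\lambda}\left[\frac{\prod_{k\ne j,j-1}(\lambda-b_k)}{\prod_{l\ne j}(\lambda-f_l)}\right]_{\lambda=f_{j,0}} .
\end{equation*}
This is finite because $f_j\equiv f_{j,0}$ is itself an interpolation node, whose factor $\lambda-f_j$ cancels; the expression already encodes the mixed signs coming from the bands with $f_i>f_{j,0}$ and with $f_i<f_{j,0}$.

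Finally I would convert $\int_0^{\tau_1}(\cdots)\,dt$ into an integral over the spectral bands $[a_l^+,a_{l-1}^-]$ by means of \eqref{eq:fdiff} and the closed-form identities of Lemma~\ref{lemma:flat}, then add $2\pi A_1'(f_{j,0})$ — which is precisely the contribution of the degenerate $j$-band, i.e. of one full turn $\theta:0\to2\pi$ — and read off the sign. The hard part will be this last step: the time integral is genuinely \emph{coupled} through the factor $\prod_{l\ne j}(\lambda-f_l)$, so the passage to a single band integral of definite sign is the analytic heart of the lemma. I expect the resulting integrand to be controlled by the remainder function $B(\lambda)$ of Lemma~\ref{lemma:cond}, whose sign is fixed by $(-1)^k\tilde A^{(k)}>0$; concretely $C>0$ should follow from $\tilde A''>0$ and $\tilde A'''<0$, just as the positivity in Proposition~\ref{prop:newineq}(3) did. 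Geometrically the statement is that the small-oscillation period of the degenerate coordinate $x_j$ increases as the confocal parameter $f_{j,0}$ increases, a convexity built into \eqref{newcond}; this is exactly where the assumption \eqref{newcond} is indispensable, and where the main difficulty lies.
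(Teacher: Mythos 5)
Your opening reduction is sound and matches the paper's own setup: by Lemma~\ref{der-thetanu}, $C=-A_1(f_{j,0})\,\partial\theta/\partial\nu_1(\tau_1,0)$ with $A_1(f_{j,0})>0$, and the absence of boundary terms in $\partial U_i^0/\partial\nu_1(\tau_1,0)$ is Lemma~\ref{der-unu} plus the vanishing of $Z_j,Z_{j-1}$ at $\tau_1$. Your Lagrange-interpolation identity is also a correct pointwise identity. But the step you explicitly defer --- ``the passage to a single band integral of definite sign'' --- is not a computation waiting to be done via Lemma~\ref{lemma:flat}; it is exactly where your outline fails, and where the paper does something you never invoke. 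The conversion \eqref{basiceq} of a time integral into a band integral over $[a_i^+,a_{i-1}^-]$ is valid only when the upper limit of integration is the half-period time $t_i$ of the $i$-th coordinate; in the formula for $C$ the upper limit is the conjugate time $\tau_1$, which for $i\ne j$ is in general \emph{not} equal to $t_i$. No identity of the type in Lemma~\ref{lemma:flat} or Lemma~\ref{lemma:cond} applies to $\int_0^{\tau_1}$ directly, and your pointwise collapse of the sum over $i$ makes matters worse: once the terms are merged, the combined integrand has indefinite sign and the per-index structure needed to repair the mismatch is gone.

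The paper closes this gap as follows. First it replaces $U_i^0$ by $W_i=U_i^1+(f_{j,0}-a_n)U_i^0$, so that the relevant polynomial acquires the factor $(\lambda-a_n)$ demanded by Proposition~\ref{prop:newineq}; this yields $C(f_{j,0}-a_n)=\sum_{i\ne j}\partial W_i/\partial\nu_1(\tau_1,0)+2\pi\,\partial_\lambda\bigl(A_1(\lambda)(\lambda-a_n)\bigr)\big|_{\lambda=f_{j,0}}$, formula \eqref{eq:C}. It then evaluates each $\partial W_i/\partial\nu_1$ at $t_i$ rather than at $\tau_1$, where by \eqref{basiceq} it \emph{is} a band integral, namely the limit as $b_j,b_{j-1}\to f_{j,0}$ of the derivative in Proposition~\ref{prop:newineq}~(2) (the positive case, which is where $\tilde A''$ and $\tilde A'''$ enter, much as you guessed); Proposition~\ref{prop:limitineq} then gives $\sum_{i\ne j}\partial W_i/\partial\nu_1(t_i,0)+\partial W_j/\partial\nu_1(t_j,0)>0$. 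Finally --- and this is the missing idea in your proposal --- it bridges from $t_i$ back to $\tau_1$ using the interlacing of half-periods from Proposition~\ref{prop:main}: since $t_j=\tau_1$, $t_i\le\tau_1$ for $i>j$ and $t_i\ge\tau_1$ for $i<j$, while the integrand of $\partial W_i/\partial\nu_1$ in \eqref{eq1} is positive for $i>j$ and negative for $i<j$, one gets $\partial W_i/\partial\nu_1(\tau_1,0)\ge\partial W_i/\partial\nu_1(t_i,0)$ termwise, whence $C(f_{j,0}-a_n)>0$. So the convexity conditions from \eqref{newcond} alone cannot close the argument; the essential extra ingredient is the ordering $t_n<t_{n-1}<\dots<t_1$ of Proposition~\ref{prop:main}, which your proof never uses.
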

\begin{proof}
We put
\begin{equation*}
W_i(t,\nu)=U_i^1(t,\nu)+(f_{j,0}-a_n)U_i^0(t,\nu)\,.
\end{equation*}
Then we have
\begin{equation*}
\sum_{i=1}^n W_i(t,\nu)=0\,,
\end{equation*}
\begin{equation*}
W_i(t,\nu)=\int_0^{t}\frac{-(f_i-a_n)\sqrt{(-1)^{i-1}\prod_{k\ne j,j-1}(f_i-b_k)}\,|\frac{\partial x_i(t,\nu)}{\partial t}|\,dt}
{\sqrt{(f_i-f_{j,0})^2-2\nu_1(f_i-f_{j,0})-\nu_2^2}}
\end{equation*}
for $i\ne j$ and
\begin{equation*}
W_j(t,\nu)=\int_0^{\theta(t,\nu)}A_1(f_j)(f_j-a_n)d\theta\,.
\end{equation*}
We now take the derivative in $\nu_1$ and put $\nu=0$, $t=\tau_1$: Since
\begin{equation*}
\frac{\partial W_j}{\partial \nu_1}(\tau_1,0)=2\pi\frac{\partial}{\partial\lambda}\left(A_1(\lambda)(\lambda-a_n)\right)\vert_{\lambda=f_{j,0}}+\frac{\partial\theta}{\partial\nu_1}(\tau_1,0)A_1(f_{j,0})(f_{j,0}-a_n)\,,
\end{equation*}
and since
\begin{equation*}
\frac{\partial\theta}{\partial\nu_1}(\tau_1,0)A_1(f_{j,0})=-C\,,
\end{equation*}
we have
\begin{equation}\label{eq:C}
C(f_{j,0}-a_n)=\sum_{i\ne j}\frac{\partial W_i}{\partial \nu_1}(\tau_1,0)+2\pi\frac{\partial}{\partial\lambda}\left(A_1(\lambda)(\lambda-a_n)\right)\vert_{\lambda=f_{j,0}}\,.
\end{equation}

On the other hand, in view of the formula \eqref{basiceq} we have
\begin{gather}\label{eq1}
\frac{\partial W_i}{\partial \nu_1}(t_i,0)=
\int_0^{t_i}\frac{-(f_i-a_n)\sqrt{(-1)^{i-1}\prod_{k\ne j,j-1}(f_i-b_k)}\,|\frac{\partial x_i(t,\nu)}{\partial t}|\,dt}
{|f_i-f_{j,0}|(f_i-f_{j,0})}\\ 
=\lim_{b_j,b_{j-1}\to f_{j,0}}\frac{\partial }{\partial b_j}
\int_{a_i^+}^{a_{i-1}^-}
\frac{(-1)^{l}A(\lambda)\,(\lambda-a_n)\,G(\lambda)\ d\lambda}
{\sqrt{-\prod_{k=1}^{n-1}(\lambda-b_k)
\cdot\prod_{k=0}^n(\lambda-a_k)}}\quad(i\ne j)\,,
\label{eq2}\end{gather}
where $G(\lambda)=\prod_{k\ne j,j-1}(\lambda-b_k)$
and $t_i$ is the time defined in \S5.
Also, when $\nu\ne 0$ $(b_j\ne b_{j-1})$, we have
\begin{align*}
W_j(t_j,\nu)=&\int_{b_j}^{b_{j-1}}
\frac{A(\lambda)(\lambda-a_n)\sqrt{(-1)^j\prod_{k\ne j,j-1}(\lambda-b_k)}\,d\lambda}{\sqrt{(-1)^j
\prod_{l=0}^n(\lambda-a_l)}\sqrt{(b_{j-1}-\lambda)(\lambda-b_j)}}\\
=&2\int_0^{\pi}A_1(f_j)(f_j-a_n)\,d\theta\,, 
\end{align*}
where $f_j=f_{j,0}+\nu_1(1-\cos\theta)+\nu_2\sin\theta$. Therefore,
\begin{equation}
\begin{gathered}
2\pi\frac{\partial}{\partial\lambda}\left(A_1(\lambda)(\lambda-a_n)\right)\vert_{\lambda=f_{j,0}}=\frac{\partial W_j}{\partial \nu_1}(t_j,0)=\\
\lim_{b_j\to f_{j,0}-0}\frac{\partial}{\partial b_j}\int_{b_j}^{b_{j-1}}
\left.\frac{A(\lambda)(\lambda-a_n)\sqrt{(-1)^j\prod_{k\ne j,j-1}(\lambda-b_k)}\,d\lambda}{\sqrt{(-1)^j
\prod_{l=0}^n(\lambda-a_l)}\sqrt{(b_{j-1}-\lambda)(\lambda-b_j)}}\right\vert_{b_{j-1}=f_{j,0}}.
\end{gathered}
\end{equation}
Then, taking the limit $b_{j-1}=f_{j,0}, b_j\to
f_{j,0}-0$ (or $\nu_2=0, \nu_1\to -0$) in the inequality of Proposition~\ref{prop:newineq} (2),
we have by Proposition~\ref{prop:limitineq}
\begin{equation}\label{ineq:wi}
\sum_{i\ne j}\frac{\partial W_i}{\partial \nu_1}
(t_i,0)+\frac{\partial W_j}{\partial\nu_1}(t_j,0)> 0\,.
\end{equation}

Since $t=\tau_1$ is the first zero
of the Jacobi field $Z_j(t)$ (and $Z_{j-1}(t)$),
it follows that $t_j=\tau_1$, $t_i\le\tau_1$ for
$i>j$, and $t_i\ge\tau_1$ for $i<j$ by Proposition~\ref{prop:main}.
From the formula \eqref{eq1} it can be easily seen
that the integrand of that formula is positive
when $i>j$ and is negative when $i<j$.
Therefore we have
\begin{equation*}
\frac{\partial W_i}{\partial\nu_1}(\tau_1,0)\ge
\frac{\partial W_i}{\partial\nu_1}(t_i,0)\qquad
(i\ne j)\,.
\end{equation*}
Thus the assertion follows from \eqref{eq:C} and \eqref{ineq:wi}.
%If $t=\tau_1$ is the $k$-th zero of the Jacobi field $Z_j(t)$ $(k\ge 2)$, then we put the zeros of the Jacobi field $Z_j(t)$ as $0=r_j^0<r_j^1<\dots<r_j^k=\tau_1$. Regarding each $r_j^l$ ($0\le l\le k-1$) as the starting point, we see from the above argument that 
%\begin{gather*}
%\frac{\partial W_i}{\partial\nu_1}(r_j^{l+1},0)-\frac{\partial W_i}{\partial\nu_1}(r_j^{l},0)>\frac{\partial W_i}{\partial\nu_1}(t_i,0)\quad(i\ne j)\,,\\
%\frac{\partial W_j}{\partial\nu_1}(r_j^{l+1},0)-\frac{\partial W_j}{\partial\nu_1}(r_j^{l},0)=\frac{\partial W_j}{\partial\nu_1}(t_j,0)\quad (i\ne j)
%\end{gather*}
%Thus we have
%\begin{gather*}
%C(f_{j,0}-a_n)=\sum_{i\ne j}\frac{\partial W_i}{\partial \nu_1}(\tau_1,0)+\frac{\partial W_j}{\partial \nu_1}(\tau_1,0)\\
%>k\left(\sum_{i\ne j}\frac{\partial W_i}{\partial \nu_1}(t_j,0)+\frac{\partial W_j}{\partial\nu_1}(t_j,0)\right)\ge 0\,.
%\end{gather*}
\end{proof}

This finishes the proof of Proposition~\ref{mainprop}.

%%%%

\end{document}